\documentclass[11pt, a4paper,reqno]{amsart}
\pdfoutput=1

\usepackage[pagebackref=false,bookmarks=true,colorlinks=true,linktoc=page,citecolor=darkgreen,linkcolor=blue,urlcolor=mediumblue]{hyperref}

\usepackage{tikz,genyoungtabtikz,enumitem,rotating,latexsym,bm,stmaryrd}
 \usepackage{amsmath,amsthm,amsfonts,amssymb,mathrsfs}
 \usepackage{mathtools}
  \usepackage{caption,cases}
 \usepackage{mathtools}

\usetikzlibrary{positioning,intersections,decorations}

\usetikzlibrary{shapes}
\usetikzlibrary{patterns,snakes}

  \usepackage[a4paper,margin=1.3in]{geometry}
 \usepackage{cleveref} 
 \newcommand{\node}{(x,y)}
\usepackage{xcolor}
\definecolor{mediumblue}{rgb}{0.0, 0.0, 0.8}
\colorlet{darkgreen}{green!50!black}

\renewcommand{\geq}{\geqslant}

\renewcommand{\leq}{\leqslant}

\renewcommand{\trianglerighteq}{\trianglerighteqslant}

\tikzset{wei/.style=
{red,double=red,double
distance=1pt}}

\usepackage{ltxtable}
\usepackage{tabularx}

\tikzset{wei2/.style={red,double=red,double
distance=1pt}}

\allowdisplaybreaks
\numberwithin{equation}{section}
\parskip=2pt
\usepackage{scalefnt}

\newtheorem{thm}{Theorem}[section]
\newtheorem{cor}[thm]{Corollary}

\newtheorem*{ack}{Acknowledgements}

\newtheorem{conj}[thm]{Conjecture}

\newtheorem{prop}[thm]{Proposition}
\newtheorem*{prop*}{Proposition}

\newtheorem*{cor*}{Corollary}

\newtheorem*{conj*}{Conjecture}

\theoremstyle{remark}

\newtheorem{rmk}[thm]{Remark}  

\newtheorem*{Acknowledgements*}{Acknowledgements}

\theoremstyle{definition}
\newtheorem{defn}[thm]{Definition}
\newtheorem{eg}[thm]{Example}


\newcommand{\PStd}{{\rm PStd}}

\newcommand{\SStd}{{\rm SStd}}

\newcommand{\maxp}{\operatorname{maxp}}
\newcommand{\dom}{\operatorname{dom}}

\newcommand{\Rem}{\operatorname{Rem}}
\renewcommand{\rem}{\operatorname{rem}}
\newcommand{\la}{\lambda}
\newcommand{\al}{\alpha}

 \newcommand{\SSTS}{\mathsf{S}}


\newcommand{\SSTT}{\mathsf{T}}  
\newcommand{\SSTU}{\mathsf{U}}  
\newcommand{\SSTV}{\mathsf{V}}  
\newcommand{\SSTW}{\mathsf{W}}  

\newcommand{\sts}{\mathsf{s}}  
\newcommand{\stt}{\mathsf{t}}  

\newcommand{\ZZ}{{\mathbb Z}}
\newcommand{\NN}{{\mathbb N}}

\let\<=\langle
\let\>=\rangle

\tikzset{
    ultra thin/.style= {line width=0.05pt},
    very thin/.style=  {line width=0.2pt},
    thin/.style=       {line width=0.1pt},
    semithick/.style=  {line width=0.6pt},
    thick/.style=      {line width=0.8pt},
    very thick/.style= {line width=1.2pt},
    ultra thick/.style={line width=1.6pt}
}

\crefname{defn}{Definition}{Definitions}
\crefname{thm}{Theorem}{Theorems}
\crefname{prop}{Proposition}{Propositions}
\crefname{lem}{Lemma}{Lemmas}
\crefname{cor}{Corollary}{Corollaries}
\crefname{conj}{Conjecture}{Conjectures}
\crefname{section}{Section}{Sections}
\crefname{subsection}{Subsection}{Subsections}
\crefname{eg}{Example}{Examples}
\crefname{figure}{Figure}{Figures}
\crefname{rem}{Remark}{Remarks}
\crefname{rmk}{Remark}{Remarks}
\crefname{equation}{equation}{equation}

\Crefname{defn}{Definition}{Definitions}
\Crefname{thm}{Theorem}{Theorems}
\Crefname{prop}{Proposition}{Propositions}
\Crefname{lem}{Lemma}{Lemmas}
\Crefname{cor}{Corollary}{Corollaries}
\Crefname{conj}{Conjecture}{Conjectures}
\Crefname{section}{Section}{Sections}
\Crefname{subsection}{Subsection}{Subsections}
\Crefname{eg}{Example}{Examples}
\Crefname{figure}{Figure}{Figures}
\Crefname{rem}{Remark}{Remarks}
\Crefname{rmk}{Remark}{Remarks}

 \newlength{\mylen}
\setbox1=\hbox{$\bullet$}\setbox2=\hbox{\tiny$\bullet$}
\setlength{\mylen}{\dimexpr0.5\ht1-0.5\ht2}

\usepackage{amsmath}
\newcommand\Item[1][]{%
  \ifx\relax#1\relax  \item \else \item[#1] \fi
  \abovedisplayskip=0pt\abovedisplayshortskip=0pt~\vspace*{-\baselineskip}}

\hyphenation{tab-le-aux}
\hyphenation{semi-standard}
\def\Item{\item\abovedisplayskip=0pt\abovedisplayshortskip=5pt~\vspace*{-\baselineskip}}

\parskip=2pt
\begin{document}

\title[Classification of multiplicity-free plethysm products]{The classification of multiplicity-free \\ plethysms of Schur functions}

\author{Christine Bessenrodt}
\address{Institut f\"ur Algebra, Zahlentheorie und Diskrete Mathematik, Leibniz Universit\"at Hannover, 30167 Hannover, Germany}
\email{bessen@math.uni-hannover.de}

\author{Chris Bowman}
\address{Department of Mathematics,  University of  York, Heslington, 
YO10 5DD}
\email{Chris.Bowman-Scargill@york.ac.uk}

\author{Rowena Paget}
\address{School of Mathematics, Statistics and Actuarial Science, University of Kent Canterbury
CT2 7NF, UK}
\email{R.E.Paget@kent.ac.uk}

\subjclass[2000]{05E05, 20C30, 20C15}
\keywords{Symmetric functions, plethystic products, Schur functions, 
multiplicity-free products, representations of symmetric groups}

\begin{abstract}
 We classify and construct all multiplicity-free plethystic products of Schur functions.  We also  compute many new (infinite) families of
 plethysm coefficients, with particular emphasis on those  near maximal  in the dominance ordering and those of small Durfee size.

 \end{abstract}

 \maketitle

\section{Introduction}

In the ring of symmetric functions there
are three ways of ``multiplying" a
pair of functions together in order to obtain a new symmetric function; these are the  outer product,
the Kronecker product,  and the plethysm product.
With $s_\nu$ and $s_\mu$ denoting the Schur functions labelled by the partitions $\nu$ and~$\mu$, the coefficients in the expansion of their outer product $s_\nu \boxtimes s_\mu$ in the basis of Schur functions
are determined by the famous Littlewood--Richardson Rule.
 Richard Stanley identified understanding the  Kronecker and   plethystic products of pairs of Schur functions as two of the most important open problems in algebraic combinatorics \cite[Problems 9 \& 10]{Sta00}; the corresponding expansion coefficients have been described as `perhaps the most challenging, deep and mysterious objects in algebraic combinatorics' \cite{PP1}.
More recently,
 the Kronecker coefficients
have provided the centrepiece of geometric complexity theory, an approach that seeks to settle the $\sf P$ vs $\sf NP$ problem \cite{GCT6};  this approach was recently shown to require not only positivity, but precise information on the coefficients \cite{MR3631001,MR3631002,MR3695867,MR3724215}. The
Kronecker and plethysm coefficients  have also been found to have deep connections with quantum information theory   \cite{ky,MR2197548,MR2421478,MR2745569}.

In 2001, Stembridge  classified the multiplicity-free outer products of  Schur functions \cite{Stembridge01}.
 At a similar time,   Bessenrodt conjectured a classification of
  multi\-pli\-city-free Kronecker products
  of Schur functions.
Multiplicity-free Kronecker products have subsequently been studied in \cite{44,BWZ10,CG1,MR2550164} and Bessenrodt's conjecture was finally    proven in \cite{cbcb}.
Finally, the multiplicity-free plethystic
products have been studied in \cite{cariniremmel,carini}
 and  the well-known   formulas of \cite[Chapter 1, Plethysm]{MR3443860}.
The purpose of this article is to classify and construct all multiplicity-free plethysm products of Schur functions thus completing this picture:

 \begin{thm}\label{conj}
 The plethysm product $s_\nu \circ s_\mu$ is multiplicity-free if and only if one of the following holds:
 \begin{itemize}[leftmargin=*]
 \item[$(i)$] either $\nu$ or $\mu$ is the partition  $(1)$  and the other is arbitrary;
  \item[$(ii)$] $\nu\vdash 2$ and $\mu$ is   $(a^b)$,   $(a+1,a^{b-1}) $, $(a^b,1) $,    $(a^{b-1},a-1)$
 or
a hook;

 \item[$(iii)$]  $\mu\vdash 2$ and $\nu$ is linear or $\nu$ belongs to a small list of exceptions
 $$\nu \in \{(4,1),(3,1),(2,1^a), (2^2),(3^2),(2^2,1)\mid 1\leq a \leq 6 \} ;$$

 \item[$(iv)$]  $\nu$ and $\mu$ belong to a finite list of  small rank exceptional products.  In particular $\nu$ and $\mu$ are both linear and $|\nu|+|\mu|\leq 8$ and $(\nu,\mu)\not\in \{((5),(3))	, ((1^5),(1^3)), $ $((4),(4))	, ((4),(1^4)) \}   $;
 or  $\nu=(1^2)$ and $\mu\in\{ (4,2),(2^2,1^2)\}$;
 or $\nu=(1^3)$ and $\mu \in \{(6), (1^6) , (2^2)\}$;
or $ \nu = (2,1)$ and $\mu\in \{(3),(1^3)\}$.  \end{itemize}
  \end{thm}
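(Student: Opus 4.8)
The plan is to prove the two directions of the classification separately, and within the ``if'' direction to treat the four cases of the theorem through explicit plethysm computations, while the ``only if'' direction is the genuine heart of the matter.

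\textbf{The ``if'' direction.} For each family listed in $(i)$--$(iv)$ one must exhibit that the plethysm is genuinely multiplicity-free. Case $(i)$ is trivial: $s_{(1)}\circ s_\mu = s_\mu = s_\mu \circ s_{(1)}$, so nothing is to be done. For $(ii)$ and $(iii)$, with one of the partitions of size $2$, one is computing $s_{(2)}\circ s_\mu$, $s_{(1^2)}\circ s_\mu$, $s_\nu\circ s_{(2)}$ or $s_\nu\circ s_{(1^2)}$. I would invoke the classical formulas for plethysm with $s_{(2)}$ and $s_{(1^2)}$ — Littlewood's identities expressing these in terms of sums over partitions with even rows or even columns, together with the Jacobi--Trudi / determinantal manipulations — and reduce the multiplicity-free assertion to a combinatorial statement about which Littlewood--Richardson-type fillings can coincide. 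For the specified shapes $(a^b)$, $(a+1,a^{b-1})$, $(a^b,1)$, $(a^{b-1},a-1)$ and hooks, these reductions should collapse to short explicit identities. The finitely many exceptional products in $(iv)$ are simply checked by direct expansion (by computer, or by hand using the above rules). The expectation is that this direction is routine but lengthy bookkeeping; no single step is an obstacle, but care is needed to be exhaustive.

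\textbf{The ``only if'' direction.} Here the strategy is to show that \emph{every} pair $(\nu,\mu)$ outside the list produces some Schur function with coefficient $\geq 2$. I would organise this by the sizes $|\nu|$ and $|\mu|$ and by the shapes of $\nu$ and $\mu$. The base principle is monotonicity: if $s_\nu\circ s_\mu$ contains a repeated constituent, then so does $s_\nu\circ s_{\mu^+}$ and $s_{\nu^+}\circ s_\mu$ for suitable enlargements $\mu^+,\nu^+$ (this kind of containment is standard for plethysm when one adds a row or column, or adds a box in a controlled way). Thus it suffices to identify, for each ``boundary'' pair just outside the classification, an explicit witness of multiplicity $\geq 2$, and then propagate. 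Concretely: (a) when $|\mu|\geq 3$ and $|\nu|\geq 3$, exhibit a small repeated constituent (a near-maximal partition in dominance, using the results on near-maximal plethysm coefficients alluded to in the abstract); (b) when $|\nu|=2$ but $\mu$ is \emph{not} one of the five listed shapes, show $s_{(2)}\circ s_\mu$ or $s_{(1^2)}\circ s_\mu$ already fails to be multiplicity-free — this is the most delicate sub-case, since the listed shapes are exactly the ``thin'' ones (near-rectangles and hooks), and one must argue that any shape with at least two rows and two columns of ``excess'' forces a repeat; (c) when $|\mu|=2$, rule out all $\nu$ not linear and not in the short exceptional list, again by a witness-plus-propagation argument, now using the Littlewood formulas for $s_\nu\circ s_{(2)}$ and $s_\nu\circ s_{(1^2)}$ directly. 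Finally, the small-rank region $|\nu|+|\mu|$ bounded must be finished by exhaustive computation to pin down exactly the exceptional list in $(iv)$.

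\textbf{Main obstacle.} The hard part will be sub-case (b): establishing that for $\nu\vdash 2$ the plethysm $s_\nu\circ s_\mu$ is multiplicity-free \emph{precisely} for the rectangle-like and hook shapes. This requires a genuinely uniform combinatorial argument — most naturally via the explicit description of the constituents of $s_{(2)}\circ s_\mu$ and $s_{(1^2)}\circ s_\mu$ in terms of pairs of semistandard tableaux or domino-type fillings, followed by a careful analysis of when two distinct such configurations can yield the same partition. The rectangle and hook shapes are exactly those rigid enough to prevent such a collision, and proving the converse (any other shape admits a collision) is where the real work lies; I expect this to occupy the technical core of the paper, with the near-maximal and small-Durfee-size plethysm computations advertised in the abstract serving as the key input.
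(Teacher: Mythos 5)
Your overall strategy (explicit degree-two plethysm combinatorics for the ``if'' direction, computer checks for the finite exceptions, and ``witness plus propagation'' for the ``only if'' direction) is broadly the strategy of the paper, but two of your key steps fail or are missing as stated. First, your step (a) asserts that whenever $|\nu|,|\mu|\geq 3$ one can exhibit a repeated \emph{near-maximal} constituent. This is false for rectangles and near-rectangles: for $\nu=(a^b)$ (and for $\nu=(a+k,a^b)$) the entire layer $\la_1=n+\nu_1-1$ of $s_\nu\circ s_{(2)}$ is still multiplicity-free (see \cref{yaaaas2}, where the relevant coefficient is $|I(\la)|-1=1$), and the paper must descend a further layer in dominance and carry out the full Hasse-diagram computation of \cref{the latter} to find the coefficient $2$ at $\bar\nu-2\varepsilon_1+2\varepsilon_2$; the two-line cases $(a^2)$ and $(2^a,1^b)$ need yet different arguments (via $e$-function expansions and shift-symmetric shapes). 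So the existence of seeds is not a soft consequence of ``near-maximal coefficient'' results; it is the technical core of the converse, and your plan gives no mechanism for producing them in exactly the cases where the first layer is clean.

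Second, your propagation step is underspecified in a way that matters. Growing a seed with inner partition $(2)$ to an arbitrary $\mu$ requires simultaneously controlling $s_\nu\circ s_{(1^2)}$, and the paper does this through the conjugation identities \cref{max-conjugate} together with the semigroup properties, packaged as \cref{cor:2-important}: one needs the family of outer partitions $\nu$ under consideration to be \emph{closed under conjugation} before $p(\nu,(2))\geq 2$ can be promoted to $p(\nu,\mu)\geq 2$ for all $|\mu|>1$. Without this (or an equivalent device) your ``witness-plus-propagation'' argument does not reach all inner partitions, and conversely your sub-case (b) ($\nu\vdash 2$, $\mu$ arbitrary) does not need the uniform collision analysis you flag as the main obstacle: the paper disposes of it quickly by stripping rows and columns of $\mu$ via the semigroup properties (taking care not to overshoot past the exceptional shape $(4,2)$) down to finitely many computer-checked seeds. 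Where the genuinely delicate domino analysis is needed is in the \emph{positive} direction, proving multiplicity-freeness of $s_{\nu}\circ s_{(a^b,1)}$ and $s_{\nu}\circ s_{(a^b,a-1)}$ for $\nu\vdash 2$; your expectation that these cases ``collapse to short explicit identities'' is not borne out — the paper needs explicit Carr\'e--Leclerc domino algorithms and a sign-reversing correspondence to split the outer-square multiplicities between the symmetric and exterior parts.
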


The first, and easier, half of the proof is given in \cref{easyhalf} where we show that all the products on the list are, indeed, multiplicity-free and we calculate these decompositions explicitly.
The more difficult half of the theorem (proving that this list is exhaustive) is the subject of \cref{sec:proof} and \cref{sec:endproof}.  The main idea   is to calculate ``seeds" of multiplicity using the combinatorics of
plethystic tableaux
and then to use semigroup properties to ``grow" these seeds and hence show that any product,  $s_\nu \circ s_\mu$, not on the list contains coefficients which are strictly greater than 1.

Finally, during the course of writing this paper we stumbled on the following
new monotonicity property.
 We believe it will be of interest as it is of a different flavour to the known monotonicity properties of plethysm coefficients \cite{plethysmstab1,BPW,brion,MR1190119}. The notation is as defined in \cref{subsec:partitions}.

  \begin{conj}\label{conj2}
For $\nu$ and $\alpha$ arbitrary partitions, we have that  $$
 \langle s_\nu \circ s_{(2)} \mid s_\alpha \rangle
\leq
 \langle s_{\nu \sqcup (1)} \circ s_{(2)} \mid s_{(\alpha+(1))\sqcup (1)} \rangle.
$$
   \end{conj}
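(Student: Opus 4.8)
The approach I would take is combinatorial. First, fix a positive rule computing the plethysm coefficients $\langle s_\nu\circ s_{(2)}\mid s_\alpha\rangle$ --- for instance the plethystic semistandard tableaux used elsewhere in this paper --- so that both sides of \cref{conj2} become cardinalities of explicit finite sets, say $\mathcal{T}(\nu,\alpha)$ and $\mathcal{T}(\nu\sqcup(1),(\alpha+(1))\sqcup(1))$; it then suffices to build an injection $\Phi\colon\mathcal{T}(\nu,\alpha)\hookrightarrow\mathcal{T}(\nu\sqcup(1),(\alpha+(1))\sqcup(1))$. The bookkeeping on the two sides is favourable: passing from $\nu$ to $\nu\sqcup(1)$ introduces exactly one new letter, $m=\ell(\nu)+1$, which is maximal and is used once, while passing from $\alpha$ to $(\alpha+(1))\sqcup(1)$ adjoins exactly two cells, namely $(1,\alpha_1+1)$ at the end of the first row and $(\ell(\alpha)+1,1)$ as a new bottom row.

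Accordingly, $\Phi$ should ``insert the new copy of $m$ into the new bottom row''. Since the two adjoined cells are not adjacent, $\Phi$ cannot merely append one cell's worth of data: given $T\in\mathcal{T}(\nu,\alpha)$ one must re-arrange $T$ along the rim path joining $(1,\alpha_1+1)$ and $(\ell(\alpha)+1,1)$, sliding the entries crossed by that path and letting the new $m$ come to rest at $(\ell(\alpha)+1,1)$. The routine steps are then: specify this rim slide; verify that $\Phi(T)$ lies in $\mathcal{T}(\nu\sqcup(1),(\alpha+(1))\sqcup(1))$ --- semistandardness and the lattice (Yamanouchi) condition ought to be inherited, because $m$ exceeds every letter of $T$ and sits in a row below all rows of $T$, while the slid entries only move weakly downwards inside a staircase region, preserving the left-to-right order of equal letters; and check injectivity, which should be immediate, as $T$ is recovered from $\Phi(T)$ by deleting its unique $m$-entry and reversing the slide.

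The genuine difficulty --- and what makes \cref{conj2} different in flavour from the monotonicities of \cite{plethysmstab1,BPW,brion}, where the inequality is witnessed by a plain inclusion of tableau sets --- is that any model for $\langle s_\nu\circ s_{(2)}\mid s_\alpha\rangle$ carries a $\mathbb{Z}/2$-datum (a sign, or the parity of a spin statistic) separating plethysm by $s_{(2)}$ from plethysm by $s_{(1,1)}$ on a fixed shape and content, and $\Phi$ must preserve this datum exactly. Each elementary step of the rim slide alters the relevant parity only at a ``turn'' of the path, and the new $m$-entry contributes according to the chessboard colour of the cell it occupies; one must show that these contributions cancel modulo $2$. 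I expect this to reduce to a rigidity statement about the two adjoined cells $(1,\alpha_1+1)$ and $(\ell(\alpha)+1,1)$ --- concretely, a relation between their chessboard colours --- which pins down the parity of the number of turns on any admissible rim path and so forces the cancellation; proving this, and checking that the path used by $\Phi$ is admissible, is where the work lies. I would run two further things in parallel, to dispatch large ranges of $(\nu,\alpha)$ outright and localise the residual cases: the identity $\bigl(\sum_{\nu^{+}}s_{\nu^{+}}\bigr)\circ s_{(2)}=\bigl(s_\nu\circ s_{(2)}\bigr)\cdot s_{(2)}$ (the sum over one-box extensions $\nu^{+}$ of $\nu$), valid by multiplicativity of plethysm in its first argument, in which $\nu\sqcup(1)$ is precisely the dominance-minimal $\nu^{+}$ and which suggests an induction on $|\nu|$; and the closed formulas for $\langle s_\nu\circ s_{(2)}\mid s_\alpha\rangle$ obtained in this paper for $\alpha$ near-maximal in dominance and of small Durfee size.
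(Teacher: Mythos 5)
You should be aware at the outset that the statement you are proving is \cref{conj2} of the paper: the authors state it as a conjecture, explicitly without proof, so there is no argument of theirs to compare yours against, and your proposal would have to stand entirely on its own. As written it does not: it is a plan whose pivotal step is acknowledged to be unresolved (``proving this \ldots is where the work lies''), and its opening move is not actually available. The plethystic semistandard tableaux of this paper do not compute the coefficients $\langle s_\nu\circ s_{(2)}\mid s_\alpha\rangle$: by \cref{plethysm1} the number $|\PStd((2)^\nu,\alpha)|$ is the coefficient of the \emph{monomial} $x^\alpha$, and it equals the Schur coefficient only when $\alpha$ is maximal in dominance (\cref{PW}); for general $\alpha$ one only has the signed recursion \cref{dvir}, and no positive combinatorial rule for these coefficients is known (this is precisely the difficulty behind Stanley's problem). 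So the finite sets $\mathcal{T}(\nu,\alpha)$ between which you want to build an injection have not been defined. Relatedly, the $\mathbb{Z}/2$ spin datum you propose to preserve lives in the Carr\'e--Leclerc domino model, which computes $\langle s_{(2)}\circ s_\mu\mid s_\alpha\rangle$ and $\langle s_{(1^2)}\circ s_\mu\mid s_\alpha\rangle$ --- the partition $(2)$ in the \emph{outer} position --- whereas \cref{conj2} concerns $s_\nu\circ s_{(2)}$ with $(2)$ in the \emph{inner} position; the object your rim-slide map is supposed to respect is simply not present in the relevant setting.

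Your auxiliary identity is correct but does not close the gap either: $(s_\nu\cdot s_{(1)})\circ s_{(2)}=(s_\nu\circ s_{(2)})\cdot s_{(2)}$ together with the Pieri rule gives only
$\sum_{\nu^{+}}\langle s_{\nu^{+}}\circ s_{(2)}\mid s_{(\alpha+(1))\sqcup(1)}\rangle \geq \langle s_\nu\circ s_{(2)}\mid s_\alpha\rangle$,
where the sum runs over all one-box additions $\nu^{+}$ of $\nu$ (since removing the two adjoined cells of $(\alpha+(1))\sqcup(1)$ is an admissible horizontal $2$-strip removal recovering $\alpha$). Extracting the single summand $\nu^{+}=\nu\sqcup(1)$ from this aggregate inequality is exactly the content of the conjecture, so this route begs the question rather than localising it. In short: there is a genuine gap --- the proposal supplies neither a positive model on which an injection could be defined nor the parity-preservation argument it identifies as essential --- and the inequality remains open, as it does in the paper.
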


\begin{ack}
We would like to thank John Stembridge for making available his SF Package.
The second author would like to thank   the Alexander von Humboldt Foundation
and EPSRC fellowship grant EP/V00090X/1 for financial support and the Leibniz Universit\"at Hannover
 for their ongoing hospitality.
\end{ack}

 \section{Partitions, symmetric functions\\  and maximal terms in plethysm}

 \subsection{Partitions and Young tableaux}\label{subsec:partitions}
  We define a {\sf composition} $\lambda\vDash n$ to be a   finite  sequence  of non-negative integers $ (\lambda_1,\lambda_2, \ldots)$ whose sum, $|\lambda| = \lambda_1+\lambda_2 + \dots$, equals $n$.
  If the sequence $(\lambda_1,\lambda_2, \ldots)$ is  weakly decreasing,  we say that $\la$ is a {\sf partition} and write $\la\vdash n$.       Given  a partition $\lambda $ of $n$, its {\sf Young diagram}   is defined to be the set
\[
[\la]=\{(r,c) \mid  1\leq  c\leq \lambda_r\} 
\]
\color{black} and we refer to each $(r,c)\in [\la]$ as a {\sf node}  or a {\sf box} of the partition.  \color{black} 
 The conjugate partition, $\la^T$, is the partition obtained by interchanging the rows and columns of $\la$.  The number of non-zero parts of a partition $\la$  is called its {\sf length}, $\ell(\lambda)$;
  its largest part $\lambda_1$ is also
 called its {\sf width}, $w(\la)$; the sum $|\lambda|$
 of all the parts of $\lambda$ is called its  size.
We let $\la_{>1}$ denote the partition obtained by removing the first row of $\la$.  
\color{black} 
 We define the {\sf Durfee size} of $\la$ to be the largest value $k$ such that $(k,k)\in [\la]$ and we denote this by $dl(\la)$.  
We say that a node $(r,c)\in [\la]$ is {\sf removable} if $[\la]\setminus \{(r,c)\}$ is itself the Young diagram of a partition.
\color{black} 
We let $\Rem(\la)$
denote the set of all removable
nodes of the partition $\la$,
and set $\rem(\la)=|\Rem(\la)|$.
If $(r,c) \in \Rem(\la)$ then we will write $\la-\varepsilon_r$ for the partition obtained by removing the (unique) removable node in row $r$ from $\la$. Similarly, 
\color{black} 
$(r,c)\notin [\la]$ is {\sf addable} if $[\la]\cup\{(r,c)\}$ is  the Young diagram of a partition, and
\color{black} 
if $(r,c)$ is an addable node of $\la$ then
$\la+\varepsilon_r$ denotes the partition obtained by adding the (unique) addable node in row $r$ to $\la$.

 We now recall the {\sf dominance ordering} on partitions. Let $\la,\mu$ be partitions.
 We  write
 $\la\trianglerighteq \mu$  if
 $$
 \sum_{1\leq i \leq k}\la_i \geq  \sum_{1\leq i \leq k}\mu_i \text{  for all } k\geq 1.
 $$
 If $\la\trianglerighteq \mu$ and $\la\neq \mu$ we write $\la\rhd \mu$.  The dominance ordering  is a partial ordering on the set of  partitions of a given size.  This partial order can be refined into a total ordering as follows:
  we write
$\la\succ \mu$  if
 $$
\text{  $\la_k >\mu_k$ for some $k\geq 1$ and  }
 \la_i = \mu_i \text{ for all } 1\leq i \leq k-1.
 $$
  We refer to   $\succ $  as the {\sf lexicographic ordering}.

Let   $\lambda   $ be  a partition  of $n$.
 A {\sf    Young tableau of shape $  \lambda$} {(\color{black}usually referred to simply a $\la$-tableau, for brevity)}
  may be defined as  a map
 $\stt : [\la] \to \NN.$  Recall that the tableau $\stt$ is {\sf semistandard} if  $\stt(r,c-1)\leq \stt(r,c)$   and
$\stt(r-1,c)< \stt(r,c)$  for all $(r,c)\in [\la]$.
 We let $\stt_k = |\{ (r,c)\in [\la] \mid \stt(r,c)=k\}|$ for $k\in \NN$.
 We refer to the composition $\alpha=(\stt_1,\stt_2,\stt_3,\dots)$ as the {\sf weight} of the tableau~$\stt$.
 We denote the set of all   semistandard
  tableaux of shape $\la$ by $\SStd_\NN(\la)$,  and the subset of those having weight $\alpha$ by $\SStd(\la,\alpha)$.  \color{black}  
 We remark that a necessary condition for 
 $\SStd(\la,\alpha)\neq \emptyset $ is that 
  $\la \trianglerighteq  \alpha$
 \color{black}
  in the dominance order (and so similarly for the lexicographic order).  
 
We write   $\la \subseteq \nu$ if $\la_i \leq \nu_i$ for all $i\geq 1$.  Given $\la \subseteq \nu$, we define the corresponding
 skew partition $\nu \setminus \la$ to be the set difference of the Young diagrams.  
 We extend all the tableaux-theoretic notions above to skew-partitions in the obvious manner, in particular we let 
 $\SStd(\nu\setminus \la,\mu)$ denote the set of skew semistandard tableaux of shape $\nu \setminus \la$  weight $\mu$.   

\color{black}

 Given two partitions $\la $ and $\mu$, we let  $\la + \mu$  and  $\la \sqcup \mu$ denote the partitions obtained by adding the partitions
 horizontally and vertically, respectively.    In more detail,
 $$
 \la+\mu=(\la_1+\mu_1,\la_2+\mu_2,\la_3+\mu_3,\dots )
 $$
 and $\la\sqcup\mu$ is the partition whose multiset of parts is the disjoint union of the multisets of parts of $\la$ and $\mu$. We have that
  $$\la\sqcup\mu= (\la^T+\mu^T)^T.$$
   Going forward, we require the following terminology.  We call the partition $\la$ of~$n$
\begin{itemize}[leftmargin=*]
  \item
   {\sf linear } if  $\la=(n)$ or $(1^n)$;
 \item
 a {\sf 2-line partition} if
   the minimum of $\ell(\la)$ and $w(\la)$ is exactly~2;

\item
  a {\sf fat hook} if $\rem(\la)\leq 2$;

\item
  a {\sf proper fat hook} if $\rem(\la)= 2$, and $\la$ is not a hook or a 2-line partition;

\item
 a {\sf rectangle} if $\la$ is of the form $(a^b)$ for some $a,b \geq 1$;

  \item
  a  {\sf near rectangle} if $\la$ is obtained from a rectangle by adding a single row or column;
  
  \item \color{black} an {\sf almost rectangle} if $\la$ is obtained from a rectangle by adding or removing  a single node.
\end{itemize}

\subsection{Symmetric functions and multiplicity-free products}
  Given $\la$ a partition of $n$, the associated {\sf Schur function}, $s_\la$,  may be defined as follows:
\begin{equation}\label{plethysm3}
   s_\la = \sum_{
   \begin{subarray}c
   \alpha \vDash n
   \end{subarray}
   }
| \SStd_\NN(\la,\alpha)
| x^\alpha
 \qquad
\text{   where} \qquad   x^\alpha= x_1^{\alpha_1} x_2^{\alpha_2} x_3^{\alpha_3}\dots .
\end{equation}
We will also require the elementary and homogenous symmetric functions
$$
e_\la= s_{\la_1^T}s_{\la_2^T}\dots s_{\la_w^T}
\quad
h_\la= s_{\la_1}s_{\la_2}\dots s_{\la_\ell}
$$
for $\la$ a partition of width $w$ and length $\ell$.
  There are three fundamental products on symmetric functions: the outer (Littlewood--Richardson) product $\boxtimes$, the inner (Kronecker) product $\otimes$, and the plethysm product $\circ$ all of which are explicitly defined in \cite[Chapter 1]{MR3443860}.
In 2001, Stembridge  classified the multiplicity-free outer products of symmetric functions
(or equivalently, the outer product of two irreducible
characters of symmetric groups)
as follows:

\begin{thm}[Multiplicity-free outer products of Schur functions
\cite{Stembridge01}\label{thm:mf-outer}]
 An outer product $s_\mu \boxtimes s_\nu$ is multiplicity-free if and only if
one of the following holds:
\begin{itemize}[leftmargin=0pt,itemindent=1.5em]
\item
$\mu$ and $\nu$ are both rectangles,
\item
$\mu$ is a rectangle and $\nu$ is a near-rectangle (up to exchange);
\item
$\mu$ is a 2-line rectangle and $\nu$ is a fat hook (up to exchange);
\item
$\mu$ or $\nu$ is linear (and the other is arbitrary).
\end{itemize}
 \end{thm}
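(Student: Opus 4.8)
The plan is to recast the statement via Littlewood--Richardson coefficients: writing $s_\mu\boxtimes s_\nu=\sum_\lambda c^\lambda_{\mu\nu}\,s_\lambda$, multiplicity-freeness means $c^\lambda_{\mu\nu}\leq 1$ for all $\lambda$. I would prove the two implications separately, using the symmetry $c^\lambda_{\mu\nu}=c^{\lambda^T}_{\mu^T\nu^T}$ throughout to halve the casework.

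For the ``if'' direction I would show each family on the list is multiplicity-free. The linear case is Pieri's rule: $s_\mu\boxtimes s_{(n)}$ (respectively $s_\mu\boxtimes s_{(1^n)}$) is a sum of pairwise distinct Schur functions indexed by the ways of attaching a horizontal (respectively vertical) $n$-strip to $\mu$. For two rectangles I would argue straight from the Littlewood--Richardson rule that a filling of $\lambda/(a^b)$ of content $(c^d)$ is unique whenever it exists: recovering the content column by column, the rectangular shape of $(c^d)$ together with semistandardness and the lattice (ballot) condition pins down the location of every entry, so the filling is determined by $\lambda$. The two remaining families---rectangle times near-rectangle, and two-line rectangle times fat hook---I would reduce to the rectangle case by writing the second factor as a rectangle with one horizontal or vertical strip attached, peeling that strip off with a Pieri/skew identity, and checking that the strip can be reinserted in at most one way into each Littlewood--Richardson filling of the rectangular part.

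For the ``only if'' direction I would run a ``reduce to minimal bad pairs'' argument. The engine is monotonicity of Littlewood--Richardson coefficients: the triples $(\lambda,\mu,\nu)$ with $c^\lambda_{\mu\nu}>0$ form a semigroup under componentwise addition, and, realising $c^\lambda_{\mu\nu}$ as a lattice-point count of the associated polytope, a Minkowski-sum argument gives $c^{\lambda+\lambda'}_{\mu+\mu',\,\nu+\nu'}\geq c^\lambda_{\mu\nu}$ whenever $c^{\lambda'}_{\mu'\nu'}\geq 1$. Combined with the conjugation symmetry, this means that having a constituent of multiplicity $\geq 2$ persists when $\mu$ and $\nu$ are enlarged by adding rows (via $+$) or columns (via $\sqcup$); it therefore suffices to produce a finite set of ``seeds'', that is, small non-multiplicity-free pairs, such that every $(\mu,\nu)$ failing all of $(i)$--$(iv)$ lies above one of them in this enlargement order. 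The basic seed is $(2,1)\boxtimes(2,1)$, where $c^{(3,2,1)}_{(2,1),(2,1)}=2$, which after growth covers the case in which neither $\mu$ nor $\nu$ is a rectangle; the remaining seeds sit just past the boundary of the list and pair a ``bad'' rectangle with a minimal ``bad'' partner---a rectangle with at least three rows and columns against a partition with three removable corners, the same rectangle against a fat hook that is not a near-rectangle, and a two-line rectangle against a partition with three removable corners. For each seed I would exhibit two Littlewood--Richardson tableaux of a common shape by hand, and then the monotonicity above propagates multiplicity $2$ to every pair that fails all of $(i)$--$(iv)$.

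I expect the ``only if'' direction to be the main obstacle, for two linked reasons. First, the list has a subtle boundary---a near-rectangle is allowed against a rectangle but a slightly larger fat hook is not, and a two-line rectangle is allowed against any fat hook but a rectangle with three or more rows and columns is not---so the seeds and the permitted enlargement moves must be chosen so that the region they generate is \emph{exactly} the complement of $(i)$--$(iv)$; in particular one must check both that every off-list pair is reached and that no on-list pair ever is, and any residual pairs the semigroup cannot reach from a seed have to be settled by a direct two-tableau construction. Second, even the ``if'' direction is not purely formal past the linear and rectangle-by-rectangle cases: proving that a strip reinserts uniquely into every Littlewood--Richardson filling of a rectangle requires a careful analysis of which shapes $\lambda$ occur in the relevant skew products.
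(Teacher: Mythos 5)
You should first note a point of calibration: the paper does not prove this statement at all. \cref{thm:mf-outer} is quoted from Stembridge \cite{Stembridge01} and is used as an input (for instance in the proof of \cref{rectangle is mf}), so there is no in-paper argument to compare against; the closest analogue is the seed-and-grow strategy the authors deploy for plethysm in \cref{sec:endproof}. Measured against Stembridge's actual theorem, your ``only if'' half is essentially the right (and the standard) machine: the monotonicity $c^{\lambda+\lambda'}_{\mu+\mu',\,\nu+\nu'}\ge c^{\lambda}_{\mu\nu}$ whenever $c^{\lambda'}_{\mu'\nu'}\ge 1$ is correct (Minkowski sums of hive polytopes), taking $\nu'=\varnothing$, $\lambda'=\mu'$ lets you enlarge one factor at a time, conjugation supplies the column moves, the seed $c^{(3,2,1)}_{(2,1),(2,1)}=2$ is correct, and a short argument (add a partition to two suitable consecutive rows, then $\sqcup$ the remaining rows) shows every non-rectangle is reachable from $(2,1)$ and every partition with three removable corners is reachable from $(3,2,1)$, so your three boundary seed families do cover the complement of the list. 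One small simplification: you do not need to verify that no on-list pair is reachable from a seed; that is a consistency check implied by sufficiency, not a proof obligation.

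The genuine gap is in the ``if'' direction for the two mixed families. For rectangle $\times$ near-rectangle and $2$-line rectangle $\times$ fat hook you propose to write $\nu$ as a rectangle $R$ with a strip attached, ``peel the strip off with a Pieri/skew identity and check it reinserts in at most one way.'' As stated this is not an argument. The symmetric-function version of peeling only gives the coefficientwise bound $\langle s_\mu\boxtimes s_\nu\mid s_\lambda\rangle\le\langle s_\mu\boxtimes s_R\boxtimes s_{\mathrm{strip}}\mid s_\lambda\rangle$, and that bound is useless: already $s_{(2^2)}\boxtimes s_{(2^2)}\boxtimes s_{(1)}$ contains $s_{(4,4,1)}$ with multiplicity $2$ (once through $(4,4)$ and once through $(4,3,1)$), even though $s_{(2^2)}\boxtimes s_{(2^2,1)}$ and $s_{(2^2)}\boxtimes s_{(3,2)}$ are multiplicity-free. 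So the work has to happen at the level of tableaux, via the refined claim you gesture at: for each Littlewood--Richardson filling accounting for the rectangular part of the content, the boxes of content $\nu/R$ can be placed in at most one way compatibly with semistandardness and the ballot condition. That claim is exactly the hard content of these two cases; it is false without the rectangle (respectively $2$-line rectangle) hypothesis on $\mu$, and your sketch never indicates where that hypothesis enters or why the insertion is forced. Until that uniqueness is proved (this is, in substance, what Stembridge's analysis of LR fillings against a rectangle does, and what later treatments via multiplicity-free skew Schur functions re-derive), the sufficiency half of your proposal is incomplete, whereas the linear case (Pieri) and the rectangle-times-rectangle case (uniqueness of LR fillings of rectangular content) are fine.
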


 We will make use of Stembridge's classification in the proof.  At a similar time, Bessenrodt conjectured a classification of all multiplicity-free Kronecker products.  This conjecture was recently proven in \cite{cbcb} and we refer to \cite{cbcb} for the full statement (as it will not be needed here).
 However, we do invite the reader to compare all three classification theorems.
  All three have a trivial case in which one partition is arbitrary and the other is particularly simple
  (linear for the outer and  Kronecker products, or $(1)$ for the plethysm product).
  Except for
  this trivial case, all three classifications satisfy the restraint  that if
  $$\color{black}
  s_\nu \boxtimes s_\mu
  \quad
    s_\nu \otimes s_\mu
    \quad
      s_\nu \circ  s_\mu
  $$ 
  is multiplicity-free, then
 $\rem(\mu)+\rem(\nu)\leq 4$.
 Also, the methods of proof for the Kronecker and plethystic classifications are very similar:
 in both cases a complementary pairing of semigroup properties and
 consideration of near maximal terms (using Dvir recursion in the former and \cref{dvir} in the latter)
 are the key ingredients.

\subsection{Plethysm }The  {\sf plethysm product} of two symmetric functions
 is defined in \cite[Chapter~7,   A2.6]{MR1676282} or   \cite[Chapter I.8]{MR3443860}.
 The plethysm product of two Schur functions  is again a symmetric function  and so can be rewritten as a linear combination of Schur functions.
 For $\nu \vdash n$, $\mu \vdash m$ we have
$$
s_{\nu}\circ s_{\mu}
 =\sum_{\alpha \vdash mn} p(\nu,\mu,\alpha) s_{\alpha}
 $$
 where the coefficients $p(\nu,\mu,\alpha)=\langle  s_{\nu}\circ s_{\mu} \mid s_{\alpha} \rangle$
    may be computed using the Hall inner product;
 they are non-negative as they are representation-theoretic multiplicities.
 We set
  $$
   p(\nu,\mu)
  = \max\{p(\nu,\mu,\alpha) \mid \alpha \vdash mn	\} .
  $$
Given   a total ordering, $>$, on partitions we let
 $$\maxp_{>}(\nu,\mu)$$
denote the  unique  partition $\lambda$  such that $p(\nu,\mu,\lambda)\neq 0$ and  $p(\nu,\mu,\alpha)=0$ for all $\alpha >  \lambda$.

\begin{thm}[\cite{BPW}]
\label{pppppppppp}
\color{black}Let $\mu $,  $\nu $ be partitions of $m$ and $n$ respectively.
The  maximal term of $s_\nu \circ s_\mu$ in the lexicographic order
 is  labelled by the partition
  $$
\maxp_{\succ } (\nu,\mu)  =
(n\mu_1,n\mu_2,\dots, n\mu_{\ell(\mu) -1},n\mu_{\ell(\mu)}-n+\nu_1, \nu_2, \dots ,\nu_{\ell(\nu)}) .
 $$
  Moreover,  the corresponding coefficient is equal to  1.
\end{thm}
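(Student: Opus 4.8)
The plan is to reduce the statement to locating the lexicographically largest monomial of the symmetric function $s_\nu\circ s_\mu$, and then to find that monomial via the semistandard tableau model for plethysm. The reduction rests on the following general observation: if $f=\sum_\lambda c_\lambda s_\lambda$ is a Schur‑positive symmetric function and $x^\beta$ is the lexicographically largest monomial occurring in $f$, then $\beta$ is a partition, $\beta=\maxp_\succ$ of $f$, and the coefficient of $x^\beta$ in $f$ equals $c_\beta$. This is because the lexicographically largest monomial of $s_\lambda$ is $x^\lambda$ (occurring with coefficient $1$), while every monomial $x^\gamma$ of $s_\lambda$ satisfies $\gamma\preceq\gamma^{\downarrow}\trianglelefteq\lambda$, where $\gamma^{\downarrow}$ denotes $\gamma$ rearranged into weakly decreasing order and we use that dominance refines to lexicographic order (\cref{subsec:partitions}); positivity of the $c_\lambda$ then rules out cancellation of the leading monomial. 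Since $\maxp_\succ(\nu,\mu)$ as displayed is visibly a partition of $mn$ (using $\mu_{\ell(\mu)-1}\geq\mu_{\ell(\mu)}$ and $n\geq\nu_1\geq\nu_2$), it remains only to show that the lexicographically largest monomial of $s_\nu\circ s_\mu$ is $x^{\maxp_\succ(\nu,\mu)}$ and occurs there with coefficient $1$.

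To that end I would expand the plethysm combinatorially. Writing $s_\mu=\sum_{i\geq 1}m_i$ as a formal sum of monomials with multiplicity, one has $s_\nu\circ s_\mu=\sum_{U\in\SStd_\NN(\nu)}\prod_{b\in[\nu]}m_{U(b)}$, the entries of $U$ indexing the list $m_1,m_2,\dots$. The combinatorial input I need concerns the lexicographically largest monomials of $s_\mu$. Put $\ell=\ell(\mu)$. A monomial $x^\gamma$ occurs in $s_\mu$ exactly when $\gamma^{\downarrow}\trianglelefteq\mu$, and from this one checks that the monomials with $\gamma_i=\mu_i$ for all $i<\ell$ — equivalently, $x_1^{\mu_1}\cdots x_{\ell-1}^{\mu_{\ell-1}}$ times a degree‑$\mu_\ell$ monomial in $x_\ell,x_{\ell+1},\dots$ — are precisely the lexicographically largest monomials of $s_\mu$, and each occurs with multiplicity $1$, since the unique tableau of that weight and shape $\mu$ has rows $1,\dots,\ell-1$ forced constant and hence row $\ell$ forced too. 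Ordering these by their degree‑$\mu_\ell$ tails shows that, for any fixed $K$, the top $K$ monomials of $s_\mu$ are well defined, form a strictly decreasing chain in lexicographic order lying above every other monomial of $s_\mu$, and are
\[
m_1=x_1^{\mu_1}\cdots x_\ell^{\mu_\ell},\qquad m_r=x_1^{\mu_1}\cdots x_{\ell-1}^{\mu_{\ell-1}}\,x_\ell^{\mu_\ell-1}\,x_{\ell+r-1}\quad(2\leq r\leq K),
\]
with the evident reading when $\mu_\ell=1$ or $\ell=1$.

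Now take $K=\ell(\nu)$ and fix the enumeration so that $m_1,\dots,m_K$ are these top monomials (the remaining monomials listed in any order). Let $U_0\in\SStd_\NN(\nu)$ be the entrywise‑smallest tableau, with row $r$ constant equal to $r$; then $\prod_b m_{U_0(b)}=m_1^{\nu_1}m_2^{\nu_2}\cdots m_K^{\nu_K}$, and multiplying out this monomial produces exactly $x^{\maxp_\succ(\nu,\mu)}$ (the exponent of $x_i$ comes out to $n\mu_i$ for $i<\ell$, to $n\mu_\ell-n+\nu_1$ for $i=\ell$, and to $\nu_{\,i-\ell+1}$ for $\ell<i\leq\ell+\ell(\nu)-1$). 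For an arbitrary $U\in\SStd_\NN(\nu)$ we have $U(b)\geq U_0(b)$ at every box $b$, and since $m_1\succ m_2\succ\cdots\succ m_K$ all exceed the remaining $m_i$ in lexicographic order, this forces $m_{U(b)}\preceq m_{U_0(b)}$, with equality only when $U(b)=U_0(b)$. Because lexicographic order on monomials is translation invariant, a product of monomials that are termwise lexicographically $\preceq$ the factors of a reference product is itself $\preceq$ that product, strictly so unless every factor agrees; hence $\prod_b m_{U(b)}\preceq x^{\maxp_\succ(\nu,\mu)}$, with equality if and only if $U=U_0$. Therefore $x^{\maxp_\succ(\nu,\mu)}$ is the lexicographically largest monomial of $s_\nu\circ s_\mu$, and it occurs there with coefficient $1$; combined with the first paragraph, this proves the theorem.

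The one step that I expect to require genuine care is the determination of the top $\ell(\nu)$ monomials of $s_\mu$ and their multiplicities. The subtlety is that the lexicographic order on monomials in infinitely many variables is not a well‑order, so one cannot simply ``list all monomials of $s_\mu$ in decreasing order''; one must instead argue that for each fixed finite $K$ the first $K$ monomials are unambiguous and of the stated form — which is all that is needed, since only the monomials $m_1,\dots,m_{\ell(\nu)}$ enter the comparison.
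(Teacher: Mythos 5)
Your argument is correct: the reduction via Schur-positivity together with the triangularity $\gamma \preceq \gamma^{\downarrow} \trianglelefteq \lambda$ is sound, the top $\ell(\nu)$ monomials of $s_\mu$ (and the fact that each has multiplicity one, so equality of monomials forces equality of indices) are correctly established, and the termwise lexicographic comparison against the superstandard filling $U_0$ then yields both the maximality of $x^{\maxp_\succ(\nu,\mu)}$ and the coefficient $1$. The paper itself gives no proof of this statement — it is imported from \cite{BPW} — and your route is essentially the standard one underlying that source and the paper's own expansion \cref{plethysm1}: expand the plethysm over semistandard fillings of $[\nu]$ by the monomials of $s_\mu$ and show that the greatest weight is attained by a unique filling.
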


We  recall the role  conjugation
 plays   in plethysm (see, for example, \cite[Ex. 1, Chapter I.8]{MR3443860}). For $\mu\vdash m$, $\nu \vdash n$, and $\alpha\vdash mn$  we have that
\begin{equation}\label{conjugate}
p( \nu,\mu,\alpha)=
\begin{cases}
p( \nu,\mu^T,\alpha^T)				&\text{if $m$ is even}\\
p( \nu^T,\mu^T,\alpha^T)				&\text{if $m$ is odd.}
\end{cases}
\end{equation}
  In order to keep track of the effect of this conjugation
  we set
\begin{equation}  \nu^M=\begin{cases}
\nu 		&\text{if $m$ is even}\\
\nu^T &\text{if $m$ is odd.}
\end{cases}
\end{equation}
In particular, we note that
\begin{equation}\label{max-conjugate}
p( \nu,\mu)= p(\nu^M,\mu^T)=
\begin{cases}
p( \nu,\mu^T)				&\text{if $m$ is even}\\
p( \nu^T,\mu^T)				&\text{if $m$ is odd.}
\end{cases}
\end{equation}

 \begin{thm}[\cite{BPW}]\label{PW1}
 For $r\in \NN$ such that $r\geq w(\mu)$,   we have
 $$
p(\nu,(r) \sqcup \mu , (nr) \sqcup \la ) = p(\nu,  \mu , \la ) .
$$
 \end{thm}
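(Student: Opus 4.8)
The plan is to exploit the stability behaviour of plethysm coefficients under the operation of prepending a long first row, which is controlled by the maximal term computation in \cref{pppppppppp} together with Stembridge's classification-free machinery of semistandard tableaux. First I would set $\la'=(nr)\cup\la$ and $\mu'=(r)\cup\mu$, and observe that since $r\geq w(\mu)$, the partition $\mu'$ genuinely has $(r)$ as its first part, and likewise $nr\geq n\mu_1\geq nw(\mu_{>1})$ ensures $(nr)\cup\la$ has $nr$ as a first part that dominates everything below it (using $|\la|=|\nu|$ here, which bounds the parts of $\la$). The key structural input is that $s_{(r)\cup\mu}=s_{(r)}\boxtimes s_\mu$ plus lower terms in dominance, or more precisely one works with the Jacobi--Trudi style expansion; but the cleanest route is via the branching/reduction already packaged in \cref{pppppppppp}.

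The main step is to show that, for $\al\vdash mn$ and $r\geq w(\mu)$,
$$
p(\nu,(r)\cup\mu,(nr)\cup\al)=p(\nu,\mu,\al),
$$
by a weight-preserving bijection on the relevant plethystic tableaux. On the left, a plethystic tableau filling $(nr)\cup\al$ with $\mu'$-shaped entries decomposes, because the first row of $(nr)\cup\mu$ has length $r$ strictly greater than $w(\mu)=\mu_1$ (when $r>\mu_1$; the boundary case $r=\mu_1$ needs the extra room in $\la$, see below): the top horizontal strip of $nr$ boxes in $(nr)\cup\al$ must be filled by the $n$ copies of the letter indexing the "first-row-of-$\mu'$" content, forced by semistandardness since that content is strictly smallest and $nr$ is large. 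Removing that strip and the corresponding first row of each $\mu'$-tile yields exactly a plethystic tableau of shape $\al$ with $\mu$-tiles, and this map is reversible. Counting gives the displayed equality; taking the maximum over $\al$ (and noting the correspondence $(nr)\cup\al\leftrightarrow\al$ is a bijection on the supports, again by \cref{pppppppppp} identifying the top of the support) then yields $p(\nu,(r)\cup\mu,(nr)\cup\la)=p(\nu,\mu,\la)$ for the specific $\la$ in the statement.

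The hard part will be making the forcing argument rigorous in the boundary regime, i.e.\ when $r=w(\mu)$ rather than $r>w(\mu)$: there the first row of $\mu'$ is only weakly longer than $\mu_1$, so the "smallest content is unique" reasoning must instead use that the repeated letter still occupies a full horizontal strip of length $nr$ at the top of $(nr)\cup\la$, which is guaranteed precisely because $nr > n\mu_1 \ge $ (the length of any row of $\la$ below the first) — this is where the hypothesis $r\geq w(\mu)$ and the shape of $\la$ in the statement are both used, and one must check no semistandard filling can "spill" one of those $n$ letters into row $2$. An alternative, possibly cleaner, route avoiding tableau gymnastics is to combine \eqref{conjugate}--\eqref{max-conjugate} with the known outer-product stability $s_{(r)\cup\mu}$, reducing the claim to a statement about $p(\nu,\mu,\al)$ after conjugation; but this introduces case analysis on the parity of $m$, so I expect the direct bijective proof to be the one to write out, with the boundary forcing lemma isolated as the crux.
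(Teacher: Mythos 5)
The paper itself contains no proof of this statement: it is quoted verbatim from \cite{BPW}. So your proposal can only be measured against what a complete argument needs, and as written it has the right core idea but two genuine problems.

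First, the forcing step is described incoherently: $(nr)\cup\la$ is a \emph{weight}, not a shape, so there is no ``top horizontal strip of $nr$ boxes in $(nr)\cup\al$'' to be filled. The correct statement, inside a plethystic tableau of shape $((r)\cup\mu)^\nu$ and weight $(nr)\cup\al$, is this: entries equal to $1$ can only occur in the first row of each of the $n$ tiles, each such row has exactly $r$ boxes, and the weight prescribes $nr$ ones in total, so every tile has its first row consisting entirely of ones. Deleting these rows and decrementing all remaining entries gives a bijection with $\PStd(\mu^\nu,\al)$; the hypothesis $r\geq w(\mu)$ enters only to make the inverse map (prepend a row of $r$ ones and increment) land in tableaux of the partition shape $(r)\cup\mu$, and one checks that the total order $\prec$ on tiles is unaffected because the deleted rows are identical in every tile. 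In particular the ``boundary case $r=w(\mu)$'' that you single out as the crux is no harder than the generic case: the counting argument is uniform in $r\geq\mu_1$, so you have misidentified where the real work lies. (Also, $|\la|=mn$, not $|\nu|$; the bound $\la_1\leq n\mu_1$ comes from \cref{pppppppppp}, not from the size of $\la$.)

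Second, and more seriously, this bijection only identifies \emph{monomial} coefficients: by \eqref{plethysm1} it gives $|\PStd(((r)\cup\mu)^\nu,(nr)\cup\al)|=|\PStd(\mu^\nu,\al)|$, which by \cref{PW} equals the plethysm coefficient only at dominance-maximal weights. Your ``counting gives the displayed equality'' together with a vague ``bijection on the supports'' skips the step that actually proves the theorem. What is needed is a downward induction along the dominance order using \eqref{dvir}: by \cref{pppppppppp} every constituent (and every weight) of $s_\nu\circ s_{(r)\cup\mu}$ has first part at most $nr$, so any partition $\beta\trianglerighteq (nr)\cup\la$ occurring there is of the form $(nr)\cup\beta'$ with $\beta'\trianglerighteq\la$; combined with the elementary identity $|\SStd((nr)\cup\beta',(nr)\cup\la)|=|\SStd(\beta',\la)|$ (again the $nr$ ones fill the first row), the recursion \eqref{dvir} for $p(\nu,(r)\cup\mu,(nr)\cup\la)$ becomes verbatim the recursion for $p(\nu,\mu,\la)$, which proves the claim for all $\la$ simultaneously. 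With the bijection stated correctly and this triangularity step supplied, your outline becomes a complete and natural proof; the conjugation detour you sketch as an alternative is unnecessary.
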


  \begin{thm}[\cite{BPW}]\label{PW2}
 For any  $r\in \NN$,
 $$
p(\nu, (1^{r}) + \mu , (n^{r}) +\la )
 \geq
p(\nu, \mu , \la )
 $$
and so by repeated applications of this we obtain
 $$
p(\nu, \al + \mu , n \al +\la )
 \geq
p(\nu, \mu , \la ) .
 $$
 \end{thm}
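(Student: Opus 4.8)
The plan is to argue representation-theoretically. Fix $N$ large and recall the dictionary between symmetric functions and polynomial $GL_N(\CC)$-modules: writing $\mathbb S_\mu$ for the Schur functor, $p(\nu,\mu,\la)$ is the multiplicity of the irreducible $\mathbb S_\la(\CC^N)$ in the plethysm module $\mathbb S_\nu(\mathbb S_\mu(\CC^N))$. It will be convenient to detect this multiplicity not on all of $\mathbb S_\nu(\mathbb S_\mu(\CC^N))$ but on the invariants of the nilradical of a suitable parabolic, so that the inequality can be read off from a containment of invariant subspaces; the whole point is that the operation $\mu\mapsto(1^r)+\mu$ becomes, inside the corresponding Levi, a twist by a determinant character.

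Concretely, fix $r$, assume $N\geq r$, and let $\mathfrak n\subset\mathfrak{gl}_N$ be the nilradical of the parabolic with Levi $L=GL_r(\CC)\times GL_{N-r}(\CC)$, spanned by the root vectors $E_{ij}$ with $i\leq r<j$. For a polynomial $GL_N$-module $M$, the space $M^{\mathfrak n}$ is an $L$-submodule on which $\mathfrak n$ acts by zero, and a standard highest-weight argument identifies $\mathbb S_\gamma(\CC^N)^{\mathfrak n}=\mathbb S_{\gamma'}(\CC^r)\boxtimes\mathbb S_{\gamma''}(\CC^{N-r})$, where $\boxtimes$ is the external tensor product of $L$-modules, $\gamma'=(\gamma_1,\dots,\gamma_r)$, and $\gamma''=(\gamma_{r+1},\gamma_{r+2},\dots)$. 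Since $((1^r)+\mu)'=(1^r)+\mu'$ is obtained from $\mu'$ by adjoining a full column of height $r=\dim\CC^r$, we have $\mathbb S_{((1^r)+\mu)'}(\CC^r)=\mathbb S_{\mu'}(\CC^r)\otimes{\det}_{\CC^r}$, where ${\det}_{\CC^r}$ is the determinant character of $GL_r(\CC)$, and hence
\[
\mathbb S_{(1^r)+\mu}(\CC^N)^{\mathfrak n}=\mathbb S_\mu(\CC^N)^{\mathfrak n}\otimes\bigl({\det}_{\CC^r}\boxtimes\mathbf 1\bigr).
\]
Because $\mathbb S_\nu$ is a functor taking split injections to split injections and $\mathfrak n$ annihilates $\mathbb S_\nu(M^{\mathfrak n})$ by the Leibniz rule, one has $\mathbb S_\nu(M^{\mathfrak n})\subseteq\mathbb S_\nu(M)^{\mathfrak n}$; and $\mathbb S_\nu$ sends a one-dimensional twist to that twist raised to the power $|\nu|=n$, so that ${\det}_{\CC^r}$ becomes ${\det}_{\CC^r}^{\otimes n}=\mathbb S_{(n^r)}(\CC^r)$. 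Combining these with the displayed identity yields
\[
\mathbb S_\nu\bigl(\mathbb S_\mu(\CC^N)^{\mathfrak n}\bigr)\otimes\bigl(\mathbb S_{(n^r)}(\CC^r)\boxtimes\mathbf 1\bigr)\ \subseteq\ \mathbb S_\nu\bigl(\mathbb S_{(1^r)+\mu}(\CC^N)\bigr)^{\mathfrak n}.
\]

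Now I would compare multiplicities of $L$-summands. The right-hand side decomposes as $\bigoplus_\gamma p(\nu,(1^r)+\mu,\gamma)\,\mathbb S_{\gamma'}(\CC^r)\boxtimes\mathbb S_{\gamma''}(\CC^{N-r})$, in which the summand $\mathbb S_{\la'+(n^r)}(\CC^r)\boxtimes\mathbb S_{\la''}(\CC^{N-r})$ occurs with multiplicity exactly $p(\nu,(1^r)+\mu,(n^r)+\la)$, since $(n^r)+\la$ is the unique partition $\gamma$ with $\gamma'=\la'+(n^r)$ and $\gamma''=\la''$. Undoing the determinant twist, the same summand occurs in the left-hand side with the multiplicity of $\mathbb S_{\la'}(\CC^r)\boxtimes\mathbb S_{\la''}(\CC^{N-r})$ in $\mathbb S_\nu(\mathbb S_\mu(\CC^N)^{\mathfrak n})$. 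When $r\geq\ell(\mu)$ this last module is $\mathbb S_\nu(\mathbb S_\mu(\CC^r))\boxtimes\mathbf 1$, and when moreover $\ell(\la)\leq r$ (so $\la''=\emptyset$ and $\la'=\la$) the multiplicity in question is that of $\mathbb S_\la(\CC^r)$ in $\mathbb S_\nu(\mathbb S_\mu(\CC^r))=\bigoplus_{\ell(\sigma)\leq r}p(\nu,\mu,\sigma)\,\mathbb S_\sigma(\CC^r)$, namely $p(\nu,\mu,\la)$. This proves $p(\nu,(1^r)+\mu,(n^r)+\la)\geq p(\nu,\mu,\la)$ for every $r\geq\max(\ell(\mu),\ell(\la))$.

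The hard part is removing the restriction $r\geq\max(\ell(\mu),\ell(\la))$, which is genuine: the determinant trick forces the $GL_r$-factor to have dimension exactly $r$, and for smaller $r$ the Levi-invariants $\mathbb S_\nu(\mathbb S_\mu(\CC^N)^{\mathfrak n})$ stop recording $p(\nu,\mu,\la)$. For general $r$ I would either feed the large-$r$ case into \eqref{conjugate} and \cref{PW1} — conjugating converts ``add a short column to $\mu$'' into ``insert a non-maximal part into $\mu^T$'', so one needs an inequality analogue of \cref{PW1} for insertion of a non-maximal part — or else run an explicit content-shifting injection on the combinatorics of plethystic tableaux, the subtlety being that $p(\nu,\mu,\la)$ is an alternating (Jacobi--Trudi) sum of tableau counts rather than a single count, so the injection must respect that cancellation. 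Finally, the iterated bound $p(\nu,\al+\mu,n\al+\la)\geq p(\nu,\mu,\la)$ follows by writing $\al=(1^{r_1})+(1^{r_2})+\cdots$ with $\al^T=(r_1,r_2,\dots)$ and applying the one-column inequality repeatedly, both partitions only growing at each step.
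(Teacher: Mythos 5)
Your parabolic-invariants argument is sound as far as it goes, but it only establishes the inequality when $r\geq\max(\ell(\mu),\ell(\la))$, and that regime is not the content of the theorem. Indeed, when $r\geq\ell(\la)$ one even has \emph{equality}, by a three-line specialisation argument: in $r$ variables $s_{\mu+(1^r)}(x_1,\dots,x_r)=(x_1\cdots x_r)\,s_\mu(x_1,\dots,x_r)$, hence $(s_\nu\circ s_{\mu+(1^r)})(x_1,\dots,x_r)=(x_1\cdots x_r)^n\,(s_\nu\circ s_\mu)(x_1,\dots,x_r)$, and multiplication by $(x_1\cdots x_r)^n$ is a bijection on Schur polynomials in $r$ variables, so $p(\nu,\mu+(1^r),\la+(n^r))=p(\nu,\mu,\la)$ whenever $\ell(\la)\leq r$ (and the hypothesis $r\geq\ell(\mu)$ is automatic when the right-hand side is nonzero, since $\mathbb{S}_\mu(\CC^{\ell(\la)})=0$ would force $p(\nu,\mu,\la)=0$). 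So your determinant-twist machinery reproves, with more effort, the easy half of the statement. The theorem is asserted for \emph{any} $r\in\NN$, and it is precisely the case $r<\ell(\la)$ — where no determinant twist of the $GL_r$-factor is available and the inequality can be strict — that carries the content; it is also the case the paper actually needs, e.g.\ the deduction of \cref{cor:semigroup}(2) for an arbitrary partition $\al$ requires the one-column inequality for every column length of $\al$, and your iteration argument inherits the same restriction.

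You flag this yourself, but the two escape routes you sketch do not close the gap. Conjugating via \cref{conjugate} turns ``add a column of height $r$ to $\mu$ and $(n^r)$ to $\la$'' into ``insert a part $r$ into $\mu^T$ and $n$ parts equal to $r$ into $\la^T$''; the ``inequality analogue of \cref{PW1} for insertion of a non-maximal part'' that you would need is therefore not a variant of \cref{PW1} (which inserts a single maximal row into $\mu$ and a single row $(nr)$ into $\la$) but is literally the conjugate form of the statement being proved — the suggestion is circular. The second route (a content-shifting injection compatible with the signed Jacobi--Trudi cancellation, or equivalently an injection on highest-weight vectors rather than on weight spaces) is the genuinely hard step — it is what \cite{BPW} actually construct — and you give no construction. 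Note also that this paper does not prove \cref{PW2} internally; it imports it from \cite{BPW}, so your proposal would need to supply the small-$r$ argument in full to stand on its own.
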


The following theorem appears explicitly (in the form stated below) in
    {\cite[Proposition 3.6 (R2)]{plethysmstab1}} where it is attributed to earlier  work of Brion  {\cite[Corollary 1, Section 2.6]{brion}}.

\begin{thm}[{\cite{brion}  and \cite{plethysmstab1}}] \label{Brion}
 We have that
 $$\langle s_{\nu+(1)} \circ s_{  \mu }\mid  s_{\la + \mu }\rangle
 \geq
 \langle s_\nu \circ s_{  \mu }\mid  s_{\la}\rangle ,
 $$
 and so by repeated application we obtain
  $$p(\nu +(r) , \mu , \la + r \mu )
 \geq
 p(\nu , \mu , \la ) .
 $$
\end{thm}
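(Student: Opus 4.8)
The plan is to prove the single-step inequality $\langle s_{\nu+(1)}\circ s_\mu\mid s_{\la+\mu}\rangle\geq\langle s_\nu\circ s_\mu\mid s_\la\rangle$ and then iterate; the multi-step form $p(\nu+(r),\mu,\la+r\mu)\geq p(\nu,\mu,\la)$ follows by applying the single step $r$ times, each time replacing $\nu$ by $\nu+(1^i)\cdot$(appropriate shift) and $\la$ by $\la+i\mu$, since $(\nu+(r))$ is reached from $\nu$ by adding one box to the first row $r$ times and $(\la+r\mu)$ is reached from $\la$ by adding a copy of $\mu$ in the first $\ell(\mu)$ rows $r$ times. So the whole content is in the base case, and I would reduce that to exhibiting an injection of combinatorial objects.

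First I would fix the representation-theoretic reading of the coefficients: $p(\nu,\mu,\la)=\dim\Hom_{\mathfrak S_{mn}}(S^\la,\, S^\nu\circ S^\mu)$, where $S^\nu\circ S^\mu$ denotes the plethysm of Specht/irreducible modules — equivalently, by Frobenius reciprocity and the wreath product construction, $p(\nu,\mu,\la)$ counts (the dimension of) the $\la$-isotypic multiplicity inside the induced module $\big((S^\mu)^{\otimes n}\otimes \widetilde{S^\nu}\big)\big\uparrow$ from $\mathfrak S_m\wr\mathfrak S_n$ to $\mathfrak S_{mn}$. The box being added to the first row of $\nu$ is an ``extra point'' in the sense of Brion's degeneration/stabilisation argument: adding $(1)$ to $\nu$ corresponds to tensoring the outer Specht module $S^\nu$ with the sign-free one-box enlargement, and on the image side this has the effect of increasing $mn$ by one and the target shape by exactly one box in a controlled way. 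I would make precise that the natural place for that box to land is at the end of row $1$ of $\la$, which is why the target shape is $\la+(1)$ at the level of a single extra point, and iterating over the $n$ copies of $\mu$... — actually here it is cleaner to follow the cited route.

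Since the statement is explicitly attributed to Brion \cite{brion} and re-derived in \cite[Proposition~3.6(R2)]{plethysmstab1}, the honest and expected approach is: recall that $\langle s_{\nu}\circ s_\mu\mid s_\la\rangle$ is the dimension of a multiplicity space that, as $\nu$ grows by boxes in its first row (and $\la$ grows correspondingly by copies of $\mu$), forms a \emph{monotone} family — this is exactly the ``$R2$'' stability relation. I would therefore: (1) state the isomorphism, in the wreath-product model, between the multiplicity space computing $p(\nu,\mu,\la)$ and the space of highest-weight vectors of weight $\la$ in a suitable $GL$-module; (2) observe that increasing $\nu_1$ by one embeds this module into the corresponding module for $(\nu+(1),\mu,\la+\mu)$ via the polynomial-functor/Schur–Weyl correspondence (adding a box to the first row of $\nu$ is the stabilisation direction in which multiplicities of plethysm can only increase, analogous to Theorem~\ref{PW2} but in the ``outer'' variable); and (3) conclude that the induced map on highest-weight spaces is injective, giving the inequality. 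The main obstacle is step~(2): making the embedding canonical — i.e. writing down an explicit, natural injective linear map (or an injection of a basis of plethystic/Littlewood--Richardson tableaux, using the combinatorial model of \cref{plethysm3} and the Littlewood--Richardson rule for $s_\la\,\boxtimes\,s_\mu$) rather than merely a dimension inequality — and checking it is well-defined independent of choices. Everything else (the iteration to the $r$-fold form, the reduction to the base case, and the bookkeeping of which rows the copies of $\mu$ occupy) is routine.
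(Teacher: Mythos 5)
The paper does not prove this statement at all: it is imported verbatim, with the text explicitly attributing it to \cite[Corollary 1, Section 2.6]{brion} and \cite[Proposition 3.6 (R2)]{plethysmstab1}. So the only "paper approach" is citation, and judged against that your closing remark (follow the cited route) is fine. Judged as a standalone proof, however, your proposal has a genuine gap: the single-step inequality $\langle s_{\nu+(1)}\circ s_\mu\mid s_{\la+\mu}\rangle\geq\langle s_\nu\circ s_\mu\mid s_\la\rangle$ \emph{is} the whole theorem, and you never establish it. Your step (2) --- that increasing $\nu_1$ by one "embeds" the multiplicity space for $(\nu,\mu,\la)$ into that for $(\nu+(1),\mu,\la+\mu)$ --- is precisely the assertion to be proved, and you yourself flag the construction of the injective map as "the main obstacle" without carrying it out; dimension-counting language ("monotone family", "stabilisation direction") is not a proof. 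The iteration to $p(\nu+(r),\mu,\la+r\mu)\geq p(\nu,\mu,\la)$ is indeed routine, as you say, but it carries none of the content.

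There is also a concrete error in your intermediate paragraph: adding $(1)$ to $\nu$ raises the outer degree from $n$ to $n+1$, hence the total degree by $m$ (a whole extra tensor factor $S^\mu$ in the wreath-product model), not "by one", and the target partition becomes $\la+\mu$, not $\la+(1)$; the picture of a single extra box landing at the end of row $1$ of $\la$ is wrong as stated (you noticed and abandoned it, but it suggests the mechanism was not in hand). For the record, the standard argument behind the cited result runs as follows: set $W=S^\mu(V)$ and let $v_\mu\in W$ be the $GL(V)$-highest-weight vector, of weight $\mu$. If $u\in S^\nu(W)$ is a $GL(V)$-highest-weight vector of weight $\la$, then $u\otimes v_\mu$ is a highest-weight vector of weight $\la+\mu$ in $S^\nu(W)\otimes W$, and one composes with the $GL(W)$-equivariant (Pieri) projection $S^\nu(W)\otimes W\to S^{\nu+(1)}(W)$; the genuine work --- done by Brion via the Cartan-component/geometric argument --- is showing this composite is injective on highest-weight vectors, i.e. that the projection does not kill $u\otimes v_\mu$. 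Either supply that argument (or an explicit injection of the relevant tableaux), or simply cite \cite{brion} and \cite{plethysmstab1} as the paper does; presenting the sketch in between as a proof leaves the theorem unproved.
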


We collect together the information on the numbers $p(\nu,\mu)$ obtained from the results above.

\begin{cor}\label{cor:semigroup}
Let $r\in \NN$ and $\al$ be a partition.
Then we have:
\begin{enumerate}
\item
$p(\nu, (r) \sqcup \mu) \geq p(\nu,\mu)$  if $r\geq w(\mu)$.
\item
$p(\nu,  \al + \mu) \geq p(\nu,\mu)$.
\item
$p(\nu + (r),  \mu) \geq p(\nu,\mu)$.
\item
$p(\nu,\mu \sqcup (1)) \geq p(\nu^T,\mu)$.
\end{enumerate}
\end{cor}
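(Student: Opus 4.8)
The plan is to assemble \Cref{cor:semigroup} directly from the four semigroup-type results immediately preceding it, using the definition $p(\nu,\mu)=\max_\alpha p(\nu,\mu,\alpha)$ together with the conjugation identity \eqref{max-conjugate}. For each part, the strategy is the same: pick a partition $\alpha$ attaining the maximum on the right-hand side, apply one of \Cref{PW1}, \Cref{PW2}, \Cref{Brion}, or \eqref{conjugate} to produce a partition of the larger size whose coefficient is at least $p(\nu,\mu,\alpha)=p(\nu,\mu)$, and then observe that the left-hand maximum $p(\cdot,\cdot)$ is at least this particular coefficient.

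In detail: for $(1)$, choose $\la$ with $p(\nu,\mu,\la)=p(\nu,\mu)$; since $r\geq w(\mu)$, \Cref{PW1} gives $p(\nu,(r)\cup\mu,(nr)\cup\la)=p(\nu,\mu,\la)=p(\nu,\mu)$, and since $(nr)\cup\la$ is a partition of $mn+nr$ we get $p(\nu,(r)\cup\mu)\geq p(\nu,\mu)$. For $(2)$, with the same choice of $\la$, the iterated form of \Cref{PW2} yields $p(\nu,\al+\mu,n\al+\la)\geq p(\nu,\mu,\la)=p(\nu,\mu)$, hence $p(\nu,\al+\mu)\geq p(\nu,\mu)$. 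For $(3)$, again with $p(\nu,\mu,\la)=p(\nu,\mu)$, the iterated form of \Cref{Brion} gives $p(\nu+(r),\mu,\la+r\mu)\geq p(\nu,\mu,\la)=p(\nu,\mu)$, so $p(\nu+(r),\mu)\geq p(\nu,\mu)$. For $(4)$, first note $\mu\cup 1=(\mu^T+(1))^T$, or more simply write $m'=|\mu\cup 1|=m+1$; one has to be slightly careful here because whether a transpose is needed depends on the parity of the size, and $\mu$ and $\mu\cup 1$ have opposite parities. The cleanest route is: apply \eqref{max-conjugate} to $\mu\cup 1$ to reduce $p(\nu,\mu\cup 1)$ to a statement about $p(\nu^{M'},(\mu\cup 1)^T)=p(\nu^{M'},\mu^T+(1))$ where $M'$ records the parity of $m+1$; then apply part $(2)$ (adding the column $(1)$, i.e.\ $\al=(1)$) to bound this below by $p(\nu^{M'},\mu^T)$; and finally translate back via \eqref{max-conjugate} for $\mu$ itself. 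Tracking the parities shows $\nu^{M'}$ versus $\nu^{M}$ differ by exactly the transpose needed to land on $p(\nu^T,\mu)$ as claimed.

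I expect part $(4)$ to be the only place requiring genuine care: the bookkeeping of which partitions get conjugated, driven by the two conjugation formulas \eqref{conjugate} and \eqref{max-conjugate} whose behaviour flips with the parity of $m$, is exactly the kind of step where an off-by-one-in-parity slip is easy. Parts $(1)$--$(3)$ are immediate and essentially notational once the maximising $\la$ is fixed. An alternative, perhaps more transparent, way to handle $(4)$ is to use the identity $\la\sqcup\mu=(\la^T+\mu^T)^T$ at the level of $\SStd$-combinatorics together with \eqref{max-conjugate} applied once to each side, but the parity tracking is the same underlying obstacle. Either way, no new input beyond the stated theorems is needed, so the whole corollary is a short deduction.
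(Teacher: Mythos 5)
Your proposal is correct and follows essentially the same route as the paper: parts $(1)$--$(3)$ are read off from \cref{PW1}, \cref{PW2} and \cref{Brion} by evaluating at a maximising weight, and part $(4)$ is exactly the paper's argument, which applies \eqref{max-conjugate} to $\mu\cup(1)$, uses part $(2)$ with $\al=(1)$, and applies \eqref{max-conjugate} again to $\mu^T$, split into the two parity cases of $m$. Your parity bookkeeping checks out in both cases, so there is nothing to add.
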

\begin{proof}
We only add an argument for the last property which is useful when
the set of partitions $\nu$ under consideration is closed under conjugation.
If $m=|\mu|$ is even, then
$p(\nu,\mu \sqcup (1)) = p(\nu^T,\mu^T+(1)) \geq p(\nu^T,\mu^T) = p(\nu^T,\mu)$.
Similarly, if $m=|\mu|$ is odd, then
$p(\nu,\mu \sqcup (1)) = p(\nu,\mu^T+(1)) \geq p(\nu,\mu^T) = p(\nu^T,\mu)$.
\end{proof}

The properties above imply the following.
\begin{cor}\label{cor:2-important}
Let $\mathcal{N}$ be a set of partitions that is closed under conjugation and
such that $p(\nu, (2)) \geq 2$ for all $\nu \in \mathcal{N}$.
Then for $m>1$ and any $\mu \vdash m$ we have
$p(\nu, \mu) \geq 2$.
\end{cor}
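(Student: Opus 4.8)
The plan is to prove the statement by induction on $m=|\mu|$, using the monotonicity properties collected in \cref{cor:semigroup} to ``peel'' an arbitrary $\mu\vdash m$ down to the single partition $(2)$, for which the hypothesis $p(\nu,(2))\geq 2$ is available. Throughout, the conjugation identity \eqref{max-conjugate} will be used to pass between $\mu$ and $\mu^T$ at the cost of replacing $\nu$ by $\nu^M\in\{\nu,\nu^T\}$; since $\mathcal N$ is closed under conjugation, $\nu^M$ still lies in $\mathcal N$, so the hypothesis survives this move. This is the only reason closure under conjugation is needed.

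For the base case $m=2$ one has $\mu\in\{(2),(1^2)\}$, and $p(\nu,(1^2))=p(\nu,(2))\geq 2$ by \eqref{max-conjugate}. For $m\geq 3$ I would distinguish cases by the shape of $\mu$. If $\ell(\mu)=1$, then $\mu=(m)=(m-2)+(2)$ and \cref{cor:semigroup}(2) gives $p(\nu,\mu)\geq p(\nu,(2))\geq 2$; the case $w(\mu)=1$, i.e.\ $\mu=(1^m)$, reduces to this one via \eqref{max-conjugate} applied to $\nu^M\in\mathcal N$. If $\ell(\mu)\geq 2$ and $\mu$ has either a second part of size $\geq 2$ or at least three parts, then $\mu_{>1}$ is a partition of size at least $2$ and at most $m-1$; since $\mu_1\geq \mu_2=w(\mu_{>1})$ we may write $\mu=(\mu_1)\cup\mu_{>1}$, and \cref{cor:semigroup}(1) together with the inductive hypothesis gives $p(\nu,\mu)\geq p(\nu,\mu_{>1})\geq 2$.

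The remaining case is the hook $\mu=(\mu_1,1)$ with $\mu_1\geq 2$, and this is where the argument is slightly delicate: peeling the first row only leaves $(1)$, which is too small to feed back into the induction (indeed $p(\nu,(1))=1$). Instead I would write $\mu=(\mu_1)\cup(1)$ and apply \cref{cor:semigroup}(4) to obtain $p(\nu,(\mu_1,1))\geq p(\nu^T,(\mu_1))$, and then conclude $p(\nu^T,(\mu_1))\geq p(\nu^T,(2))\geq 2$ exactly as in the $\ell(\mu)=1$ case, using $\nu^T\in\mathcal N$. Recognising that this hook case (and, similarly, the base partition $(1^2)$ and the column $(1^m)$) must be routed through property (4) and the conjugation symmetry rather than through the naive row‑peeling is the only genuine subtlety; everything else is a direct bookkeeping of the semigroup inequalities in \cref{cor:semigroup}.
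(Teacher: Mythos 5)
Your proof is correct, and it is essentially the argument the authors intend: the paper states \cref{cor:2-important} as an immediate consequence of \cref{cor:semigroup} (whose part (4) is explicitly flagged as the tool for conjugation-closed families), and your induction simply makes that derivation explicit, including the correct routing of the hook and column cases through \eqref{max-conjugate} and property (4).
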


\begin{proof}
\color{black}
Given $\nu\in \mathcal N$  such that $ p(\nu,(2))\geq 2$, 
we have that
$$2\leq p(\nu,(2))\leq p(\nu,(2)+(m-2))=p(\nu,(m))=p(\nu^M,(1^m))$$
 for all $m\geq 2$ by \cref{cor:semigroup}(2); we remark that $\mathcal{N}$ is closed under conjugation and so $\nu^M\in    \mathcal{N}$ and  the result follows for linear partitions $\mu$ of $m\geq 2$.  
Now assume that $\mu  $ is non-linear and so $\mu_1^T\geq2$ and therefore 
$$2\leq p(\nu,(1^{\mu_1^T}))\leq p(\nu,(1^{\mu_1^T})+(1^{\mu_2^T})+\dots )= p(\nu,\mu)$$
by \cref{cor:semigroup}(2).  The result follows.
\end{proof}

\subsection{Plethystic tableaux}

Sometimes we shall use the dominance ordering~$\rhd $ to compare the summands of $s_{\nu}\circ s_{\mu}$, and then there will, in general, be many  (incomparable) maximal partitions. To understand these summands, we require some further definitions.    We place a lexicographic  ordering, $\prec$, on the set of semistandard Young tableaux as follows.
  Let $\sts\neq\stt$ be semistandard $\mu$-tableaux,   and consider the leftmost column in which $\sts$ and $\stt$ differ.  We write
 $\sts \prec \stt$ if  the greatest entry not appearing in both columns lies in $\stt$.  Following \cite[Definition~1.4]{BPW}, we define a {\sf    plethystic  tableau of shape $  \mu^\nu$}  and weight $\alpha$ to be  a map
 $$\SSTT : [\nu] \to \SStd_\NN(\mu)$$ such that the total  number of occurrences of
$k$ in the tableau entries of $\SSTT$ is $\alpha_k$ for each $k$.
 We say that such a tableau is {\sf semistandard} if  $\SSTT(r,c-1) \preceq  \SSTT(r,c)$   and
$\SSTT(r-1,c)  \prec  \SSTT(r,c)$  for all $(r,c)\in [\nu]$. An example follows in Figure~1.
   We denote the set of all plethystic  tableaux of shape $\mu^\nu$ and weight $\alpha$ by $\PStd(\mu^\nu,
  \alpha)$.
By \cite[Section~3]{BPW} we have that
\begin{equation}\label{plethysm1}
s_\nu\circ s_\mu = \sum_{\alpha} |{\rm PStd}(\mu^\nu,\alpha)|x^\alpha .
\end{equation}
This will be a key tool in what follows.

 \!\!\begin{figure}[ht!]
$$  \begin{tikzpicture} [scale=0.6]

\path(0,0) coordinate (origin);
 \begin{scope}{\draw[very thick](origin)--++(0:2)--++(-90:1)--++(180:1)--++(-90:1)--++(180:1)--++(90:2);

   \draw(0.5,-0.5)  node {1};
       \draw(1.5,-0.5)node {1};
              \draw(0.5,-1.5)node {2};

  \clip (origin)--++(0:2)--++(-90:1)--++(180:1)--++(-90:1)--++(180:1)--++(90:2);

  \path(origin) coordinate (origin);
   \foreach \i in {1,...,19}
  {
    \path (origin)++(0:1*\i cm)  coordinate (a\i);
    \path (origin)++(-90:1*\i cm)  coordinate (b\i);
    \path (a\i)++(-90:10cm) coordinate (ca\i);
    \path (b\i)++(0:10cm) coordinate (cb\i);
    \draw[thin] (a\i) -- (ca\i)  (b\i) -- (cb\i); }

    } \end{scope}

  \path(3,0) coordinate (origin);
 \begin{scope}{\draw[very thick](origin)--++(0:2)--++(-90:1)--++(180:1)--++(-90:1)--++(180:1)--++(90:2);
      \path(origin)--++(0:0.5)--++(-90:0.5)  node {1};
       \path(origin)--++(0:1.5)--++(-90:0.5)node {1};
            \path(origin)--++(0:0.5)--++(90:-1.5)node {3};

  \clip (origin)--++(0:1)--++(-90:1)--++(180:1)--++(-90:1)--++(180:1)--++(90:1);
  \path(origin) coordinate (origin);
   \foreach \i in {1,...,19}
  {
    \path (origin)++(0:1*\i cm)  coordinate (a\i);
    \path (origin)++(-90:1*\i cm)  coordinate (b\i);
    \path (a\i)++(-90:10cm) coordinate (ca\i);
    \path (b\i)++(0:10cm) coordinate (cb\i);
    \draw[thin] (a\i) -- (ca\i)  (b\i) -- (cb\i); }

 \path(0.5,-0.5) coordinate (origin);
 \foreach \i in {1,...,19}
  {
    \path (origin)++(0:1*\i cm)  coordinate (a\i);
    \path (origin)++(-90:1*\i cm)  coordinate (b\i);
    \path (a\i)++(-90:1cm) coordinate (ca\i);
        \path (ca\i)++(-90:1cm) coordinate (cca\i);
    \path (b\i)++(0:1cm) coordinate (cb\i);
    \path (cb\i)++(0:1cm) coordinate (ccb\i);
  }} \end{scope}

    \path(6,0) coordinate (origin);
 \begin{scope}{\draw[very thick](origin)--++(0:2)--++(-90:1)--++(180:1)--++(-90:1)--++(180:1)--++(90:2);
         \path(origin)--++(0:0.5)--++(-90:0.5)  node {1};
       \path(origin)--++(0:1.5)--++(-90:0.5)node {1};
            \path(origin)--++(0:0.5)--++(90:-1.5)node {3};

  \clip (origin)--++(0:1)--++(-90:1)--++(180:1)--++(-90:1)--++(180:1)--++(90:1);
  \path(origin) coordinate (origin);
   \foreach \i in {1,...,19}
  {
    \path (origin)++(0:1*\i cm)  coordinate (a\i);
    \path (origin)++(-90:1*\i cm)  coordinate (b\i);
    \path (a\i)++(-90:10cm) coordinate (ca\i);
    \path (b\i)++(0:10cm) coordinate (cb\i);
    \draw[thin] (a\i) -- (ca\i)  (b\i) -- (cb\i); }

 \path(0.5,-0.5) coordinate (origin);
 \foreach \i in {1,...,19}
  {
    \path (origin)++(0:1*\i cm)  coordinate (a\i);
    \path (origin)++(-90:1*\i cm)  coordinate (b\i);
    \path (a\i)++(-90:1cm) coordinate (ca\i);
        \path (ca\i)++(-90:1cm) coordinate (cca\i);
    \path (b\i)++(0:1cm) coordinate (cb\i);
    \path (cb\i)++(0:1cm) coordinate (ccb\i);
  }} \end{scope}

  \path(0,-3) coordinate (origin);
     \path(origin)--++(0:0.5)--++(-90:0.5)  node {1};
       \path(origin)--++(0:1.5)--++(-90:0.5)node {2};
            \path(origin)--++(0:0.5)--++(90:-1.5)node {3};
 \begin{scope}{\draw[very thick](origin)--++(0:2)--++(-90:1)--++(180:1)--++(-90:1)--++(180:1)--++(90:2);

  \clip (origin)--++(0:1)--++(-90:1)--++(180:1)--++(-90:1)--++(180:1)--++(90:1);
  \path(origin) coordinate (origin);
   \foreach \i in {1,...,19}
  {
    \path (origin)++(0:1*\i cm)  coordinate (a\i);
    \path (origin)++(-90:1*\i cm)  coordinate (b\i);
    \path (a\i)++(-90:10cm) coordinate (ca\i);
    \path (b\i)++(0:10cm) coordinate (cb\i);
    \draw[thin] (a\i) -- (ca\i)  (b\i) -- (cb\i); }

 \path(0.5,-0.5) coordinate (origin);
 \foreach \i in {1,...,19}
  {
    \path (origin)++(0:1*\i cm)  coordinate (a\i);
    \path (origin)++(-90:1*\i cm)  coordinate (b\i);
    \path (a\i)++(-90:1cm) coordinate (ca\i);
        \path (ca\i)++(-90:1cm) coordinate (cca\i);
    \path (b\i)++(0:1cm) coordinate (cb\i);
    \path (cb\i)++(0:1cm) coordinate (ccb\i);
  }} \end{scope}

  \path(3,-3) coordinate (origin);
 \begin{scope}{\draw[very thick](origin)--++(0:2)--++(-90:1)--++(180:1)--++(-90:1)--++(180:1)--++(90:2);
      \path(origin)--++(0:0.5)--++(-90:0.5)  node {1};
       \path(origin)--++(0:1.5)--++(-90:0.5)node {1};
            \path(origin)--++(0:0.5)--++(90:-1.5)node {4};

  \clip (origin)--++(0:1)--++(-90:1)--++(180:1)--++(-90:1)--++(180:1)--++(90:1);
  \path(origin) coordinate (origin);
   \foreach \i in {1,...,19}
  {
    \path (origin)++(0:1*\i cm)  coordinate (a\i);
    \path (origin)++(-90:1*\i cm)  coordinate (b\i);
    \path (a\i)++(-90:10cm) coordinate (ca\i);
    \path (b\i)++(0:10cm) coordinate (cb\i);
    \draw[thin] (a\i) -- (ca\i)  (b\i) -- (cb\i); }

 \path(0.5,-0.5) coordinate (origin);
 \foreach \i in {1,...,19}
  {
    \path (origin)++(0:1*\i cm)  coordinate (a\i);
    \path (origin)++(-90:1*\i cm)  coordinate (b\i);
    \path (a\i)++(-90:1cm) coordinate (ca\i);
        \path (ca\i)++(-90:1cm) coordinate (cca\i);
    \path (b\i)++(0:1cm) coordinate (cb\i);
    \path (cb\i)++(0:1cm) coordinate (ccb\i);
  }} \end{scope}

 \draw[very thick] (-0.5,0.5)--(8.5,0.5)--(8.5,-2.5)--(5.5,-2.5)--(5.5,-5.5)--(-0.5,-5.5)--(-0.5,0.5);
 \draw[very thick] (5.5,-2.5)--(-0.5,-2.5);
  \draw[very thick] (5.5,-2.5)--(5.5,0.5);
    \draw[very thick] (2.5,-5.5)--(2.5,0.5);
   \end{tikzpicture}
   \qquad
    \begin{tikzpicture} [scale=0.6]

\path(0,0) coordinate (origin);
 \begin{scope}{\draw[very thick](origin)--++(0:2)--++(-90:1)--++(180:1)--++(-90:1)--++(180:1)--++(90:2);

   \draw(0.5,-0.5)  node {1};
       \draw(1.5,-0.5)node {1};
              \draw(0.5,-1.5)node {2};

  \clip (origin)--++(0:2)--++(-90:1)--++(180:1)--++(-90:1)--++(180:1)--++(90:2);

  \path(origin) coordinate (origin);
   \foreach \i in {1,...,19}
  {
    \path (origin)++(0:1*\i cm)  coordinate (a\i);
    \path (origin)++(-90:1*\i cm)  coordinate (b\i);
    \path (a\i)++(-90:10cm) coordinate (ca\i);
    \path (b\i)++(0:10cm) coordinate (cb\i);
    \draw[thin] (a\i) -- (ca\i)  (b\i) -- (cb\i); }

    } \end{scope}

  \path(3,0) coordinate (origin);
 \begin{scope}{\draw[very thick](origin)--++(0:2)--++(-90:1)--++(180:1)--++(-90:1)--++(180:1)--++(90:2);
      \path(origin)--++(0:0.5)--++(-90:0.5)  node {1};
       \path(origin)--++(0:1.5)--++(-90:0.5)node {1};
            \path(origin)--++(0:0.5)--++(90:-1.5)node {2};

  \clip (origin)--++(0:1)--++(-90:1)--++(180:1)--++(-90:1)--++(180:1)--++(90:1);
  \path(origin) coordinate (origin);
   \foreach \i in {1,...,19}
  {
    \path (origin)++(0:1*\i cm)  coordinate (a\i);
    \path (origin)++(-90:1*\i cm)  coordinate (b\i);
    \path (a\i)++(-90:10cm) coordinate (ca\i);
    \path (b\i)++(0:10cm) coordinate (cb\i);
    \draw[thin] (a\i) -- (ca\i)  (b\i) -- (cb\i); }

 \path(0.5,-0.5) coordinate (origin);
 \foreach \i in {1,...,19}
  {
    \path (origin)++(0:1*\i cm)  coordinate (a\i);
    \path (origin)++(-90:1*\i cm)  coordinate (b\i);
    \path (a\i)++(-90:1cm) coordinate (ca\i);
        \path (ca\i)++(-90:1cm) coordinate (cca\i);
    \path (b\i)++(0:1cm) coordinate (cb\i);
    \path (cb\i)++(0:1cm) coordinate (ccb\i);
  }} \end{scope}

    \path(6,0) coordinate (origin);
 \begin{scope}{\draw[very thick](origin)--++(0:2)--++(-90:1)--++(180:1)--++(-90:1)--++(180:1)--++(90:2);
         \path(origin)--++(0:0.5)--++(-90:0.5)  node {1};
       \path(origin)--++(0:1.5)--++(-90:0.5)node {1};
            \path(origin)--++(0:0.5)--++(90:-1.5)node {2};

  \clip (origin)--++(0:1)--++(-90:1)--++(180:1)--++(-90:1)--++(180:1)--++(90:1);
  \path(origin) coordinate (origin);
   \foreach \i in {1,...,19}
  {
    \path (origin)++(0:1*\i cm)  coordinate (a\i);
    \path (origin)++(-90:1*\i cm)  coordinate (b\i);
    \path (a\i)++(-90:10cm) coordinate (ca\i);
    \path (b\i)++(0:10cm) coordinate (cb\i);
    \draw[thin] (a\i) -- (ca\i)  (b\i) -- (cb\i); }

 \path(0.5,-0.5) coordinate (origin);
 \foreach \i in {1,...,19}
  {
    \path (origin)++(0:1*\i cm)  coordinate (a\i);
    \path (origin)++(-90:1*\i cm)  coordinate (b\i);
    \path (a\i)++(-90:1cm) coordinate (ca\i);
        \path (ca\i)++(-90:1cm) coordinate (cca\i);
    \path (b\i)++(0:1cm) coordinate (cb\i);
    \path (cb\i)++(0:1cm) coordinate (ccb\i);
  }} \end{scope}

  \path(0,-3) coordinate (origin);
     \path(origin)--++(0:0.5)--++(-90:0.5)  node {1};
       \path(origin)--++(0:1.5)--++(-90:0.5)node {2};
            \path(origin)--++(0:0.5)--++(90:-1.5)node {2};
 \begin{scope}{\draw[very thick](origin)--++(0:2)--++(-90:1)--++(180:1)--++(-90:1)--++(180:1)--++(90:2);

  \clip (origin)--++(0:1)--++(-90:1)--++(180:1)--++(-90:1)--++(180:1)--++(90:1);
  \path(origin) coordinate (origin);
   \foreach \i in {1,...,19}
  {
    \path (origin)++(0:1*\i cm)  coordinate (a\i);
    \path (origin)++(-90:1*\i cm)  coordinate (b\i);
    \path (a\i)++(-90:10cm) coordinate (ca\i);
    \path (b\i)++(0:10cm) coordinate (cb\i);
    \draw[thin] (a\i) -- (ca\i)  (b\i) -- (cb\i); }

 \path(0.5,-0.5) coordinate (origin);
 \foreach \i in {1,...,19}
  {
    \path (origin)++(0:1*\i cm)  coordinate (a\i);
    \path (origin)++(-90:1*\i cm)  coordinate (b\i);
    \path (a\i)++(-90:1cm) coordinate (ca\i);
        \path (ca\i)++(-90:1cm) coordinate (cca\i);
    \path (b\i)++(0:1cm) coordinate (cb\i);
    \path (cb\i)++(0:1cm) coordinate (ccb\i);
  }} \end{scope}

  \path(3,-3) coordinate (origin);
 \begin{scope}{\draw[very thick](origin)--++(0:2)--++(-90:1)--++(180:1)--++(-90:1)--++(180:1)--++(90:2);
      \path(origin)--++(0:0.5)--++(-90:0.5)  node {1};
       \path(origin)--++(0:1.5)--++(-90:0.5)node {1};
            \path(origin)--++(0:0.5)--++(90:-1.5)node {3};

  \clip (origin)--++(0:1)--++(-90:1)--++(180:1)--++(-90:1)--++(180:1)--++(90:1);
  \path(origin) coordinate (origin);
   \foreach \i in {1,...,19}
  {
    \path (origin)++(0:1*\i cm)  coordinate (a\i);
    \path (origin)++(-90:1*\i cm)  coordinate (b\i);
    \path (a\i)++(-90:10cm) coordinate (ca\i);
    \path (b\i)++(0:10cm) coordinate (cb\i);
    \draw[thin] (a\i) -- (ca\i)  (b\i) -- (cb\i); }

 \path(0.5,-0.5) coordinate (origin);
 \foreach \i in {1,...,19}
  {
    \path (origin)++(0:1*\i cm)  coordinate (a\i);
    \path (origin)++(-90:1*\i cm)  coordinate (b\i);
    \path (a\i)++(-90:1cm) coordinate (ca\i);
        \path (ca\i)++(-90:1cm) coordinate (cca\i);
    \path (b\i)++(0:1cm) coordinate (cb\i);
    \path (cb\i)++(0:1cm) coordinate (ccb\i);
  }} \end{scope}

 \draw[very thick] (-0.5,0.5)--(8.5,0.5)--(8.5,-2.5)--(5.5,-2.5)--(5.5,-5.5)--(-0.5,-5.5)--(-0.5,0.5);
 \draw[very thick] (5.5,-2.5)--(-0.5,-2.5);
  \draw[very thick] (5.5,-2.5)--(5.5,0.5);
    \draw[very thick] (2.5,-5.5)--(2.5,0.5);
   \end{tikzpicture}   $$
   \caption{Two plethystic semistandard tableaux of shape $ {(2,1)}^{(3,2)}$.
   The former has weight $(9,2,3,1)$ and the latter has weight $(9,5,1)$.
   The latter is maximal in the dominance ordering; the former is not.   }
   \label{pleth-tab}
\end{figure}
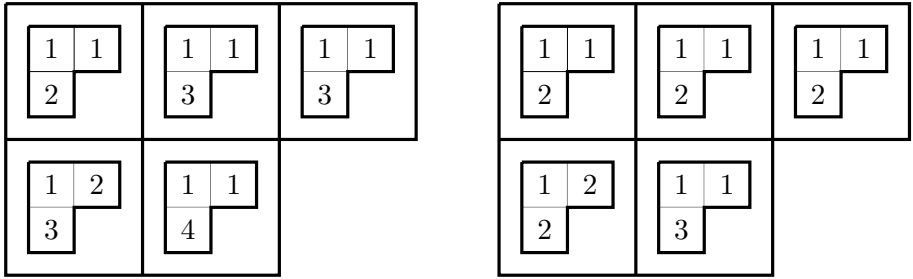

\!\!
\begin{thm}[{\cite[Theorem 1.5]{BPW}}]\label{PW}The maximal partitions $\alpha$ in the dominance order such that $s_\alpha$ is a constituent of $s_\nu \circ s_\mu$ are precisely the maximal weights of the plethystic semistandard tableaux of shape $\mu^\nu$. Moreover, if $\alpha$ is such a maximal partition then $p( \nu,\mu,\alpha)=|\PStd(\mu^\nu, \alpha)|$.
\end{thm}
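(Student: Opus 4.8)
The plan is to deduce Theorem~\ref{PW} from the monomial expansion \eqref{plethysm1} together with the unitriangularity of the Kostka matrix with respect to dominance.

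First I would rewrite \eqref{plethysm1} in the basis of monomial symmetric functions. Since $s_\nu\circ s_\mu$ is symmetric, $|\PStd(\mu^\nu,\alpha)|$ depends only on the partition obtained by sorting the composition $\alpha$, so \eqref{plethysm1} becomes
\begin{equation*}
s_\nu\circ s_\mu=\sum_{\lambda\vdash mn}|\PStd(\mu^\nu,\lambda)|\,m_\lambda ,
\end{equation*}
the sum now being over partitions. This reduces the theorem to a purely formal statement about symmetric functions: for $f=\sum_\lambda a_\lambda m_\lambda=\sum_\mu p_\mu s_\mu$, the dominance-maximal partitions in the support $\{\lambda:a_\lambda\neq0\}$ coincide with those in $\{\mu:p_\mu\neq0\}$, and the two coefficients agree at such a partition.

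To prove this I would use $s_\lambda=m_\lambda+\sum_{\mu\lhd\lambda}K_{\lambda\mu}m_\mu$ and the fact that, this transition matrix being unitriangular for $\trianglelefteq$, so is its inverse: $m_\lambda=s_\lambda+\sum_{\mu\lhd\lambda}\widetilde K_{\lambda\mu}s_\mu$ for suitable integers $\widetilde K_{\lambda\mu}$. Expanding $f$ both ways and comparing coefficients yields, for every $\mu\vdash mn$,
\begin{equation*}
a_\mu=p_\mu+\sum_{\lambda\rhd\mu}K_{\lambda\mu}\,p_\lambda,\qquad
p_\mu=a_\mu+\sum_{\lambda\rhd\mu}\widetilde K_{\lambda\mu}\,a_\lambda .
\end{equation*}
The first identity shows that if $p_\lambda=0$ for all $\lambda\rhd\mu$ then, by transitivity of $\rhd$, also $a_\lambda=0$ for all $\lambda\rhd\mu$, and moreover $a_\mu=p_\mu$; the second identity gives the reverse implication. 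Thus ``$\gamma$ is dominance-maximal in the support'' is a condition detected purely by the vanishing of all strictly larger coefficients, and this property is preserved when passing between the $a$'s and the $p$'s; hence the dominance-maximal partitions are the same for both expansions, and the corresponding coefficients coincide.

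Finally I would specialise to $a_\lambda=|\PStd(\mu^\nu,\lambda)|$ and $p_\lambda=p(\nu,\mu,\lambda)$: the partitions $\lambda$ with $|\PStd(\mu^\nu,\lambda)|\neq0$ are exactly the weights of plethystic semistandard tableaux of shape $\mu^\nu$, so their dominance-maximal elements are precisely the maximal such weights, and on these $p(\nu,\mu,\gamma)=|\PStd(\mu^\nu,\gamma)|$, as claimed. I do not expect a serious obstacle here: the combinatorial substance is already carried by \eqref{plethysm1}, and what remains is the standard leading-term manipulation for a unitriangular base change, the only delicate point being to phrase dominance-maximality in the cancellation-free way above. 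If one instead wished to establish \eqref{plethysm1} from scratch, the main work would move there --- one would iterate the semistandard-tableau model for $s_\mu$ inside that for $s_\nu$ and verify that the row/column comparison rules defining a plethystic semistandard tableau reproduce exactly the monomials of $s_\nu\circ s_\mu$ --- and that verification, rather than anything in the argument above, would be the crux.
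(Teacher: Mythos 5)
Your argument is correct, and it is essentially the argument this paper has available: the theorem itself is imported from \cite[Theorem 1.5]{BPW} without proof, with \cref{plethysm1} (also quoted from \cite{BPW}) carrying the real content, exactly as you observe. Your first displayed identity is, after noting $K_{\lambda\mu}=|\SStd(\lambda,\mu)|$, precisely the recursion \cref{dvir} that the paper records as ``implicit in \cite{BPW}'', and your unitriangular-inversion step is the standard way to extract both halves of \cref{PW} from it; so there is no gap, only the (acknowledged) reliance on \cref{plethysm1}, which is where the genuine combinatorial work would lie if one wanted a self-contained proof.
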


More generally,  to calculate $p(\nu,\mu,\al ) = \langle s_\nu\circ s_\mu \mid s_\alpha\rangle$ we can proceed by induction on the dominance order (using \cref{plethysm3,plethysm1}).
The following proposition is implicit in \cite{BPW} and can be thought of as the plethystic analogue of
Dvir's recursive method for calculating  Kronecker coefficients \cite{11}
 (as both proceed iteratively by induction along the dominance ordering and cancelling earlier terms).

\begin{prop}
For $\mu$, $\nu$, $\alpha$ an arbitrary triple of partitions, we have that
 \begin{equation}\label{dvir}
 p(\nu,\mu,\al )
=
 |{\rm PStd}(\mu^\nu,\alpha)| -
 \sum_{\beta\rhd \alpha}
 p(\nu,\mu,\beta )
\times |{\rm SStd}( \beta,\alpha)|,
\end{equation}
where the sum  can be restricted to the set of all partitions $\beta\rhd \alpha$
which are less than or equal to
 $\maxp_\succ(\nu,\mu)$
in the lexicographic ordering.
\end{prop}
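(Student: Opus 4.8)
\emph{Proof sketch.} The plan is to compute the coefficient of $x^\alpha$ in $s_\nu \circ s_\mu$ in two different ways for a fixed partition $\alpha \vdash mn$ and equate the results. On the one hand, \cref{plethysm1} gives this coefficient as $|\PStd(\mu^\nu,\alpha)|$. On the other hand, writing $s_\nu\circ s_\mu = \sum_{\beta \vdash mn} p(\nu,\mu,\beta)\, s_\beta$ and then expanding each Schur function via \cref{plethysm3}, the same coefficient equals $\sum_{\beta} p(\nu,\mu,\beta)\, |\SStd(\beta,\alpha)|$. (Since $s_\nu\circ s_\mu$ and each $s_\beta$ are symmetric, the coefficient of $x^\alpha$ depends only on the multiset of parts of $\alpha$, so it is harmless to assume $\alpha$ is a partition, which is the only case used.)

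The first step is to recall the classical unitriangularity of the Kostka numbers $K_{\beta\alpha}=|\SStd(\beta,\alpha)|$ with respect to the dominance order: $|\SStd(\beta,\alpha)|=0$ unless $\beta\trianglerighteq\alpha$, and $|\SStd(\alpha,\alpha)|=1$ (the unique semistandard tableau of shape $\alpha$ and weight $\alpha$ has $k$ in every box of row $k$). Substituting this into the identity
\[
|\PStd(\mu^\nu,\alpha)| \;=\; \sum_{\beta \vdash mn} p(\nu,\mu,\beta)\, |\SStd(\beta,\alpha)|
\;=\; p(\nu,\mu,\alpha) \;+\; \sum_{\beta \rhd \alpha} p(\nu,\mu,\beta)\, |\SStd(\beta,\alpha)|,
\]
and solving for $p(\nu,\mu,\alpha)$ yields precisely \cref{dvir}. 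The only $\beta$ that occur are those with $\beta\rhd\alpha$, a finite set, so this is a genuine recursion running along the dominance order (and, since $\succ$ refines $\trianglerighteq$, every such $\beta$ also satisfies $\beta\succ\alpha$).

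For the asserted restriction of the sum, observe that by the very definition of $\maxp_\succ(\nu,\mu)$ we have $p(\nu,\mu,\beta)=0$ for every $\beta\succ\maxp_\succ(\nu,\mu)$; hence the only terms contributing to $\sum_{\beta\rhd\alpha} p(\nu,\mu,\beta)|\SStd(\beta,\alpha)|$ are those with $\beta\preceq\maxp_\succ(\nu,\mu)$, and the sum may be restricted accordingly.

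\emph{Main obstacle.} There is no substantial difficulty here: the statement is a formal consequence of the two expansions \cref{plethysm3} and \cref{plethysm1} together with the unitriangularity of the Kostka matrix, exactly paralleling Dvir's recursion for Kronecker coefficients. The only points meriting a line of care are the reduction to partition weights $\alpha$ (legitimate by symmetry of all functions involved) and the observation that the definition of $\maxp_\succ(\nu,\mu)$ bounds the support of $p(\nu,\mu,-)$ in the lexicographic order, which gives the final restriction of the summation range for free.
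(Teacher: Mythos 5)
Your argument is correct and is exactly the route the paper intends: it states the result without a formal proof, noting only that one proceeds "by induction on the dominance order (using \cref{plethysm3,plethysm1})," which is precisely your combination of the two monomial expansions with unitriangularity of the Kostka matrix, and your justification of the lexicographic restriction via the definition of $\maxp_\succ(\nu,\mu)$ is the intended one as well. No gaps.
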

\begin{proof}
\color{black} Our algorithm is simply an example of what is known as ``highest weight theory".  
We suppose that $f(x_1,x_2,\dots)$ is a symmetric function with integer coefficients which we wish to write in terms of the basis of Schur functions.  
We define the highest weight in    $f(x_1,x_2,\dots)$  to be the
 term $c_\la x^\la$ for some $c_\la\in \ZZ\setminus \{0\}$ for which 
 the partition
\color{black} 
  $\la$ is maximal in the lexicographic ordering (using the notation of  \cref{plethysm3}).  
We claim that the existence of this highest weight implies that  $s_\la$ appears in $f(x_1,x_2,\dots)$ with multiplicity $c_\la$.
To see this, simply note that
\begin{itemize}[leftmargin=*]
\item     if there exists some $s_\mu$ appearing with non-zero coefficient in $f(x_1,\dots, x_n)$  such that $\mu>\la$ in the lexicographic ordering, then 
$s_\mu=x^\mu+\dots$   (by \cref{plethysm3})  which contradicts our maximality assumption on $\la$; 
\item  the highest weight term $x^\la$ cannot appear in any $s_\mu$ for $\mu< \la$ in the lexicographic ordering (by \cref{plethysm3})  as $\SStd(\mu,\la)= \emptyset$ in this case
\color{black}
\end{itemize}
and thus we deduce the existence of the  term  $c_\la s_\la$ in the expansion of $f(x_1,\dots, x_n)$ in the basis of Schur functions.  
One then repeats the above argument for the symmetric function $f(x_1,\dots, x_n)- c_\la s_\la$ et cetera.  
This argument works for any symmetric function, in particular if we set 
\begin{equation}\label{fkghsdjkfgh}f(x_1,x_2,\dots) =
s_\nu\circ s_\mu = \sum_{\alpha} |{\rm PStd}(\mu^\nu,\alpha)|x^\alpha\end{equation}
as in \cref{plethysm1}, then the coefficients are indeed given by the number of relevant plethystic tableaux (appearing in the righthand-side of \cref{fkghsdjkfgh})
 minus the relevant number of semistandard tableaux (appearing in the definition of the Schur function, see   \cref{plethysm3}).  
\end{proof}

This is not an efficient as a general algorithm, however, we  focus on partitions $\alpha$ that are {\em nearly} maximal in the dominance ordering -- this makes calculations manageable.

 \section{The products on the list are multiplicity-free}\label{easyhalf}

In this section we prove that every product on the list is, indeed, multiplicity-free.  For the finite list of exceptional products, this is easily done by computer calculation.
However, the infinite families
require some work.  The ones on our list are $(i)$
$\nu \vdash 2$ and $\mu$ 
 a rectangle or 
\color{black}
an almost rectangle
(i.e., it differs from a rectangle at most by one box)
or a hook, and $(ii)$  $\mu\vdash 2$ and $\nu $ linear.  The  latter case is well-known to be multiplicity-free,  see
\cref{size2,maxminall}.
We have that
\begin{equation}\label{size2}
\langle s_{(n)}\circ s_{(2)}\mid s_\alpha \rangle =
\langle
s_{(n)}\circ s_{(1^2)}
\mid s_{\alpha^T} \rangle =
\begin{cases}
1	&\text{if $\alpha$ has only even parts}		\\
0&\text{otherwise.}
\end{cases}
\end{equation}
 In particular, $p((n),\mu)=1$ for all $n\in \NN$, $\mu\vdash 2$.

 Given $\beta$ a partition of $n$ with distinct parts, we let
$ss[\beta]$ denote the shift symmetric  partition of $2n$  whose leading diagonal hook-lengths  are $2\beta_1, \dots, 2\beta_{\ell(\beta)}$  and whose $i\textsuperscript{th}$ row has length $\beta_i+i$ for  $1\leq i \leq \ell(\beta)$.
We have that
\begin{equation}\label{maxminall}
\langle s_{(1^n)}\circ s_{(2)}\mid s_\alpha \rangle =
\langle
s_{(1^n)}\circ s_{(1^2)}
\mid s_{\alpha^T} \rangle =
\begin{cases}
1	&\text{$\alpha=ss[\beta]$ for some $\beta\vdash n$}		 \\
0&\text{otherwise.}
\end{cases}
\end{equation}
In particular, $p((1^n),\mu)=1$ for all $n\in \NN$ and $\mu\vdash 2$.
Thus case $(ii)$ is covered.

\color{black}
We remark that  the product $ s_\mu \boxtimes s_\mu $ is the character of the tensor square of a simple representation, $\Delta(\mu)$,  of the general linear group. 
 Any tensor square can be decomposed into its symmetric and antisymmetric parts.
 As noted in  \cite[Introduction]{MR1331743},  
 this symmetric/anti-symmetric decomposition of   characters for general linear groups provides us with the well-known identity 
\begin{equation}\label{hghghfkdi}
 s_\mu \boxtimes s_\mu = s_{(2)}\circ s_\mu +
 s_{(1^2)}\circ s_\mu
\end{equation}
 where the first (respectively second) summand is the character of the symmetric (respectively antisymmetric) summand of the tensor product of characters.  
 
\color{black}

\begin{prop}\label{rectangle is mf}
If $\nu \vdash 2$ and $\mu$  is a rectangle, then
 $p(\nu,\mu)=1$.
\end{prop}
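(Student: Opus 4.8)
The plan is to reduce the statement to Stembridge's classification of multiplicity-free outer products (\cref{thm:mf-outer}) via the classical decomposition of the outer square of a Schur function.

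First I would recall the identity
$$
s_\mu \boxtimes s_\mu \;=\; \bigl(s_{(2)}\circ s_\mu\bigr) \;+\; \bigl(s_{(1^2)}\circ s_\mu\bigr),
$$
which follows from $s_{(1)}\boxtimes s_{(1)} = s_{(2)} + s_{(1^2)}$ together with the standard plethystic identities $(f+g)\circ h = (f\circ h)+(g\circ h)$ and $(fg)\circ h = (f\circ h)(g\circ h)$ (see \cite[Chapter~I.8]{MR3443860}). Comparing coefficients of $s_\alpha$ on both sides gives, for every $\alpha\vdash 2|\mu|$,
$$
p\bigl((2),\mu,\alpha\bigr) + p\bigl((1^2),\mu,\alpha\bigr) = \langle s_\mu \boxtimes s_\mu \mid s_\alpha\rangle .
$$

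Next I would apply \cref{thm:mf-outer}: as $\mu$ is a rectangle, the pair $(\mu,\mu)$ lies in the first case of Stembridge's list (both factors rectangles), so $s_\mu \boxtimes s_\mu$ is multiplicity-free. Hence the right-hand side above is at most $1$, and since $p((2),\mu,\alpha)$ and $p((1^2),\mu,\alpha)$ are non-negative integers, each of them is at most $1$; that is, $p(\nu,\mu)\leq 1$ for $\nu\in\{(2),(1^2)\}$. On the other hand \cref{pppppppppp} exhibits a constituent of $s_\nu\circ s_\mu$ with coefficient exactly $1$, so $p(\nu,\mu)=1$, as claimed.

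The argument has no real obstacle; the only points requiring care are checking that the outer-square decomposition is applied with the correct summands and confirming that $(\mu,\mu)$ genuinely falls into Stembridge's classification, both of which are immediate. A self-contained route counting plethystic semistandard tableaux of shapes $\mu^{(2)}$ and $\mu^{(1^2)}$ is also possible, but passing through \cref{thm:mf-outer} is the most economical.
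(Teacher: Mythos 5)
Your argument is correct and is exactly the paper's proof: decompose $s_\mu \boxtimes s_\mu = s_{(2)}\circ s_\mu + s_{(1^2)}\circ s_\mu$ and invoke Stembridge's classification (\cref{thm:mf-outer}) to see the outer square is multiplicity-free, so each plethystic summand is too. The extra remark that $p(\nu,\mu)\geq 1$ via \cref{pppppppppp} is a harmless refinement the paper leaves implicit.
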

\begin{proof}
We have seen that $s_\mu \boxtimes s_\mu$ is multiplicity-free for $\mu$ a rectangle by \cref{thm:mf-outer}. 
 and so the result follows by \cref{hghghfkdi}.
\end{proof}

The remaining products  do  not correspond to summands of products of the form
$s_\mu \boxtimes s_\mu$ on Stembridge's list.   Therefore, we need to show that
these products have maximal multiplicity 2, and when
$$\langle s_\mu \boxtimes s_\mu \mid s_\alpha \rangle=2$$
for some partition $\alpha$, then this coefficient 2 splits into two separate pieces:
\begin{equation}\label{splitters}
  \langle s_{(2)}\circ s_\mu  \mid s_\alpha \rangle=1
\qquad
\text{and} \qquad
 \langle  s_{(1^2)}\circ s_\mu \mid s_\alpha \rangle=1.
\end{equation}
In order to do this, we will require Carr\'e--Leclerc's
 ``domino--Littlewood--Richardson tableaux" algorithm  \cite{MR1331743}  for calculating the decomposition of the products
  $
   s_{(2)}\circ s_\mu    $ and $s_{(1^2)}\circ s_\mu.  $  Given
$\la$ a partition of $n$, we let $[\la]^{2\times2}  $ denote the partition of $4n$ obtained by 
 first doubling the length of every row and then doubling the length of each column.
\color{black}
We define a domino diagram of shape $\la$ as a tiling of $[\la]^{2\times2}$ by means of $2 \times 1$ or $1 \times 2$ rectangles called dominoes.
The {\sf spin-type} of a domino diagram is defined to be half of the total number of $(2)$-dominoes (which is always an integer) modulo $2$.
A domino tableau of shape $\la$ is obtained by labelling  each  domino of the diagram
by a natural number.
We say that the domino tableau is semistandard if these
numbers are weakly increasing along the rows (from left to right), and strictly increasing down the columns.
 Examples are depicted in \cref{0spin,1spin}.

 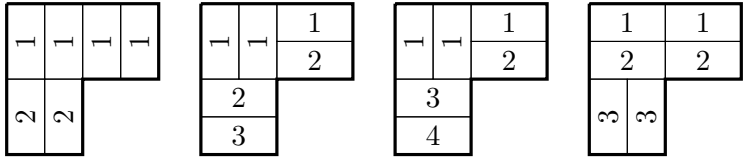
\begin{figure}[ht!]
  \begin{tikzpicture} [scale=1]
 \draw[very thick](0,0)--++(0:2)--++(-90:1)--++(180:1)--++(-90:1)--++(180:1)--++(90:2);

  \clip(0,0)--++(0:2)--++(-90:1)--++(180:1)--++(-90:1)--++(180:1)--++(90:2);

 \draw(0,0) rectangle (0.5,-1) node [midway, rotate=90] {1};

 \draw(0.5,0) rectangle (1,-1) node [midway, rotate=90] {1};

  \draw(1,0) rectangle (1.5,-1) node [midway, rotate=90] {1};

 \draw(1.5,0) rectangle (2,-1) node [midway, rotate=90] {1};

 \draw(0,-1) rectangle (0.5,-2) node [midway, rotate=90] {2};
 \draw(0.5,-1) rectangle (1,-2) node [midway, rotate=90] {2};

 \end{tikzpicture}
\quad
  \begin{tikzpicture} [scale=1]
 \draw[very thick](0,0)--++(0:2)--++(-90:1)--++(180:1)--++(-90:1)--++(180:1)--++(90:2);

  \clip(0,0)--++(0:2)--++(-90:1)--++(180:1)--++(-90:1)--++(180:1)--++(90:2);

 \draw(0,0) rectangle (0.5,-1) node [midway, rotate=90] {1};

 \draw(0.5,0) rectangle (1,-1) node [midway, rotate=90] {1};

  \draw(1,0) rectangle (2,-0.5) node [midway] {1};

 \draw(1,-0.5) rectangle (2,-1) node [midway] {2};

 \draw(0,-1) rectangle (1,-1.5) node [midway] {2};
 \draw(0,-1.5) rectangle (1,-2) node [midway] {3};

 \end{tikzpicture}
\quad
  \begin{tikzpicture} [scale=1]
 \draw[very thick](0,0)--++(0:2)--++(-90:1)--++(180:1)--++(-90:1)--++(180:1)--++(90:2);

  \clip(0,0)--++(0:2)--++(-90:1)--++(180:1)--++(-90:1)--++(180:1)--++(90:2);

 \draw(0,0) rectangle (0.5,-1) node [midway, rotate=90] {1};

 \draw(0.5,0) rectangle (1,-1) node [midway, rotate=90] {1};

  \draw(1,0) rectangle (2,-0.5) node [midway] {1};

 \draw(1,-0.5) rectangle (2,-1) node [midway] {2};

 \draw(0,-1) rectangle (1,-1.5) node [midway] {3};
 \draw(0,-1.5) rectangle (1,-2) node [midway] {4};

 \end{tikzpicture}
 \quad
  \begin{tikzpicture} [scale=1]
 \draw[very thick](0,0)--++(0:2)--++(-90:1)--++(180:1)--++(-90:1)--++(180:1)--++(90:2);

  \clip(0,0)--++(0:2)--++(-90:1)--++(180:1)--++(-90:1)--++(180:1)--++(90:2);

 \draw(0,0) rectangle (1,-0.5) node [midway  ] {1};
 \draw(0,-0.5) rectangle (1,-1) node [midway] {2};

  \draw(1,0) rectangle (2,-0.5) node [midway] {1};

 \draw(1,-0.5) rectangle (2,-1) node [midway] {2};

 \draw(0,-1) rectangle (0.5,-2) node [midway, rotate=90] {3};
 \draw(0.5,-1) rectangle (1,-2) node [midway, rotate=90] {3};

 \end{tikzpicture}
 \caption{The semistandard domino tableaux of shape $(2,1)$ and  even spin type   satisfying the lattice permutation condition 
  (of Definition~\ref{lattice}).  }
 \label{0spin}
 \end{figure}

 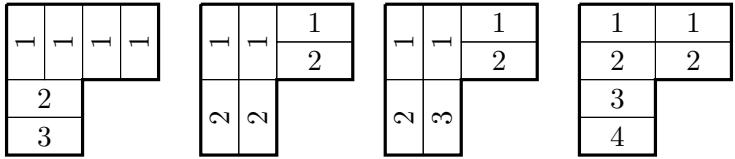
\begin{figure}[ht!]
  \begin{tikzpicture} [scale=1]
 \draw[very thick](0,0)--++(0:2)--++(-90:1)--++(180:1)--++(-90:1)--++(180:1)--++(90:2);

  \clip(0,0)--++(0:2)--++(-90:1)--++(180:1)--++(-90:1)--++(180:1)--++(90:2);

 \draw(0,0) rectangle (0.5,-1) node [midway, rotate=90] {1};

 \draw(0.5,0) rectangle (1,-1) node [midway, rotate=90] {1};

  \draw(1,0) rectangle (1.5,-1) node [midway, rotate=90] {1};

 \draw(1.5,0) rectangle (2,-1) node [midway, rotate=90] {1};

 \draw(0,-1) rectangle (1,-1.5) node [midway] {2};
 \draw(0,-1.5) rectangle (1,-2) node [midway] {3};

 \end{tikzpicture}
\quad  \begin{tikzpicture} [scale=1]
 \draw[very thick](0,0)--++(0:2)--++(-90:1)--++(180:1)--++(-90:1)--++(180:1)--++(90:2);

  \clip(0,0)--++(0:2)--++(-90:1)--++(180:1)--++(-90:1)--++(180:1)--++(90:2);

 \draw(0,0) rectangle (0.5,-1) node [midway, rotate=90] {1};

 \draw(0.5,0) rectangle (1,-1) node [midway, rotate=90] {1};

  \draw(1,0) rectangle (2,-0.5) node [midway] {1};

 \draw(1,-0.5) rectangle (2,-1) node [midway] {2};

 \draw(0,-1) rectangle (0.5,-2) node [midway, rotate=90] {2};
 \draw(0.5,-1) rectangle (1,-2) node [midway, rotate=90] {2};

 \end{tikzpicture}\quad  \begin{tikzpicture} [scale=1]
 \draw[very thick](0,0)--++(0:2)--++(-90:1)--++(180:1)--++(-90:1)--++(180:1)--++(90:2);

  \clip(0,0)--++(0:2)--++(-90:1)--++(180:1)--++(-90:1)--++(180:1)--++(90:2);

 \draw(0,0) rectangle (0.5,-1) node [midway, rotate=90] {1};

 \draw(0.5,0) rectangle (1,-1) node [midway, rotate=90] {1};

  \draw(1,0) rectangle (2,-0.5) node [midway] {1};

 \draw(1,-0.5) rectangle (2,-1) node [midway] {2};

 \draw(0,-1) rectangle (0.5,-2) node [midway, rotate=90] {2};
 \draw(0.5,-1) rectangle (1,-2) node [midway, rotate=90] {3};

 \end{tikzpicture} \quad
  \begin{tikzpicture} [scale=1]
 \draw[very thick](0,0)--++(0:2)--++(-90:1)--++(180:1)--++(-90:1)--++(180:1)--++(90:2);

  \clip(0,0)--++(0:2)--++(-90:1)--++(180:1)--++(-90:1)--++(180:1)--++(90:2);

 \draw(0,0) rectangle (1,-0.5) node [midway  ] {1};
 \draw(0,-0.5) rectangle (1,-1) node [midway] {2};

  \draw(1,0) rectangle (2,-0.5) node [midway] {1};

 \draw(1,-0.5) rectangle (2,-1) node [midway] {2};

 \draw(0,-1) rectangle (1,-1.5) node [midway] {3};
 \draw(0,-1.5) rectangle (1,-2) node [midway] {4};

 \end{tikzpicture}
\caption{The semistandard domino tableaux of shape $(2,1)$ and  odd spin type   satisfying the lattice permutation condition. }
 \label{1spin}
\end{figure}

\color{black}

We associate to a domino tableau, $\SSTT$, of shape $\la$ (as above) a Young tableau, $\stt$, 
of shape  $[\la]^{2\times2}$ in the following way.
   Given a domino $\{(r,c),(r,c+1)\}$ (respectively $\{(r,c),(r+1,c)\}$) labelled by
   $i\in\NN$, we write $\stt(r,c)=i$ and
   $\stt(r,c+1)=i$ (respectively $\stt(r,c)=i$ and $\stt(r+1,c)=i$).
For $k\in \NN$, we let $$t_k = \tfrac{1}{2}|\{ (r,c)   	 \in  [\la]^{2\times2}  \mid \stt (r,c)=k\}|.$$
 We refer to  $\alpha=(t_1, t_2, t_3,\dots)$ as the {\sf weight} of the domino tableau~$\SSTT$.
This is illustrated in \cref{anotherfig1}.

\begin{figure}[ht!]\color{black}
$$
\gyoung(1;1;1;1,1;1;1;1,2;2,3;3)
\qquad
\gyoung(1;1;1;1,1;1;2;2,2;2,2;2)
\qquad
\gyoung(1;1;1;1,1;1;2;2,2;3,2;3)\qquad
\gyoung(1;1;1;1,2;2;2;2,3;3,4;4)
$$

\caption{\color{black}The Young tableaux associated to the domino tableaux of \cref{1spin}.  
Thus the domino  tableaux of \cref{1spin} have weights 
$(4,1,1)$, $(3,3)$, $(3,2,1)$, and $(2^2,1^2)$ respectively.  }
\label{anotherfig1}
\end{figure}

The following definition of good and bad nodes (and lattice permutations) is due to G. D. James and can be found in his original  characteristic-free proof of the Littlewood--Richardson rule in the setting of the symmetric group \cite[4.5 Definition]{j77}.  
 This definition is slightly more complicated than the usual definition of a lattice permutation found in, for example, Sagan's book \cite[Definition 4.9.3]{MR1824028}.  
 This definition keeps track of much more information (it can be seen as a pre-cursor to the theory of crystals) and this information will be needed in our arguments later on in the paper (in particular, we will need to specify a given ``bad node" in a sequence).  
An equivalent formulation of a ``bad node" (see below) is that of a ``Bad Guy" as given within Stembridge's  proof of   \cite[Theorem]{MR1912814} and the reader is invited to 
 use Stembridge's definition if this appeals more to their tastes.  
 In what follows, we will use the grammatical rule for pairing nested parentheses (that is, we proceed from the innermost pairing to the outermost pairing) but
 we tweak  this rule slightly by not requiring that the number of opening parentheses is equal to the number of closing parentheses (any such additional parentheses are left unpaired).  For example in  the following two sequence of parentheses  
 $$(\;\; (\; \;(\;\; )\;\; )\;, \qquad \qquad (\;\;(\;\;)\;\;(\;\;)$$
 the leftmost parenthesis in each sequence is unpaired.  In the first sequence the 3rd and 4th terms are paired and the 2nd and 5th terms are paired.  
 In the second sequence, the 2nd and 3rd terms are paired and the 4th and 5th terms are paired. 
\color{black}

\begin{defn}\label{lattice}
Given a finite sequence, $\Sigma$, of positive integers we let $\Sigma_{(i-1,i)}$ denote the sequence obtained by replacing all occurrences of $i-1$ with an open parenthesis and
all occurrences of $i$ with a closed parenthesis.
We define the quality (good/bad) of each term in $\Sigma$ as follows.
\begin{enumerate}[leftmargin=*]
\item All  terms $1$ are good.
\item A term $i$ is good if and only if  the corresponding closed parenthesis in the sequence
 $\Sigma_{(i-1,i)}$ is partnered with an open parenthesis under the usual rule for nested parentheses.
\end{enumerate}
The  sequence   is  a {\sf lattice permutation} if every term in the sequence is good.
 We shall say the term $i-1$ is supported by the term $i$ whenever they are partnered under the usual rule for parentheses.   \end{defn}

    \begin{eg}
    The following sequence is not a lattice permutation
    $$
    1 , \;
    1 , \; 2 , \; 2 , \; 1 , \; 3 , \; 3 , \; 3 , \; 4 , \; 4 , \; 1 , \;2 , \;4 , \;3,  \;2.
    $$
To see this, we note that the system of parentheses $\Sigma_{(2,3)}$ is   as follows
\begin{align*}
\scalefont{0.9}
   \begin{tikzpicture}[baseline={([yshift=-.7ex]current bounding box.center)},scale=0.55]   \begin{scope}
             \draw (1.5,2.3) arc (0:180:-1.5 and 0.5);
             \draw (0,2.3) arc (0:180:-3 and 0.75);             
                   \draw (13.5,2.3) arc (0:180:-1.5 and 0.5);
                  \draw (0,1.9) node  {$\tiny { ( }$};
          \draw (1.5,1.9) node  {$\tiny { ( }$};
                                \draw (13.5,1.9) node  {$\tiny { ( }$};                \draw (16.5,1.9) node  {$\tiny { ) }$};
                \draw (4.5,1.9) node  {$\tiny { ) }$}; \draw (6,1.9) node  {$\tiny { ) }$};
                 \draw (7.5,1.9) node  {$\tiny { ) }$};         \end{scope}
       \begin{scope}
           \draw (10.5,1.9) node  {$   $};
         \draw (12,1.9) node  {$   $};
                  \draw (13.5,1.9) node  {$   $};
                           \draw (15,1.9) node  {$   $};
                     \draw (16.5,1.9) node  {$   $};
  \draw (-3,1) node  {$\tiny {1}$};
 \draw (-1.5,1) node  {$\tiny {1}$};
  \draw (0,1) node  {$\tiny {2}$};
    \draw (1.5,1) node  {$\tiny {2}$};
      \draw (3,1) node  {$\tiny {1}$};
  \draw (4.5,1) node  {$\tiny { 3}$};
    \draw (6,1) node  {$\tiny { 3}$};
        \draw (7.5,1) node  {$\tiny { 3}$};
      \draw (9,1) node  {$\tiny {4}$};
  \draw (10.5,1) node  {$\tiny { 4}$};
    \draw (12,1) node  {$\tiny { 1}$};
      \draw (13.5,1) node  {$\tiny {2}$};
  \draw (15,1) node  {$\tiny {4}$};
    \draw (16.5,1) node  {$\tiny { 3}$};
        \draw (18,1) node  {$\tiny { 2}$};
            \draw (18,2) node  {$\tiny {(}$};
    \end{scope}
 \end{tikzpicture}
  \end{align*}
\color{black} Thus the 8th integer in the sequence is bad.
    \end{eg}

\begin{defn}\color{black}
We define the {\sf  reading word} $R(\SSTT)$  of a semistandard  tableau  $\SSTT\in \SStd(\nu\setminus \la,\mu)$  to be given by reading the labels of the boxes from top-to-bottom down columns from right-to-left.
We let ${\rm LR}(\nu\setminus \la,\mu)\subseteq \SStd(\nu\setminus \la,\mu)$ denote the set of tableaux whose reading words satisfy the lattice permutation property;  
we refer to such tableaux as {\sf Littlewood--Richardson tableaux}.  

    \end{defn}

\begin{defn}

We define the {\sf  reading word} $R(\SSTT)$  of a domino tableau  $\SSTT$  to be given by reading the labels of the dominoes from top-to-bottom down columns from right-to-left and recording each label exactly once --- as late as possible --- in other words, for a horizontal domino $\{(r,c),(r,c+1)\}$ we record the label upon reading column~$c$.
We say that  a semistandard domino tableau satisfies the {\sf lattice permutation condition} if
  the reading word  is a lattice permutation.
  We let  ${\sf Dom}(\la,\alpha)$     denote the set of all semistandard   domino \color{black} tableaux of shape $\la$ and   weight $\alpha$ satisfying the lattice permutation condition;
  we refer to such tableaux as {\sf Littlewood--Richardson domino tableaux}.  
   We set
  $\dom (\la,\al) = |{\sf Dom}(\la,\alpha)|$, and let $\dom_+(\la,\al)$ and $\dom_-(\la,\al)$
  count the corresponding tableaux of even and odd spin type, respectively.

    \end{defn}

\begin{eg}
The  reading words of the domino tableaux in \cref{0spin} are
$$
(1,1,1,2,1,2) \quad
(1,2,1,  1 ,2,3)\quad
(1,2,1,1, 3,4)\quad
(1,2,3,1,2,3)
$$
and so all the tableaux of \cref{0spin} satisfy the lattice permutation condition.
\end{eg}

{\color{black}
\begin{thm}[Littlewood--Richardson]
 We have that
 $\langle   s_\la\boxtimes s_\mu  \mid s_\nu\rangle =|{\rm LR}(\nu\setminus \la,\mu)|$.  
\end{thm}
}

\begin{thm}[Carr\'e--Leclerc \cite{MR1331743}]\label{gom-tab}
We have that
 $\langle   s_\mu\boxtimes s_\mu  \mid s_\alpha\rangle $
is the number $\dom(\mu,\al)$
of semistandard domino tableaux of shape $\mu$ and weight $\alpha$ satisfying the
lattice permutation condition.  This number decomposes as
$$\langle s_{(2)}\circ s_\mu \mid s_\alpha\rangle + \langle s_{(1^2)}\circ s_\mu \mid s_\alpha\rangle$$
where the former (respectively latter) summand is
equal to the number $\dom_+(\mu,\al)$ (respectively $\dom_-(\mu,\al)$) of tableaux of even (respectively odd) spin type.
\end{thm}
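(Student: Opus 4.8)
This is the main theorem of Carr\'e--Leclerc \cite{MR1331743}; we outline a proof.

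\emph{Reduction to two combinatorial identities.}  Since $h_2+e_2=p_1^2$ and $h_2-e_2=p_2$, and since plethysm is multiplicative in its outer argument with $p_1\circ f=f$ for all $f$, we have
\begin{equation*}
s_{(2)}\circ s_\mu + s_{(1^2)}\circ s_\mu \;=\; p_1^2\circ s_\mu \;=\; (p_1\circ s_\mu)^2 \;=\; s_\mu^2 \;=\; s_\mu\boxtimes s_\mu,
\end{equation*}
\begin{equation*}
s_{(2)}\circ s_\mu - s_{(1^2)}\circ s_\mu \;=\; p_2\circ s_\mu \;=\; s_\mu(x_1^2,x_2^2,\dots).
\end{equation*}
Hence it suffices to prove the two Schur expansions
\begin{equation*}
\langle s_\mu\boxtimes s_\mu \mid s_\alpha\rangle = \dom_+(\mu,\alpha)+\dom_-(\mu,\alpha)
\qquad\text{and}\qquad
\langle s_\mu(x_1^2,x_2^2,\dots)\mid s_\alpha\rangle = \dom_+(\mu,\alpha)-\dom_-(\mu,\alpha);
\end{equation*}
adding and subtracting these then yields $\langle s_{(2)}\circ s_\mu\mid s_\alpha\rangle=\dom_+(\mu,\alpha)$ and $\langle s_{(1^2)}\circ s_\mu\mid s_\alpha\rangle=\dom_-(\mu,\alpha)$, and the first of the two displayed identities is exactly $\langle s_\mu\boxtimes s_\mu\mid s_\alpha\rangle=\dom(\mu,\alpha)$.

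\emph{The unsigned domino Littlewood--Richardson rule.}  The first identity is equivalent to $c^\alpha_{\mu\mu}=\dom(\mu,\alpha)$.  Here I would invoke domino insertion---a Robinson--Schensted correspondence valued in semistandard domino tableaux (Garfinkle, Barbasch--Vogan, with the combinatorial $2$-quotient treatment of Stanton--White)---or, equivalently, the weight-preserving bijection between semistandard domino tableaux of shape $[\mu]^{2\times 2}$ and pairs of ordinary semistandard tableaux supported on the two components of the $2$-quotient of the underlying Young diagram, the horizontal and vertical dominoes being separated into the two components.  The content of \cite{MR1331743} is that, under this bijection, the lattice-permutation condition imposed on the reading word $R(\SSTT)$ corresponds to a genuine Littlewood--Richardson condition; one then deduces $\dom(\mu,\alpha)=c^\alpha_{\mu\mu}$ either by an explicit bijection with Littlewood--Richardson fillings of content $\mu$ in the skew shape $\alpha/\mu$, or by checking that $\dom(\mu,-)$ and $c^{-}_{\mu\mu}$ obey the same Pieri/branching recursion.

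\emph{The spin refinement.}  For the signed identity one must show that, expanding $s_\mu(x_1^2,x_2^2,\dots)$ monomial by monomial over semistandard $\mu$-tableaux, the Schur expansion that results is precisely $\sum_\alpha\bigl(\dom_+(\mu,\alpha)-\dom_-(\mu,\alpha)\bigr)s_\alpha$.  This is a $2$-quotient computation: rewriting the sum in terms of domino fillings and applying a sign-reversing involution whose fixed points are exactly the semistandard domino tableaux satisfying the lattice condition, the residual sign carried by such a tableau is $(-1)$ to its spin-type (the parity of half the number of $(2)$-dominoes).  Equivalently, one tracks the sign $(-1)^{\mathrm{spin}}$ through the domino insertion of the previous step.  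Combining the two identities via the $2\times 2$ linear system above completes the argument.

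\emph{Main obstacle.}  The algebraic reduction is immediate and the unsigned rule $\dom(\mu,\alpha)=c^\alpha_{\mu\mu}$, while not trivial, is by now routine domino-RSK combinatorics; the weight of the proof is the spin half.  Concretely, one has to verify that the spin-type is \emph{exactly} the statistic governing the $h_2$ versus $e_2$ split---that is, that it is the surviving sign in the straightening of $s_\mu(x_1^2,x_2^2,\dots)$, equivalently that it is preserved correctly by domino insertion---and this rests on a careful analysis of how the reading-word convention (``recorded as late as possible'') interacts with the $2$-quotient and with conjugation of the shapes (cf.\ \eqref{conjugate}).
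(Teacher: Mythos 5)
This statement is quoted in the paper as an external result of Carr\'e--Leclerc \cite{MR1331743} and is not proved there, so there is no internal argument to compare against. Your outline is correct and follows the route of the original reference: the reduction via $h_2+e_2=p_1^2$ and $h_2-e_2=p_2$ is exactly right (and matches the paper's conventions, where even spin-type corresponds to $s_{(2)}\circ s_\mu$), while the two remaining identities --- the unsigned domino Littlewood--Richardson rule and the spin-refined signed expansion of $s_\mu(x_1^2,x_2^2,\dots)$ --- are precisely the combinatorial content of Carr\'e--Leclerc's paper, which you defer to rather than reprove; since the paper itself treats the theorem as a black box, that is an acceptable level of detail, though as written your argument is a sketch rather than a self-contained proof.
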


Now, using Carr\'e--Leclerc's refinement of the Littlewood--Richardson rule, we are able (without much ado)  to calculate the multiplicity-free plethystic products $s_{(2)}\circ s_\mu$ and  $s_{(1^2)}\circ s_\mu$  for $\mu $ a hook.

 \begin{prop}
 For $\mu\vdash m$   a hook,  $s_{(2)}\circ s_\mu$ and 
  $s_{(1^2)}\circ s_\mu$
are both multiplicity-free.
 \end{prop}

 \begin{proof}
\color{black}
The coefficient $  \langle s_\mu \boxtimes s_\mu \mid s_\alpha \rangle  $ are {\em not} multiplicity-free in general (we will see that they can be equal to 0, 1, or 2). Thus, following the   discussion around \cref{splitters}, this proof will then proceed to show that the  coefficients   equal to 2 split as $2=1+1$ with multiplicity 1 in each of  
 $s_{(2)}\circ s_\mu$ and 
  $s_{(1^2)}\circ s_\mu$ using \cref{gom-tab} and the combinatorics of domino tableaux.  

Firstly, for $\mu=(a,1^b)$   we have (by the Littlewood--Richardson rule) that 
 \begin{align} \label{=2}
 \langle s_{(a,1^b)} \boxtimes s_{(a,1^b)} \mid s_\alpha \rangle=
 	 \begin{cases}
2 &\text{ if $\alpha_1+\alpha_2=2a+1$ and $dl(\alpha)= 2$} \\
1 &\text{ if $\alpha_1+\alpha_2=2a \text{ or }2a+2$ and ${dl}(\alpha)= 2$} \\
1 &\text{ if $\alpha =(2a,1^{2b})$ or $(2a-1,1^{2b+1})$} \\
0 &\text{ otherwise}.  
   \end{cases}  
\end{align}
In more detail, for  $\alpha$ with $\alpha_1+\alpha_2=2a+1$ and 
 Durfee size
 $dl(\alpha)=2$, 
  the entry $1$ can be placed in either the box $(2,\alpha_2)\in \alpha$ or  
$(b+1,1)\in \alpha$ and all other entries are forced by the semistandard and lattice permutation conditions;   
 we depict indicative examples in \cref{2hooksy}.

 \color{black}
 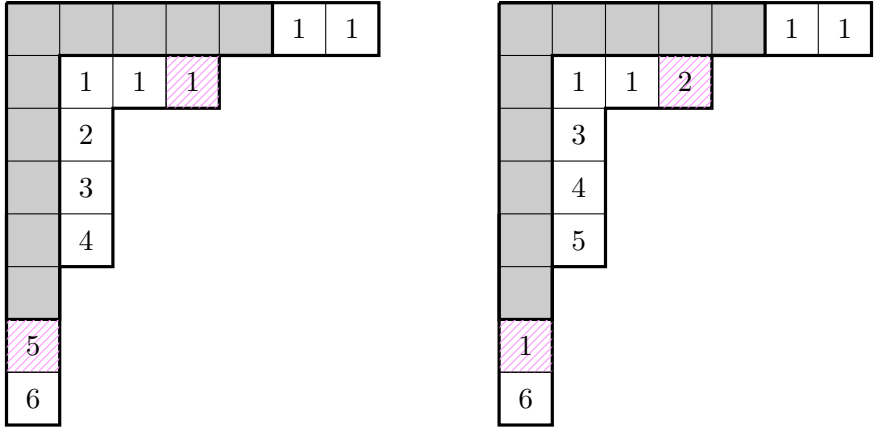
\begin{figure}[ht!]
 $$    \begin{tikzpicture} [scale=0.7]

   \draw[very thick,black,fill=gray!40 ]
   (0,0)--++(0:5)--++(-90:1)
   --++(180:4)--++(-90:5)      --++(180:1)--++(90:6) ;

  \draw[very thick,black,   ]
   (0,0)--++(0:7)--++(-90:1)
   --++(180:3)--++(-90:1)--++(180:2) --++(-90:3)--++(180:1) --++(-90:3) --++(180:1) --++(90:4) ;

\clip   (0,0)--++(0:7)--++(-90:1)
   --++(180:3)--++(-90:1)--++(180:2) --++(-90:3)--++(180:1) --++(-90:3) --++(180:1) --++(90:4) ;

\foreach \i in {0,1,2,3,4,5,6,7,8,9}
{\draw(0,-\i) --++(0:2);
\draw(\i,0)--++(-90:2); }

  \draw[pattern=north east lines, pattern color=magenta!40](3,-1)--++(0:1)--++(-90:1)
  --++(180:1)--++(90:1);

  \draw[pattern=north east lines, pattern color=magenta!40](0,-6)--++(0:1)--++(-90:1)
  --++(180:1)--++(90:1);

\draw(5.5,-0.5)  node {1};
\draw(6.5,-0.5)  node {1};
\draw(1.5,-1.5)  node {1};
\draw(2.5,-1.5)  node {1};

\draw(3.5,-1.5)  node {1};
\draw(1.5,-2.5)  node {2};
\draw(1.5,-3.5)  node {3};
\draw(1.5,-4.5)  node {4};

\draw(0.5,-6.5)  node {5};
\draw(0.5,-7.5)  node {6};

\end{tikzpicture}
\qquad
\qquad
  \begin{tikzpicture} [scale=0.7]

   \draw[very thick,black,fill=gray!40 ]
   (0,0)--++(0:5)--++(-90:1)
   --++(180:4)--++(-90:5)      --++(180:1)--++(90:6) ;

  \draw[very thick,black,   ]
   (0,0)--++(0:7)--++(-90:1)
   --++(180:3)--++(-90:1)--++(180:2) --++(-90:3)--++(180:1) --++(-90:3) --++(180:1) --++(90:4) ;

\clip   (0,0)--++(0:7)--++(-90:1)
   --++(180:3)--++(-90:1)--++(180:2) --++(-90:3)--++(180:1) --++(-90:3) --++(180:1) --++(90:4) ;

\foreach \i in {0,1,2,3,4,5,6,7,8,9}
{\draw(0,-\i) --++(0:2);
\draw(\i,0)--++(-90:2); }

  \draw[pattern=north east lines, pattern color=magenta!40](3,-1)--++(0:1)--++(-90:1)
  --++(180:1)--++(90:1);

  \draw[pattern=north east lines, pattern color=magenta!40](0,-6)--++(0:1)--++(-90:1)
  --++(180:1)--++(90:1);

\draw(5.5,-0.5)  node {1};
\draw(6.5,-0.5)  node {1};
\draw(1.5,-1.5)  node {1};
\draw(2.5,-1.5)  node {1};

\draw(3.5,-1.5)  node {2};
\draw(1.5,-2.5)  node {3};
\draw(1.5,-3.5)  node {4};
\draw(1.5,-4.5)  node {5};

\draw(0.5,-6.5)  node {1};
\draw(0.5,-7.5)  node {6};

\end{tikzpicture}$$
 \caption{ \color{black} 
The elements of $ {\rm LR}(\alpha\setminus \mu,\mu)$ for $\mu=(5,1^5)$ and 
$\alpha=(7,4,2^3,1^3)$.  Note that $\alpha_1+\alpha_2=7+4= 2\times 5+1=2a+1$ and $dl(\alpha)=2$. The only choice is  which pink box we place  the final entry 1 (namely the rightmost box of the second row   or the topmost box of the first column).  All other entries are forced by this choice.} 
 \label{2hooksy}
 \end{figure}

 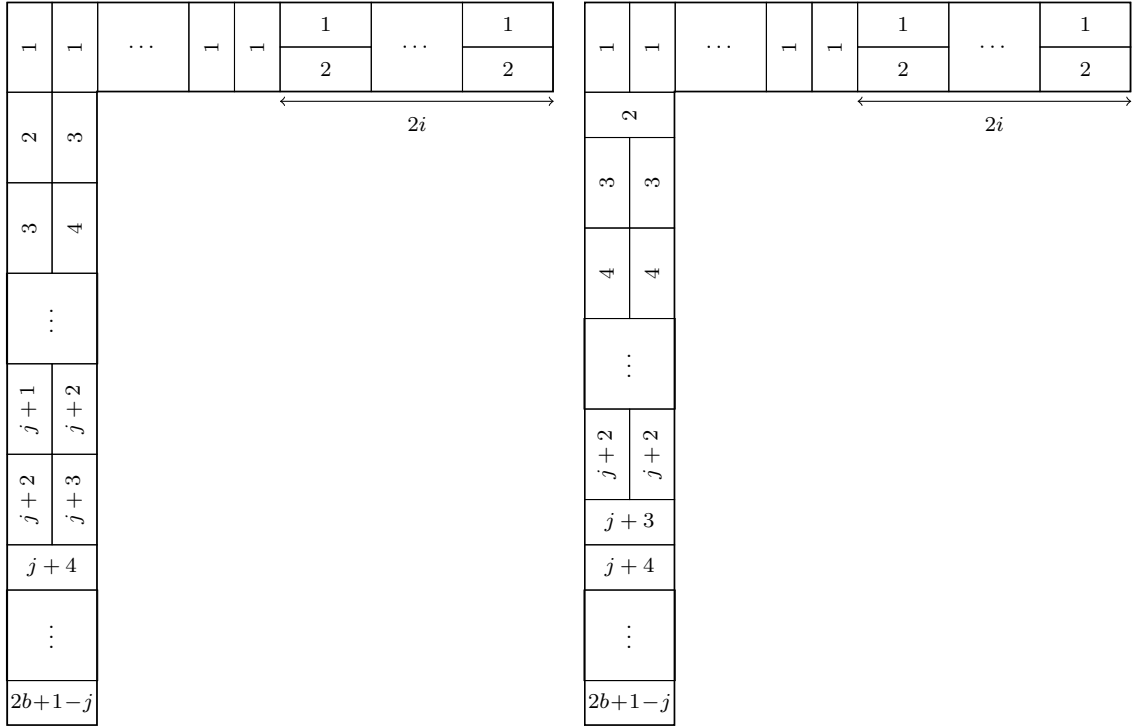
\begin{figure}[ht!]
$$  \begin{tikzpicture} [scale=0.6]
\scalefont{0.7}

      \draw(-2,-6) rectangle  node[midway,rotate=90] {$ \cdots$}(0,-8) ;
       \draw(-2,-13) rectangle  node[midway,rotate=90] {$ \cdots$}(0,-15) ;

 \draw[<->] (4,-2.2)-- (10,-2.2);
 \draw  (7,-2.7) node {$2i$};

\path(0,0) coordinate (origin);

    \clip (-2,0)--++(0:12)--++(-90:2)--++(180:10)--++(-90:14)--++(180:2)--++(90:16);

 \draw (6,0) rectangle  node[midway] {$ \cdots$}(8,-2) ; 
      \draw(0,0) rectangle  node[midway] {$ \dots$}(2,-2) ;

  \draw[very thick]  (-2,0)--++(0:12)--++(-90:2)--++(180:10)--++(-90:14)--++(180:2)--++(90:16);

  \draw(10,-1) rectangle  node[midway] {2}(12,-2) ;
  \draw(8,-1) rectangle  node[midway] {2}(10,-2) ;

  \draw(4,-1) rectangle  node[midway] {2}(6,-2) ;

   \draw(4,-1) rectangle  node[midway] {1}(6,0) ;

  \draw(10,-1) rectangle  node[midway] {1}(12,0) ;
  \draw(8,-1) rectangle  node[midway] {1}(10,0) ;
  \draw(-2,0) rectangle  node[midway, rotate=90] {1}(-1,-2) ;
    \draw(0,0) rectangle  node[midway, rotate=90] {1}(-1,-2) ;
    \draw(3,0) rectangle  node[midway, rotate=90] {1}(4,-2) ;
     \draw(3,0) rectangle  node[midway, rotate=90] {1}(2,-2) ;

    \draw(-2,-2) rectangle  node[midway, rotate=90]  {2}(-1,-4) ;
    \draw(-1,-2) rectangle  node[midway, rotate=90]  {3}(0,-4) ;\draw(-2,-4) rectangle  node[midway, rotate=90]  {3}(-1,-6) ;

      \draw(-1,-6) rectangle  node[midway, rotate=90]  {4}(-0,-4) ;

        \draw(-1,-8) rectangle  node[midway, rotate=90]  {$j+1$}(-2,-10) ;
                \draw(-1,-8) rectangle  node[midway, rotate=90]  {$j+2$}(0,-10) ;

                      \draw(-1,-12) rectangle  node[midway, rotate=90]  {$j+2$}(-2,-10) ;
                \draw(-1,-10) rectangle  node[midway, rotate=90]  {$j+3$}(0,-12) ;

                  \draw(0,-12) rectangle  node[midway]  {$j+4$}(-2,-13) ;

                       \draw(0,-15) rectangle  node[midway]  {$2b\!+\!1\!-\!j$}(-2,-16) ;

   \end{tikzpicture}  \quad 
   \begin{tikzpicture} [scale=0.6]
\scalefont{0.7}

     \draw(-2,-7) rectangle  node[midway,rotate=90] {$ \cdots$}(0,-9) ;
     \draw(-2,-13) rectangle  node[midway,rotate=90] {$ \cdots$}(0,-15) ;

 \draw[<->] (4,-2.2)-- (10,-2.2);
 \draw  (7,-2.7) node {$2i$};

\path(0,0) coordinate (origin);

    \clip (-2,0)--++(0:12)--++(-90:2)--++(180:10)--++(-90:14)--++(180:2)--++(90:16);

 \draw(6,0) rectangle  node[midway] {$ \cdots$}(8,-2) ;

      \draw(0,0) rectangle  node[midway] {$ \dots$}(2,-2) ;

  \draw[very thick]  (-2,0)--++(0:12)--++(-90:2)--++(180:10)--++(-90:14)--++(180:2)--++(90:16);

  \draw(10,-1) rectangle  node[midway] {2}(12,-2) ;
  \draw(8,-1) rectangle  node[midway] {2}(10,-2) ;

  \draw(4,-1) rectangle  node[midway] {2}(6,-2) ;

   \draw(4,-1) rectangle  node[midway] {1}(6,0) ;

  \draw(10,-1) rectangle  node[midway] {1}(12,0) ;
  \draw(8,-1) rectangle  node[midway] {1}(10,0) ;
  \draw(-2,0) rectangle  node[midway, rotate=90] {1}(-1,-2) ;
    \draw(0,0) rectangle  node[midway, rotate=90] {1}(-1,-2) ;
    \draw(3,0) rectangle  node[midway, rotate=90] {1}(4,-2) ;
     \draw(3,0) rectangle  node[midway, rotate=90] {1}(2,-2) ;

     \draw(-2,-2) rectangle  node[midway, rotate=90]  {2}(0,-3) ;  \draw(-1,-3) rectangle  node[midway, rotate=90]  {3}(0,-5) ;\draw(-2,-3) rectangle  node[midway, rotate=90]  {3}(-1,-5) ;

     \draw(-1,-4-1) rectangle  node[midway, rotate=90] {4}(-2,-6-1) ;
     \draw(-1,-4-1) rectangle  node[midway, rotate=90]  {4}(-0,-6-1) ;

     \draw(-1,-8-1) rectangle  node[midway, rotate=90] {$j+2$}(-2,-10-1) ;
     \draw(-1,-8-1) rectangle  node[midway, rotate=90] {$j+2$}(-0,-10-1) ;

          \draw(0,-11) rectangle  node[midway] {$j+3$}(-2,-12) ;
          \draw(0,-12) rectangle  node[midway] {$j+4$}(-2,-13) ;

          \draw(0,-15) rectangle  node[midway] {$2b\!+\!1\!-\!j$}(-2,-16) ;

   \end{tikzpicture} $$
   \caption{The two domino Littlewood--Richardson tableaux of shape $(a,1^b)$ and weight a double hook $\alpha=(2a-i,i+1,2^j,1^{2b-1-2j})$ satisfying $\alpha_1+\alpha_2=2a+1$ and $dl(\alpha)=2$. }
   \label{dominotableaux}
\end{figure}
Thus it remains to show that if  $\alpha$ is such that $\langle s_\mu \boxtimes s_\mu \mid s_\alpha \rangle =2$  (in other words, if $\alpha_1+\alpha_2=2a+1$ and $dl(\alpha)=2$) 
then this coefficient can be  ``split" so that 
$$   \langle s_{(2)}  \circ s_\mu \mid s_\alpha \rangle =1 \qquad   \langle s_{(1^2)} \circ s_\mu \mid s_\alpha \rangle =1 $$
as already discussed in \cref{splitters}.  
  \color{black}  
In other words, we need to describe the domino--Littlewood--Richardson tableaux of this form.  Firstly, we can rewrite $\alpha$ in the form $\alpha=(2a-i,i+1,2^j,1^{2b-1-2j})$
for
$i,j \geq 1$.  With this notation fixed, the pair of domino Littlewood--Richardson  tableaux are depicted in \cref{dominotableaux}.  The signs of these tableaux differ (as the total number of $(2)$-dominoes in the former is 2 
  fewer
\color{black}
 than in the latter) and   the result follows.
  \end{proof}

{\color{black} The remainder of this section is dedicated to the proof that $s_{(2)}\circ s_\mu$ and  $s_{(1^2)}\circ s_\mu$ are both multiplicity-free for the almost rectangles  $\mu=(a^b,1)$ and $(a^b,a-1)$.
We begin by considering the case that $\mu$ is a rectangle in more detail: namely, we  construct  the  elements of ${\sf Dom}((a^b), \la)$ explicitly.
 While this information was not needed to prove that  $ p((2), (a^b))=1$ (as we have already seen in  \cref{rectangle is mf}), this serves as a warm up to our construction of  the domino tableaux of shape    $\mu=(a^b,1)$ and $(a^b,a-1)$ and hence will help the reader with  our proofs that $ p((2), (a^b,1))=p((2), (a^b,a-1))=1$.     }

\begin{eg}
\color{black}
For $\mu=(3,1,1)$ we have the following plethysm products 
\begin{align*}
s_{(2)} \circ s_{(3,1,1)}
&=
s_{(3^2 2, 1^2)} + s_{(4, 2, 1^4)} + s_{(4,2^3)} + s_{(4, 3, 1^3)} + s_{(4, 3, 2, 1)} + s_{(4, 4, 2)} + s_{(5, 2, 1^3)} 
\\	& \quad + s_{(5, 2^2, 1)} + s_{(5, 3, 1^2)} + s_{(6, 1^4)} + s_{(6, 2^2)}
\\
 s_{(1^2)} \circ s_{(3,1,1)}		&=
s_{(3^2 1^4)} + s_{(3^2 2^2)} + s_{(4, 2^2, 1^2)} + s_{(4, 3, 1^3)} + s_{(4, 3, 2, 1)} + s_{(4, 4, 1^2)} + s_{(5, 1^4, 1)} \\	& \quad + s_{(5, 2, 1^3)} + s_{(5, 2^2, 1)} + s_{(5, 3, 2)} + s_{(6, 2, 1^2)}
\end{align*}
and the reader is invited to sum these term-wise to obtain the coefficients for the decomposition of 
$ s_{(3,1)}\boxtimes  s_{(3,1)}$ given in \cref{=2}.  
The partitions which label constituents of both products are $(5,2^2,1)$, $(5,2,1^3)$, $(4,3,2,1)$ and $(4,3,1^3)$;   
 the corresponding domino tableaux for the first 2 of these partitions are given in \cref{extra1}.

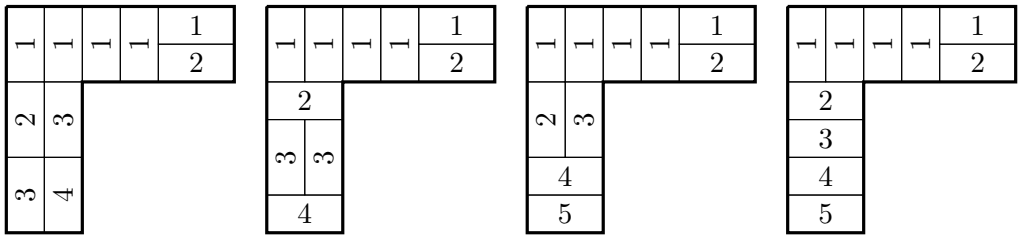
\begin{figure}[ht!]$$
 \begin{tikzpicture} [scale=1]
 \draw[very thick](0,0)--++(0:3)--++(-90:1)--++(180:2)--++(-90:2)--++(180:1)--++(90:3);


 \draw(0,0) rectangle (0.5,-1) node [midway, rotate=90] {1};

 \draw(0.5,0) rectangle (1,-1) node [midway, rotate=90] {1};

  \draw(1,0) rectangle (1.5,-1) node [midway, rotate=90] {1};

 \draw(1.5,0) rectangle (2,-1) node [midway, rotate=90] {1};

 \draw(2,0) rectangle (3,-0.5) node [midway,  ] {1};
 \draw(2,-1) rectangle (3,-0.5) node [midway,  ] {2};

 \draw(0,-1) rectangle (0.5,-2) node [midway, rotate=90] {2};
 \draw(0.5,-1) rectangle (1,-2) node [midway, rotate=90] {3};

 \draw(0,-1-1) rectangle (0.5,-2-1) node [midway, rotate=90] {3};
 \draw(0.5,-1-1) rectangle (1,-2-1) node [midway, rotate=90] {4};

 \end{tikzpicture}
\quad
 \begin{tikzpicture} [scale=1]
 \draw[very thick](0,0)--++(0:3)--++(-90:1)--++(180:2)--++(-90:2)--++(180:1)--++(90:3);


 \draw(0,0) rectangle (0.5,-1) node [midway, rotate=90] {1};

 \draw(0.5,0) rectangle (1,-1) node [midway, rotate=90] {1};

  \draw(1,0) rectangle (1.5,-1) node [midway, rotate=90] {1};

 \draw(1.5,0) rectangle (2,-1) node [midway, rotate=90] {1};

 \draw(2,0) rectangle (3,-0.5) node [midway,  ] {1};
 \draw(2,-1) rectangle (3,-0.5) node [midway,  ] {2};
 
  \draw(0,-1) rectangle (1,-1.5) node [midway ] {2};

 \draw(0,-1-0.5) rectangle (0.5,-2-0.5) node [midway, rotate=90] {3};
 \draw(0.5,-1-0.5) rectangle (1,-2-0.5) node [midway, rotate=90] {3};

   \draw(0,-1-1.5) rectangle (1,-1.5-1.5) node [midway ] {4};

 \end{tikzpicture}
\quad
 \begin{tikzpicture} [scale=1]
 \draw[very thick](0,0)--++(0:3)--++(-90:1)--++(180:2)--++(-90:2)--++(180:1)--++(90:3);


 \draw(0,0) rectangle (0.5,-1) node [midway, rotate=90] {1};

 \draw(0.5,0) rectangle (1,-1) node [midway, rotate=90] {1};

  \draw(1,0) rectangle (1.5,-1) node [midway, rotate=90] {1};

 \draw(1.5,0) rectangle (2,-1) node [midway, rotate=90] {1};

 \draw(2,0) rectangle (3,-0.5) node [midway,  ] {1};
 \draw(2,-1) rectangle (3,-0.5) node [midway,  ] {2};

 \draw(0,-1) rectangle (0.5,-2) node [midway, rotate=90] {2};
 \draw(0.5,-1) rectangle (1,-2) node [midway, rotate=90] {3};

   \draw(0,-1-1.5) rectangle (1,-1.5-1.5) node [midway ] {5};

   \draw(0,-1-1 ) rectangle (1,-1.5-1 ) node [midway ] {4};

 \end{tikzpicture}\quad
 \begin{tikzpicture} [scale=1]
 \draw[very thick](0,0)--++(0:3)--++(-90:1)--++(180:2)--++(-90:2)--++(180:1)--++(90:3);


 \draw(0,0) rectangle (0.5,-1) node [midway, rotate=90] {1};

 \draw(0.5,0) rectangle (1,-1) node [midway, rotate=90] {1};

  \draw(1,0) rectangle (1.5,-1) node [midway, rotate=90] {1};

 \draw(1.5,0) rectangle (2,-1) node [midway, rotate=90] {1};

 \draw(2,0) rectangle (3,-0.5) node [midway,  ] {1};
 \draw(2,-1) rectangle (3,-0.5) node [midway,  ] {2};

 \draw(0,-1) rectangle (1,-1.5) node [midway ] {2};
 \draw(0,-1.5) rectangle (1,-2) node [midway ] {3};

   \draw(0,-1-1.5) rectangle (1,-1.5-1.5) node [midway ] {5};

   \draw(0,-1-1 ) rectangle (1,-1.5-1 ) node [midway ] {4};

 \end{tikzpicture}
 $$
 \caption{\color{black} The  domino tableaux   of shape $(3,1^2)$ and weight $(5,2^2,1)$ and $(5,2,1^3)$.  Compare with \cref{dominotableaux}.   }
 \label{extra1}
 \end{figure}
\end{eg}

\begin{defn}\label{ineedalabel}
 \renewcommand{\alpha}{\lambda}
\renewcommand{\theta}{{\widehat{\la}}}
\renewcommand{\vartheta}{{\widehat{\la}}}

Let $\vartheta=(\vartheta_1,\dots,\vartheta_\ell) \subseteq (a^b)$ be a  partition with
 $\ell =\ell(\vartheta) \leq b$. 
We let $\SSTT^\vartheta$ be the domino tableau constructed in two steps:
\begin{itemize}[leftmargin=*]
\item tile in the region  $[\vartheta]^{2\times 2} $ with unlabelled $(1^2)$-dominoes and the region
		 \linebreak $  [(a^b)]^{2\times 2}\setminus [\vartheta]^{2\times 2}   $ with unlabelled $(2)$-dominoes.
\item label the dominoes down each column with consecutive integers 	beginning with 1.  	
\end{itemize}
We refer to $\SSTT^\vartheta$ as the {\sf admissible} tableau for $\vartheta$ (or simply the   {\sf admissible} $\vartheta$-tableau),
\color{black}
and we call the partition $\vartheta$ a {\sf rectangular weight}.
\color{black}
  \end{defn}

 \begin{figure}[ht!]
$$    \begin{tikzpicture} [scale=1]

   \draw[pattern=north east lines, pattern color=magenta!40](0,0)--++(0:4)--++(-90:1)--++(180:2)--++(-90:1)--++(180:1) --++(-90:1)--++(180:1)  --++(90:3);
 \draw[very thick](0,0)--++(0:6)--++(-90:3)--++(180:6)--++(90:3);
 \clip(0,0)--++(0:6)--++(-90:3)--++(180:6)--++(90:3);

          \draw(0,0) rectangle (0.5,-1) node [midway, rotate=90] {1};
 \draw(0.5,0) rectangle (1,-1) node [midway, rotate=90] {1};
 \draw(1.5,0) rectangle (2,-1) node [midway, rotate=90] {1};
 \draw(1,0) rectangle (1.5,-1) node [midway, rotate=90] {1};
  \draw(2,0) rectangle (2.5,-1) node [midway, rotate=90] {1};
 \draw(0.5+2,0) rectangle (0.5+2.5,-1) node [midway, rotate=90] {1};
 \draw(0.5+0.5+2,0) rectangle (0.5+0.5+2.5,-1) node [midway, rotate=90] {1};
  \draw(0.5+0.5+0.5+2,0) rectangle (0.5+0.5+0.5+2.5,-1) node [midway, rotate=90] {1};

  \draw(0.5+0.5+0.5+2+0.5,0) rectangle (0.5+0.5+0.5+2.5+1,-0.5) node [midway] {1};
    \draw(1+0.5+0.5+0.5+2+0.5,0) rectangle (1+0.5+0.5+0.5+2.5+1,-0.5) node [midway] {1};

 \draw(0.5,0-1) rectangle (1,-1-1) node [midway, rotate=90] {2};
 \draw(1.5,0-1) rectangle (2,-1-1) node [midway, rotate=90] {2};
 \draw(1,0-1) rectangle (1.5,-1-1) node [midway, rotate=90] {2};
  \draw(0.5,0-1) rectangle (0,-1-1) node [midway, rotate=90] {2};

  \draw(-2+0.5+0.5+0.5+2+0.5,0-0.5-0.5) rectangle (-2+0.5+0.5+0.5+2.5+1,-0.5-0.5-0.5) node [midway] {2};
    \draw(-2+1+0.5+0.5+0.5+2+0.5,0-0.5-0.5) rectangle (-2+1+0.5+0.5+0.5+2.5+1,-0.5-0.5-0.5) node [midway] {2};
  \draw(0.5+0.5+0.5+2+0.5,0-0.5) rectangle (0.5+0.5+0.5+2.5+1,-0.5-0.5) node [midway] {2};
    \draw(1+0.5+0.5+0.5+2+0.5,0-0.5) rectangle (1+0.5+0.5+0.5+2.5+1,-0.5-0.5) node [midway] {2};

 \draw(0.5,0-1-1) rectangle (1,-1-1-1) node [midway, rotate=90] {3};
 \draw(0,0-1-1) rectangle (0.5,-1-1-1) node [midway, rotate=90] {3};
 \draw(1,0-1-1) rectangle (2,-1-1-0.5) node [midway] {3};

  \draw(-2+0.5+0.5+0.5+2+0.5,0-0.5-0.5-0.5) rectangle (-2+0.5+0.5+0.5+2.5+1,-0.5-0.5-0.5-0.5) node [midway] {3};
    \draw(-2+1+0.5+0.5+0.5+2+0.5,0-0.5-0.5-0.5) rectangle (-2+1+0.5+0.5+0.5+2.5+1,-0.5-0.5-0.5-0.5) node [midway] {3};
  \draw(0.5+0.5+0.5+2+0.5,0-0.5-0.5) rectangle (0.5+0.5+0.5+2.5+1,-0.5-0.5-0.5) node [midway] {3};
    \draw(1+0.5+0.5+0.5+2+0.5,0-0.5-0.5) rectangle (1+0.5+0.5+0.5+2.5+1,-0.5-0.5-0.5) node [midway] {3};

          \draw(1,-2.5)rectangle (2,-3)node [midway] {4};

  \draw(-2+0.5+0.5+0.5+2+0.5,0-0.5-0.5-0.5-0.5) rectangle (-2+0.5+0.5+0.5+2.5+1,-0.5-0.5-0.5-0.5-0.5) node [midway] {4};
    \draw(-2+1+0.5+0.5+0.5+2+0.5,0-0.5-0.5-0.5-0.5) rectangle (-2+1+0.5+0.5+0.5+2.5+1,-0.5-0.5-0.5-0.5-0.5) node [midway] {4};
  \draw(0.5+0.5+0.5+2+0.5,0-0.5-0.5-0.5) rectangle (0.5+0.5+0.5+2.5+1,-0.5-0.5-0.5-0.5) node [midway] {4};
    \draw(1+0.5+0.5+0.5+2+0.5,0-0.5-0.5-0.5) rectangle (1+0.5+0.5+0.5+2.5+1,-0.5-0.5-0.5-0.5) node [midway] {4};

 \draw(0,-3)rectangle (0.5,-4)node [midway,rotate=90] {4};
   \draw(0.5,-3)rectangle (1,-4)node [midway,rotate=90] {5};

  \draw(-2+0.5+0.5+0.5+2+0.5,0-0.5-0.5-0.5-0.5-0.5) rectangle (-2+0.5+0.5+0.5+2.5+1,-0.5-0.5-0.5-0.5-0.5-0.5) node [midway] {5};
    \draw(-2+1+0.5+0.5+0.5+2+0.5,0-0.5-0.5-0.5-0.5-0.5) rectangle (-2+1+0.5+0.5+0.5+2.5+1,-0.5-0.5-0.5-0.5-0.5-0.5) node [midway] {5};
  \draw(0.5+0.5+0.5+2+0.5,0-0.5-0.5-0.5-0.5) rectangle (0.5+0.5+0.5+2.5+1,-0.5-0.5-0.5-0.5-0.5) node [midway] {5};
    \draw(1+0.5+0.5+0.5+2+0.5,0-0.5-0.5-0.5-0.5) rectangle (1+0.5+0.5+0.5+2.5+1,-0.5-0.5-0.5-0.5-0.5) node [midway] {5};

  \draw(0.5+0.5+0.5+2+0.5,0-0.5-0.5-0.5-0.5-0.5) rectangle (0.5+0.5+0.5+2.5+1,-0.5-0.5-0.5-0.5-0.5-0.5) node [midway] {6};
    \draw(1+0.5+0.5+0.5+2+0.5,0-0.5-0.5-0.5-0.5-0.5) rectangle (1+0.5+0.5+0.5+2.5+1,-0.5-0.5-0.5-0.5-0.5-0.5) node [midway] {6};

             \end{tikzpicture}
\qquad
   \begin{tikzpicture} [scale=1]

   \draw[pattern=north east lines, pattern color=magenta!40](0,0)--++(0:2)--++(-90:2)--++(180:1)--++(-90:1)--++(180:1)  --++(90:3);

 \draw[very thick](0,0)--++(0:3)--++(-90:3)--++(180:3)--++(90:3);
 \clip(0,0)--++(0:3)--++(-90:3)--++(180:3)--++(90:3);

          \draw(0,0) rectangle (0.5,-1) node [midway, rotate=90] {1};
 \draw(0.5,0) rectangle (1,-1) node [midway, rotate=90] {1};
 \draw(1.5,0) rectangle (2,-1) node [midway, rotate=90] {1};
 \draw(1,0) rectangle (1.5,-1) node [midway, rotate=90] {1};
 \draw(2,0) rectangle (3,-0.5) node [midway] {1};
  \draw(2,-0.5) rectangle (3,-1) node [midway] {2};

  \draw(0.5+0.5+2,0) rectangle (0.5+0.5+2.5,-1) node [midway, rotate=90] {1};
  \draw(0.5+0.5+0.5+2,0) rectangle (0.5+0.5+0.5+2.5,-1) node [midway, rotate=90] {1};

  \draw(0.5+0.5+0.5+2+0.5,0) rectangle (0.5+0.5+0.5+2.5+1,-0.5) node [midway] {1};
    \draw(1+0.5+0.5+0.5+2+0.5,0) rectangle (1+0.5+0.5+0.5+2.5+1,-0.5) node [midway] {1};

 \draw(0.5,0-1) rectangle (1,-1-1) node [midway, rotate=90] {2};
 \draw(1.5,0-1) rectangle (2,-1-1) node [midway, rotate=90] {2};
 \draw(1,0-1) rectangle (1.5,-1-1) node [midway, rotate=90] {2};
  \draw (0.5,0-1) rectangle (0,-1-1) node [midway, rotate=90] {2};

  \draw (-2+0.5+0.5+0.5+2+0.5,0-0.5-0.5) rectangle (-2+0.5+0.5+0.5+2.5+1,-0.5-0.5-0.5) node [midway] {3};
    \draw(-2+1+0.5+0.5+0.5+2+0.5,0-0.5-0.5) rectangle (-2+1+0.5+0.5+0.5+2.5+1,-0.5-0.5-0.5) node [midway] {2};
  \draw(0.5+0.5+0.5+2+0.5,0-0.5) rectangle (0.5+0.5+0.5+2.5+1,-0.5-0.5) node [midway] {2};
    \draw(1+0.5+0.5+0.5+2+0.5,0-0.5) rectangle (1+0.5+0.5+0.5+2.5+1,-0.5-0.5) node [midway] {2};

 \draw(0.5,0-1-1) rectangle (1,-1-1-1) node [midway, rotate=90] {3};
 \draw(0,0-1-1) rectangle (0.5,-1-1-1) node [midway, rotate=90] {3};
 \draw (1,0-1-1) rectangle (2,-1-1-0.5) node [midway] {3};

  \draw(-2+0.5+0.5+0.5+2+0.5,0-0.5-0.5-0.5) rectangle (-2+0.5+0.5+0.5+2.5+1,-0.5-0.5-0.5-0.5) node [midway] {4};
    \draw(-2+1+0.5+0.5+0.5+2+0.5,0-0.5-0.5-0.5) rectangle (-2+1+0.5+0.5+0.5+2.5+1,-0.5-0.5-0.5-0.5) node [midway] {3};
  \draw(0.5+0.5+0.5+2+0.5,0-0.5-0.5) rectangle (0.5+0.5+0.5+2.5+1,-0.5-0.5-0.5) node [midway] {3};
    \draw(1+0.5+0.5+0.5+2+0.5,0-0.5-0.5) rectangle (1+0.5+0.5+0.5+2.5+1,-0.5-0.5-0.5) node [midway] {3};

          \draw(1,-2.5)rectangle (2,-3)node [midway] {4};

  \draw (-2+0.5+0.5+0.5+2+0.5,0-0.5-0.5-0.5-0.5) rectangle (-2+0.5+0.5+0.5+2.5+1,-0.5-0.5-0.5-0.5-0.5) node [midway] {5};
    \draw(-2+1+0.5+0.5+0.5+2+0.5,0-0.5-0.5-0.5-0.5) rectangle (-2+1+0.5+0.5+0.5+2.5+1,-0.5-0.5-0.5-0.5-0.5) node [midway] {4};
  \draw(0.5+0.5+0.5+2+0.5,0-0.5-0.5-0.5) rectangle (0.5+0.5+0.5+2.5+1,-0.5-0.5-0.5-0.5) node [midway] {4};
    \draw(1+0.5+0.5+0.5+2+0.5,0-0.5-0.5-0.5) rectangle (1+0.5+0.5+0.5+2.5+1,-0.5-0.5-0.5-0.5) node [midway] {4};

 \draw(0,-3)rectangle (0.5,-4)node [midway,rotate=90] {4};
   \draw(0.5,-3)rectangle (1,-4)node [midway,rotate=90] {5};

  \draw(-2+0.5+0.5+0.5+2+0.5,0-0.5-0.5-0.5-0.5-0.5) rectangle (-2+0.5+0.5+0.5+2.5+1,-0.5-0.5-0.5-0.5-0.5-0.5) node [midway] {6};
    \draw(-2+1+0.5+0.5+0.5+2+0.5,0-0.5-0.5-0.5-0.5-0.5) rectangle (-2+1+0.5+0.5+0.5+2.5+1,-0.5-0.5-0.5-0.5-0.5-0.5) node [midway] {5};
  \draw(0.5+0.5+0.5+2+0.5,0-0.5-0.5-0.5-0.5) rectangle (0.5+0.5+0.5+2.5+1,-0.5-0.5-0.5-0.5-0.5) node [midway] {5};
    \draw(1+0.5+0.5+0.5+2+0.5,0-0.5-0.5-0.5-0.5) rectangle (1+0.5+0.5+0.5+2.5+1,-0.5-0.5-0.5-0.5-0.5) node [midway] {5};

  \draw(0.5+0.5+0.5+2+0.5,0-0.5-0.5-0.5-0.5-0.5) rectangle (0.5+0.5+0.5+2.5+1,-0.5-0.5-0.5-0.5-0.5-0.5) node [midway] {6};
    \draw(1+0.5+0.5+0.5+2+0.5,0-0.5-0.5-0.5-0.5-0.5) rectangle (1+0.5+0.5+0.5+2.5+1,-0.5-0.5-0.5-0.5-0.5-0.5) node [midway] {6};

             \end{tikzpicture}
 $$
 \caption{The unique admissible tableaux
 for $\widehat{\la}=(4,2,1)\subseteq (6^3)$ and
 $\widehat{\la}=(2^2,1)\subseteq (3^3)$
 are of odd and even spin types, respectively \color{black} (see \cref{ineedalabel}).  These tableaux have weights 
 $\la=(10,8,7,5,4,2)$ and  $\la=(5^2,4,2,1^2)$ respectively.  See \cref{hkdsfhksdfhkudsfghkudsfghkuf} and \cref{uiywertyiouwteriuyoterwuyioetwruyetwryo}    for the back-and-forth between $\la$ and $\widehat{\la}$.  
 }
\label{twodomtab}\end{figure}

\begin{defn}\label{hkdsfhksdfhkudsfghkudsfghkuf} \renewcommand{\alpha}{\lambda}
\renewcommand{\theta}{{\widehat{\la}}}
\renewcommand{\vartheta}{{\widehat{\la}}}
\color{black} Given a rectangular weight  $\theta\subseteq (a^b)$  as above
we define   
$$ \alpha_i=
 \begin{cases}
a+\theta_i 					&\text{ for } 1\leq i \leq \ell , \\
 a  							&\text{ for }\ell+1\leq i \leq 2b-\ell \\
 a-\theta_{2b+1-i} 				&\text{ for } 2b-\ell+1\leq i \leq 2b,
 \end{cases}	
$$
and we write ${\sf weight}_{a,b}(\widehat{\la})=\la$.  
\end{defn}

\begin{rmk}\label{ishouldref}  \renewcommand{\alpha}{\lambda}
\renewcommand{\theta}{{\widehat{\la}}}
\renewcommand{\vartheta}{{\widehat{\la}}}
Given  $\theta\subseteq (a^b)$  as above, the weight partition  $\la$ is the weight of  $\SSTT^\vartheta$,  the admissible tableau   for $\vartheta$.
Given $\la={\sf weight}_{a,b}(\widehat{\la})$
for some $\widehat{\la}\subseteq (a^b)$ we can reconstruct
  $\theta\subseteq (a^b)$   by noting that
 ${\widehat{\la}}_i=\tfrac{1}{2}( \lambda_i - \lambda_{2b+1-i})$ for $1\leq   i \leq b$.
 \end{rmk}

\begin{eg}\label{uiywertyiouwteriuyoterwuyioetwruyetwryo}   \color{black} 
For $\widehat{\la}=(4,2,1)\subseteq (6^3)$ 
 depicted in \cref{twodomtab} have 
$$
{\sf  weight}_{6,3}(4,2,1)= (6+4,6+2,6+1,6-1,6-2,6-4)=(10,8,7,5,4,2)
$$
which we have   calculated using \cref{hkdsfhksdfhkudsfghkudsfghkuf}.   One can verify that these are the weights of the tableaux 
in \cref{twodomtab} simply by counting the number of dominoes.  
We can recover the rectangular weight 
as follows 
$$
\widehat{(10,8,7,5,4,2)}= (\tfrac{1}{2}(10-2),\tfrac{1}{2}(8-4), \tfrac{1}{2}(7-5))
 =(4,2,1)
$$
using \cref{ishouldref}.  
 
\end{eg}

\begin{prop}\label{rectangle}
\color{black} Let $\lambda\vdash2 ab$  with $\ell (\lambda)\leq 2b$.
 We have that
$$\langle s_{(a^b)} \boxtimes  s_{(a^b)}\mid s_\lambda\rangle =
\begin{cases}
1	&\text{if $ \la={\sf weight}_{a,b}(\widehat{\la})$ for some  } {\widehat{\la}} \subseteq (a^b) \\
 0		&\text{otherwise.}
\end{cases}
$$
In the former case, the unique element of ${\sf Dom}((a^b),\lambda)$ is given by the admissible tableau
$\SSTT^{\widehat{\la}}$ associated to ${\widehat{\la}}\subseteq (a^b)$.

\end{prop}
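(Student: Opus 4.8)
The plan is to deduce the numerical statement from the Littlewood--Richardson rule by way of the Carr\'e--Leclerc theorem, and then to identify the unique domino tableau by a direct check of the construction in \cref{ineedalabel}. By Carr\'e--Leclerc, $\langle s_{(a^b)}\boxtimes s_{(a^b)}\mid s_\lambda\rangle=|{\sf Dom}((a^b),\lambda)|$, so it is enough to prove: (i) this number is $0$ unless $\lambda_i+\lambda_{2b+1-i}=2a$ for every $1\leq i\leq b$, in which case it equals $1$; and (ii) when it equals $1$, the admissible tableau $\SSTT^{\widehat\lambda}$ lies in ${\sf Dom}((a^b),\lambda)$, forcing it to be the unique element.

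For (i): since $(a^b)$ is a rectangle, $s_{(a^b)}\boxtimes s_{(a^b)}$ is multiplicity-free by \cref{thm:mf-outer}, so the coefficient is $0$ or $1$, and it only remains to describe the support, namely to show that $s_\lambda$ occurs exactly when $\lambda$ is self-complementary in the $2b\times 2a$ box (equivalently $\lambda_i+\lambda_{2b+1-i}=2a$ for all $1\leq i\leq b$). This is the rectangular case of the Littlewood--Richardson rule; it may be verified directly by noting that the first row of a filling of $\lambda/(a^b)$ of content $(a^b)$ consists of $1$'s, which places $\lambda_1-a$ of the $a$ available $1$'s there and forces the other $2a-\lambda_1$ into row $b+1$, whence $\lambda_1+\lambda_{b+1}\geq 2a$; the analogous analysis on the remaining rows, together with the size constraint $|\lambda|=2ab$, turns all these inequalities into equalities, and conversely every self-complementary $\lambda$ occurs. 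By \cref{ishouldref} this condition is precisely that $\lambda={\sf weight}(\widehat\lambda)$ with $\widehat\lambda_i=\tfrac12(\lambda_i-\lambda_{2b+1-i})$, and one checks that this $\widehat\lambda$ is a partition contained in $(a^b)$ (it is weakly decreasing because $\lambda$ is and $i\mapsto\lambda_{2b+1-i}$ is weakly increasing, non-negative since $i\leq b<2b+1-i$, and satisfies $2\widehat\lambda_i\leq\lambda_i\leq 2a$). This yields the displayed formula.

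For (ii): I would verify straight from \cref{ineedalabel} that $\SSTT^{\widehat\lambda}$ is a semistandard domino tableau of shape $(a^b)$ of weight $\lambda$. Column-strictness is built in, since the labels increase by $1$ down each column of dominoes; weak increase along a row $r$ holds because the vertical $(1^2)$-dominoes of $[\widehat\lambda]^{2\times2}$ meeting that row all carry the label $\lceil r/2\rceil$, while the horizontal $(2)$-domino occupying columns $2d-1,2d$ of row $r$ carries the label $r-\widehat\lambda^T_d\geq\lceil r/2\rceil$, which is weakly increasing in $d$; and the weight is $\lambda$ by \cref{ishouldref}. It then remains to verify the lattice permutation condition for the reading word of $\SSTT^{\widehat\lambda}$, which is read column-pair by column-pair from the right: the columns $2d-1,2d$ contribute the increasing run $1,\dots,\widehat\lambda^T_d$ from their vertical dominoes, followed by an increasing run of consecutive labels from the horizontal dominoes beneath. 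Granting that this reading word is a lattice permutation, $\SSTT^{\widehat\lambda}\in{\sf Dom}((a^b),\lambda)$; since $|{\sf Dom}((a^b),\lambda)|=1$ in this case by (i) and Carr\'e--Leclerc, $\SSTT^{\widehat\lambda}$ is its unique element, and for all other $\lambda$ the set is empty.

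I expect the lattice-permutation check in (ii) to be the main obstacle: one must describe the reading word of $\SSTT^{\widehat\lambda}$ exactly and verify, for every pair $(i-1,i)$ simultaneously, that the brackets of $\Sigma_{(i-1,i)}$ nest --- including across the interface between the vertical region $[\widehat\lambda]^{2\times2}$ and the horizontal region. The guiding principle is that an occurrence of a label $i$ on a horizontal domino in columns $2d-1,2d$ is supported by the occurrence of $i$ immediately to its upper right, in columns $2d+1,2d+2$ (or on a vertical domino of $[\widehat\lambda]^{2\times2}$), so that no closed bracket is left unmatched. Part (i) is entirely classical; I include the argument only to make the passage to the ${\sf weight}(\widehat\lambda)$ parametrisation transparent, and it could instead simply be quoted.
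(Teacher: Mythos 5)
Your route (Stembridge for multiplicity-freeness, the classical description of $s_{(a^b)}\boxtimes s_{(a^b)}$, Carr\'e--Leclerc, then a direct verification that $\SSTT^{\widehat{\la}}$ lies in ${\sf Dom}((a^b),\la)$) is genuinely different from the paper's, which never quotes the classical rectangle product: the paper takes an arbitrary element of ${\sf Dom}((a^b),\la)$ and shows, double column by double column, that it is forced to be the admissible tableau, thereby obtaining the support, the uniqueness, and --- importantly for the later propositions on $(a^b,1)$ and $(a^b,a-1)$ --- the precise structure of the reading word recorded in \cref{useful!} and \cref{usefulremarkweneed}. Your plan could in principle work, but as written it has a genuine gap exactly where the content lies: the lattice-permutation property of the reading word of $\SSTT^{\widehat{\la}}$ is never proved. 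You say ``granting that this reading word is a lattice permutation'' and flag it as ``the main obstacle'' with only a guiding principle (which, as phrased, even matches a letter $i$ with another $i$ rather than with an earlier $i-1$). Without this check the second assertion of the proposition --- that the unique element of ${\sf Dom}((a^b),\la)$ is $\SSTT^{\widehat{\la}}$ --- is unproven, and it is precisely this explicit identification that the rest of Section 3 uses.

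There is also a concrete error in the argument you offer for part (i). From the placement of the $1$'s you get $\la_1+\la_{b+1}\geq 2a$, and you then claim that ``the analogous analysis on the remaining rows'' plus $|\la|=2ab$ forces equalities. The analogous inequalities would be $\la_i+\la_{b+i}\geq 2a$, and summing them does force $\la_i+\la_{b+i}=2a$ for all $i$ --- but this is false for genuine constituents: for $a=b=2$ the partition $\la=(4,3,1)$ occurs in $s_{(2^2)}\boxtimes s_{(2^2)}$ (and is ${\sf weight}((2,1))$), yet $\la_2+\la_4=3\neq 4$. The letter $i$ need not be confined to rows $i$ and $b+i$, so the pairing of rows in your inequalities is wrong; the correct condition pairs row $i$ with row $2b+1-i$, and your sketch does not produce it. This is partially mitigated by your remark that the support description is classical and could simply be quoted (and your translation of self-complementarity into the ${\sf weight}(\widehat{\la})$ parametrisation via \cref{ishouldref}, as well as the semistandardness check for $\SSTT^{\widehat{\la}}$ from \cref{ineedalabel}, are fine), but then the proof would rest on an external citation for (i) and would still be missing the lattice verification in (ii); the paper's single structural argument delivers both at once.
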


\begin{proof}
 \renewcommand{\alpha}{\lambda}
\renewcommand{\theta}{{\widehat{\la}}}
\renewcommand{\vartheta}{{\widehat{\la}}}

Let $\SSTT \in {\sf Dom}(a^b,\alpha)$ for some $\alpha\vdash 2ab$.  Let $R(\SSTT ) $ denote the reading word of $\SSTT$.
In the rightmost column, $R(\SSTT ) $ only reads the labels of $(1^2)$-dominoes.
Thus all $(1^2)$-dominoes occur above $(2)$-dominoes in this column and they  are labelled by consecutive numbers starting from 1.
Thus the reading word for this column is $1,2,\dots,i_{2a}$ for some $i_{2a}\leq b$.
 Before reading $R(\SSTT)$ for the $(2a-1)$th column,  we note that
 adjacent to every $(1^2)$-domino of label $1\leq j \leq i_{2a}$ in column $2a$ we have another
 $(1^2)$-domino of the same label in column $2a-1$
   (by the semistandard condition).
   The remaining rows of the $(2a-1)$th column were all previously  determined to be   $(2)$-dominoes.
   By the lattice permutation condition, these horizontal dominoes have labels $i_{2a}+1,i_{2a}+2,\dots, 2b-i_{2a}$.
We remark that all the  dominoes we have determined so far
belong to a unique square $(r,c)_{2}:=\{ 2r-1,2r\}\times \{2c-1,2c\}$ for some $(r,c)\in (a^b)$ {\color{black}with $c=a$}.
Therefore it makes sense to speak of us having just determined the $a$th double-column.
The reading word of this double column is a prefix of the reading word of $\SSTT$ and is of the form
$$
R_a(\SSTT)=(1,2,3,\dots, i_{2a},1,2,3,\dots, i_{2a},i_{2a}+1 ,i_{2a}+2, \dots,  {2b-i_{2a})}.
$$
 The only numbers $i$ in $R_a(\SSTT)$ which are free to support a subsequent  $ i+1$
 in $R (\SSTT)\setminus R_a(\SSTT)$  under the system of parentheses are $ i_{2a}$ and $ {2 b-i_{2a}}$.
  This is summarised visually in 
  in \cref{below1} below.
 
 \begin{figure}[ht!]  
 $$  \scalefont{0.8}  \begin{tikzpicture} [xscale=1.3,yscale=-1.25]

   \draw[pattern=north east lines, pattern color=cyan!40](0,-0.5)--++(0:1)--++(-90:0.5)--++(180:1)--++(90:0.5) ;
   \draw[pattern=north east lines, pattern color=cyan!40](0+0.5,-3)--++(0:0.5)--++(-90:1)--++(180:0.5)--++(90:1) ;
 \draw[very thick](0,-1 )--(0,-0.5)--++(0:1)--++(-90:0.5 );
  \draw (0,-1)--++(0:1);  
  \draw[densely dotted,very thick](0,-1 )--++(-90:1) (1,-1 )--++(-90:1);
 \draw[very thick](0,-2)--++(-90:2) (1,-2)--++(-90:2);   
  \draw(0,-2)--++(0:1);
    \draw(0,-2.5)--++(0:1);
        \draw(0,-3)--++(0:0.5)--++(-90:1);
            \draw(0,-4)--++(0:1);            \draw(0,-3)--++(0:1);

 \draw[  densely dotted](0.5,-4)--++(-90:1) ;
 \draw[very thick,densely dotted](0,-4)--++(-90:1) (1,-4)--++(-90:1);   
  \draw[very thick] (0,-5)--++(-90:2) --++(0:1)--++(90:2);     
  \draw (0.5,-7)--++(90:2);
  \draw(0,-6)--++(0:1);       \draw(0,-5)--++(0:1);            
  \draw(0.5,-0.75) node {$2b-i_{2a} $ };

  \draw(0.5,-2.25) node {$i_{2a}+2 $ };
  \draw(0.5,-2.75) node {$i_{2a}+1 $ };

              \draw(0.25,-3.5) node[rotate=90]  {$i_{2a}  $ };
                            \draw(0.75,-3.5) node[rotate=90]  {$i_{2a}  $ };

      \draw(0.25,-5.5) node[rotate=90]  {$ 2  $ };
                            \draw(0.75,-5.5) node[rotate=90]  {$2  $ };

      \draw(0.25,-6.5) node[rotate=90]  {$ 1  $ };
                            \draw(0.75,-6.5) node[rotate=90]  {$1  $ };             \end{tikzpicture}$$
 \caption{\color{black}The last two columns (or last ``double column") of an arbitrary tableau in ${\sf Dom}(a^b,\alpha)$.  We have highlighted in blue  the only two dominoes whose  integer entries are free to support subsequent terms in $R (\SSTT)\setminus R_a(\SSTT)$.  Notice that the length of the column (namely, $2b$) and the final entry in a $(1^2)$-domino (namely, $i_{2a}$) together determine the entry in the final $(2)$-domino (namely $2b-i_{2a}$)}
 \label{below1}
 \end{figure}
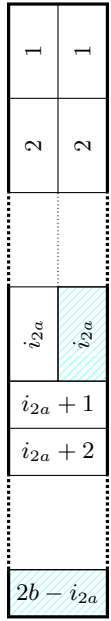

 Before reading $R(\SSTT)$ for the $(2(a-1))$th column,  we note that, 
\color{black} 
 as $\SSTT$ is semistandard,
 \color{black}
 adjacent to every $(1^2)$-domino of label $1\leq j \leq i_{2a}$ in column $2a-1$ we have another
 $(1^2)$-domino of the same label in column $2(a-1)$.  
 (\color{black} In other words, the first 
 $2i_{2a}$ rows 
 \color{black}
 of the penultimate column are the same as those of the the  $2a$th and $(2a-1)$th columns.) \color{black}   
Similarly to how we argued when reading the $2a$th column, we see that all $(1^2)$-dominoes must appear above $(2)$-dominoes
(as all the labels $j$ of these subsequent dominoes are $i_{2a}<j\leq 2b-i_{2a}$ and thus cannot be supported by elements of $R_a(\SSTT)$).
 The labels of these subsequent  $(1^2)$-dominoes are consecutive $i_{2a}+1,\dots,i_{2(a-1)}$.  
  {\color{black}In particular, we note that  $i_{2a}\leq i_{2(a-1)}\leq  b  $}.
 Before reading $R(\SSTT)$ for the $(2a-3)$th column,  we note that
 adjacent to every $(1^2)$-domino of label $1\leq j \leq i_{2(a-1)}$ in column $2(a-1)$ we have another
 $(1^2)$-domino of the same label in column $2a-3$.
 The remaining rows of the $(2a-3)$th column were all previously determined to be   $(2)$-dominoes.
 By the \color{black}
 semistandard property and the
 \color{black}
   lattice permutation condition, these labels are $i_{2(a-1)}+1 ,i_{2(a-1)}+2, \dots    $
 Therefore it makes sense to speak of us having just determined the $(a-1)$th double-column.
The reading word of this double column is a subword of the reading word of $\SSTT$ and is of the form
$$
R_{a-1}(\SSTT)=(1,2,3,\dots, i_{2(a-1)},1,2,3,\dots, i_{2(a-1)},i_{2(a-1)}+1 ,i_{2(a-1)}+2, \dots,  {2b-i_{2(a-1)})}.
$$
Repeating this argument, we deduce that  $\SSTT$ is indeed {\color{black}the admissible $\theta$-tableau   for
 $\theta$ with $\theta^T=(i_2,i_4,\dots, i_{2a})$} with reading word
\[
R_{a}(\SSTT)\circ R_{a-1}(\SSTT)\circ \dots \circ R_{1}(\SSTT). \qedhere
\]
   \end{proof}

\begin{rmk}\label{useful!}
We emphasise that the only numbers in  $R_k(\SSTT)$ which were free to support a subsequent integer
in
 $R (\SSTT) \setminus \cup_{j\leq k}\{ R_k(\SSTT)\}$   were   $ i_{2k}$ and $ {2 b-i_{2k}}$;  however, these integers {\em never did} support any subsequent integer.
 In particular each subword $R_k(\SSTT)$ of $R(\SSTT)$ for $1\leq k \leq a$ was itself a lattice permutation.
\end{rmk}

\begin{prop}\label{3.13}
 For $\nu \vdash 2$,
the products $s_\nu\circ s_{(a^b,1) }$   are   multiplicity-free.
\end{prop}

 \begin{proof}
Let  $\SSTT \in {\sf Dom}((a^b,1),\alpha)$ for some $\alpha\vdash 2ab+2$.
 Proceeding as in the rectangle case, we deduce that any domino $D$ in  $\SSTT$
belongs to a unique square $(r,c)_2=\{ 2r-1,2r\}\times \{2c-1,2c\}$ for some $(r,c)\in (a^b,1)$.  In particular, it makes sense to factorise the reading word as
$$
R_a(\SSTT)\circ R_{a-1}(\SSTT)\circ \dots \circ R_1(\SSTT)
$$
where $R_i(\SSTT)$ is the reading word of the $i$th double column.
Moreover, each $R_i(\SSTT)$ is itself a lattice permutation for $i>1$ just as in \cref{useful!}.
This is not true for $R_1(\SSTT)$ as we see in the example in \cref{cupbox} below, since the dominoes in $(b+1,1)_2$ will,  in general, be  matched with elements of
$R_i(\SSTT)$ for $i\geq1$.

  \begin{figure}[ht!]
    $$
  \begin{tikzpicture} [scale=1]

   \draw[pattern=north east lines, pattern color=magenta!40](0,0)--++(0:4)--++(-90:1)--++(180:2)--++(-90:1)--++(180:1) --++(-90:1)--++(180:1)  --++(90:3);
\draw[pattern=north west lines, pattern color=cyan!60](0,-3)--++(0:1)--++(-90:1)--++(180:1);
 \draw[very thick](0,0)--++(0:6)--++(-90:3)--++(180:5)--++(-90:1)--++(180:1)--++(90:4);

          \draw(0,0) rectangle (0.5,-1) node [midway, rotate=90] {1};
 \draw(0.5,0) rectangle (1,-1) node [midway, rotate=90] {1};
 \draw(1.5,0) rectangle (2,-1) node [midway, rotate=90] {1};
 \draw(1,0) rectangle (1.5,-1) node [midway, rotate=90] {1};
  \draw(2,0) rectangle (2.5,-1) node [midway, rotate=90] {1};
 \draw(0.5+2,0) rectangle (0.5+2.5,-1) node [midway, rotate=90] {1};
 \draw(0.5+0.5+2,0) rectangle (0.5+0.5+2.5,-1) node [midway, rotate=90] {1};
  \draw(0.5+0.5+0.5+2,0) rectangle (0.5+0.5+0.5+2.5,-1) node [midway, rotate=90] {1};

  \draw(0.5+0.5+0.5+2+0.5,0) rectangle (0.5+0.5+0.5+2.5+1,-0.5) node [midway] {1};
    \draw(1+0.5+0.5+0.5+2+0.5,0) rectangle (1+0.5+0.5+0.5+2.5+1,-0.5) node [midway] {1};

 \draw(0.5,0-1) rectangle (1,-1-1) node [midway, rotate=90] {2};
 \draw(1.5,0-1) rectangle (2,-1-1) node [midway, rotate=90] {2};
 \draw(1,0-1) rectangle (1.5,-1-1) node [midway, rotate=90] {2};
  \draw(0.5,0-1) rectangle (0,-1-1) node [midway, rotate=90] {2};

  \draw(-2+0.5+0.5+0.5+2+0.5,0-0.5-0.5) rectangle (-2+0.5+0.5+0.5+2.5+1,-0.5-0.5-0.5) node [midway] {2};
    \draw(-2+1+0.5+0.5+0.5+2+0.5,0-0.5-0.5) rectangle (-2+1+0.5+0.5+0.5+2.5+1,-0.5-0.5-0.5) node [midway] {2};
  \draw(0.5+0.5+0.5+2+0.5,0-0.5) rectangle (0.5+0.5+0.5+2.5+1,-0.5-0.5) node [midway] {2};
    \draw(1+0.5+0.5+0.5+2+0.5,0-0.5) rectangle (1+0.5+0.5+0.5+2.5+1,-0.5-0.5) node [midway] {2};

 \draw(0.5,0-1-1) rectangle (1,-1-1-1) node [midway, rotate=90] {{3}};
 \draw(0,0-1-1) rectangle (0.5,-1-1-1) node [midway, rotate=90] {\underline{3}};
 \draw(1,0-1-1) rectangle (2,-1-1-0.5) node [midway] {3};

  \draw(-2+0.5+0.5+0.5+2+0.5,0-0.5-0.5-0.5) rectangle (-2+0.5+0.5+0.5+2.5+1,-0.5-0.5-0.5-0.5) node [midway] {3};
    \draw(-2+1+0.5+0.5+0.5+2+0.5,0-0.5-0.5-0.5) rectangle (-2+1+0.5+0.5+0.5+2.5+1,-0.5-0.5-0.5-0.5) node [midway] {3};
  \draw(0.5+0.5+0.5+2+0.5,0-0.5-0.5) rectangle (0.5+0.5+0.5+2.5+1,-0.5-0.5-0.5) node [midway] {3};
    \draw(1+0.5+0.5+0.5+2+0.5,0-0.5-0.5) rectangle (1+0.5+0.5+0.5+2.5+1,-0.5-0.5-0.5) node [midway] {3};

     \draw(0,-3)rectangle (1,-3.5)node [midway] {4};
          \draw(1,-2.5)rectangle (2,-3)node [midway] {4};

  \draw(-2+0.5+0.5+0.5+2+0.5,0-0.5-0.5-0.5-0.5) rectangle (-2+0.5+0.5+0.5+2.5+1,-0.5-0.5-0.5-0.5-0.5) node [midway] {4};
    \draw(-2+1+0.5+0.5+0.5+2+0.5,0-0.5-0.5-0.5-0.5) rectangle (-2+1+0.5+0.5+0.5+2.5+1,-0.5-0.5-0.5-0.5-0.5) node [midway] {4};
  \draw(0.5+0.5+0.5+2+0.5,0-0.5-0.5-0.5) rectangle (0.5+0.5+0.5+2.5+1,-0.5-0.5-0.5-0.5) node [midway] {4};
    \draw(1+0.5+0.5+0.5+2+0.5,0-0.5-0.5-0.5) rectangle (1+0.5+0.5+0.5+2.5+1,-0.5-0.5-0.5-0.5) node [midway] {4};

   \draw(0,-3-0.5)rectangle (1,-3.5-0.5)node [midway] {6};

  \draw(-2+0.5+0.5+0.5+2+0.5,0-0.5-0.5-0.5-0.5-0.5) rectangle (-2+0.5+0.5+0.5+2.5+1,-0.5-0.5-0.5-0.5-0.5-0.5) node [midway] {\underline5};
    \draw(-2+1+0.5+0.5+0.5+2+0.5,0-0.5-0.5-0.5-0.5-0.5) rectangle (-2+1+0.5+0.5+0.5+2.5+1,-0.5-0.5-0.5-0.5-0.5-0.5) node [midway] {{5}};
  \draw(0.5+0.5+0.5+2+0.5,0-0.5-0.5-0.5-0.5) rectangle (0.5+0.5+0.5+2.5+1,-0.5-0.5-0.5-0.5-0.5) node [midway] {5};
    \draw(1+0.5+0.5+0.5+2+0.5,0-0.5-0.5-0.5-0.5) rectangle (1+0.5+0.5+0.5+2.5+1,-0.5-0.5-0.5-0.5-0.5) node [midway] {5};

  \draw(0.5+0.5+0.5+2+0.5,0-0.5-0.5-0.5-0.5-0.5) rectangle (0.5+0.5+0.5+2.5+1,-0.5-0.5-0.5-0.5-0.5-0.5) node [midway] {6};
    \draw(1+0.5+0.5+0.5+2+0.5,0-0.5-0.5-0.5-0.5-0.5) rectangle (1+0.5+0.5+0.5+2.5+1,-0.5-0.5-0.5-0.5-0.5-0.5) node [midway] {6};

             \end{tikzpicture}\qquad
              \begin{tikzpicture} [scale=1]
   \draw[pattern=north east lines, pattern color=magenta!40](0,0)--++(0:4)--++(-90:1)--++(180:2)--++(-90:1)--++(180:1) --++(-90:1)--++(180:1)  --++(90:3);
 \draw[very thick](0,0)--++(0:6)--++(-90:3)--++(180:5)--++(-90:1)--++(180:1)--++(90:4);
   \draw[pattern=north west lines, pattern color=cyan!60](0,-3)--++(0:1)--++(-90:1)--++(180:1);

          \draw(0,0) rectangle (0.5,-1) node [midway, rotate=90] {1};
 \draw(0.5,0) rectangle (1,-1) node [midway, rotate=90] {1};
 \draw(1.5,0) rectangle (2,-1) node [midway, rotate=90] {1};
 \draw(1,0) rectangle (1.5,-1) node [midway, rotate=90] {1};
  \draw(2,0) rectangle (2.5,-1) node [midway, rotate=90] {1};
 \draw(0.5+2,0) rectangle (0.5+2.5,-1) node [midway, rotate=90] {1};
 \draw(0.5+0.5+2,0) rectangle (0.5+0.5+2.5,-1) node [midway, rotate=90] {1};
  \draw(0.5+0.5+0.5+2,0) rectangle (0.5+0.5+0.5+2.5,-1) node [midway, rotate=90] {1};

  \draw(0.5+0.5+0.5+2+0.5,0) rectangle (0.5+0.5+0.5+2.5+1,-0.5) node [midway] {1};
    \draw(1+0.5+0.5+0.5+2+0.5,0) rectangle (1+0.5+0.5+0.5+2.5+1,-0.5) node [midway] {1};

 \draw(0.5,0-1) rectangle (1,-1-1) node [midway, rotate=90] {2};
 \draw(1.5,0-1) rectangle (2,-1-1) node [midway, rotate=90] {2};
 \draw(1,0-1) rectangle (1.5,-1-1) node [midway, rotate=90] {2};
  \draw(0.5,0-1) rectangle (0,-1-1) node [midway, rotate=90] {2};

  \draw(-2+0.5+0.5+0.5+2+0.5,0-0.5-0.5) rectangle (-2+0.5+0.5+0.5+2.5+1,-0.5-0.5-0.5) node [midway] {2};
    \draw(-2+1+0.5+0.5+0.5+2+0.5,0-0.5-0.5) rectangle (-2+1+0.5+0.5+0.5+2.5+1,-0.5-0.5-0.5) node [midway] {2};
  \draw(0.5+0.5+0.5+2+0.5,0-0.5) rectangle (0.5+0.5+0.5+2.5+1,-0.5-0.5) node [midway] {2};
    \draw(1+0.5+0.5+0.5+2+0.5,0-0.5) rectangle (1+0.5+0.5+0.5+2.5+1,-0.5-0.5) node [midway] {2};

 \draw(0.5,0-1-1) rectangle (1,-1-1-1) node [midway, rotate=90] {3};
 \draw(0,0-1-1) rectangle (0.5,-1-1-1) node [midway, rotate=90] {\underline{3}};
 \draw(1,0-1-1) rectangle (2,-1-1-0.5) node [midway] {3};

  \draw(-2+0.5+0.5+0.5+2+0.5,0-0.5-0.5-0.5) rectangle (-2+0.5+0.5+0.5+2.5+1,-0.5-0.5-0.5-0.5) node [midway] {3};
    \draw(-2+1+0.5+0.5+0.5+2+0.5,0-0.5-0.5-0.5) rectangle (-2+1+0.5+0.5+0.5+2.5+1,-0.5-0.5-0.5-0.5) node [midway] {3};
  \draw(0.5+0.5+0.5+2+0.5,0-0.5-0.5) rectangle (0.5+0.5+0.5+2.5+1,-0.5-0.5-0.5) node [midway] {3};
    \draw(1+0.5+0.5+0.5+2+0.5,0-0.5-0.5) rectangle (1+0.5+0.5+0.5+2.5+1,-0.5-0.5-0.5) node [midway] {3};

          \draw(1,-2.5)rectangle (2,-3)node [midway] {4};

  \draw(-2+0.5+0.5+0.5+2+0.5,0-0.5-0.5-0.5-0.5) rectangle (-2+0.5+0.5+0.5+2.5+1,-0.5-0.5-0.5-0.5-0.5) node [midway] {4};
    \draw(-2+1+0.5+0.5+0.5+2+0.5,0-0.5-0.5-0.5-0.5) rectangle (-2+1+0.5+0.5+0.5+2.5+1,-0.5-0.5-0.5-0.5-0.5) node [midway] {4};
  \draw(0.5+0.5+0.5+2+0.5,0-0.5-0.5-0.5) rectangle (0.5+0.5+0.5+2.5+1,-0.5-0.5-0.5-0.5) node [midway] {4};
    \draw(1+0.5+0.5+0.5+2+0.5,0-0.5-0.5-0.5) rectangle (1+0.5+0.5+0.5+2.5+1,-0.5-0.5-0.5-0.5) node [midway] {4};

 \draw(0,-3)rectangle (0.5,-4)node [midway,rotate=90] {4};
   \draw(0.5,-3)rectangle (1,-4)node [midway,rotate=90] {6};

  \draw(-2+0.5+0.5+0.5+2+0.5,0-0.5-0.5-0.5-0.5-0.5) rectangle (-2+0.5+0.5+0.5+2.5+1,-0.5-0.5-0.5-0.5-0.5-0.5) node [midway] {\underline5};
    \draw(-2+1+0.5+0.5+0.5+2+0.5,0-0.5-0.5-0.5-0.5-0.5) rectangle (-2+1+0.5+0.5+0.5+2.5+1,-0.5-0.5-0.5-0.5-0.5-0.5) node [midway] {{5}};
  \draw(0.5+0.5+0.5+2+0.5,0-0.5-0.5-0.5-0.5) rectangle (0.5+0.5+0.5+2.5+1,-0.5-0.5-0.5-0.5-0.5) node [midway] {5};
    \draw(1+0.5+0.5+0.5+2+0.5,0-0.5-0.5-0.5-0.5) rectangle (1+0.5+0.5+0.5+2.5+1,-0.5-0.5-0.5-0.5-0.5) node [midway] {5};

  \draw(0.5+0.5+0.5+2+0.5,0-0.5-0.5-0.5-0.5-0.5) rectangle (0.5+0.5+0.5+2.5+1,-0.5-0.5-0.5-0.5-0.5-0.5) node [midway] {6};
    \draw(1+0.5+0.5+0.5+2+0.5,0-0.5-0.5-0.5-0.5-0.5) rectangle (1+0.5+0.5+0.5+2.5+1,-0.5-0.5-0.5-0.5-0.5-0.5) node [midway] {6};

             \end{tikzpicture}
 $$
\caption{The two tableaux of ${\sf Dom}((6^3,1),(10,8,7,6,4,3))$.  The first 6 rows are common to both tableaux and are uniquely determined by the weight. The colouring   highlights  the partition $(4,2,1)\subseteq (6^3) \subset (6^3,1) $ and the final double-row.
\color{black} We have underlined the   elements paired with the dominoes  in $(b+1,1)_2$ under the reading word (note that these underlined entries all belong to the final double-row of the rectangle).}\label{cupbox}
\end{figure}

We now consider the word $R_1(\SSTT)$ in more detail.
 The two dominoes $D$ and $D'$ belonging to $(b+1,1)_2$ have labels $d\leq d'$ respectively, both of which are strictly greater than any other label in
$R_1(\SSTT)$. Thus we can remove the integers $d$ and $d'$ from $R_1(\SSTT)$ without affecting the system of parentheses.
Therefore the 
  domino tableau \color{black}
 $\SSTT_{\leq 2b}=\SSTT \setminus \{D,D'\}$ is of shape $(a^b)$,
  weight $\la:=\alpha-\varepsilon_{d}-\varepsilon_{d'}$,
 and its reading word is a lattice permutation.  In particular $\SSTT_{\leq 2b}$ is the unique admissible ${\widehat{\la}}$-tableau for some ${\widehat{\la}}\subseteq (a^b)$.

\renewcommand{\theta}{{\widehat{\la}}}
\renewcommand{\vartheta}{{\widehat{\la}}}

The partition $\widehat{\lambda}$ and the labels $d,d'$ are uniquely determined by the weight $\alpha$. To see this observe that as $d,d'>b$ then $\widehat{\lambda}_i={\lambda}_i-a=\alpha_i-a$ for $1 \le i \le b$ by \cref{ishouldref}. Then $\widehat{\lambda}$ determines $\lambda$, from which we can read off the values of $d,d'$.  All that remains to determine is whether the dominoes of $(b+1,1)_2$ are both $(1^2)$-dominoes or both $(2)$-dominoes. If both possibilities satisfy the lattice condition there are  two resulting domino tableaux of weight $\alpha$  which have opposite signs, or otherwise there is a unique domino tableau of this weight.
  \end{proof}
  
  \begin{rmk}
  \color{black}
 In the above proof, we assumed that there existed a   $\SSTT \in {\sf Dom}((a^b,1),\alpha)$ and we proved that under this assumption this was the unique element of ${\sf Dom}((a^b,1),\alpha)$ 
 {\em of this 
  given spin-type}.  
We did this by showing that (1) any two tableaux from ${\sf Dom}((a^b,1),\alpha)$  must coincide within the rectangular region and (2) noticing that this implied that they must differ by rotating the dominoes within $(b+1,1)_2$, thus having distinct spin-types. 
We emphasise that rearranging these dominoes will always change the spin-type, but it might also break the semistandard or lattice permutation conditions (this is to be expected as not all coefficients in the product $s_\mu \boxtimes s_\mu$ have coefficient 2). 
For example, if the two dominoes $D$ and $D'$ within $(b+1,1)_2$ have the same label $d=d'$, then they must both be $(1^2)$-dominoes.  
   \end{rmk}

 \begin{rmk}\label{usefulremarkweneed}
 We remark that the
  two dominoes $D$ and $D'$ must be either $(a)$ supported by integers
  $ i_{2k}$ or  $ {2 b-i_{2k}}$ for some $1\leq k \leq a$ as in \cref{useful!},
	or
  $(b)$  $D$ is supported by such an integer and $D'$ is supported by $D$.
  However, $i_{2a}\leq i_{2(a-1)}\leq \dots \leq i_{2}\leq 2 b-i_{2}$  	
   so in actual fact $D$ and $D'$ (respectively $D$ in case $(b)$) must be supported by
  some integers $ 2 b-i_{2k}  $ for $1\leq k \leq a$ which are precisely the labels of the dominoes which intersect the $2b$th row.
  To summarise, the dominoes $D$ and $D'$ are paired (under the system of parentheses) with dominoes of the form
 $\{(2b-1,c),(2b,c)\}$ or  $\{(2b,c-1),(2b,c )\}$ for some $1\leq c \leq 2a$,
   or $D'$ is paired with $D$, and $D$ is paired with such a domino.
     \end{rmk}

\begin{prop}
 For $\nu \vdash 2$,
the products  $s_\nu \circ s_{(a^b,a-1) }$ are   multiplicity-free.
\end{prop}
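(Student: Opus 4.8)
The plan is to run the argument for $s_\nu\circ s_{(a^b,1)}$ again, with the single extra double-box replaced by the $a-1$ double-boxes making up the final double-row of $[(a^b,a-1)]^{2\times 2}$. By \cref{thm:mf-outer} and the theorem of Carr\'e--Leclerc it suffices to prove that for every partition $\alpha\vdash 2(ab+a-1)$ the set ${\sf Dom}((a^b,a-1),\alpha)$ has at most two elements, and that when it has exactly two they are of opposite spin type; this gives $\dom_+((a^b,a-1),\alpha)\le 1$ and $\dom_-((a^b,a-1),\alpha)\le 1$. So fix $\SSTT\in{\sf Dom}((a^b,a-1),\alpha)$. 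As in \cref{rectangle} (and with the same care for the rows above the final double-row as in the $(a^b,1)$ case) one first checks that every domino of $\SSTT$ lies inside a single double-box $(r,c)_2$, so that the reading word factorises as $R(\SSTT)=R_a(\SSTT)\circ R_{a-1}(\SSTT)\circ\cdots\circ R_1(\SSTT)$ along double-columns, with $R_a(\SSTT)$ the pure-rectangle double-column word of \cref{rectangle} (double-column $a$ has height $2b$) and each $R_c(\SSTT)$, $c<a$, carrying one extra double-box $(b+1,c)_2$ at its foot, tiled by two vertical or by two horizontal dominoes.

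Next I would show, exactly as in the $(a^b,1)$ case, that the restriction $\SSTT_{\le 2b}$ of $\SSTT$ to its first $2b$ rows is the admissible tableau $\SSTT^{\widehat\lambda}$ of a partition $\widehat\lambda\subseteq(a^b)$ which is forced by $\alpha$. Column-strictness forces the labels of the dominoes of $(b+1,c)_2$ to be strictly larger than every label of column $2c-1$ (equivalently $2c$) of $\SSTT_{\le 2b}$, and reading the double-boxes from right to left their labels appear in weakly decreasing order, always at the tail of the relevant double-column word; so, as in the $(a^b,1)$ argument and \cref{useful!}, deleting these labels one at a time leaves the parenthesis matching of the rest undisturbed, whence $R(\SSTT_{\le 2b})$ is a lattice permutation and $\SSTT_{\le 2b}=\SSTT^{\widehat\lambda}$ by \cref{rectangle}. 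By \cref{ishouldref} the partition $\widehat\lambda$, and hence also $\beta:=\alpha-{\sf weight}(\widehat\lambda)$, is recovered from $\alpha$ (explicitly $\widehat\lambda_i=\tfrac12(\lambda_i-\lambda_{2b+1-i})$, where $\lambda$ is $\alpha$ truncated to its first $2b$ rows).

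It remains to analyse the final double-row, which is where the genuine additional work lies. Weak increase along rows $2b+1$ and $2b+2$ forces the labels $p_c\le q_c$ of the double-boxes $(b+1,c)_2$ into a weakly increasing pattern across $c$; together with $\beta$ this leaves, for each box, only the binary vertical-versus-horizontal choice (and a box with $p_c=q_c$ must be vertical). Imposing the lattice condition on $R(\SSTT)$, and using the analogue of \cref{useful!} and \cref{usefulremarkweneed} --- that the only labels free to support the extra entries are the row-$2b$ labels $2b-\widehat\lambda^T_k$ of $\SSTT^{\widehat\lambda}$, together with the extra entries themselves --- one tracks that the type of every double-box is determined save for at most one exception; the two types of an exceptional box yield two domino tableaux of the same weight differing in exactly two $(2)$-dominoes, hence of opposite spin. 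This gives $\dom((a^b,a-1),\alpha)\le 2$ with the two tableaux, when present, of opposite spin, so $\dom_+((a^b,a-1),\alpha)\le 1$ and $\dom_-((a^b,a-1),\alpha)\le 1$, and both products $s_\nu\circ s_{(a^b,a-1)}$ with $\nu\vdash 2$ are multiplicity-free.

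The main obstacle is precisely this last step. In the $(a^b,1)$ case a single double-box carried the only freedom; here the $a-1$ boxes interact with one another and with the prescribed weight $\beta$ through the parenthesis matching, and one has to show that this collective constraint collapses to at most a single binary choice. The careful bookkeeping --- assigning each extra label a supporter among the row-$2b$ labels $2b-\widehat\lambda^T_k$, while respecting both the monotonicity across the boxes and the increasing column-strictness thresholds $2b-\widehat\lambda^T_c+1$ --- is the heart of the proof; everything else runs parallel to \cref{rectangle} and the $(a^b,1)$ proposition.
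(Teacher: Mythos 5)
Your reduction to Carr\'e--Leclerc and the identification of $\SSTT_{\le 2b}$ with the admissible tableau $\SSTT^{\widehat\lambda}$ both run parallel to the paper, but the analysis of the final double-row — which you yourself flag as ``the heart of the proof'' — rests on a structural claim that is false. You assert that every domino of $\SSTT$ lies inside a single double-box $(r,c)_2$, so that each of the $a-1$ extra double-boxes is tiled either by two vertical or by two horizontal dominoes, leaving only a binary choice per box. In the shape $(a^b,a-1)$ this fails precisely in the final double-row: since that row is one double-column shorter than the rows above, horizontal dominoes of the form $\{(2b+1,2c),(2b+1,2c+1)\}$ or $\{(2b+2,2c),(2b+2,2c+1)\}$ can straddle two adjacent double-boxes (the paper points this out explicitly and the right-hand tableau of \cref{cupbox2} realises it; it is also exactly the configuration produced in case $(\spadesuit)$ of the paper's second algorithm). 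Once straddling is allowed, the reading word no longer factorises cleanly along double-columns with ``one extra box at the foot'', and the counting of configurations compatible with a given weight is no longer a product of independent vertical-versus-horizontal choices.

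Independently of this, the decisive step — that the lattice condition pins down the entire final double-row up to at most one alternative, and that the two alternatives have opposite spin — is only asserted, not proved. In the paper this is where all the work lies: one first bounds the number of $(1^2)$-dominoes with label $>b+1$ by two, then runs two deterministic right-to-left algorithms (one for the case of no such dominoes, one for the case of at least one), each producing at most one tableau of the given weight, and finally shows via the rotated tableau $\SSTT^{\rm rot}$ of \cref{fig1:rowena,fig2:rowena} that when both algorithms succeed their outputs have opposite spin. Note in particular that the two tableaux of equal weight do not in general differ by flipping a single double-box (two $(2)$-dominoes), as you claim; the sign change comes from shifting a whole chain of horizontal dominoes in the final double-row, and verifying that this rearrangement is still semistandard and lattice is a nontrivial check. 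As it stands, your proposal defers exactly the part of the argument that distinguishes this case from $(a^b,1)$, and the simplification it is built on does not hold.
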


 \begin{proof}
\renewcommand{\theta}{{\widehat{\la}}}
\renewcommand{\vartheta}{{\widehat{\la}}}

 Let  $\SSTT \in {\sf Dom}((a^b,a-1),\alpha)$ for some $\alpha\vdash 2ab+2a-2$.
 Proceeding as in the rectangle case, we deduce that any domino $D$ in  $\SSTT$
belongs to a unique square $(r,c)_2=\{ 2r-1,2r\}\times \{2c-1,2c\}$ for some $(r,c)\in (a^b)\subset(a^b,a-1)$.
 {\em However} this is not true for the final double-row, i.e., $(r,c)\in ((a^b,a-1)\setminus  (a^b))$.
 Namely, there can exist dominoes of the form $\{(2b+1,2c),(2b+1,2c+1)\}$ or $\{(2b+2,2c),(2b+2,2c+1)\}$ for $1\leq c < a$. An example is depicted in the rightmost tableau in \cref{cupbox2} below.
 Let $D$ be a domino from the final double-row ($\{(x,y)\mid 1\leq y \leq 2a, x > 2b\}$) with label $d$
 and let
 $D'$ be a domino from the first $b$ double-rows  ($\{(x,y)\mid 1\leq y \leq 2a, x \leq  2b\}$) with label $d'$.
If   $d<d'$ , then by the semistandard property, we have that $d$ occurs {\em after} $d'$ in the reading word of $\SSTT$.
Thus  $\SSTT_{\leq 2b} =\SSTT \cap \{(x,y)\mid 1\leq y \leq 2a, x \leq  2b\}$
is itself a semistandard    domino \color{black}  tableau and satisfies the lattice permutation condition.
Thus $\SSTT_{\leq 2b} = \SSTT^{\widehat{\la}}$ for   $\la= \alpha - \varepsilon_{d_1}- \varepsilon_{d_2}-\dots - \varepsilon_{d_{2a-2}} $,
the partition obtained by removing the labels of the dominoes from the final double-row.

  \begin{figure}[ht!]
    $$
              \begin{tikzpicture} [scale=1]
    \draw[pattern=north east lines, pattern color=magenta!40](0,0)--++(0:4)--++(-90:1)--++(180:2)--++(-90:1)--++(180:1) --++(-90:1)--++(180:1)  --++(90:3);
    \draw[pattern=north west lines, pattern color=cyan!60](0,-3)--++(0:5)--++(-90:1)--++(180:5);
    \draw[very thick](0,0)--++(0:6)--++(-90:3)--++(180:1)--++(-90:1)--++(180:5) --++(90:4);
           \draw(0,0) rectangle (0.5,-1) node [midway, rotate=90] {1};
 \draw(0.5,0) rectangle (1,-1) node [midway, rotate=90] {1};
 \draw(1.5,0) rectangle (2,-1) node [midway, rotate=90] {1};
 \draw(1,0) rectangle (1.5,-1) node [midway, rotate=90] {1};
  \draw(2,0) rectangle (2.5,-1) node [midway, rotate=90] {1};
 \draw(0.5+2,0) rectangle (0.5+2.5,-1) node [midway, rotate=90] {1};
 \draw(0.5+0.5+2,0) rectangle (0.5+0.5+2.5,-1) node [midway, rotate=90] {1};
  \draw(0.5+0.5+0.5+2,0) rectangle (0.5+0.5+0.5+2.5,-1) node [midway, rotate=90] {1};

  \draw(0.5+0.5+0.5+2+0.5,0) rectangle (0.5+0.5+0.5+2.5+1,-0.5) node [midway] {1};
    \draw(1+0.5+0.5+0.5+2+0.5,0) rectangle (1+0.5+0.5+0.5+2.5+1,-0.5) node [midway] {1};

 \draw(0.5,0-1) rectangle (1,-1-1) node [midway, rotate=90] {2};
 \draw(1.5,0-1) rectangle (2,-1-1) node [midway, rotate=90] {2};
 \draw(1,0-1) rectangle (1.5,-1-1) node [midway, rotate=90] {2};
  \draw(0.5,0-1) rectangle (0,-1-1) node [midway, rotate=90] {2};

  \draw(-2+0.5+0.5+0.5+2+0.5,0-0.5-0.5) rectangle (-2+0.5+0.5+0.5+2.5+1,-0.5-0.5-0.5) node [midway] {2};
    \draw(-2+1+0.5+0.5+0.5+2+0.5,0-0.5-0.5) rectangle (-2+1+0.5+0.5+0.5+2.5+1,-0.5-0.5-0.5) node [midway] {2};
  \draw(0.5+0.5+0.5+2+0.5,0-0.5) rectangle (0.5+0.5+0.5+2.5+1,-0.5-0.5) node [midway] {2};
    \draw(1+0.5+0.5+0.5+2+0.5,0-0.5) rectangle (1+0.5+0.5+0.5+2.5+1,-0.5-0.5) node [midway] {2};

 \draw(0.5,0-1-1) rectangle (1,-1-1-1) node [midway, rotate=90] {3};
 \draw(0,0-1-1) rectangle (0.5,-1-1-1) node [midway, rotate=90] {3};
 \draw(1,0-1-1) rectangle (2,-1-1-0.5) node [midway] {3};

  \draw(-2+0.5+0.5+0.5+2+0.5,0-0.5-0.5-0.5) rectangle (-2+0.5+0.5+0.5+2.5+1,-0.5-0.5-0.5-0.5) node [midway] {3};
    \draw(-2+1+0.5+0.5+0.5+2+0.5,0-0.5-0.5-0.5) rectangle (-2+1+0.5+0.5+0.5+2.5+1,-0.5-0.5-0.5-0.5) node [midway] {3};
  \draw(0.5+0.5+0.5+2+0.5,0-0.5-0.5) rectangle (0.5+0.5+0.5+2.5+1,-0.5-0.5-0.5) node [midway] {3};
    \draw(1+0.5+0.5+0.5+2+0.5,0-0.5-0.5) rectangle (1+0.5+0.5+0.5+2.5+1,-0.5-0.5-0.5) node [midway] {3};

          \draw(1,-2.5)rectangle (2,-3)node [midway] {4};

  \draw(-2+0.5+0.5+0.5+2+0.5,0-0.5-0.5-0.5-0.5) rectangle (-2+0.5+0.5+0.5+2.5+1,-0.5-0.5-0.5-0.5-0.5) node [midway] {4};
    \draw(-2+1+0.5+0.5+0.5+2+0.5,0-0.5-0.5-0.5-0.5) rectangle (-2+1+0.5+0.5+0.5+2.5+1,-0.5-0.5-0.5-0.5-0.5) node [midway] {4};
  \draw(0.5+0.5+0.5+2+0.5,0-0.5-0.5-0.5) rectangle (0.5+0.5+0.5+2.5+1,-0.5-0.5-0.5-0.5) node [midway] {4};
    \draw(1+0.5+0.5+0.5+2+0.5,0-0.5-0.5-0.5) rectangle (1+0.5+0.5+0.5+2.5+1,-0.5-0.5-0.5-0.5) node [midway] {4};

  \draw(-2+0.5+0.5+0.5+2+0.5,0-0.5-0.5-0.5-0.5-0.5) rectangle (-2+0.5+0.5+0.5+2.5+1,-0.5-0.5-0.5-0.5-0.5-0.5) node [midway] {5};
    \draw(-2+1+0.5+0.5+0.5+2+0.5,0-0.5-0.5-0.5-0.5-0.5) rectangle (-2+1+0.5+0.5+0.5+2.5+1,-0.5-0.5-0.5-0.5-0.5-0.5) node [midway] {5};
  \draw(0.5+0.5+0.5+2+0.5,0-0.5-0.5-0.5-0.5) rectangle (0.5+0.5+0.5+2.5+1,-0.5-0.5-0.5-0.5-0.5) node [midway] {5};
    \draw(1+0.5+0.5+0.5+2+0.5,0-0.5-0.5-0.5-0.5) rectangle (1+0.5+0.5+0.5+2.5+1,-0.5-0.5-0.5-0.5-0.5) node [midway] {5};

  \draw(0.5+0.5+0.5+2+0.5,0-0.5-0.5-0.5-0.5-0.5) rectangle (0.5+0.5+0.5+2.5+1,-0.5-0.5-0.5-0.5-0.5-0.5) node [midway] {6};
    \draw(1+0.5+0.5+0.5+2+0.5,0-0.5-0.5-0.5-0.5-0.5) rectangle (1+0.5+0.5+0.5+2.5+1,-0.5-0.5-0.5-0.5-0.5-0.5) node [midway] {6};

 \draw(0,-3)rectangle (0.5,-4)node [midway,rotate=90] {4};
   \draw (0.5,-3)rectangle (1,-4)node [midway,rotate=90] {4};

  \draw (1,-3)rectangle (2,-3.5)node [midway] {5};
    \draw (2,-3)rectangle (3,-3.5)node [midway] {6};
        \draw (3,-3)rectangle (4,-3.5)node [midway] {6};
\draw (1,-4)rectangle (2,-3.5)node [midway] {6};
    \draw (2,-4)rectangle (3,-3.5)node [midway] {7};
        \draw (3,-4)rectangle (4,-3.5)node [midway] {7};

   \draw (4,-3)rectangle (5,-3.5)node [midway] {7};
   \draw (4,-4)rectangle (5,-3.5)node [midway] {8};
             \end{tikzpicture}
             \qquad\qquad
                  \begin{tikzpicture} [scale=1]
 \draw[very thick](0,0)--++(0:6)--++(-90:2);
   \draw[pattern=north east lines, pattern color=magenta!40](0,0)--++(0:4)--++(-90:1)--++(180:2)--++(-90:1)--++(180:1) --++(-90:1)--++(180:1)  --++(90:3);
    \draw[pattern=north west lines, pattern color=cyan!60](0,-3)--++(0:5)--++(-90:1)--++(180:5);
    \draw[very thick](0,0)--++(0:6)--++(-90:3)--++(180:1)--++(-90:1)--++(180:5) --++(90:4);

          \draw(0,0) rectangle (0.5,-1) node [midway, rotate=90] {1};
 \draw(0.5,0) rectangle (1,-1) node [midway, rotate=90] {1};
 \draw(1.5,0) rectangle (2,-1) node [midway, rotate=90] {1};
 \draw(1,0) rectangle (1.5,-1) node [midway, rotate=90] {1};
  \draw(2,0) rectangle (2.5,-1) node [midway, rotate=90] {1};
 \draw(0.5+2,0) rectangle (0.5+2.5,-1) node [midway, rotate=90] {1};
 \draw(0.5+0.5+2,0) rectangle (0.5+0.5+2.5,-1) node [midway, rotate=90] {1};
  \draw(0.5+0.5+0.5+2,0) rectangle (0.5+0.5+0.5+2.5,-1) node [midway, rotate=90] {1};

  \draw(0.5+0.5+0.5+2+0.5,0) rectangle (0.5+0.5+0.5+2.5+1,-0.5) node [midway] {1};
    \draw(1+0.5+0.5+0.5+2+0.5,0) rectangle (1+0.5+0.5+0.5+2.5+1,-0.5) node [midway] {1};

 \draw(0.5,0-1) rectangle (1,-1-1) node [midway, rotate=90] {2};
 \draw(1.5,0-1) rectangle (2,-1-1) node [midway, rotate=90] {2};
 \draw(1,0-1) rectangle (1.5,-1-1) node [midway, rotate=90] {2};
  \draw(0.5,0-1) rectangle (0,-1-1) node [midway, rotate=90] {2};

  \draw(-2+0.5+0.5+0.5+2+0.5,0-0.5-0.5) rectangle (-2+0.5+0.5+0.5+2.5+1,-0.5-0.5-0.5) node [midway] {2};
    \draw(-2+1+0.5+0.5+0.5+2+0.5,0-0.5-0.5) rectangle (-2+1+0.5+0.5+0.5+2.5+1,-0.5-0.5-0.5) node [midway] {2};
  \draw(0.5+0.5+0.5+2+0.5,0-0.5) rectangle (0.5+0.5+0.5+2.5+1,-0.5-0.5) node [midway] {2};
    \draw(1+0.5+0.5+0.5+2+0.5,0-0.5) rectangle (1+0.5+0.5+0.5+2.5+1,-0.5-0.5) node [midway] {2};

 \draw(0.5,0-1-1) rectangle (1,-1-1-1) node [midway, rotate=90] {3};
 \draw(0,0-1-1) rectangle (0.5,-1-1-1) node [midway, rotate=90] {3};
 \draw(1,0-1-1) rectangle (2,-1-1-0.5) node [midway] {3};

  \draw(-2+0.5+0.5+0.5+2+0.5,0-0.5-0.5-0.5) rectangle (-2+0.5+0.5+0.5+2.5+1,-0.5-0.5-0.5-0.5) node [midway] {3};
    \draw(-2+1+0.5+0.5+0.5+2+0.5,0-0.5-0.5-0.5) rectangle (-2+1+0.5+0.5+0.5+2.5+1,-0.5-0.5-0.5-0.5) node [midway] {3};
  \draw(0.5+0.5+0.5+2+0.5,0-0.5-0.5) rectangle (0.5+0.5+0.5+2.5+1,-0.5-0.5-0.5) node [midway] {3};
    \draw(1+0.5+0.5+0.5+2+0.5,0-0.5-0.5) rectangle (1+0.5+0.5+0.5+2.5+1,-0.5-0.5-0.5) node [midway] {3};

          \draw(1,-2.5)rectangle (2,-3)node [midway] {4};

  \draw(-2+0.5+0.5+0.5+2+0.5,0-0.5-0.5-0.5-0.5) rectangle (-2+0.5+0.5+0.5+2.5+1,-0.5-0.5-0.5-0.5-0.5) node [midway] {4};
    \draw(-2+1+0.5+0.5+0.5+2+0.5,0-0.5-0.5-0.5-0.5) rectangle (-2+1+0.5+0.5+0.5+2.5+1,-0.5-0.5-0.5-0.5-0.5) node [midway] {4};
  \draw(0.5+0.5+0.5+2+0.5,0-0.5-0.5-0.5) rectangle (0.5+0.5+0.5+2.5+1,-0.5-0.5-0.5-0.5) node [midway] {4};
    \draw(1+0.5+0.5+0.5+2+0.5,0-0.5-0.5-0.5) rectangle (1+0.5+0.5+0.5+2.5+1,-0.5-0.5-0.5-0.5) node [midway] {4};

  \draw(-2+0.5+0.5+0.5+2+0.5,0-0.5-0.5-0.5-0.5-0.5) rectangle (-2+0.5+0.5+0.5+2.5+1,-0.5-0.5-0.5-0.5-0.5-0.5) node [midway] {5};
    \draw(-2+1+0.5+0.5+0.5+2+0.5,0-0.5-0.5-0.5-0.5-0.5) rectangle (-2+1+0.5+0.5+0.5+2.5+1,-0.5-0.5-0.5-0.5-0.5-0.5) node [midway] {5};
  \draw(0.5+0.5+0.5+2+0.5,0-0.5-0.5-0.5-0.5) rectangle (0.5+0.5+0.5+2.5+1,-0.5-0.5-0.5-0.5-0.5) node [midway] {5};
    \draw(1+0.5+0.5+0.5+2+0.5,0-0.5-0.5-0.5-0.5) rectangle (1+0.5+0.5+0.5+2.5+1,-0.5-0.5-0.5-0.5-0.5) node [midway] {5};

  \draw(0.5+0.5+0.5+2+0.5,0-0.5-0.5-0.5-0.5-0.5) rectangle (0.5+0.5+0.5+2.5+1,-0.5-0.5-0.5-0.5-0.5-0.5) node [midway] {6};
    \draw(1+0.5+0.5+0.5+2+0.5,0-0.5-0.5-0.5-0.5-0.5) rectangle (1+0.5+0.5+0.5+2.5+1,-0.5-0.5-0.5-0.5-0.5-0.5) node [midway] {6};

 \draw(0,-3)rectangle (0.5,-4)node [midway,rotate=90] {4};
   \draw (0.5,-3)rectangle (1,-4)node [midway,rotate=90] {4};

  \draw (1,-3)rectangle (2,-3.5)node [midway] {5};
    \draw (1,-4)rectangle (2,-3.5)node [midway] {6};

    \draw (2,-3)rectangle (2.5,-4)node [midway,rotate=90] {6};

   \draw (3.5,-3)rectangle (4,-4)node [midway] {7};

  \draw (1,-3)rectangle (2,-3.5)node [midway] {5};
    \draw (2.5,-3)rectangle (3.5,-3.5)node [midway] {6};

     \draw (2+0.5,-4)rectangle (3+0.5,-3.5)node [midway] {7};

   \draw (4,-3)rectangle (5,-3.5)node [midway] {7};
   \draw (4,-4)rectangle (5,-3.5)node [midway] {8};
             \end{tikzpicture}
 $$
\caption{Example of a pair of   domino \color{black} tableaux $\SSTS$ and $\SSTT$
    of shape $(6^3,5)$  \color{black} and weight $(10,8,7^2,5^2,3,1)$ \color{black}.  The colouring   highlights  the partition $\widehat{\la} = (4,2,1)\subseteq (6^3) \subset (6^3,5) $ and the final double-row.
 }\label{cupbox2}
\end{figure}

 The partition $\widehat{\lambda}$ and the 
 \color{black} multiset of
 labels of the dominoes in the final double row $\mathcal{D}$
 are uniquely determined by the weight $\alpha$.
 \color{black}
  To see this, observe that since $d>b$ for any $d\in \mathcal{D}$, we have that $\widehat{\lambda}_i={\lambda}_i-a=\alpha_i-a$ for $1 \le i \le b$. Then $\widehat{\lambda}$ determines $\lambda$, from which we can read off the elements of $\mathcal{D}$.
What  remains is to determine  the configuration of dominoes of the final double-row and their labelling.

 We claim that there are at most  two  $(1^2)$-dominoes with labels $d, d' >b+1$.
 Every domino which intersects the $(2b+1)$th row must be supported by some
domino which intersects the $2b$th row (exactly as in  \cref{usefulremarkweneed}).
 Since there is precisely one more double column in the $2b$th row   than   in row $(2b+1)$th,
 and
the  rightmost $2(a-\widehat{\la}_b)$ columns of the $2b$th row
 consist solely of $(2)$-dominoes, the claim follows.  
  \color{black} (For example,  in \cref{cupbox2} the rightmost tableau has two $(1^2)$-dominoes with \color{black}
  labels $6,7>3+1=b+1$  \color{black}
  whereas the leftmost tableau has no such $(1^2)$-dominoes.) \color{black}    
We will now construct the final double-row of each of the \color{black} possible domino tableaux from the claim; we do this  \color{black} from right-to-left (as this allows us to verify the lattice permutation condition at each stage).
Given a fixed weight partition $\alpha$, we now provide a pair of algorithms.
The first (second) algorithm   determines the unique element  $\SSTT\in {\sf Dom}((a^b,a-1),\alpha)$ (if it exists) subject to the condition that  there are zero (respectively one or two)
  $(1^2)$-dominoes  of label $ d>b+1$.

 \medskip
 \noindent{\bf Algorithm 1: No $(1^2)$-dominoes of label $\bm{ d>b+1}$}.
 We now provide an algorithm for uniquely determining a tableau of a given weight subject to the condition that  there are no
  $(1^2)$-dominoes  of label $ d>b+1$.
 In what follows, we assume that such a tableau exists.  If such a tableau does not exist, then one of the deductions made during the
 running of the algorithm (for example a statement regarding the differences between labels) will be false.

Set   $W_1:= \mathcal{D}$, the multiset of labels determined by the  weight  $\alpha-\lambda$ (of the final double-row)    and we set $w_1=\max(W_1)$.
 Set $f_1$ equal to the label of  $ {F}_1=\{(2b,2a-1),(2b,2a)\}$.
Set $D_1$ equal to the  bottommost horizontal domino/leftmost vertical domino  in the region $(b,a-1)_2$
and set $d_1$ to be the label of $D_1$.
Set  $E_1 $
and $\overline{E}_1	 $ to be the (at this point empty)  dominoes in $(b+1,a-1)_2$ with
$\overline{E}_1	 $  above
  $E_1$.
Step $i \geq  1$ of the algorithm proceeds as follows:

\begin{itemize}[leftmargin=*]
\item Fill in  $E_i$  with the label   $e_i:=w_i$.
\item
   If $w_i=f_i+1$, then  $E_i$ is supported by  $F_i$; therefore $\overline{E}_i$ must be supported by $ {D}_i$
    and so we fill in   $\overline{E}_i$  with the label   $\overline{e}_i:=d_i+1$.    Now, if  $e_i>\overline{e}_i+1$, then  $E_i$ remains supported by $F_i$ (and
$\overline{E}_i$
is free to support a subsequent empty domino) and so we set $F_{i+1}:=E_i$ and we additionally set   $\delta_i=i$.
On the other hand, if $e_i=\overline{e}_i+1$, then  $E_i$ is now supported by $\overline{E}_i$
(and so $F_i$  remains  free to support a subsequent empty domino)  and we set $F_{i+1}:=F_i$
and we additionally set   $\delta_i=0$.

\item
If $w_i\neq f_i+1$, then  $E_i$ must be supported by $\overline{E}_i$.
  Therefore we fill in   $\overline{E}_i$  with the label   $\overline{e}_i:=w_i-1 \in W_i$.
    Now, if $\overline{e}_i=d_i+1$ then  the domino $\overline{E}_i$ is supported by  ${D}_i$  (and $F_i$  is free to support a subsequent empty domino)
 and so we set  ${F}_{i+1}:={F}_i$.
 On the other hand, if  $\overline{e}_i>d_i+1 $
then $\overline{E}_i$ is supported by   ${F}_i$
  (which by necessity implies that $\overline{e}_i=f_i+1 $ and that ${D}_i$ is free to support a subsequent empty domino) and so  we set ${F}_{i+1}={D}_i$.
 Set $\delta_i=0$.

\item In either case, we now set $W_{i+1} = W_i \setminus \{e_i,\overline{e}_i\}$,
\color{black}
$w_{i+1}=\max(W_{i+1})$,
\color{black} and $D_{i+1}$  equal to the  bottommost horizontal domino/leftmost vertical domino  in the region $(b,a-i-1)_2$
and set $d_{i+1}$ to be the label of $D_{i+1}$.
  If $W_{i+1}$ does not consist solely of labels $b+1$,
then we label the
  top domino   $\overline{E}_{i+1}$ and the bottom domino  $E_{i+1}$ and we commence step $i+1$.
  Otherwise, the algorithm terminates with us placing all the remaining labels in $(1^2)$-dominoes.

\end{itemize}

 The algorithm terminates with
output given by   $\SSTT $.
That the resulting tableau $\SSTT$ belongs to ${\sf Dom}((a^b,a-1) ,\alpha)$ is immediate from the definition of the $i$th step:
 we place the largest possible value in the bottom rightmost $(2)$-domino   (of course) and then place the only
  possible label in the
$(2)$-domino immediately above this (with cases prescribed precisely by the system of parentheses).

 \medskip
 \noindent{\bf Algorithm 2: At least one $(1^2)$-domino of label $\bm{ d>b+1}$}.
  We now provide an algorithm for uniquely determining a tableau of a given weight subject to the condition that  there exists at least one
  $(1^2)$-domino    of label $ d>b+1$.
 In what follows, we assume that such a tableau exists.  If such a tableau does not exist, then one of the deductions made during the
 running of the algorithm (for example a statement regarding the differences between labels) will be false.

 Set   $W_1:= \mathcal{D}$,  the multiset of labels determined by the  weight  $\alpha-\lambda$  (of the final double-row), and set $w_1=\max(W_1)$.  Set $f_1$ equal to the label of  $ {F}_1=\{(2b,2a-1),(2b,2a)\}$.
Set $D_1$ equal to the  bottommost horizontal domino/leftmost vertical domino  in the region $(b,a-1)_2$
and set $d_1$ to be the label of $D_1$.
Step $i\geq 1$ of the algorithm proceeds as follows:
 \begin{itemize}[leftmargin=*]
\item Suppose $F_i$ is in the $2b$th row.
 \begin{itemize}[leftmargin=*]
\item
If   $w_i=f_i+2$,  then
necessarily  $ f_i	+1\in W_i$.  We
 place two $(2)$-dominoes $\overline{E}_i$ and $ {E}_{i}$ in
  $(b+1,a-i)_2$ with ascending labels
   $\overline{e}_i=f_i+1$ and
   $e_i=f_i+2$.
    If $d_i=f_i$ then set $F_{i+1}:=F_i$ and if $d_i<f_i$ then set $F_{i+1}:=D_i$.

\item
If $w_i=f_i+1$, then $d_i+1\in W_i\setminus \{w_i\}$.
\begin{itemize}[leftmargin=*]
\item[$(\clubsuit)$] If
$d_i+2\not\in W_i\setminus \{f_i+1,d_i+1\}$, place a $(1^2)$-domino, $E_i$   in the rightmost position and then place a $(1^2)$-domino, $\overline{E}_i$, in the adjacent position
with labels $e_i=f_i+1$ and $\overline{e}_i= d_i+1$.    Set $F_{i+1}:=\emptyset$.

\item [$(\spadesuit)$]
If
$d_i+2 \in W_i\setminus \{f_i+1,d_i+1\}$,
 then place a $(1^2)$-domino, $V $, in the rightmost position with label $e_i= f_i+1$.
 Then place   a
  $(2)$-domino
$\overline{E}_i$
adjacent to $V $ in the $(2b+1)$th row
with label  $ \overline{e}_i=d_i+1$.
Set $F_{i+1}:=\overline{E}_i$.

\end{itemize}

 \end{itemize}

\item Suppose $F_i$ is in the $(2b+1)$th row.    In this case,  $d_i\neq f_i$ and we must have $d_i+1, f_i+1\in W_i$.
\begin{itemize}[leftmargin=*]
\item
 If $d_i+2\in W_i\setminus \{f_i+1\}$ then place a $(2)$-domino, $E_{i}$, in the rightmost position in the $(2b+2)$th row
 with label $e_i=f_i+1$.
 We then place  a $(2)$-domino, $\overline{E}_{i}$, in the rightmost available position in the $(2b+1)$th row with label $\overline{e}_i=d_i+1$.
 We set $F_{i+1}:=\overline{E}_{i}$.

 \item  If $d_i+2 \not\in W_i\setminus \{f_i+1\}$ then
 place a $(2)$-domino $E_i$ in the rightmost available position in the $(2b+2)$th row with label $e_i=f_i+1$.
Then   place a $(1^2)$-domino   $\overline{V}$  in the adjacent position to the left with label $\overline{e}_i=d_i+1$.
Then set $F_{i+1}=\emptyset$.

\end{itemize}

\item Suppose $F_i=\emptyset$.
  If $W_{i}$ does not consist solely of labels $b+1$, then $d_i+1,d_i+2\in W_i$  and
   we  place  a pair of $(2)$-dominoes  $\overline{E}_{i}$ and   $E_{i}$ with labels $d_i+1$ and $d_i+2$.
  Otherwise, the algorithm terminates with us placing all the remaining labels in $(1^2)$-dominoes.

\item
We now set $W_{i+1} = W_i \setminus \{e_i,\overline{e}_i\}$,
\color{black}
$w_{i+1}=\max(W_{i+1})$,
\color{black}
     and $D_{i+1}$  equal to the  bottommost horizontal domino/leftmost vertical domino  in the region $(b,a-i-1)_2$
and set $d_{i+1}$ to be the label of $D_{i+1}$.

\end{itemize}
  The algorithm terminates with
output given by   $\SSTT $.   That the resulting tableau $\SSTT$ belongs to ${\sf Dom}((a^b,a-1) ,\alpha)$ is immediate from the definition.
It is not immediate that this tableau is unique: in the step $(\spadesuit)$ we have apparently made a choice.
We could have placed two $(2)$-dominoes at this step and set $F_{i+1}:=\overline{E}_i$ in the $(2b+1)$th row.  However, a $(2)$-domino in the $(2b+1)$th row is unable to support a $(1^2)$-domino and so this choice is invalid.

 \smallskip
 \noindent{\bf Uniqueness of sign.  }
Given a weight $\alpha$, each algorithm produces at most one tableau of that weight.
If the second algorithm does not produce a tableau, then the result follows.
Now suppose that the second algorithm does terminate with a tableau $\SSTT$.
 We depict $\SSTT \cap \{(r,c) \mid r\geq 2b, 1\leq c \leq 2a\}$ in \cref{fig1:rowena} below.

\begin{figure}[ht!]
$$\scalefont{0.7}
\begin{tikzpicture} [scale=1.2]

\draw(-3.5,0)--++(0:12-1.5)--++(90:0.5)--++(180:12-1.5)--++(-90:.5);

 \draw(-3.5,0) rectangle (-2.5,-1) node [midway ] {$\dots  $};
       \draw(-3.5,0) rectangle (-2.5,0.5) node [midway ] {$\dots  $};

       \draw(-1.5,0) rectangle (-2.5,0.5) node [midway ] {$ d_{j+2}$};
       \draw(-1.5,0) rectangle (-0.5,0.5) node [midway ] {$\overline{v}-1$};

             \draw(0.5+-1+-2+0,0) rectangle (0.5+-1+-1,-0.5) node [midway ] {$ d_{j+2}+1$};
                    \draw(0.5+-1+-2,-1) rectangle (0.5+-1+-1,-0.5) node [midway ] {$d_{j+2}+2$};

     \draw(-1.5,-1) rectangle (-1,-0) node [midway ] {$\overline{v}$};

   \draw(.5,0) rectangle (-0.5,0.5) node [midway ] {$d_j$};

            \draw(+0,0) rectangle (-1,-0.5) node [midway ] {$d_j+1$};
                    \draw(+-1,-1) rectangle (0,-0.5) node [midway ] {$d_j+2$};

    \draw(2,0.5) rectangle (0.5,0) node [midway ] {$\dots$};

    \draw(1.5,-1) rectangle (0,0) node [midway ] {$\dots$};

           \draw(1+1+0,0) rectangle (1+1+1,0.5) node [midway ] {$d_i$};

                            \draw(-1.5+1+1+1+0,0) rectangle (-1.5+1+1+1+1,-0.5) node [midway ] {$d_{i }+1$};
 \draw(-1.5+1+1+1+0,-1) rectangle (-1.5+1+1+1+1,-0.5) node [midway ] {$d_{i }+2$};

          \draw(1+1+1+0,0) rectangle (1+1+.5,-1) node [midway  ] {$v$};

          \draw(1+1+1+0,0) rectangle (1+1+1+1,0.5) node [midway ] {$v-1$};
          \draw(1+1+1+0,0) rectangle (1+1+1+1,-0.5) node [midway ] {$f_{i-2}+1$};
 \draw(1+1+1+0,-1) rectangle (1+1+1+1,-0.5) node [midway ] {$f_{i-2}+2$};
 \draw(1+3,-1) rectangle (1+4,0) node [midway ] {$\dots$};
 \draw(1+3,0.5) rectangle (1+4,0) node [midway ] {$\dots$};

                                \draw(1+-1+2+4+0,0) rectangle (2+-1+1+4+1,0.5) node [midway ] {$f_{1}$};
                               \draw(1+-1+1+4+0,0) rectangle (1+-1+1+4+1,0.5) node [midway ] {$f_{2}$};
                               \draw(1+-1+1+4+0,0) rectangle (1+-1+1+4+1,-0.5) node [midway ] {$f_{1}+1$};
                    \draw(1+-1+1+4+0,-1) rectangle (1+-1+1+4+1,-0.5) node [midway ] {$f_1+2$};

          \end{tikzpicture}
$$
 \caption{Rows $2b, 2b+1, 2b+2$ of the domino tableau  $\SSTT$ constructed by Algorithm 2.   Note that $v-1=f_{i-1}$. 
 \color{black} Compare with the leftmost domino tableau in \cref{cupbox2}.}
 \label{fig1:rowena}
\end{figure}

If $i-j=-1$ in the above and $v=\overline{v}$, then $\SSTT$ is the unique tableau  in ${\sf Dom}(a^b,a-1,\alpha)$.
To see this, note that algorithms 1 and 2 coincide up to the point in the $(i-2)$th step at which we insert a vertical domino.
 At this point
 $d_{i-1}+1=v=w_{i-1}=\max(W_{i-1})$ and
  $\overline{v}=d_{i-1}+1$  and  so $\overline{v}=v$; thus algorithm~1 fails.

Now assume that $ i-j\geq 0$ or  $\overline{v}\neq v$.
We now describe how to obtain a semistandard tableau $\SSTT^{\rm rot}$ from $\SSTT$ with no
  $(1^2)$-dominoes of label  $d>b+1$, but such that  $\SSTT^{\rm rot}$ has   opposite sign.
  Note that    $\SSTT^{\rm rot}$ will be the output of algorithm~1.

\begin{figure}[ht!]
$$\scalefont{0.7}
\begin{tikzpicture} [scale=1.2]

 \draw(-3.5,0)--++(0:12-1.5)--++(90:0.5)--++(180:12-1.5)--++(-90:.5);

 \draw(-3.5,0) rectangle (-2.5,-1) node [midway ] {$\dots  $};
       \draw(-3.5,0) rectangle (-2.5,0.5) node [midway ] {$\dots  $};

       \draw(-1.5,0) rectangle (-2.5,0.5) node [midway ] {$ d_{j+2}$};
       \draw(-1.5,0) rectangle (-0.5,0.5) node [midway ] {$\overline{v}-1$};

             \draw(0.5+-1+-2+0,0) rectangle (0.5+-1+-1,-0.5) node [midway ] {$ d_{j+2}+1$};
                    \draw(0.5+-1+-2,-1) rectangle (0.5+-1+-1,-0.5) node [midway ] {$d_{j+2}+2$};

    \draw(.5,0) rectangle (-0.5,0.5) node [midway ] {$d_j$};

            \draw(-0.5++0,0) rectangle (-0.5+-1,-0.5) node [midway ] {$\overline{v}$};
                    \draw(-0.5++-1,-1) rectangle (-0.5+0,-0.5) node [midway ] {$d_j+2$};

     \draw(1+-0.5++0,0) rectangle (1+-0.5+-1,-0.5) node [midway ] {$d_j+1$};
                    \draw(1+-0.5++-1,-1) rectangle (1+-0.5+0,-0.5) node [midway ] {$ d_{j-1}+2$};

         \draw(1+1++0,0) rectangle (1+1+-0.5+-1,0.5) node [midway ] {$\dots$};
         \draw(1+1++0,0) rectangle (1+1+-0.5+-1,-1) node [midway ] {$\dots$};

          \draw(1+1+0,0) rectangle (1+1+1,0.5) node [midway ] {$d_i$};
                            \draw(0.5+-1.5+1+1+1+0,0) rectangle (0.5+-1.5+1+1+1+1,-0.5) node [midway ] {$d_{i }+1$};
 \draw(0.5+-1.5+1+1+1+0,-1) rectangle (0.5+-1.5+1+1+1+1,-0.5) node [midway ] {$v$};

          \draw(1+1+1+0,0) rectangle (1+1+1+1,0.5) node [midway ] {$v-1$};
          \draw(1+1+1+0,0) rectangle (1+1+1+1,-0.5) node [midway ] {$f_{i-2}+1$};
 \draw(1+1+1+0,-1) rectangle (1+1+1+1,-0.5) node [midway ] {$f_{i-2}+2$};
 \draw(1+3,-1) rectangle (1+4,0) node [midway ] {$\dots$};

                                \draw(1+-1+2+4+0,0) rectangle (2+-1+1+4+1,0.5) node [midway ] {$f_{1}$};
                               \draw(1+-1+1+4+0,0) rectangle (1+-1+1+4+1,0.5) node [midway ] {$f_{2}$};
                               \draw(1+-1+1+4+0,0) rectangle (1+-1+1+4+1,-0.5) node [midway ] {$f_{1}+1$};
                    \draw(1+-1+1+4+0,-1) rectangle (1+-1+1+4+1,-0.5) node [midway ] {$f_1+2$};

          \end{tikzpicture}
$$
\caption{Rows $2b, 2b+1, 2b+2$ of the domino tableau   $\SSTT^{\rm rot}$.
 \color{black} Compare with the rightmost domino tableau in \cref{cupbox2}.}
\label{fig2:rowena}
\end{figure}

  Given $\SSTT$  as in \cref{fig1:rowena}, we define  $\SSTT^{\rm rot}$ to the tableau obtained from $\SSTT$ as in \cref{fig2:rowena}.  We need only show that    $\SSTT^{\rm rot}$  satisfies the semistandard and lattice permutation conditions.

The lattice permutation can be checked by inspection of   \cref{fig2:rowena}.
That $\SSTT^{\rm rot}$ is weakly increasing along rows follows as each set of row labels
of  $\SSTT^{\rm rot}$ is a subset of the row labels of $\SSTT$.
That the columns increase from the entries in the $2b$th to the $(2b+1)$th row is immediate.
Finally, the column strict inequality $\overline{v}<d_j+2$ in $\SSTT^{\rm rot}$ follows from the
  row  semistandardness inequality  $\overline{v}\leq d_j+1$ of $\SSTT$.
Similarly, $d_k+1 < d_{k-1}+2$   for $i\leq k \leq j$ and  $d_i+1<v$ because
$d_k		  \leq  d_{k-1} $ and  $d_{i}+2\leq v$, both  by the row semistandardness of $\SSTT$.

Therefore the signs of the tableaux (if they both exist) produced in  Algorithms~1 and 2 are opposite and so
$s_ {(2)}\circ s_{(a^b,a-1) }\leq 1$ and $s_ {(1^2)}\circ s_{(a^b,a-1) }\leq 1$ as required.
 \end{proof}

 \begin{cor}
All the products listed in \cref{conj} are   multiplicity-free.

 \end{cor}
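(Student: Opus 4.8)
The plan is to assemble the preceding results of this section according to the cases listed in \cref{conj}. The finitely many exceptional products---the small list of $\nu$'s appearing in part $(iii)$ and the whole of part $(iv)$---are dispatched by direct computation, for instance with Stembridge's \textsf{SF} package; since all the partitions involved have bounded size there is nothing more to say. Part $(i)$ holds because plethysm with $s_{(1)}$ is the identity, so $s_{(1)}\circ s_\mu=s_\mu$ and $s_\nu\circ s_{(1)}=s_\nu$ are trivially multiplicity-free. The subcase of part $(iii)$ with $\mu\vdash 2$ and $\nu$ linear is exactly \eqref{size2} and \eqref{maxminall}, which give $p((n),\mu)=p((1^n),\mu)=1$.

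It remains to treat part $(ii)$, where $\nu\vdash 2$ and $\mu$ is a rectangle $(a^b)$, a near-rectangle $(a+1,a^{b-1})$, a ``rectangle with a foot'' $(a^b,1)$, a ``near-rectangle with a foot'' $(a^{b-1},a-1)$, or a hook. The rectangular case is \cref{rectangle is mf}; the cases $\mu=(a^b,1)$ and $\mu=(a^b,a-1)$ are the two propositions above, each proved for both $\nu=(2)$ and $\nu=(1^2)$, and the family $(a^{b-1},a-1)$ is the latter after renaming $b$. For a hook $\mu$, the proposition above gives $p((2),\mu)=1$; to obtain $p((1^2),\mu)=1$ as well, we use that the proof of that proposition in fact produces the (at most two) domino Littlewood--Richardson tableaux of a given double-hook weight with opposite spin types (see \cref{dominotableaux}), so that $\dom_+(\mu,\alpha)\leq 1$ and $\dom_-(\mu,\alpha)\leq 1$ for every $\alpha$, and Carr\'e--Leclerc's theorem then gives the claim. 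Finally, the near-rectangle $\mu=(a+1,a^{b-1})$ reduces to the shape $(a^b,1)$ by conjugation, since $(a+1,a^{b-1})^T=(b^a,1)$: by \eqref{conjugate} one has $p(\nu,(a+1,a^{b-1}))=p(\nu^M,(b^a,1))$ with $\nu^M\vdash 2$, and this equals $1$ by the proposition on $s_\nu\circ s_{(a^b,1)}$. Together these cases cover every product on the list, proving the corollary.

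Since essentially all the mathematical content has already been established, the only genuine subtlety here is bookkeeping: one must check that the shapes in part $(ii)$ together with their conjugates really exhaust the families claimed, and that both $\nu=(2)$ and $\nu=(1^2)$ have been addressed in each case---the hook proposition as stated only mentions $s_{(2)}\circ s_\mu$, so one must appeal to the stronger statement implicit in its proof. The hard work proper was in the earlier propositions, in particular the $(a^b,a-1)$ case with its two domino-filling algorithms and the sign-reversing map $\SSTT\mapsto\SSTT^{\mathrm{rot}}$, rather than in the corollary itself.
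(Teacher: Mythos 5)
Your proof is correct and follows essentially the same route as the paper's: case $(i)$ is trivial, the finite exceptions in $(iii)$--$(iv)$ are computer checks, the $\mu\vdash 2$ with $\nu$ linear family comes from \eqref{size2} and \eqref{maxminall}, case $(ii)$ is assembled from the preceding propositions, and $(a+1,a^{b-1})$ is handled by conjugation via \cref{max-conjugate}. Your explicit observation that the hook proposition's proof in fact bounds $\dom_+(\mu,\alpha)$ and $\dom_-(\mu,\alpha)$ by $1$ separately, so that $p((1^2),\mu)=1$ as well, is a detail the paper leaves implicit but relies on in exactly the same way.
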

 \begin{proof}
 Case $(i)$ is trivial, and cases $(iii)$ and $(iv)$ have been checked by computer.
 Above, we have explicitly checked case $(ii)$ for $\mu=  (a^b)$,     $(a^b,1) $,    $(a^{b-1},a-1)$  and $\mu$ a hook.
 The case $\mu=(a+1,a^{b-1})=(a^b,1)^T $ then follows immediately by \cref{max-conjugate}.
 \end{proof}

 \section{Near maximal constituents of $s_\nu \circ s_{(2)}$} \label{sec:proof}

   For an arbitrary partition $\nu\vdash n$, we calculate
the near maximal (in the lexicographic ordering) constituents    of the product  $s_{\nu}\circ s_{(2)}$ and their multiplicities.
The answer is reminiscent of the famous rule for Kronecker products with the standard representation of the symmetric group.
We expect the results and ideas of this section to be of independent interest; these results will also be vital  in the proof of the classification.

\newcommand{\PSSYT}{{\rm PStd}}
\newcommand{\SSYT}{{\rm SStd}}
\newcommand{\ix}{i_x}

Given $\nu \vdash n$, we have already seen in \cref{pppppppppp} that
$s_{(n+\nu_1,\nu_2,\dots,\nu_\ell)}$ is the lexicographically maximal constituent
of $s_\nu \circ s_{(2)}$, and that
\begin{equation}\label{nu-2-max-constituent}
\langle s_\nu \circ s_{(2)}\mid s_{(n+\nu_1,\nu_2,\dots,\nu_\ell)}\rangle=1 .
\end{equation}
\color{black}
We set $\bar\nu = \nu +(n)$. We first note that if $\la \vdash 2n$ is any partition with $\la_1= n+\nu_1=\bar\nu_1$ then
there is a bijection
$$\PSSYT((2)^\nu, \la) \to \SStd(\bar\nu, \la) , $$
simply given by 
(1) exorcising  the first entry (equal to 1 in every case) of each tableau $\SSTT(r,c)=\gyoung(1;z) $ for $(r,c)\in [\nu]$ and then 
(2) adjoining $n$ additional boxes each containing an entry~1 into the top row. (We note that since $\SSTT$ had $n+\nu_1$ tableau entries~1, this map sends $\SSTT$ to a semistandard $\bar\nu$-tableau whose first row contains only entries~1.) 
  We now use \cref{dvir}: 
$$ \langle s_\nu \circ s_{(2)}\mid s_{\la} \rangle
= |{\rm PStd}((2)^\nu,\lambda)| -
 \sum_{\beta\rhd \lambda}
\langle s_\nu \circ s_{(2)}\mid s_{\beta} \rangle
\times |{\rm SStd}( \beta,\lambda)|$$ to argue that the multiplicity is zero.  If $\bar\nu \ntrianglerighteq \lambda$ then
$|{\rm PStd}((2)^\nu,\lambda)|=|{\rm SStd}(\bar\nu,\lambda)|=0$. Otherwise
$\bar\nu \vartriangleright \lambda$ and the $\beta=\bar\nu$ term cancels with the first term, again giving multiplicity zero.
Therefore
\begin{equation}\label{maxfirstpart}
  \langle s_\nu \circ s_{(2)}\mid s_{\la} \rangle =0 \textrm{ if } \la_1 = n+\nu_1 \textrm{ and } \la \neq {\nu}+(n).
\end{equation}
 \color{black}An example is given in \cref{Rowena_example22}.   \color{black}

 \begin{rmk}\label{aremark}\color{black}
 In the above   one can think of our construction of plethystic tableaux 
 of weight $(n+\nu_1,\dots)$ 
  via the following summary: {\em the position of 
  every single entry equal to $1$ is forced}.   
  Namely, we cannot have a 
$ {\scriptsize\young(11)}$ appearing in  the $r$th row for any $r>1$ (as $ {\scriptsize\young(11)}$ is minimal in the ordering on Young tableaux) and so 
every entry of a plethystic tableau, $\SSTT$, 
must be of the form
$\SSTT(r,c)= {\scriptsize\young(1t)}$ for $r>1$
and 
$\SSTT(1,c)= {\scriptsize\young(11)}$.  This is simply because we have $n+\nu_1$ such entries equal to 1.  
With the $1$s in place, the conditions on the integers $t$ in the entries
  $\SSTT(r,c)={\scriptsize\young(1t)}$ are the same as the conditions on 
  the entries $\stt(r,c)=t$ of some $\stt \in \SStd(\bar\nu, \la)$.  
  \end{rmk}

\begin{figure}[ht!]  $$   \begin{minipage}{5.3cm} \begin{tikzpicture} [scale=0.33]
\draw[very thick](-0.5,0.5)--++(0:15)--++(-90:3)--++(180:15)--++(-90:3)--++(0:12)--++(180:6)
(-0.5,0.5)--++(0:3)--++(-90:9)
(-0.5,0.5)--++(0:6)--++(-90:9)
(-0.5,0.5)--++(0:9)--++(-90:6)
(-0.5,0.5)--++(0:12)--++(-90:6)
(-0.5,0.5)--++(-90:9)--++(0:6)
; 

  \path(-0.2,-0.5) coordinate (origin);
 \begin{scope}
 {\draw (origin)--++(0:2*1.2)--++(-90:1*1.2)--++(180:2*1.2)--++(90:1*1.2); 

      \path(origin)--++(0.5*1.2,-0.5*1.2)  node {1};
   \path(origin)--++(1.5*1.2,-0.5*1.2) node {1};
  \clip (origin)--++(0:2*1.2)--++(-90:1*1.2)--++(180:2*1.2)--++(90:1*1.2);

  \path(origin) coordinate (origin);
   \foreach \i in {1,...,19}
  {
    \path (origin)++(0:1.2*\i cm)  coordinate (a\i);
    \path (origin)++(-90:1.2*\i cm)  coordinate (b\i);
    \path (a\i)++(-90:10cm) coordinate (ca\i);
    \path (b\i)++(0:10cm) coordinate (cb\i);
    \draw[thin] (a\i) -- (ca\i)  (b\i) -- (cb\i); }

     }

     \end{scope}

  \path(3-0.2,-0.5) coordinate (origin);
 \begin{scope}
 {\draw (origin)--++(0:2*1.2)--++(-90:1*1.2)--++(180:2*1.2)--++(90:1*1.2); 

      \path(origin)--++(0.5*1.2,-0.5*1.2)  node {1};
   \path(origin)--++(1.5*1.2,-0.5*1.2) node {1};
  \clip (origin)--++(0:2*1.2)--++(-90:1*1.2)--++(180:2*1.2)--++(90:1*1.2);

  \path(origin) coordinate (origin);
   \foreach \i in {1,...,19}
  {
    \path (origin)++(0:1.2*\i cm)  coordinate (a\i);
    \path (origin)++(-90:1.2*\i cm)  coordinate (b\i);
    \path (a\i)++(-90:10cm) coordinate (ca\i);
    \path (b\i)++(0:10cm) coordinate (cb\i);
    \draw[thin] (a\i) -- (ca\i)  (b\i) -- (cb\i); }

     }

     \end{scope}

    \path(6-0.2,-0.5) coordinate (origin);
  \begin{scope}
 {\draw (origin)--++(0:2*1.2)--++(-90:1*1.2)--++(180:2*1.2)--++(90:1*1.2); 

      \path(origin)--++(0.5*1.2,-0.5*1.2)  node {1};
   \path(origin)--++(1.5*1.2,-0.5*1.2) node {1};
  \clip (origin)--++(0:2*1.2)--++(-90:1*1.2)--++(180:2*1.2)--++(90:1*1.2);

  \path(origin) coordinate (origin);
   \foreach \i in {1,...,19}
  {
    \path (origin)++(0:1.2*\i cm)  coordinate (a\i);
    \path (origin)++(-90:1.2*\i cm)  coordinate (b\i);
    \path (a\i)++(-90:10cm) coordinate (ca\i);
    \path (b\i)++(0:10cm) coordinate (cb\i);
    \draw[thin] (a\i) -- (ca\i)  (b\i) -- (cb\i); }

     }

     \end{scope}

\path(9-0.2,-0.5) coordinate (origin);

 \begin{scope}
 {\draw (origin)--++(0:2*1.2)--++(-90:1*1.2)--++(180:2*1.2)--++(90:1*1.2); 

      \path(origin)--++(0.5*1.2,-0.5*1.2)  node {1};
   \path(origin)--++(1.5*1.2,-0.5*1.2) node {1};
  \clip (origin)--++(0:2*1.2)--++(-90:1*1.2)--++(180:2*1.2)--++(90:1*1.2);

  \path(origin) coordinate (origin);
   \foreach \i in {1,...,19}
  {
    \path (origin)++(0:1.2*\i cm)  coordinate (a\i);
    \path (origin)++(-90:1.2*\i cm)  coordinate (b\i);
    \path (a\i)++(-90:10cm) coordinate (ca\i);
    \path (b\i)++(0:10cm) coordinate (cb\i);
    \draw[thin] (a\i) -- (ca\i)  (b\i) -- (cb\i); }

     }

     \end{scope}

\path(12-0.2,-0.5) coordinate (origin);

 \begin{scope}
 {\draw (origin)--++(0:2*1.2)--++(-90:1*1.2)--++(180:2*1.2)--++(90:1*1.2); 

      \path(origin)--++(0.5*1.2,-0.5*1.2)  node {1};
   \path(origin)--++(1.5*1.2,-0.5*1.2) node {1};
  \clip (origin)--++(0:2*1.2)--++(-90:1*1.2)--++(180:2*1.2)--++(90:1*1.2);

  \path(origin) coordinate (origin);
   \foreach \i in {1,...,19}
  {
    \path (origin)++(0:1.2*\i cm)  coordinate (a\i);
    \path (origin)++(-90:1.2*\i cm)  coordinate (b\i);
    \path (a\i)++(-90:10cm) coordinate (ca\i);
    \path (b\i)++(0:10cm) coordinate (cb\i);
    \draw[thin] (a\i) -- (ca\i)  (b\i) -- (cb\i); }

     }

     \end{scope}

  \path(-0.2,-3.5) coordinate (origin);

 \begin{scope}
 {\draw (origin)--++(0:2*1.2)--++(-90:1*1.2)--++(180:2*1.2)--++(90:1*1.2); 

      \path(origin)--++(0.5*1.2,-0.5*1.2)  node {1};
   \path(origin)--++(1.5*1.2,-0.5*1.2) node {2};   
  \clip (origin)--++(0:2*1.2)--++(-90:1*1.2)--++(180:2*1.2)--++(90:1*1.2);

  \path(origin) coordinate (origin);
   \foreach \i in {1,...,19}
  {
    \path (origin)++(0:1.2*\i cm)  coordinate (a\i);
    \path (origin)++(-90:1.2*\i cm)  coordinate (b\i);
    \path (a\i)++(-90:10cm) coordinate (ca\i);
    \path (b\i)++(0:10cm) coordinate (cb\i);
    \draw[thin] (a\i) -- (ca\i)  (b\i) -- (cb\i); }

     }

     \end{scope}

  \path(2.8,-3.5) coordinate (origin);

 \begin{scope}
 {\draw (origin)--++(0:2*1.2)--++(-90:1*1.2)--++(180:2*1.2)--++(90:1*1.2); 

      \path(origin)--++(0.5*1.2,-0.5*1.2)  node {1};
   \path(origin)--++(1.5*1.2,-0.5*1.2) node {2};
  \clip (origin)--++(0:2*1.2)--++(-90:1*1.2)--++(180:2*1.2)--++(90:1*1.2);

  \path(origin) coordinate (origin);
   \foreach \i in {1,...,19}
  {
    \path (origin)++(0:1.2*\i cm)  coordinate (a\i);
    \path (origin)++(-90:1.2*\i cm)  coordinate (b\i);
    \path (a\i)++(-90:10cm) coordinate (ca\i);
    \path (b\i)++(0:10cm) coordinate (cb\i);
    \draw[thin] (a\i) -- (ca\i)  (b\i) -- (cb\i); }

     }

     \end{scope}

  \path(5.8,-3.5) coordinate (origin);

 \begin{scope}
 {\draw (origin)--++(0:2*1.2)--++(-90:1*1.2)--++(180:2*1.2)--++(90:1*1.2); 

      \path(origin)--++(0.5*1.2,-0.5*1.2)  node {1};
   \path(origin)--++(1.5*1.2,-0.5*1.2) node {2};
  \clip (origin)--++(0:2*1.2)--++(-90:1*1.2)--++(180:2*1.2)--++(90:1*1.2);

  \path(origin) coordinate (origin);
   \foreach \i in {1,...,19}
  {
    \path (origin)++(0:1.2*\i cm)  coordinate (a\i);
    \path (origin)++(-90:1.2*\i cm)  coordinate (b\i);
    \path (a\i)++(-90:10cm) coordinate (ca\i);
    \path (b\i)++(0:10cm) coordinate (cb\i);
    \draw[thin] (a\i) -- (ca\i)  (b\i) -- (cb\i); }

     }

     \end{scope}

  \path(8.8,-3.5) coordinate (origin);

 \begin{scope}
 {\draw (origin)--++(0:2*1.2)--++(-90:1*1.2)--++(180:2*1.2)--++(90:1*1.2); 

      \path(origin)--++(0.5*1.2,-0.5*1.2)  node {1};
   \path(origin)--++(1.5*1.2,-0.5*1.2) node {2};
  \clip (origin)--++(0:2*1.2)--++(-90:1*1.2)--++(180:2*1.2)--++(90:1*1.2);

  \path(origin) coordinate (origin);
   \foreach \i in {1,...,19}
  {
    \path (origin)++(0:1.2*\i cm)  coordinate (a\i);
    \path (origin)++(-90:1.2*\i cm)  coordinate (b\i);
    \path (a\i)++(-90:10cm) coordinate (ca\i);
    \path (b\i)++(0:10cm) coordinate (cb\i);
    \draw[thin] (a\i) -- (ca\i)  (b\i) -- (cb\i); }

     }

     \end{scope}

 \path(-0.2,-6.5) coordinate (origin);

 \begin{scope}
 {\draw (origin)--++(0:2*1.2)--++(-90:1*1.2)--++(180:2*1.2)--++(90:1*1.2); 

      \path(origin)--++(0.5*1.2,-0.5*1.2)  node {1};
   \path(origin)--++(1.5*1.2,-0.5*1.2) node {3};
  \clip (origin)--++(0:2*1.2)--++(-90:1*1.2)--++(180:2*1.2)--++(90:1*1.2);

  \path(origin) coordinate (origin);
   \foreach \i in {1,...,19}
  {
    \path (origin)++(0:1.2*\i cm)  coordinate (a\i);
    \path (origin)++(-90:1.2*\i cm)  coordinate (b\i);
    \path (a\i)++(-90:10cm) coordinate (ca\i);
    \path (b\i)++(0:10cm) coordinate (cb\i);
    \draw[thin] (a\i) -- (ca\i)  (b\i) -- (cb\i); }

     }

     \end{scope}

 \path(3-0.2,-6.5) coordinate (origin);

 \begin{scope}
 {\draw (origin)--++(0:2*1.2)--++(-90:1*1.2)--++(180:2*1.2)--++(90:1*1.2); 

      \path(origin)--++(0.5*1.2,-0.5*1.2)  node {1};
   \path(origin)--++(1.5*1.2,-0.5*1.2) node {3};
  \clip (origin)--++(0:2*1.2)--++(-90:1*1.2)--++(180:2*1.2)--++(90:1*1.2);

  \path(origin) coordinate (origin);
   \foreach \i in {1,...,19}
  {
    \path (origin)++(0:1.2*\i cm)  coordinate (a\i);
    \path (origin)++(-90:1.2*\i cm)  coordinate (b\i);
    \path (a\i)++(-90:10cm) coordinate (ca\i);
    \path (b\i)++(0:10cm) coordinate (cb\i);
    \draw[thin] (a\i) -- (ca\i)  (b\i) -- (cb\i); }

     }

     \end{scope}

 \end{tikzpicture}\end{minipage}\mapsto \;\;
 \begin{minipage}{5.3cm}
 \begin{tikzpicture}
 [scale=0.18]
\draw[very thick](-0.5,0.5)--++(0:15)--++(-90:3)--++(180:15)--++(-90:3)--++(0:12)--++(180:6)
(-0.5,0.5)--++(0:3)--++(-90:9)
(-0.5,0.5)--++(0:6)--++(-90:9)
(-0.5,0.5)--++(0:9)--++(-90:6)
(-0.5,0.5)--++(0:12)--++(-90:6)
(-0.5,0.5)--++(-90:9)--++(0:6)
; 
\draw(1,-1) node {1};
\draw(1+3,-1) node {1};
\draw(1+3+3,-1) node {1};
\draw(1+3+3+3,-1) node {1};
\draw(1+3+3+3+3,-1) node {1};
\draw(1+3+3+3+3+3,-1) node {1};
\draw(1+3+3+3+3+3+3+3+1.5,-1) node {1};

\draw[very thick] (-0.5+15,0.5)--++(0:3)--++(-90:3)--++(180:3); 

\draw[very thick] (-0.5+15+6+1.5,0.5)--++(0:3)--++(-90:3)--++(180:3)--++(90:3);

\draw[very thick,densely dotted] (-0.5+15+1.5,0.5)--++(0:6);
\draw[very thick,densely dotted] (-0.5+15+1.5,0.5-3)--++(0:6);

\draw(1 ,-1-3) node {2};
\draw(1+3 ,-1-3) node {2}; 
\draw(1+3+3 ,-1-3) node {2}; 
\draw(1+3+3+3,-1-3) node {2}; 

\draw(1 ,-1-6) node {3};
\draw(4 ,-1-6) node {3};

\end{tikzpicture}\end{minipage}
$$
\caption{\color{black}A plethystic tableaux $  \SSTT$ of shape   $(2)^{(5,4,2)}$ and maximal weight $(16,4,2)$ is depicted on the left.  
The corresponding semistandard tableau of weight $(16,4,2)$  is depicted on the right.}
\label{Rowena_example22}
\end{figure}
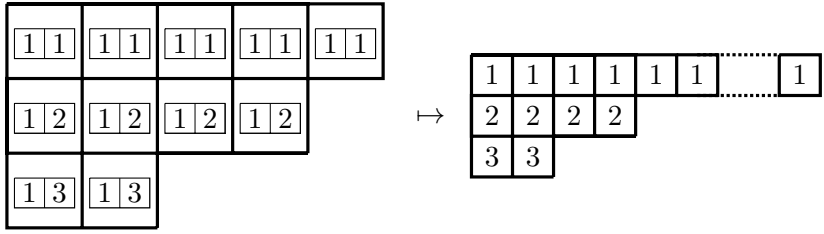

 We will now consider the next layer in the lexicographic ordering,  namely the constituents  labelled by partitions
  $\la \vdash 2n$ with $\la_1=n+\nu_1-1$.
Recall $\bar\nu = \nu +(n)$.

\newcommand{\zedone}{t_1}
\newcommand{\zedtwo}{t_2}
\newcommand{\stwo}{s_1}
\newcommand{\sone}{s_2}
\newcommand{\zedx}{t_X}
\newcommand{\anode}{\sts'(X)}

We already know that $s_{(n)}\circ s_{(2)}$ is multiplicity-free,
so we will now assume that $\nu \ne (n)$.
For the remainder of this section, we will assume that    $\la \vdash 2n$ with $\la_1=n+\nu_1-1$.
      We begin by defining  a map
$$\Phi: \PSSYT((2)^\nu, \la) \to \bigsqcup_{ \begin{subarray}c \beta=\bar\nu-\varepsilon_1-\varepsilon_x+\varepsilon_a+\varepsilon_b
\\x,a,b\ge 2
\end{subarray}} \SSYT(\beta, \la)
\sqcup  \SSYT(\bar\nu , \la),$$
by first breaking  $\PSSYT((2)^\nu, \la) $ into two disjoint subsets as follows. 
 We observe that
any $\SSTT\in \PSSYT((2)^\nu, \la)$ is of one of the following forms:
 \begin{itemize}[leftmargin=*]
\item[$(i)$] we have that $\SSTT(X)= {\scriptsize\young(1\zedx)}$ with $t_X \ge 1$ for  all  $X\in [\nu]$;
 in row 1 there is a unique entry not of the form
 ${\scriptsize\young(11)}\;\!$, namely    $ \SSTT(1,\nu_1)={\scriptsize\young(1t)} $ for  some $t:=t_{(1,\nu_1)}>1$;
 \item[$(ii)$]
 the tableau $\SSTT$  has a unique entry  of the form  $\SSTT(x, \nu_x)={\scriptsize\young(\zedone\zedtwo)}$ for
some $2\leq t_1\leq t_2$ and  $x \ge 2$;
 all other entries
 of $\SSTT$   are of  the form   $\SSTT(X)={\scriptsize\young(1\zedx)}$ with $t_X \ge 1$ for   $X\in [\nu] \setminus (x, \nu_x)$;
  and in particular  $\SSTT(X)={\scriptsize\young(11)}$ for all $X=(1,c)$ for $c\leq \nu_1$.
 \end{itemize}
For an example of the two cases, see \cref{casesiandii}

        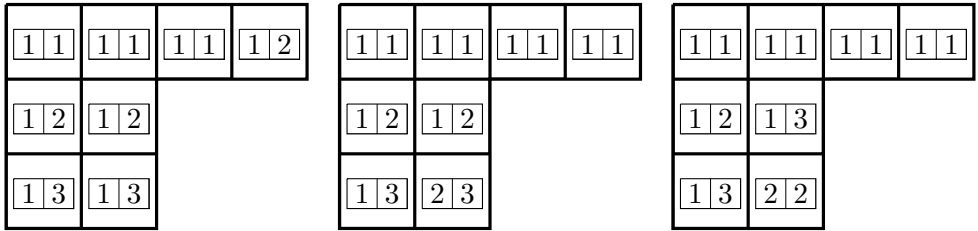
\begin{figure}[ht!]
        \color{black}
  $$    \begin{tikzpicture} [scale=0.33]

 \draw[very thick](-0.5,0.5)--++(0:12)--++(-90:3)--++(180:6)
 --++(-90:6)--++(180:6)
    --++(90:9);

 \clip(-0.5,0.5)--++(0:12)--++(-90:3)--++(180:6)
 --++(-90:6)--++(180:6)
    --++(90:6);

  \path(-0.2,-0.5) coordinate (origin);
 \begin{scope}
 {\draw (origin)--++(0:2*1.2)--++(-90:1*1.2)--++(180:2*1.2)--++(90:1*1.2);

      \path(origin)--++(0.5*1.2,-0.5*1.2)  node {1};
   \path(origin)--++(1.5*1.2,-0.5*1.2) node {1};
  \clip (origin)--++(0:2*1.2)--++(-90:1*1.2)--++(180:2*1.2)--++(90:1*1.2);

  \path(origin) coordinate (origin);
   \foreach \i in {1,...,19}
  {
    \path (origin)++(0:1.2*\i cm)  coordinate (a\i);
    \path (origin)++(-90:1.2*\i cm)  coordinate (b\i);
    \path (a\i)++(-90:10cm) coordinate (ca\i);
    \path (b\i)++(0:10cm) coordinate (cb\i);
    \draw[thin] (a\i) -- (ca\i)  (b\i) -- (cb\i); }

     }

     \end{scope}

  \path(3-0.2,-0.5) coordinate (origin);
 \begin{scope}
 {\draw (origin)--++(0:2*1.2)--++(-90:1*1.2)--++(180:2*1.2)--++(90:1*1.2); 

      \path(origin)--++(0.5*1.2,-0.5*1.2)  node {1};
   \path(origin)--++(1.5*1.2,-0.5*1.2) node {1};
  \clip (origin)--++(0:2*1.2)--++(-90:1*1.2)--++(180:2*1.2)--++(90:1*1.2);

  \path(origin) coordinate (origin);
   \foreach \i in {1,...,19}
  {
    \path (origin)++(0:1.2*\i cm)  coordinate (a\i);
    \path (origin)++(-90:1.2*\i cm)  coordinate (b\i);
    \path (a\i)++(-90:10cm) coordinate (ca\i);
    \path (b\i)++(0:10cm) coordinate (cb\i);
    \draw[thin] (a\i) -- (ca\i)  (b\i) -- (cb\i); }

     }

     \end{scope}

    \path(6-0.2,-0.5) coordinate (origin);

  \begin{scope}
 {\draw (origin)--++(0:2*1.2)--++(-90:1*1.2)--++(180:2*1.2)--++(90:1*1.2); 

      \path(origin)--++(0.5*1.2,-0.5*1.2)  node {1};
   \path(origin)--++(1.5*1.2,-0.5*1.2) node {1};
  \clip (origin)--++(0:2*1.2)--++(-90:1*1.2)--++(180:2*1.2)--++(90:1*1.2);

  \path(origin) coordinate (origin);
   \foreach \i in {1,...,19}
  {
    \path (origin)++(0:1.2*\i cm)  coordinate (a\i);
    \path (origin)++(-90:1.2*\i cm)  coordinate (b\i);
    \path (a\i)++(-90:10cm) coordinate (ca\i);
    \path (b\i)++(0:10cm) coordinate (cb\i);
    \draw[thin] (a\i) -- (ca\i)  (b\i) -- (cb\i); }

     }

     \end{scope}

\path(9-0.2,-0.5) coordinate (origin);

 \begin{scope}
 {\draw (origin)--++(0:2*1.2)--++(-90:1*1.2)--++(180:2*1.2)--++(90:1*1.2); 

      \path(origin)--++(0.5*1.2,-0.5*1.2)  node {1};
   \path(origin)--++(1.5*1.2,-0.5*1.2) node {2};
  \clip (origin)--++(0:2*1.2)--++(-90:1*1.2)--++(180:2*1.2)--++(90:1*1.2);

  \path(origin) coordinate (origin);
   \foreach \i in {1,...,19}
  {
    \path (origin)++(0:1.2*\i cm)  coordinate (a\i);
    \path (origin)++(-90:1.2*\i cm)  coordinate (b\i);
    \path (a\i)++(-90:10cm) coordinate (ca\i);
    \path (b\i)++(0:10cm) coordinate (cb\i);
    \draw[thin] (a\i) -- (ca\i)  (b\i) -- (cb\i); }

     }

     \end{scope}


  \path(-0.2,-3.5) coordinate (origin);

 \begin{scope}
 {\draw (origin)--++(0:2*1.2)--++(-90:1*1.2)--++(180:2*1.2)--++(90:1*1.2); 

      \path(origin)--++(0.5*1.2,-0.5*1.2)  node {1};
   \path(origin)--++(1.5*1.2,-0.5*1.2) node {2};   
  \clip (origin)--++(0:2*1.2)--++(-90:1*1.2)--++(180:2*1.2)--++(90:1*1.2);

  \path(origin) coordinate (origin);
   \foreach \i in {1,...,19}
  {
    \path (origin)++(0:1.2*\i cm)  coordinate (a\i);
    \path (origin)++(-90:1.2*\i cm)  coordinate (b\i);
    \path (a\i)++(-90:10cm) coordinate (ca\i);
    \path (b\i)++(0:10cm) coordinate (cb\i);
    \draw[thin] (a\i) -- (ca\i)  (b\i) -- (cb\i); }

     }

     \end{scope}

  \path(2.8,-3.5) coordinate (origin);

 \begin{scope}
 {\draw (origin)--++(0:2*1.2)--++(-90:1*1.2)--++(180:2*1.2)--++(90:1*1.2); 

      \path(origin)--++(0.5*1.2,-0.5*1.2)  node {1};
   \path(origin)--++(1.5*1.2,-0.5*1.2) node {2};
  \clip (origin)--++(0:2*1.2)--++(-90:1*1.2)--++(180:2*1.2)--++(90:1*1.2);

  \path(origin) coordinate (origin);
   \foreach \i in {1,...,19}
  {
    \path (origin)++(0:1.2*\i cm)  coordinate (a\i);
    \path (origin)++(-90:1.2*\i cm)  coordinate (b\i);
    \path (a\i)++(-90:10cm) coordinate (ca\i);
    \path (b\i)++(0:10cm) coordinate (cb\i);
    \draw[thin] (a\i) -- (ca\i)  (b\i) -- (cb\i); }

     }

     \end{scope}


 \path(-0.2,-6.5) coordinate (origin);

 \begin{scope}
 {\draw (origin)--++(0:2*1.2)--++(-90:1*1.2)--++(180:2*1.2)--++(90:1*1.2); 

      \path(origin)--++(0.5*1.2,-0.5*1.2)  node {1};
   \path(origin)--++(1.5*1.2,-0.5*1.2) node {3};
  \clip (origin)--++(0:2*1.2)--++(-90:1*1.2)--++(180:2*1.2)--++(90:1*1.2);

  \path(origin) coordinate (origin);
   \foreach \i in {1,...,19}
  {
    \path (origin)++(0:1.2*\i cm)  coordinate (a\i);
    \path (origin)++(-90:1.2*\i cm)  coordinate (b\i);
    \path (a\i)++(-90:10cm) coordinate (ca\i);
    \path (b\i)++(0:10cm) coordinate (cb\i);
    \draw[thin] (a\i) -- (ca\i)  (b\i) -- (cb\i); }

     }

     \end{scope}

 \path(3-0.2,-6.5) coordinate (origin);

 \begin{scope}
 {\draw (origin)--++(0:2*1.2)--++(-90:1*1.2)--++(180:2*1.2)--++(90:1*1.2); 

      \path(origin)--++(0.5*1.2,-0.5*1.2)  node {1};
   \path(origin)--++(1.5*1.2,-0.5*1.2) node {3};
  \clip (origin)--++(0:2*1.2)--++(-90:1*1.2)--++(180:2*1.2)--++(90:1*1.2);

  \path(origin) coordinate (origin);
   \foreach \i in {1,...,19}
  {
    \path (origin)++(0:1.2*\i cm)  coordinate (a\i);
    \path (origin)++(-90:1.2*\i cm)  coordinate (b\i);
    \path (a\i)++(-90:10cm) coordinate (ca\i);
    \path (b\i)++(0:10cm) coordinate (cb\i);
    \draw[thin] (a\i) -- (ca\i)  (b\i) -- (cb\i); }

     }

     \end{scope}
  \draw[very thick]
  (-0.5,0.5)--++(-90:9)--++(0:3)--++(90:3) ;

 \draw[very thick]
  (-0.5,0.5)--(8.5,0.5)--(8.5,-2.5)--(5.5,-2.5)--(5.5,-5.5)--(-0.5,-5.5)--(-0.5,0.5);
 \draw[very thick] (5.5,-2.5)--(-0.5,-2.5);
  \draw[very thick] (5.5,-2.5)--(5.5,0.5);
    \draw[very thick] (2.5,-5.5)--(2.5,0.5);
   \end{tikzpicture}  \quad  \begin{tikzpicture} [scale=0.33]

 \draw[very thick](-0.5,0.5)--++(0:12)--++(-90:3)--++(180:6)
 --++(-90:6)--++(180:6)
    --++(90:9);

 \clip(-0.5,0.5)--++(0:12)--++(-90:3)--++(180:6)
 --++(-90:6)--++(180:6)
    --++(90:6);

  \path(-0.2,-0.5) coordinate (origin);
 \begin{scope}
 {\draw (origin)--++(0:2*1.2)--++(-90:1*1.2)--++(180:2*1.2)--++(90:1*1.2);

      \path(origin)--++(0.5*1.2,-0.5*1.2)  node {1};
   \path(origin)--++(1.5*1.2,-0.5*1.2) node {1};
  \clip (origin)--++(0:2*1.2)--++(-90:1*1.2)--++(180:2*1.2)--++(90:1*1.2);

  \path(origin) coordinate (origin);
   \foreach \i in {1,...,19}
  {
    \path (origin)++(0:1.2*\i cm)  coordinate (a\i);
    \path (origin)++(-90:1.2*\i cm)  coordinate (b\i);
    \path (a\i)++(-90:10cm) coordinate (ca\i);
    \path (b\i)++(0:10cm) coordinate (cb\i);
    \draw[thin] (a\i) -- (ca\i)  (b\i) -- (cb\i); }

     }

     \end{scope}

  \path(3-0.2,-0.5) coordinate (origin);
 \begin{scope}
 {\draw (origin)--++(0:2*1.2)--++(-90:1*1.2)--++(180:2*1.2)--++(90:1*1.2); 

      \path(origin)--++(0.5*1.2,-0.5*1.2)  node {1};
   \path(origin)--++(1.5*1.2,-0.5*1.2) node {1};
  \clip (origin)--++(0:2*1.2)--++(-90:1*1.2)--++(180:2*1.2)--++(90:1*1.2);

  \path(origin) coordinate (origin);
   \foreach \i in {1,...,19}
  {
    \path (origin)++(0:1.2*\i cm)  coordinate (a\i);
    \path (origin)++(-90:1.2*\i cm)  coordinate (b\i);
    \path (a\i)++(-90:10cm) coordinate (ca\i);
    \path (b\i)++(0:10cm) coordinate (cb\i);
    \draw[thin] (a\i) -- (ca\i)  (b\i) -- (cb\i); }

     }

     \end{scope}

    \path(6-0.2,-0.5) coordinate (origin);

  \begin{scope}
 {\draw (origin)--++(0:2*1.2)--++(-90:1*1.2)--++(180:2*1.2)--++(90:1*1.2); 

      \path(origin)--++(0.5*1.2,-0.5*1.2)  node {1};
   \path(origin)--++(1.5*1.2,-0.5*1.2) node {1};
  \clip (origin)--++(0:2*1.2)--++(-90:1*1.2)--++(180:2*1.2)--++(90:1*1.2);

  \path(origin) coordinate (origin);
   \foreach \i in {1,...,19}
  {
    \path (origin)++(0:1.2*\i cm)  coordinate (a\i);
    \path (origin)++(-90:1.2*\i cm)  coordinate (b\i);
    \path (a\i)++(-90:10cm) coordinate (ca\i);
    \path (b\i)++(0:10cm) coordinate (cb\i);
    \draw[thin] (a\i) -- (ca\i)  (b\i) -- (cb\i); }

     }

     \end{scope}

\path(9-0.2,-0.5) coordinate (origin);

 \begin{scope}
 {\draw (origin)--++(0:2*1.2)--++(-90:1*1.2)--++(180:2*1.2)--++(90:1*1.2); 

      \path(origin)--++(0.5*1.2,-0.5*1.2)  node {1};
   \path(origin)--++(1.5*1.2,-0.5*1.2) node {1};
  \clip (origin)--++(0:2*1.2)--++(-90:1*1.2)--++(180:2*1.2)--++(90:1*1.2);

  \path(origin) coordinate (origin);
   \foreach \i in {1,...,19}
  {
    \path (origin)++(0:1.2*\i cm)  coordinate (a\i);
    \path (origin)++(-90:1.2*\i cm)  coordinate (b\i);
    \path (a\i)++(-90:10cm) coordinate (ca\i);
    \path (b\i)++(0:10cm) coordinate (cb\i);
    \draw[thin] (a\i) -- (ca\i)  (b\i) -- (cb\i); }

     }

     \end{scope}


  \path(-0.2,-3.5) coordinate (origin);

 \begin{scope}
 {\draw (origin)--++(0:2*1.2)--++(-90:1*1.2)--++(180:2*1.2)--++(90:1*1.2); 

      \path(origin)--++(0.5*1.2,-0.5*1.2)  node {1};
   \path(origin)--++(1.5*1.2,-0.5*1.2) node {2};   
  \clip (origin)--++(0:2*1.2)--++(-90:1*1.2)--++(180:2*1.2)--++(90:1*1.2);

  \path(origin) coordinate (origin);
   \foreach \i in {1,...,19}
  {
    \path (origin)++(0:1.2*\i cm)  coordinate (a\i);
    \path (origin)++(-90:1.2*\i cm)  coordinate (b\i);
    \path (a\i)++(-90:10cm) coordinate (ca\i);
    \path (b\i)++(0:10cm) coordinate (cb\i);
    \draw[thin] (a\i) -- (ca\i)  (b\i) -- (cb\i); }

     }

     \end{scope}

  \path(2.8,-3.5) coordinate (origin);

 \begin{scope}
 {\draw (origin)--++(0:2*1.2)--++(-90:1*1.2)--++(180:2*1.2)--++(90:1*1.2); 

      \path(origin)--++(0.5*1.2,-0.5*1.2)  node {1};
   \path(origin)--++(1.5*1.2,-0.5*1.2) node {2};
  \clip (origin)--++(0:2*1.2)--++(-90:1*1.2)--++(180:2*1.2)--++(90:1*1.2);

  \path(origin) coordinate (origin);
   \foreach \i in {1,...,19}
  {
    \path (origin)++(0:1.2*\i cm)  coordinate (a\i);
    \path (origin)++(-90:1.2*\i cm)  coordinate (b\i);
    \path (a\i)++(-90:10cm) coordinate (ca\i);
    \path (b\i)++(0:10cm) coordinate (cb\i);
    \draw[thin] (a\i) -- (ca\i)  (b\i) -- (cb\i); }

     }

     \end{scope}


 \path(-0.2,-6.5) coordinate (origin);

 \begin{scope}
 {\draw (origin)--++(0:2*1.2)--++(-90:1*1.2)--++(180:2*1.2)--++(90:1*1.2); 

      \path(origin)--++(0.5*1.2,-0.5*1.2)  node {1};
   \path(origin)--++(1.5*1.2,-0.5*1.2) node {3};
  \clip (origin)--++(0:2*1.2)--++(-90:1*1.2)--++(180:2*1.2)--++(90:1*1.2);

  \path(origin) coordinate (origin);
   \foreach \i in {1,...,19}
  {
    \path (origin)++(0:1.2*\i cm)  coordinate (a\i);
    \path (origin)++(-90:1.2*\i cm)  coordinate (b\i);
    \path (a\i)++(-90:10cm) coordinate (ca\i);
    \path (b\i)++(0:10cm) coordinate (cb\i);
    \draw[thin] (a\i) -- (ca\i)  (b\i) -- (cb\i); }

     }

     \end{scope}

 \path(3-0.2,-6.5) coordinate (origin);

 \begin{scope}
 {\draw (origin)--++(0:2*1.2)--++(-90:1*1.2)--++(180:2*1.2)--++(90:1*1.2); 

      \path(origin)--++(0.5*1.2,-0.5*1.2)  node {2};
   \path(origin)--++(1.5*1.2,-0.5*1.2) node {3};
  \clip (origin)--++(0:2*1.2)--++(-90:1*1.2)--++(180:2*1.2)--++(90:1*1.2);

  \path(origin) coordinate (origin);
   \foreach \i in {1,...,19}
  {
    \path (origin)++(0:1.2*\i cm)  coordinate (a\i);
    \path (origin)++(-90:1.2*\i cm)  coordinate (b\i);
    \path (a\i)++(-90:10cm) coordinate (ca\i);
    \path (b\i)++(0:10cm) coordinate (cb\i);
    \draw[thin] (a\i) -- (ca\i)  (b\i) -- (cb\i); }

     }

     \end{scope}
  \draw[very thick]
  (-0.5,0.5)--++(-90:9)--++(0:3)--++(90:3) ;

 \draw[very thick]
  (-0.5,0.5)--(8.5,0.5)--(8.5,-2.5)--(5.5,-2.5)--(5.5,-5.5)--(-0.5,-5.5)--(-0.5,0.5);
 \draw[very thick] (5.5,-2.5)--(-0.5,-2.5);
  \draw[very thick] (5.5,-2.5)--(5.5,0.5);
    \draw[very thick] (2.5,-5.5)--(2.5,0.5);
   \end{tikzpicture}
\quad
\begin{tikzpicture} [scale=0.33]

 \draw[very thick](-0.5,0.5)--++(0:12)--++(-90:3)--++(180:6)
 --++(-90:6)--++(180:6)
    --++(90:9);

 \clip(-0.5,0.5)--++(0:12)--++(-90:3)--++(180:6)
 --++(-90:6)--++(180:6)
    --++(90:6);

  \path(-0.2,-0.5) coordinate (origin);
 \begin{scope}
 {\draw (origin)--++(0:2*1.2)--++(-90:1*1.2)--++(180:2*1.2)--++(90:1*1.2);

      \path(origin)--++(0.5*1.2,-0.5*1.2)  node {1};
   \path(origin)--++(1.5*1.2,-0.5*1.2) node {1};
  \clip (origin)--++(0:2*1.2)--++(-90:1*1.2)--++(180:2*1.2)--++(90:1*1.2);

  \path(origin) coordinate (origin);
   \foreach \i in {1,...,19}
  {
    \path (origin)++(0:1.2*\i cm)  coordinate (a\i);
    \path (origin)++(-90:1.2*\i cm)  coordinate (b\i);
    \path (a\i)++(-90:10cm) coordinate (ca\i);
    \path (b\i)++(0:10cm) coordinate (cb\i);
    \draw[thin] (a\i) -- (ca\i)  (b\i) -- (cb\i); }

     }

     \end{scope}

  \path(3-0.2,-0.5) coordinate (origin);
 \begin{scope}
 {\draw (origin)--++(0:2*1.2)--++(-90:1*1.2)--++(180:2*1.2)--++(90:1*1.2); 

      \path(origin)--++(0.5*1.2,-0.5*1.2)  node {1};
   \path(origin)--++(1.5*1.2,-0.5*1.2) node {1};
  \clip (origin)--++(0:2*1.2)--++(-90:1*1.2)--++(180:2*1.2)--++(90:1*1.2);

  \path(origin) coordinate (origin);
   \foreach \i in {1,...,19}
  {
    \path (origin)++(0:1.2*\i cm)  coordinate (a\i);
    \path (origin)++(-90:1.2*\i cm)  coordinate (b\i);
    \path (a\i)++(-90:10cm) coordinate (ca\i);
    \path (b\i)++(0:10cm) coordinate (cb\i);
    \draw[thin] (a\i) -- (ca\i)  (b\i) -- (cb\i); }

     }

     \end{scope}

    \path(6-0.2,-0.5) coordinate (origin);

  \begin{scope}
 {\draw (origin)--++(0:2*1.2)--++(-90:1*1.2)--++(180:2*1.2)--++(90:1*1.2); 

      \path(origin)--++(0.5*1.2,-0.5*1.2)  node {1};
   \path(origin)--++(1.5*1.2,-0.5*1.2) node {1};
  \clip (origin)--++(0:2*1.2)--++(-90:1*1.2)--++(180:2*1.2)--++(90:1*1.2);

  \path(origin) coordinate (origin);
   \foreach \i in {1,...,19}
  {
    \path (origin)++(0:1.2*\i cm)  coordinate (a\i);
    \path (origin)++(-90:1.2*\i cm)  coordinate (b\i);
    \path (a\i)++(-90:10cm) coordinate (ca\i);
    \path (b\i)++(0:10cm) coordinate (cb\i);
    \draw[thin] (a\i) -- (ca\i)  (b\i) -- (cb\i); }

     }

     \end{scope}

\path(9-0.2,-0.5) coordinate (origin);

 \begin{scope}
 {\draw (origin)--++(0:2*1.2)--++(-90:1*1.2)--++(180:2*1.2)--++(90:1*1.2); 

      \path(origin)--++(0.5*1.2,-0.5*1.2)  node {1};
   \path(origin)--++(1.5*1.2,-0.5*1.2) node {1};
  \clip (origin)--++(0:2*1.2)--++(-90:1*1.2)--++(180:2*1.2)--++(90:1*1.2);

  \path(origin) coordinate (origin);
   \foreach \i in {1,...,19}
  {
    \path (origin)++(0:1.2*\i cm)  coordinate (a\i);
    \path (origin)++(-90:1.2*\i cm)  coordinate (b\i);
    \path (a\i)++(-90:10cm) coordinate (ca\i);
    \path (b\i)++(0:10cm) coordinate (cb\i);
    \draw[thin] (a\i) -- (ca\i)  (b\i) -- (cb\i); }

     }

     \end{scope}


  \path(-0.2,-3.5) coordinate (origin);

 \begin{scope}
 {\draw (origin)--++(0:2*1.2)--++(-90:1*1.2)--++(180:2*1.2)--++(90:1*1.2); 

      \path(origin)--++(0.5*1.2,-0.5*1.2)  node {1};
   \path(origin)--++(1.5*1.2,-0.5*1.2) node {2};   
  \clip (origin)--++(0:2*1.2)--++(-90:1*1.2)--++(180:2*1.2)--++(90:1*1.2);

  \path(origin) coordinate (origin);
   \foreach \i in {1,...,19}
  {
    \path (origin)++(0:1.2*\i cm)  coordinate (a\i);
    \path (origin)++(-90:1.2*\i cm)  coordinate (b\i);
    \path (a\i)++(-90:10cm) coordinate (ca\i);
    \path (b\i)++(0:10cm) coordinate (cb\i);
    \draw[thin] (a\i) -- (ca\i)  (b\i) -- (cb\i); }

     }

     \end{scope}

  \path(2.8,-3.5) coordinate (origin);

 \begin{scope}
 {\draw (origin)--++(0:2*1.2)--++(-90:1*1.2)--++(180:2*1.2)--++(90:1*1.2); 

      \path(origin)--++(0.5*1.2,-0.5*1.2)  node {1};
   \path(origin)--++(1.5*1.2,-0.5*1.2) node {3};
  \clip (origin)--++(0:2*1.2)--++(-90:1*1.2)--++(180:2*1.2)--++(90:1*1.2);

  \path(origin) coordinate (origin);
   \foreach \i in {1,...,19}
  {
    \path (origin)++(0:1.2*\i cm)  coordinate (a\i);
    \path (origin)++(-90:1.2*\i cm)  coordinate (b\i);
    \path (a\i)++(-90:10cm) coordinate (ca\i);
    \path (b\i)++(0:10cm) coordinate (cb\i);
    \draw[thin] (a\i) -- (ca\i)  (b\i) -- (cb\i); }

     }

     \end{scope}


 \path(-0.2,-6.5) coordinate (origin);

 \begin{scope}
 {\draw (origin)--++(0:2*1.2)--++(-90:1*1.2)--++(180:2*1.2)--++(90:1*1.2); 

      \path(origin)--++(0.5*1.2,-0.5*1.2)  node {1};
   \path(origin)--++(1.5*1.2,-0.5*1.2) node {3};
  \clip (origin)--++(0:2*1.2)--++(-90:1*1.2)--++(180:2*1.2)--++(90:1*1.2);

  \path(origin) coordinate (origin);
   \foreach \i in {1,...,19}
  {
    \path (origin)++(0:1.2*\i cm)  coordinate (a\i);
    \path (origin)++(-90:1.2*\i cm)  coordinate (b\i);
    \path (a\i)++(-90:10cm) coordinate (ca\i);
    \path (b\i)++(0:10cm) coordinate (cb\i);
    \draw[thin] (a\i) -- (ca\i)  (b\i) -- (cb\i); }

     }

     \end{scope}

 \path(3-0.2,-6.5) coordinate (origin);

 \begin{scope}
 {\draw (origin)--++(0:2*1.2)--++(-90:1*1.2)--++(180:2*1.2)--++(90:1*1.2); 

      \path(origin)--++(0.5*1.2,-0.5*1.2)  node {2};
   \path(origin)--++(1.5*1.2,-0.5*1.2) node {2};
  \clip (origin)--++(0:2*1.2)--++(-90:1*1.2)--++(180:2*1.2)--++(90:1*1.2);

  \path(origin) coordinate (origin);
   \foreach \i in {1,...,19}
  {
    \path (origin)++(0:1.2*\i cm)  coordinate (a\i);
    \path (origin)++(-90:1.2*\i cm)  coordinate (b\i);
    \path (a\i)++(-90:10cm) coordinate (ca\i);
    \path (b\i)++(0:10cm) coordinate (cb\i);
    \draw[thin] (a\i) -- (ca\i)  (b\i) -- (cb\i); }

     }

     \end{scope}
  \draw[very thick]
  (-0.5,0.5)--++(-90:9)--++(0:3)--++(90:3) ;

 \draw[very thick]
  (-0.5,0.5)--(8.5,0.5)--(8.5,-2.5)--(5.5,-2.5)--(5.5,-5.5)--(-0.5,-5.5)--(-0.5,0.5);
 \draw[very thick] (5.5,-2.5)--(-0.5,-2.5);
  \draw[very thick] (5.5,-2.5)--(5.5,0.5);
    \draw[very thick] (2.5,-5.5)--(2.5,0.5);
   \end{tikzpicture}
   $$       
   \caption{\color{black}The plethystic tableaux $\SSTT_1,\SSTT_2,\SSTT_3\in  \PStd((2)^{(4,2^2)} , (11,3,2))$.
   The plethystic tableau $\SSTT_1$ is of the form $(i)$ and   
 $  \SSTT_2, \SSTT_3$  are of the form $(ii)$.  
     }
   \label{casesiandii}

   \end{figure}

\begin{rmk}\color{black}
In \cref{aremark} we saw that we had zero choices for where to place the
 $n+\nu_1$ integers equal to  $1$ within a plethystic tableau of shape $\nu$ and weight $(n+\nu_1,\dots)$.  
 The two cases $(i)$ and $(ii)$ above arise as there is precisely one choice 
 as to where {\em not} to put an entry $1$ in a plethystic tableau
 of shape $\nu$ and weight $(n+\nu_1-1,\dots)$.  
 
\end{rmk}

We  now define a tableau $\stt$ in each of the  cases $(i)$ and $(ii)$ above as follows, and then set  $\Phi(\SSTT)=\stt$.
\color{black}

 \smallskip
\noindent
{\bf Case $(i)$. }
We 
define a tableau of shape $\bar\nu$.    Set 
\color{black} $\stt(1,c) = 1$ for all $1\leq c <  n+\nu_1$ and $\stt(1,n+\nu_1) =t_{(1,\nu_1)}$.
For the remaining nodes, $X\in [\nu_{>1}]$, we set $\stt(X)= t_X$ (where $\SSTT(X)= {\scriptsize\young(1\zedx)}$).

 \smallskip
\noindent
{\bf Case $(ii)$. }
  We first define a semistandard $(\bar\nu-\epsilon_x)$-tableau $\bar{\stt}$ by setting $\bar{\stt}(1,c)=1$ for all $1\leq c \leq n+\nu_1-1$ and $\bar{\stt}(X)=t_X$ if $\SSTT(X)= {\scriptsize\young(1\zedx)}$).
 \color{black}
   We then let $\stt$ be the tableau obtained from $\bar{\stt}$ by applying the RSK bumping algorithm to insert $t_1$ into row 2 (resulting in the addition of a box in the $a$th row for some  $a\geq 2$) followed by $t_2$ into row 2 (resulting in a box added into the $b$th row  for some $2\leq b \leq a$).

For an example of the two cases, see \cref{casesiandii2}.

\begin{rmk} \label{maprmk}
We note that in case $(i)$, $\Phi(\SSTT) \in \SStd(\bar{\nu},\la)$  and in case (ii) $\Phi(\SSTT) \in \SStd(\beta,\la)$ for $ \beta=\bar\nu-\varepsilon_1-\varepsilon_x+\varepsilon_a+\varepsilon_b$ where  the shape $\beta$ is determined by
the numbers $a,b$ with  $2\leq b \leq a$  produced via the RSK bumping.
We emphasise that since the two RSK applications will never add two boxes in the same column, we must have that $\nu_a\neq \nu_b$ whenever $a\neq b$.
\end{rmk}
  
\begin{eg}\color{black} 
We now illustrate the effect of the map $\Phi$ on the  plethystic tableaux $\SSTT_1, \SSTT_2,\SSTT_3 $ of \cref{casesiandii}.  The semistandard tableaux $\Phi(\SSTT_1), \Phi(\SSTT_2)$ and $ \Phi(\SSTT_3)$ are depicted in \cref{casesiandii2}.  
The tableau $\Phi(\SSTT_1)$ is easily calculated; we simply remove the   initial entries 1 from 
 each $\SSTT(r,c)= {\scriptsize\young(1t)}$ for each $r\geq 1$ and adjoin these to row 1.    
 
To compute $\Phi(\SSTT_2)$, we first move the 1 entries as above.  
Next, we observe the unique entry of $\SSTT_2$  not containing 1 occurs in the removable box
$\SSTT(3,2)= {\scriptsize\young(23)}$ in row $x = 3$; we remove this box and its entries.  
Then add the removed numbers 2, 3 to the second row   (shown in blue). The result is the Young tableau $\Phi(\SSTT_2)$.

To compute $\Phi(\SSTT_3)$, we first move the 1 entries as above.  
Next, we observe the unique entry of $\SSTT_3$  not containing 1 occurs in the removable box
$\SSTT(3,2)= {\scriptsize\young(22)}$ in row $x = 3$; we remove this box and its entries.  
Then add the removed numbers 2, 2 to the second row   (shown in cyan) and, using the RSK bumping algorithm, displace the entry 3 to the third row (shown in pink). 
In more detail: in the first addition, the 2 bumps the entry 3 from the second row   into the third row; 
as the third row   consists only of entries 3 there are no further bumps. 
The second insertion simply adds the entry 2 to the right of the second row.  
The result is the Young tableau $\Phi(\SSTT_3)$.

\begin{figure}[ht!] \color{black} 
  $$\begin{minipage}{4.9cm}
 \begin{tikzpicture}
 [scale=0.18]
 
  \draw[very thick](-0.5,0.5)--++(0:9)--++(-90:3)--++(180:3)--++(-90:6)--++(180:6)--++(90:9);
  
  \draw[very thick](-0.5,0.5)--++(0:6) --++(-90:3);
    \draw[very thick](-0.5,0.5)--++(0:3) --++(-90:9);
    
      \draw[very thick](-0.5,0.5)--++(-90:3) --++(0:6);
      \draw[very thick](-0.5,0.5)--++(-90:6) --++(0:6);
  
 \draw(1,-1) node {1};
\draw(1+3,-1) node {1};
\draw(1+3+3,-1) node {1};
\draw(1+3+3+3,-1) node {1};

\draw(1+3+3+3+3+4.5,-1) node {1}; 
\draw(1+3+3+3+3+3+4.5,-1) node {2};

\draw[very thick] (-0.5+15-3-3,0.5)--++(0:3)--++(-90:3)--++(180:3); 

\draw[very thick] (-0.5+15+6+1.5-3-3,0.5)--++(0:3)--++(-90:3)--++(180:3)--++(90:3); 
\draw[very thick] (-0.5+15+6+1.5-3,0.5)--++(0:3)--++(-90:3)--++(180:3)--++(90:3);

\draw[very thick,densely dotted] (-0.5+15+1.5-6,0.5)--++(0:6);
\draw[very thick,densely dotted] (-0.5+15+1.5-6,0.5-3)--++(0:6);

\draw(1 ,-1-3) node {2};
\draw(1+3 ,-1-3) node {2}; 

\draw(1 ,-1-6) node {3};
\draw(4 ,-1-6) node {3};

\end{tikzpicture}\end{minipage}  
\quad
  \begin{minipage}{4.9cm}
 \begin{tikzpicture}
 [scale=0.18]
 
  \draw[very thick](-0.5,0.5)--++(0:9)--++(-90:3)--++(180:3)--++(-90:3)--++(180:3)--++(-90:3)--++(180:3)--++(90:9);
  
  \draw[very thick](-0.5,0.5)--++(0:6) --++(-90:3);
    \draw[very thick](-0.5,0.5)--++(0:3) --++(-90:9);
    
      \draw[very thick](-0.5,0.5)--++(-90:3) --++(0:6);
      \draw[very thick](-0.5,0.5)--++(-90:6) --++(0:6);
  
 \draw(1,-1) node {1};
\draw(1+3,-1) node {1};
\draw(1+3+3,-1) node {1};
\draw(1+3+3+3,-1) node {1};

\draw(1+3+3+3+3+4.5,-1) node {1}; 
 \draw(1+3+3+3+3+3+4.5,-1) node {$\times$};

\draw[very thick] (-0.5+15-3-3,0.5)--++(0:3)--++(-90:3)--++(180:3); 

\draw[very thick] (-0.5+15+6+1.5-3-3,0.5)--++(0:3)--++(-90:3)--++(180:3)--++(90:3); 

\draw[very thick,densely dotted] (-0.5+15+1.5-6,0.5)--++(0:6);
\draw[very thick,densely dotted] (-0.5+15+1.5-6,0.5-3)--++(0:6);

\draw(1 ,-1-3) node {2};
\draw(1+3 ,-1-3) node {2}; 
\draw(1+3+3 ,-1-3) node {\color{cyan}2}; 
\draw(1+3+3+3,-1-3) node {\color{cyan}3}; 
\draw[very thick] (-0.5+6+6-6,0.5-3)--++(0:3)--++(-90:3)--++(180:3)--++(90:3); 
\draw[very thick] (-0.5+6+6-3,0.5-3)--++(0:3)--++(-90:3)--++(180:3)--++(90:3);

\draw(1 ,-1-6) node {3};
 
\end{tikzpicture}\end{minipage}  
\quad
  \begin{minipage}{4.9cm}
 \begin{tikzpicture}
 [scale=0.18]
 
  \draw[very thick](-0.5,0.5)--++(0:9)--++(-90:3)--++(180:3)--++(-90:3)--++(180:3)--++(-90:3)--++(180:3)--++(90:9);
  
  \draw[very thick](-0.5,0.5)--++(0:6) --++(-90:3);
    \draw[very thick](-0.5,0.5)--++(0:3) --++(-90:9);
    
      \draw[very thick](-0.5,0.5)--++(-90:3) --++(0:6);
      \draw[very thick](-0.5,0.5)--++(-90:6) --++(0:6);
  
 \draw(1,-1) node {1};
\draw(1+3,-1) node {1};
\draw(1+3+3,-1) node {1};
\draw(1+3+3+3,-1) node {1};

\draw(1+3+3+3+3+4.5,-1) node {1}; 
 \draw(1+3+3+3+3+3+4.5,-1) node {$\times$};

\draw[very thick] (-0.5+15-3-3,0.5)--++(0:3)--++(-90:3)--++(180:3); 

\draw[very thick] (-0.5+15+6+1.5-3-3,0.5)--++(0:3)--++(-90:3)--++(180:3)--++(90:3); 

\draw[very thick,densely dotted] (-0.5+15+1.5-6,0.5)--++(0:6);
\draw[very thick,densely dotted] (-0.5+15+1.5-6,0.5-3)--++(0:6);

\draw(1 ,-1-3) node {2};
\draw(1+3 ,-1-3) node {\color{cyan}2}; 
\draw(1+3+3 ,-1-3) node {\color{cyan}2}; 
 \draw(1+3+3+3-3-3,-1-3-3) node {\color{magenta}3}; 
\draw[very thick] (-0.5+6+6-6,0.5-3)--++(0:3)--++(-90:3)--++(180:3)--++(90:3); 
\draw[very thick] (-0.5+6+6-3-3-3,0.5-3-3)--++(0:3)--++(-90:3)--++(180:3)--++(90:3);

\draw(1 ,-1-6) node {3};
 
\end{tikzpicture}\end{minipage}  
   $$
\caption{\color{black} The images under $\Phi$ of the plethystic tableaux $\SSTT_1, \SSTT_2, \SSTT_3$ of \cref{casesiandii} respectively.  
The shapes of these tableaux are $(12,2^2)$, $(11,4,1)$ and $(11,3,2)$ respectively.  We have used   $\times$ to illustrate that the latter two tableaux have shorter first rows (by one box).   }
\label{casesiandii2}
\end{figure}
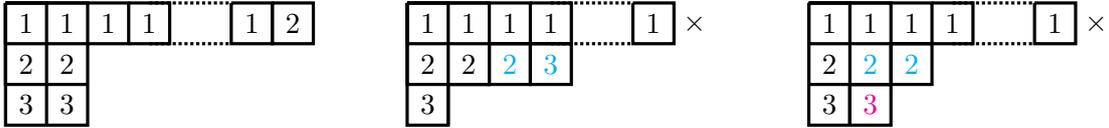
\end{eg}

We let
$M(\nu)$ be the set of all partitions $\beta \vdash 2n$ such that   $\beta$ can be obtained from $\bar\nu-\varepsilon_1$ by first removing a node from $\bar\nu-\varepsilon_1$
in row $x>1$ and then adding two nodes in rows $a \geq b \geq 2$
where $\beta_a\neq \beta_b$ if $a\neq b$.
 In particular,  $\beta$ can be written in the form
$\beta= \bar\nu-\varepsilon_1-\varepsilon_x+\varepsilon_a+\varepsilon_b$ for some
$2\leq a,b,x$ with conditions as above.
A partition $\beta\in M(\nu)$ may be obtained in the form
$\beta=\bar\nu-\varepsilon_1-\varepsilon_x+\varepsilon_a+\varepsilon_b$
for different choices of $a\geq b$ satisfying the conditions above ($x$ is then uniquely determined);
we note that $\beta$ has only one such form if $x\not\in \{a,b\}$.
We let $I(\beta)$ be the set of possible pairs $(a,b)$ for $\beta$ as above.

\subsection{The case $\nu_1\ne \nu_2$. }

 \begin{prop}\label{4.2}
Let $\nu\vdash n $ with $\nu_1\neq \nu_2$.  Let  $\la \vdash 2n$ with $\la_1=n+\nu_1-1$.
 The following map is a bijection:
\begin{equation}\label{ourmap}
\widehat{\Phi}:  \PSSYT((2)^\nu, \la) \to
\SStd(\bar\nu,\lambda) \sqcup\Bigg(
 \bigsqcup_{
 \begin{subarray}c
\beta \in M(\nu) \\
\beta \trianglerighteq \la
\end{subarray}}
 ( \SSYT(\beta, \la) \times I(\beta))\Bigg)
\end{equation}
given by $\widehat{\Phi}(\SSTT)=\Phi(\SSTT)$ in case $(i)$, and in case $(ii)$  $\widehat{\Phi}(\SSTT)$ is equal to $ (\Phi(\SSTT), (a,b))$, with
$(a,b)$ obtained in the RSK bumping.
\end{prop}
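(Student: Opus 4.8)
The plan is to show that $\widehat{\Phi}$ is a bijection by constructing an explicit inverse. First I would observe that the target of $\widehat{\Phi}$ is already a disjoint union, so it suffices to treat case $(i)$ and case $(ii)$ separately. In case $(i)$ the map $\SSTT\mapsto\Phi(\SSTT)$ lands in $\SStd(\bar\nu,\la)$, and the inverse is immediate: given $\stt\in\SStd(\bar\nu,\la)$, row~$1$ of $\stt$ has content $1^{n+\nu_1-1}$ together with a single larger entry $t$ in its last box (this is forced because $\la_1=n+\nu_1-1<n+\nu_1=\bar\nu_1$, so row~$1$ cannot be all $1$'s, and semistandardness plus $\la_1=n+\nu_1-1$ forces exactly one non-$1$ entry, necessarily in the last column); we recover $\SSTT$ by putting $\SSTT(1,\nu_1)={\scriptsize\young(1t)}$, $\SSTT(1,c)={\scriptsize\young(11)}$ for $c<\nu_1$, and $\SSTT(X)={\scriptsize\young(1{\stt(X)})}$ for $X\in[\nu_{>1}]$. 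One checks this is a well-defined plethystic semistandard tableau and is inverse to $\Phi$ on the nose; the hypothesis $\nu_1\neq\nu_2$ is what guarantees that a case-$(i)$ tableau is never confused with a case-$(ii)$ one, since in case $(i)$ row~$1$ of $\SSTT$ is forced but in case $(ii)$ no entry in row~$1$ is of the form ${\scriptsize\young(1t)}$ with $t>1$.

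For case $(ii)$, the key point is to invert the composite of the two operations used to define $\Phi$: erasing the $1$'s and re-stacking them in row~$1$, and then the two RSK row-insertions of $t_1,t_2$ into row~$2$. I would start from a pair $(\stt,(a,b))$ with $\stt\in\SSYT(\beta,\la)$, $\beta=\bar\nu-\varepsilon_1-\varepsilon_x+\varepsilon_a+\varepsilon_b\in M(\nu)$, and $(a,b)\in I(\beta)$. Since RSK row-insertion is invertible (reverse bumping is deterministic once one specifies the row and the box being removed), I can run reverse insertion starting from box $(b,\beta_b)$ in row~$2$, obtaining a value $t_2$ and a shape with one fewer box, then reverse-insert starting from box $(a,\beta_a)$ (in the intermediate shape) to obtain $t_1$ and the shape $\bar\nu-\varepsilon_1-\varepsilon_x$. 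Crucially the conditions $b\le a$ and $\nu_a\neq\nu_b$ (when $a\neq b$) are exactly what is needed so that the two reverse bumps can be read off unambiguously from $(\beta,(a,b))$: the recorded pair $(a,b)$ tells us which two outer corners were added, and $\nu_a\neq\nu_b$ rules out the two boxes being in the same column (which would make the order of removal ambiguous). Having recovered $\bar\nu-\varepsilon_1-\varepsilon_x$ together with $t_1\le t_2$ and the row $x$ (namely, the unique row with $(\bar\nu-\varepsilon_1-\varepsilon_x)_x=\nu_x-1$, using $\nu_1\neq\nu_2$ to avoid $x=1$, noting $\varepsilon_1$ was already removed), I reconstruct $\SSTT$: put $\SSTT(x,\nu_x)={\scriptsize\young(t_1t_2)}$, put $\SSTT(1,c)={\scriptsize\young(11)}$ for all $c\le\nu_1$, and for the remaining boxes $X$ of $[\nu]$ recover $\SSTT(X)={\scriptsize\young(1{\stt'(X)})}$ where $\stt'$ is the semistandard tableau of shape $[\nu]\setminus(x,\nu_x)$ left over after deleting row~$1$; then check $\SSTT\in\PSSYT((2)^\nu,\la)$.

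Two routine verifications remain: that $\widehat{\Phi}$ actually maps into the stated target (well-definedness), and that the constructed map is a two-sided inverse. Well-definedness in case $(ii)$ is essentially the content of the remark $(*)$ on page~\pageref{maprmk}: the shape of $\Phi(\SSTT)$ is $\beta=\bar\nu-\varepsilon_1-\varepsilon_x+\varepsilon_a+\varepsilon_b$ with $a\ge b\ge2$ and $\nu_a\neq\nu_b$ if $a\neq b$ (two RSK insertions never add two boxes in the same column), so $\beta\in M(\nu)$, and $(a,b)\in I(\beta)$ by definition of $I(\beta)$; the constraint $\beta\trianglerighteq\la$ is automatic since $\Phi(\SSTT)$ is a semistandard tableau of shape $\beta$ and weight $\la$. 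That $\widehat\Phi$ and the reconstruction above are mutually inverse then follows from the invertibility of RSK row-insertion and the fact that erasing/re-stacking the $1$'s is visibly reversible. The main obstacle I anticipate is precisely the bookkeeping around the second RSK insertion and the case $a=b$: one must be careful that recording only the pair of rows $(a,b)$ — rather than the full insertion data — still determines the reverse bumping uniquely, and this is exactly where the hypotheses $\nu_1\neq\nu_2$ (so $x\geq2$) and $\nu_a\neq\nu_b$ for $a\neq b$ do their work; everything else is standard RSK manipulation.
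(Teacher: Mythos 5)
Your proposal is correct and follows essentially the same route as the paper: well-definedness via the remark $(*)$, a directly constructed preimage in case $(i)$, and in case $(ii)$ reverse RSK (removing the box in row $b$, then row $a$) to recover $s_1\leq s_2$ and the shape $\bar\nu-\varepsilon_1-\varepsilon_x$, followed by reassembling the plethystic tableau. One small correction: the hypothesis $\nu_1\neq\nu_2$ is not needed to separate cases $(i)$ and $(ii)$ (that dichotomy is forced by the weight count), but rather to make the case-$(i)$ preimage semistandard — when $\nu_2=\nu_1$ the box $(2,\nu_1)$ lies in $[\nu]$ and the required relation $\stt(1,n+\nu_1)<\stt(2,\nu_2)$ can fail, so not every element of $\SStd(\bar\nu,\la)$ is hit, which is precisely why the $\nu_1=\nu_2$ case uses the modified codomain $\SStd(\nu,\la-(n))$.
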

\begin{proof}
The fact that $\widehat{\Phi}$ is a well-defined map follows from the definition of $\Phi$ and
$(*)$ above.
We shall now prove that $\widehat{\Phi}$ is bijective.  Finding the preimage in case $(i)$ is trivial.
We now consider case $(ii)$.  Suppose
 that  $\beta=\bar\nu-\varepsilon_1-\varepsilon_x+\varepsilon_a+\varepsilon_b$
 with $(a,b)\in I(\beta)$.
We can apply reverse RSK to $\sts \in \SStd(\beta,\la)$ to remove nodes from the $b$th and then $a$th rows and hence obtain a unique tableau $\sts'$ and a pair of integers $s_1\leq s_2$ removed from the tableau.  We set $\SSTS$ to be the plethystic tableau obtained by letting
$$\SSTS(X)=
\Yboxdim{29pt}\gyoung(1;\anode)\qquad
\SSTS(x,\nu_x)=\Yboxdim{29pt}\gyoung(\stwo;\sone) $$ for $X \in [\nu-\varepsilon_x] $.
This provides the required inverse map.
 \end{proof}

\begin{cor}\label{yaaaas}
Let $\nu \vdash n$ with $\nu_1\neq \nu_2$ and  $\la \vdash 2n$ with $\la_1=n+\nu_1-1$.    We have that
$$\langle s_\nu \circ s_{(2)}, s_\la \rangle = \begin{cases}
 \color{black}  |I(\la) |= \color{black}
1 & \textrm{if }\la=\bar\nu-\varepsilon_1-\varepsilon_x+\varepsilon_a+\varepsilon_b \textrm{ for }  x \neq a,b, \nu_a\neq \nu_b \text{ if } a\neq b\\
 |I(\la) |
& \textrm{if }\la=\bar\nu-\varepsilon_1+\varepsilon_c  \textrm{ for some } c>1\\
0 & \textrm{otherwise}.
\end{cases}$$
\end{cor}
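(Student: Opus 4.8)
The plan is to deduce \cref{yaaaas} directly from the bijection $\widehat\Phi$ of \cref{4.2}. Since $s_\nu\circ s_{(2)}=\sum_\alpha|\PSSYT((2)^\nu,\alpha)|x^\alpha$ by \cref{plethysm1}, and the decomposition into Schur functions is obtained by the usual Kostka inversion, we have for $\la_1=n+\nu_1-1$ that $\langle s_\nu\circ s_{(2)},s_\la\rangle=|\PSSYT((2)^\nu,\la)|-\sum_{\beta\rhd\la}\langle s_\nu\circ s_{(2)},s_\beta\rangle\,|\SStd(\beta,\la)|$, but I expect it will be cleaner to argue via the bijection itself: counting the right-hand side of \eqref{ourmap} and peeling off the semistandard terms. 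First I would observe that $|\SStd(\bar\nu,\la)|$ contributes exactly $1$ when $\la=\bar\nu$ and $0$ otherwise --- but in our range $\la_1=n+\nu_1-1<n+\nu_1=\bar\nu_1$, so in fact $\SStd(\bar\nu,\la)=\varnothing$ always, and this term drops out entirely. Thus $|\PSSYT((2)^\nu,\la)|=\sum_{\beta\in M(\nu),\ \beta\gedom\la}|\SStd(\beta,\la)|\cdot|I(\beta)|$.

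Next I would run the same inductive/inversion argument \emph{within} the set $M(\nu)\cup\{\bar\nu\}$. Order the relevant partitions by dominance. For the top case $\la=\bar\nu-\varepsilon_1+\varepsilon_c$ with $c>1$: here $\la$ itself lies in $M(\nu)$ (take $x=1$ formally --- or rather note $\bar\nu-\varepsilon_1+\varepsilon_c$ arises as $\bar\nu-\varepsilon_1-\varepsilon_x+\varepsilon_a+\varepsilon_b$ with $\{a,b\}\ni$ appropriate rows; one must check this is the unique maximal $\beta$), and any $\beta\in M(\nu)$ with $\beta\rhd\la$ would have $\beta\gdom\bar\nu-\varepsilon_1+\varepsilon_c$, forcing $\beta_1\geq\bar\nu_1$, impossible. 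So the sum has the single term $\beta=\la$, giving $|\PSSYT((2)^\nu,\la)|=|\SStd(\la,\la)|\cdot|I(\la)|=|I(\la)|$; since any constituent $s_\mu$ of $s_\nu\circ s_{(2)}$ with $\mu_1=n+\nu_1-1$ and $\mu\rhd\la$ has already been shown to have coefficient $0$ (it cannot be $\bar\nu$, and nothing in $M(\nu)$ dominates $\la$ here), Kostka inversion gives $\langle s_\nu\circ s_{(2)},s_\la\rangle=|\PSSYT((2)^\nu,\la)|=|I(\la)|$. For the generic case $\la=\bar\nu-\varepsilon_1-\varepsilon_x+\varepsilon_a+\varepsilon_b$ with $x\neq a,b$ and $\nu_a\neq\nu_b$ if $a\neq b$: here $I(\la)=\{(a,b)\}$ is a singleton (the pair is determined since $x\notin\{a,b\}$, as recorded in the paragraph defining $I(\beta)$), so the leading term of the sum is $|\SStd(\la,\la)|\cdot 1=1$; one then checks that all $\beta\in M(\nu)$ with $\beta\rhd\la$ have $\langle s_\nu\circ s_{(2)},s_\beta\rangle\cdot|\SStd(\beta,\la)|$ already accounted for by induction --- i.e. the contributions from strictly larger $\beta$ exactly cancel the non-diagonal Kostka terms $|\SStd(\beta,\la)|\cdot|I(\beta)|$ for $\beta\rhd\la$. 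For the "otherwise" case (any other $\la$ with $\la_1=n+\nu_1-1$), $\la\notin M(\nu)$, so the diagonal term is absent and, by induction along the dominance order, $|\PSSYT((2)^\nu,\la)|$ equals precisely $\sum_{\beta\rhd\la}\langle s_\nu\circ s_{(2)},s_\beta\rangle|\SStd(\beta,\la)|$, whence $\langle s_\nu\circ s_{(2)},s_\la\rangle=0$.

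Organizationally, I would phrase this as: (1) restrict to $\la$ with $\la_1=n+\nu_1-1$ and note $\SStd(\bar\nu,\la)=\varnothing$; (2) by \cref{maxfirstpart} and \cref{nu-2-max-constituent} all constituents with first part $>n+\nu_1-1$ are understood, so we may induct downward in dominance through the layer $\la_1=n+\nu_1-1$; (3) apply \cref{4.2} to get the cardinality identity above; (4) compare with the Kostka-inversion formula $|\PSSYT((2)^\nu,\la)|=\langle s_\nu\circ s_{(2)},s_\la\rangle+\sum_{\beta\rhd\la,\,\beta_1=n+\nu_1-1}\langle s_\nu\circ s_{(2)},s_\beta\rangle|\SStd(\beta,\la)|$ (valid since, again, nothing with larger first part enters), and conclude by induction that $\langle s_\nu\circ s_{(2)},s_\la\rangle=|\SStd(\la,\la)|\cdot|I(\la)|=|I(\la)|$ if $\la\in M(\nu)$ and $0$ otherwise; (5) translate "$\la\in M(\nu)$ with $|I(\la)|=1$" into the explicit condition "$\la=\bar\nu-\varepsilon_1-\varepsilon_x+\varepsilon_a+\varepsilon_b$ with $x\neq a,b$" and "$\la\in M(\nu)$ with $|I(\la)|>1$" into "$\la=\bar\nu-\varepsilon_1+\varepsilon_c$", using the discussion of $I(\beta)$ preceding \cref{4.2}.

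The main obstacle, I expect, is step (4): making the induction genuinely tight. One needs that the only $\beta$ with $\beta_1=n+\nu_1-1$ and $\beta\gedom\la$ contributing a nonzero coefficient are exactly those in $M(\nu)$, and that the multiplicities $|I(\beta)|$ slot in so that the alternating sum collapses correctly --- equivalently, that the bijection of \cref{4.2} is compatible with the dominance filtration, so that for each fixed $\la$ the term $\SStd(\bar\nu,\la)\sqcup\bigsqcup_{\beta}(\SStd(\beta,\la)\times I(\beta))$ is precisely the right-hand side of the Kostka-inverted formula. This should follow formally once one notes that \cref{4.2} holds \emph{for every} $\la$ in the layer $\la_1=n+\nu_1-1$ simultaneously, so the equality of generating functions $\sum_\la|\PSSYT((2)^\nu,\la)|x^\la=\sum_{\beta\in M(\nu)}|I(\beta)|\,s_\beta+(\text{terms with larger first part})$ can be read off by descending induction on $\la$; the bookkeeping of which $\beta$ dominate a given $\la$ is routine but must be stated carefully.
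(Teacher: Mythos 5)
Your overall route---\cref{dvir} combined with the bijection of \cref{4.2}, inducting downwards in dominance through the layer $\la_1=n+\nu_1-1$---is exactly the paper's, but the execution contains a genuine error. You assert that $\SStd(\bar\nu,\la)=\varnothing$ because $\la_1=n+\nu_1-1<\bar\nu_1$, discard that term from the target of \eqref{ourmap}, and correspondingly drop $\beta=\bar\nu$ from the sum in \cref{dvir} (``nothing with larger first part enters''). Both claims are false: $|\SStd(\bar\nu,\la)|$ is a Kostka number, positive whenever $\bar\nu\trianglerighteq\la$; having the first part of the \emph{weight} smaller than the first row of the \emph{shape} is no obstruction. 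Concretely, for $\nu=(2,1)$ and $\la=(4,2)=\bar\nu-\varepsilon_1+\varepsilon_2$ one has $|\SStd((5,1),(4,2))|=1$ and $|\PStd((2)^\nu,\la)|=2$, whereas your identity $|\PStd((2)^\nu,\la)|=\sum_{\beta\in M(\nu),\,\beta\trianglerighteq\la}|\SStd(\beta,\la)|\,|I(\beta)|$ predicts $1$; and since $\langle s_\nu\circ s_{(2)},s_{\bar\nu}\rangle=1$ by \cref{pppppppppp}, your step (4) formula fails on the same example ($2\neq 1+0$). A related unjustified step: for $\la=\bar\nu-\varepsilon_1+\varepsilon_c$ with $c>2$ there generally \emph{are} elements of $M(\nu)$ strictly dominating $\la$ (e.g.\ $\bar\nu-\varepsilon_1+\varepsilon_2$ when it lies in $M(\nu)$), so ``the sum has the single term $\beta=\la$'' is also untrue.

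Your final case values come out right only because the quantities you deleted from the two sides of the computation are equal and would have cancelled anyway. The correct argument (and the paper's) keeps $|\SStd(\bar\nu,\la)|$ in the image of $\widehat\Phi$ and subtracts $\langle s_\nu\circ s_{(2)},s_{\bar\nu}\rangle\,|\SStd(\bar\nu,\la)|=|\SStd(\bar\nu,\la)|$ via \cref{dvir}; similarly the contributions of those $\beta\in M(\nu)$ with $\beta\rhd\la$, which carry the inductively known coefficients $1$ or $|I(\beta)|$, cancel against the matching terms of the bijection, leaving only the diagonal term $|\SStd(\la,\la)|\,|I(\la)|$ (or $|\SStd(\la,\la)|$, or $0$). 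So no new idea is needed, but as written the intermediate identities are false and the proof does not stand; reinstate the $\SStd(\bar\nu,\la)$ term, the $\beta=\bar\nu$ term, and the dominating members of $M(\nu)$, and make the cancellation explicit.
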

\begin{proof}
 For partitions $\pi$ with $\pi_1 =n+\nu_1$, we have already seen that
$
 \langle s_\nu \circ s_{(2)}, s_\pi \rangle =1
$  or 0 if $\pi$ is or is not equal to $\bar\nu$, respectively.
\color{black}
For partitions $\lambda$ with $\lambda_1=n+\nu_1-1$, we have already observed that $|I(\lambda)|=1$ in the first case listed in the corollary. We proceed inductively. By  \cref{dvir} together with the bijection of \cref{4.2},
$$\langle s_\nu \circ s_{(2)}, s_\la \rangle = |\SStd(\bar\nu, \la)|+\sum_{ \begin{subarray}c
\beta \in M(\nu) \\
\beta \trianglerighteq \la
\end{subarray}}|I(\lambda)|  \times |\SStd(\beta, \la)| - \sum_{\beta \rhd \la}\langle s_\nu \circ s_{(2)}, s_\beta \rangle  \times |\SStd(\beta, \la)|.
$$
The inductive hypothesis allows the cancellation of each term of the first sum corresponding to $\beta \rhd \la$ with the corresponding term in the second sum. In the first sum, we are left with the term for $\beta=\la$ if $\la \in M(\nu)$ and nothing otherwise. In the second sum, only the term for $\beta=\bar\nu$ remains and only provided $\bar\nu \rhd \la$. This term equals $1 \times |\SStd(\bar\nu, \la)|$ and cancels with the initial term (and, in the case where $\bar\nu \ntrianglerighteq \la$, the initial term is zero). Thus all terms cancel except $|I(\la)|  \times |\SStd(\la, \la)|=|I(\la)|$ in the two cases where $\la \in M(\nu)$, as claimed.
\color{black}
\end{proof}

\subsection{The case $\nu_1=\nu_2$. }
In the previous section, we made the assumption that $\nu_1\neq \nu_2$ in order to guarantee
that    \cref{ourmap} was  a bijection.  If $\nu_1=\nu_2$ then this map is not surjective.  In fact, we have the following.

 \begin{prop}
Let $\nu\vdash n $ with $\nu_1=\nu_2$.  Let  $\la \vdash 2n$ with $\la_1=n+\nu_1-1$.
 The following map is a bijection:
\begin{equation}\label{ourmap2}
\tilde{\Phi}:  \PSSYT((2)^\nu, \la) \to
\SStd( \nu,\lambda-(n)) \sqcup\Bigg(
 \bigsqcup_{
 \begin{subarray}c
\beta \in M(\nu) \\
\beta \trianglerighteq \la
\end{subarray}}
 ( \SSYT(\beta, \la) \times I(\beta))\Bigg)
\end{equation}
given, in case $(i)$, by   $\tilde{\Phi}(\SSTT )$  obtained by deleting all initial 1s in all tableaux entries of $\SSTT$ and,
 in case $(ii)$,   $\tilde{\Phi}(\SSTT )=(\Phi(\SSTT), (a,b))$ with
$(a,b)$ obtained in the RSK bumping.
\end{prop}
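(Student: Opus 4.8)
The plan is to run the argument of \cref{4.2} essentially verbatim; the only place it changes is case (i), and the change --- replacing the target $\SStd(\bar\nu,\la)$ of \cref{4.2} by $\SStd(\nu,\la-(n))$ --- is exactly what is needed to accommodate one extra plethystic inequality that is forced once $\nu_1=\nu_2$. First I would check well-definedness and exhibit an inverse on case (i). In case (i) every cell of $\SSTT\in\PSSYT((2)^\nu,\la)$ has first entry $1$: the cells of row $1$ of $[\nu]$ are all ${\scriptsize\young(11)}$ except the last, $\SSTT(1,\nu_1)$, whose entries are $1,t$ for some $t>1$, and for $X$ in rows $\ge 2$ the cell $\SSTT(X)$ has entries $1,t_X$ with $t_X\ge 2$. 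Deleting the initial $1$ from each cell yields a filling $\stt'$ of $[\nu]$ of weight $\la-(n)$ with $\stt'(1,c)=1$ for $c<\nu_1$, $\stt'(1,\nu_1)=t$, and $\stt'(r,c)=t_X$ for $r\ge 2$. Using the elementary fact that for the shape $(2)$ one has ${\scriptsize\young(1a)}\preceq{\scriptsize\young(1b)}$ iff $a\le b$, and strictly iff $a<b$, the plethystic relations $\SSTT(r,c-1)\preceq\SSTT(r,c)$ and $\SSTT(r-1,c)\prec\SSTT(r,c)$ transcribe precisely into weak increase of $\stt'$ along rows and strict increase down columns. The one point at which $\nu_1=\nu_2$ is used is column $\nu_1$: since $\nu_2=\nu_1$ the cell $(2,\nu_1)$ exists, so the plethystic strictness $\SSTT(1,\nu_1)\prec\SSTT(2,\nu_1)$ gives $t<\stt'(2,\nu_1)$, which is exactly the strict column inequality needed at the corner of the first row of $\stt'$. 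Hence $\stt'\in\SStd(\nu,\la-(n))$. Conversely, given $\stt'\in\SStd(\nu,\la-(n))$ I would prepend a $1$ to each entry to form a cell-filling $\SSTT$ of $(2)^\nu$; because the $\nu_1-1$ entries of $\stt'$ equal to $1$ occupy exactly the leftmost cells of row $1$, the resulting $\SSTT$ has precisely the shape described in case (i), and reversing the transcription above --- with the column-$\nu_1$ strictness of $\stt'$ now supplying $\SSTT(1,\nu_1)\prec\SSTT(2,\nu_1)$ --- shows $\SSTT\in\PSSYT((2)^\nu,\la)$. These two assignments are manifestly mutually inverse, and one checks directly that the weights match.

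Case (ii) is unchanged from \cref{4.2}: well-definedness is the observation $(*)$ (so $\Phi(\SSTT)\in\SSYT(\beta,\la)$ for $\beta=\bar\nu-\varepsilon_1-\varepsilon_x+\varepsilon_a+\varepsilon_b\in M(\nu)$, with $\beta\trianglerighteq\la$ automatic since $\SSYT(\beta,\la)\neq\emptyset$), and the inverse is the reverse-RSK construction: given $(\sexy,(a,b))$ with $\sexy\in\SSYT(\beta,\la)$, peel nodes off rows $b$ then $a$ to recover $\sexy'$ and the extracted values $s_1\le s_2$ (both $\ge 2$), and set $\SSTS(X)$ to be the cell with entries $1,\sexy'(X)$ for $X\in[\nu-\varepsilon_x]$ and $\SSTS(x,\nu_x)$ the cell with entries $s_1,s_2$; as in \cref{4.2} the cell $(x,\nu_x)$ is automatically a removable corner of $\nu$, because any cell of $\SSTS$ directly below it would begin with $1$, hence precede $\SSTS(x,\nu_x)$ in $\prec$, contradicting semistandardness.

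Finally I would observe that the partition of the source $\PSSYT((2)^\nu,\la)$ into its two cases --- disjoint by construction --- is carried onto the displayed disjoint union: cases (i) and (ii) map bijectively onto $\SStd(\nu,\la-(n))$ and onto $\bigsqcup_{\beta\in M(\nu),\,\beta\trianglerighteq\la}\SSYT(\beta,\la)\times I(\beta)$ respectively, by the two preceding paragraphs, and these two parts of the target are disjoint for the trivial reason that the former consists of tableaux whose shape is a partition of $n$ whereas the latter involves partitions of $2n$. Combining the case-(i) and case-(ii) bijections yields the asserted bijection $\tilde\Phi$. The step I expect to be the main obstacle --- really the heart of the matter --- is the case-(i) analysis: one has to see cleanly why the extra plethystic strictness at position $(1,\nu_1)$, present exactly because $\nu_1=\nu_2$ makes $(2,\nu_1)$ a genuine cell, matches column-strictness in column $\nu_1$ of $\stt'$, and hence why $\SStd(\nu,\la-(n))$ is the correct target in place of $\SStd(\bar\nu,\la)$ (in the latter the relevant entry of row $1$ lies alone in its column and is unconstrained from below, so $\widehat\Phi$ of \cref{4.2} fails to be surjective when $\nu_1=\nu_2$). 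Everything else is a transcription of the proof of \cref{4.2}.
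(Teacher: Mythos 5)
Your proposal is correct and follows the same route as the paper, which simply declares the proof identical to that of Proposition 4.2; your case (ii) is a verbatim transcription of that argument, and your case (i) analysis (delete/prepend the initial 1s, with the strictness $\SSTT(1,\nu_1)\prec\SSTT(2,\nu_1)$ — available exactly because $\nu_1=\nu_2$ makes $(2,\nu_1)$ a cell — matching column-strictness in column $\nu_1$ of the image) is precisely the modification that justifies replacing $\SStd(\bar\nu,\la)$ by $\SStd(\nu,\la-(n))$. Nothing is missing.
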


 The proof is identical to that of \cref{4.2}.

\begin{cor}\label{yaaaas2}
Let $\nu \vdash n$ with $\nu_1= \nu_2$ and  $\la \vdash 2n$ with $\la_1=n+\nu_1-1$.    We have that
$$\langle s_\nu \circ s_{(2)}, s_\la \rangle =
 \begin{cases}
1 & \textrm{if }\la=\bar\nu-\varepsilon_1-\varepsilon_x+\varepsilon_a+\varepsilon_b \textrm{ for }  x \neq a,b, \nu_a\neq \nu_b \text{ if } a\neq b\\
 |	I(\la) | -1
& \textrm{if }\la=\bar\nu-\varepsilon_1+\varepsilon_2   \\
 |I(\la) |
& \textrm{if }\la=\bar\nu-\varepsilon_1+\varepsilon_c  \textrm{ for some } c>2\\
0 & \textrm{otherwise}.
\end{cases}$$
\end{cor}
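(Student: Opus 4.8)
The plan is to run the Dvir-type recursion \eqref{dvir} by downward induction on the dominance order among the partitions $\la\vdash 2n$ with $\la_1=n+\nu_1-1$, exactly as in the proof of \cref{yaaaas}, while tracking one extra contribution that is invisible when $\nu_1\neq\nu_2$. Throughout set $\mu^\circ:=\bar\nu-\varepsilon_1+\varepsilon_2=(n+\nu_1-1,\nu_2+1,\nu_3,\dots)$, which is a partition because $\nu_1=\nu_2$ forces $\ell(\nu)\geq 2$ and $n\geq 2$. First I would record two inputs: by \cref{pppppppppp} and \eqref{maxfirstpart}, $p(\nu,(2),\beta)=1$ if $\beta=\bar\nu$ and $=0$ for every other $\beta$ with $\beta_1\geq n+\nu_1$; and the bijection $\tilde\Phi$ of the preceding proposition yields
$$|\PStd((2)^\nu,\la)|=|\SStd(\nu,\la-(n))|+\sum_{\substack{\beta\in M(\nu)\\ \beta\trianglerighteq\la}}|I(\beta)|\cdot|\SStd(\beta,\la)|.$$

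The genuinely new ingredient is the Kostka identity, valid whenever $\la_1=n+\nu_1-1$:
$$|\SStd(\bar\nu,\la)|=|\SStd(\nu,\la-(n))|+|\SStd(\mu^\circ,\la)|.$$
I would prove this bijectively. Since $\la$ has exactly $n+\nu_1-1$ ones and column-strictness confines them to a prefix of the first row, every $\stt\in\SStd(\bar\nu,\la)$ has $\stt(1,c)=1$ for $c\leq n+\nu_1-1$, $\stt(1,n+\nu_1)=t$ for some $t\geq 2$, and all other entries $\geq 2$. Partition $\SStd(\bar\nu,\la)$ according to whether $t<\stt(2,\nu_1)$ or $t\geq\stt(2,\nu_1)$ (here $\nu_1=\nu_2$ is used, so that $(2,\nu_1)$ is a genuine cell). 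In the first case, shortening row~$1$ to $1^{\nu_1-1}\,t$ gives an element of $\SStd(\nu,\la-(n))$; in the second, relocating the cell $(1,n+\nu_1)$ to $(2,\nu_2+1)$ gives an element of $\SStd(\mu^\circ,\la)$, the row and column conditions surviving because $\stt(2,\nu_2)\leq t$ and nothing lies below $(2,\nu_2+1)$. Both constructions are manifestly reversible. (Equivalently, one may apply $h_{n+\nu_1-1}^{\perp}$ to $s_{\bar\nu}$ and $h_{\nu_1-1}^{\perp}$ to $s_\nu$ and compare by the Pieri rule: the summand $s_{\nu_{>1}+\varepsilon_1}$ occurs for $s_{\bar\nu}$ but not for $s_\nu$.)

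For the induction I would assume that, for all $\beta\rhd\la$ in the layer $\beta_1=n+\nu_1-1$, the quantity $p(\nu,(2),\beta)$ equals $|I(\beta)|$ when $\beta\in M(\nu)$ and $\beta\neq\mu^\circ$, equals $|I(\beta)|-1$ when $\beta=\mu^\circ$, and equals $0$ otherwise (the base case being the same computation with these sums empty). Feeding the two inputs and the Kostka identity into \eqref{dvir}, the sums over $M(\nu)$ cancel except for the $\beta=\la$ term, which contributes $|I(\la)|$ when $\la\in M(\nu)$ (as $|\SStd(\la,\la)|=1$); and the $-|\SStd(\mu^\circ,\la)|$ produced by the Kostka identity, combined with the $+|\SStd(\mu^\circ,\la)|$ coming from the $\beta=\mu^\circ$ term of the inductive sum whenever $\mu^\circ\rhd\la$, collapses to $-1$ when $\la=\mu^\circ$ and to $0$ otherwise (using that $\SStd(\mu^\circ,\la)$ is nonempty only if $\mu^\circ\trianglerighteq\la$, with a single element when $\mu^\circ=\la$). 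Thus $p(\nu,(2),\la)=|I(\la)|$ if $\la\in M(\nu)\setminus\{\mu^\circ\}$, $=|I(\la)|-1$ if $\la=\mu^\circ$, and $=0$ otherwise. It then remains to note that $\mu^\circ\in M(\nu)$ always (take $(a,b)=(2,2)$ when $\nu_2>\nu_3$, and otherwise pair row~$2$ with a removable row $x>2$ of $\bar\nu-\varepsilon_1$), and to translate ``$\la\in M(\nu)$'' into the three displayed families: $|I(\la)|=1$ exactly when the removed row $x$ is distinct from the two added rows $a,b$, the partition $\bar\nu-\varepsilon_1+\varepsilon_2$ is $\mu^\circ$ itself, and $\bar\nu-\varepsilon_1+\varepsilon_c$ with $c>2$ exhausts the rest of $M(\nu)$.

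The main obstacle is spotting that the naive ``delete a full first row of $n$ ones'' bijection between $\SStd(\bar\nu,\la)$ and $\SStd(\nu,\la-(n))$, which is available when $\nu_1\neq\nu_2$, breaks precisely when $\nu_1=\nu_2$ and breaks by exactly the set of tableaux counted by $\SStd(\mu^\circ,\la)$; granting this Kostka identity, everything else is a bookkeeping variant of the proof of \cref{yaaaas}.
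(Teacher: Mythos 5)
Your proposal is correct and takes essentially the same route as the paper: the Dvir-type recursion \eqref{dvir} together with the bijection $\tilde{\Phi}$, reduced to the key identity $|\SStd(\bar\nu,\la)| = |\SStd(\nu,\la-(n))| + |\SStd(\bar\nu-\varepsilon_1+\varepsilon_2,\la)|$, which you prove by the very same case-split bijection (delete the row-one entries equal to $1$ when the last entry of row one is less than $\stt(2,\nu_2)$, and otherwise move that last box to the end of row two). The only difference is that you spell out the inductive cancellation bookkeeping which the paper leaves implicit in the phrase ``one proceeds as in \cref{yaaaas}''.
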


\begin{proof}
One proceeds as in \cref{yaaaas} and reduces the problem to constructing the following equality
$$
|\SStd(\bar\nu,\la)|
=
|\SStd( \nu , \la-(n))|
+
|\SStd(\bar\nu-\varepsilon_1+ \varepsilon_2, \la)|
.$$
The bijection $\tilde{\phi}$  behind this equality is given as follows. If
$\stt \in \SStd(\bar\nu,\la)$  is such that $\stt({1,\nu_1+n})<\stt(2,\nu_2)$ then
$\tilde{\phi}(\stt)$ is obtained by deleting a total of $n$ entries equal to $1$ from the first row of $\stt$ (so $\tilde{\phi}$ is semistandard as  $\stt({1,\nu_1+n})<\stt(2,\nu_2)$).
If $\stt \in \SStd(\bar\nu,\la)$  is such that $\stt({1,\nu_1+n})\geq \stt(2,\nu_2)$,
then move the final box in row 1 containing entry $\stt({1,\nu_1+n})$ and add this box to the end of row 2.
\end{proof}

 \section{Proof of the classification} \label{sec:endproof}

We are now  ready to prove the converse of the main theorem, namely that any product not on the list  of \cref{conj} does indeed contain multiplicities.   The idea of the proof is as follows: we first  calculate ``seeds of multiplicity" using
plethystic tableaux
and then we ``grow" these seeds to infinite families of products  $s_\nu \circ s_\mu$ containing coefficients which are strictly greater than 1.  We shall provide an example of this procedure below and then afterwards explain the idea of the proof in detail.  We organise this section according to the outer partition   ---  in more detail, each result
 of this section proves \cref{conj} under some restriction on $\nu$ (that $\nu$ has 3 removable nodes, is a proper fat hook, rectangle, 2-line, linear partition) until we have exhausted all possibilities.

  \cref{yaaaas} provided our first ``seed", which we will now ``grow" as follows.

 \begin{prop}\label{coolcor}\label{3removable}
Let  $\nu$ be a partition with $\rem(\nu)\geq 3$.
Then $p(\nu,\mu)>1$
for any  partition
$\mu  $ such that   $|\mu|> 1$.
 \end{prop}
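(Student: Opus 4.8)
The plan is to exploit \cref{cor:2-important}: since the collection $\mathcal N$ of all partitions with at least three removable nodes is closed under conjugation, it suffices to produce, for each $\nu \in \mathcal N$ with $\nu \vdash n$, a single partition $\lambda \vdash 2n$ with $p(\nu,(2),\lambda) \geq 2$; then $p(\nu,\mu)\geq 2$ follows for every $\mu$ of size greater than $1$. So the whole problem reduces to finding a good ``seed'' $\lambda$ in the product $s_\nu \circ s_{(2)}$.

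The natural place to look is the second layer of the lexicographic order analysed in \cref{sec:proof}, namely partitions $\lambda$ with $\lambda_1 = n+\nu_1-1$, where \cref{yaaaas} and \cref{yaaaas2} give exact multiplicities. Concretely, I would take $\lambda = \bar\nu - \varepsilon_1 + \varepsilon_c$ for a suitable row $c>1$ (with $c>2$ if $\nu_1=\nu_2$, to avoid the ``$|I(\lambda)|-1$'' correction), so that the multiplicity equals $|I(\lambda)|$, the number of ways of writing $\lambda = \bar\nu - \varepsilon_1 - \varepsilon_x + \varepsilon_a + \varepsilon_b$ with $a\geq b\geq 2$, $x\geq 2$, and $\nu_a\neq\nu_b$ when $a\neq b$. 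The task is therefore purely combinatorial: show that when $\rem(\nu)\geq 3$ one can choose $c$ so that $|I(\lambda)|\geq 2$. The idea is that $\rem(\nu)\geq 3$ gives at least three rows of $\bar\nu - \varepsilon_1$ at which a box can be removed and at least a matching supply of distinct addable rows, so a single net move ``remove from row $x$, add to row $c$'' can be realised by (at least) two genuinely different double-moves $\{x\}\to\{a,b\}$; for instance, removing a box from a low removable row and re-adding it higher up can be decomposed either trivially ($a=c$, $b=x'$ for another removable $x'$ with a compensating addition, arranged so the shape still equals $\lambda$) or via an intermediate row, yielding two elements of $I(\lambda)$. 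I would make this precise by a short case analysis on the positions of the three removable nodes of $\nu$ (equivalently the three distinct part-sizes appearing), picking $c$ to be, say, the row just below the topmost removable node.

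The main obstacle I anticipate is exactly the exceptional case flagged in the commented-out \cref{fathooks2}: when $\nu$ is close to a rectangle the set $I(\lambda)$ can collapse to a single pair for every choice of $c$, so the ``$\lambda_1 = n+\nu_1-1$'' layer does not suffice. Here, however, the hypothesis $\rem(\nu)\geq 3$ actually rules out the troublesome near-rectangles $(a+k,a^b)$ (those have $\rem = 2$), so for this proposition — unlike the proper-fat-hook case — the clean argument via \cref{yaaaas}/\cref{yaaaas2} should go through without needing a deeper ``double jump'' seed. I would double-check the boundary situations $n$ small and $\nu_1=\nu_2$ directly, and confirm in each that some $\lambda = \bar\nu-\varepsilon_1+\varepsilon_c$ with $c>2$ has $|I(\lambda)|\geq 2$. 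Having secured $p(\nu,(2))\geq 2$ for all $\nu\in\mathcal N$, invoking \cref{cor:2-important} closes the proof: $p(\nu,\mu)\geq 2$ for every $\mu$ with $|\mu|>1$, and for $|\mu|=1$ the product is trivially multiplicity-free, so this is consistent with \cref{conj}.
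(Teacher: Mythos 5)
Your proposal is correct and takes essentially the paper's own route: produce a multiplicity-two seed for $s_\nu\circ s_{(2)}$ in the layer $\lambda_1=n+\nu_1-1$ via \cref{yaaaas} and \cref{yaaaas2}, then use conjugation-invariance of $\mathcal{N}$ together with \cref{cor:2-important}. The only difference is that the paper avoids your adaptive choice of $c$ and the anticipated case analysis by taking $c=2$ uniformly: since $\rem(\nu)\geq 3$ gives at least two (if $\nu_1\neq\nu_2$) or three (if $\nu_1=\nu_2$) removable rows $x\geq 2$, each contributing a pair $\{x,2\}$ to $I(\bar\nu-\varepsilon_1+\varepsilon_2)$ with the condition $\lambda_x\neq\lambda_2=\nu_2+1$ automatic, so even after the ``$|I(\lambda)|-1$'' correction the multiplicity is at least $2$.
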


 \begin{proof}
Let $\mathcal N$ be the set of all partitions $\nu$ with
$\rem(\nu)\geq 3$.
Let $\nu \in \mathcal N$.
By \cref{yaaaas} and \cref{yaaaas2}
we have
$$2 \leq \langle s_\nu\circ s_{(2)} \mid s_{\bar\nu -\varepsilon_1+\varepsilon_2}\rangle ,
$$
and thus $p(\nu,(2))>1$.
As $\mathcal N$ is closed under conjugation, the result now follows by
\cref{cor:2-important}.
 \end{proof}

    It now only remains to consider all products of the form $s_\nu \circ s_\mu$  such that $\nu$ has at most 2 removable nodes.  As these products are ``closer to being on our list" we have to delve deeper into the dominance order if we are to find the desired multiplicities.
 \begin{prop}\label{the latter}
  Let $ \nu= (a^b)  \supseteq (2^3)$ be a rectangle.
Then
\begin{equation}\label{equality1}
\langle s_\nu \circ s_{(2)} \mid s_{\bar\nu-2\varepsilon_1+2\varepsilon_2} \rangle =2 .
\end{equation}
 \end{prop}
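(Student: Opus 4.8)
The plan is to evaluate $p(\nu,(2),\lambda)$ directly, where $\lambda:=\bar\nu-2\varepsilon_1+2\varepsilon_2$, by means of the plethystic Dvir recursion \eqref{dvir}. Writing $n=ab$, we have $\bar\nu=(n+a,a^{b-1})$ and $\lambda=(n+a-2,\,a+2,\,a^{b-2})$; in particular $\lambda_1=n+\nu_1-2$, which is \emph{one level deeper} in the lexicographic order than the range $\lambda_1=n+\nu_1-1$ handled in \cref{yaaaas,yaaaas2}. So \eqref{dvir} reads
$$
p(\nu,(2),\lambda)=|\PStd((2)^\nu,\lambda)|-\sum_{\beta\rhd\lambda}p(\nu,(2),\beta)\,|\SStd(\beta,\lambda)| ,
$$
and everything comes down to: (a) finding all constituents $\beta$ of $s_\nu\circ s_{(2)}$ with $\beta\rhd\lambda$, together with their multiplicities; and (b) computing $|\PStd((2)^\nu,\lambda)|$ and the ensuing Kostka numbers.

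For (a), since $\beta\trianglerighteq\lambda$ forces $\beta_1\geq\lambda_1=n+a-2$, and since $\bar\nu$ is the lexicographically largest constituent (\cref{pppppppppp}), we have $\beta_1\in\{n+a,\;n+a-1,\;n+a-2\}$. If $\beta_1=n+a$ then $\beta=\bar\nu$ (by \eqref{nu-2-max-constituent} and \eqref{maxfirstpart}), with multiplicity $1$, and $\bar\nu\rhd\lambda$. If $\beta_1=n+a-1$ I would apply \cref{yaaaas2}: because $\nu=(a^b)$ is a rectangle, the only removable row of $\bar\nu-\varepsilon_1$ other than the first is row $b$, so every $\beta\in M(\nu)$ arises from $(n+a-1,a^{b-2},a-1)$ by adding two boxes; a short check of these possibilities against dominance shows that the unique constituent with $\beta_1=n+a-1$ dominating $\lambda$ is $\beta':=\bar\nu-\varepsilon_1-\varepsilon_b+2\varepsilon_2=(n+a-1,\,a+2,\,a^{b-3},\,a-1)$, of multiplicity $1$ (note in particular that $|I(\bar\nu-\varepsilon_1+\varepsilon_2)|=1$, so that constituent has multiplicity $0$, while $\bar\nu-\varepsilon_1+\varepsilon_{b+1}\not\trianglerighteq\lambda$; here $b\geq 3$ guarantees $\beta'$ falls into the first, multiplicity-one case). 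The crux is the remaining level $\beta_1=n+a-2=\lambda_1$, which is not covered by \cref{sec:proof}: here one must argue from scratch — running \eqref{dvir} again for the finitely many $\beta=(n+a-2,\beta_{\geq 2})$ with $\beta_{\geq 2}\rhd(a+2,a^{b-2})$, and using \cref{PW} to recognise which of these weights are or are not dominance-maximal — that exactly one such $\beta$, say $\beta''$, is a constituent, it has multiplicity $1$, and it dominates $\lambda$. (For $\nu=(2^3)$: $n=6$, $\beta'=(7,4,1)$, $\beta''=(6,6)$, with $|\SStd(\bar\nu,\lambda)|=1$, $|\SStd(\beta',\lambda)|=2$, $|\SStd(\beta'',\lambda)|=1$, so the recursion gives $p=|\PStd((2)^{(2^3)},(6,4,2))|-4=6-4=2$.)

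For (b), a plethystic semistandard tableau of shape $(2)^\nu$ is a filling of $[\nu]=[(a^b)]$ by semistandard Young tableaux of shape $(2)$, i.e.\ by rows $\young(ij)$ with $i\leq j$; such rows form a \emph{chain} under $\prec$, so a plethystic tableau of shape $(2)^{(a^b)}$ is nothing but an ordinary semistandard tableau of shape $(a^b)$ in that alphabet, and $|\PStd((2)^\nu,\lambda)|$ is the sum of the Kostka numbers $K_{(a^b),m}$ over the domino-content vectors $m$ compatible with the target weight $\lambda$ — a finite, explicit count. Feeding this, together with the Kostka numbers from (a), into \eqref{dvir} yields $p(\nu,(2),\lambda)=2$. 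The main obstacle is step (a) at the level $\beta_1=n+\nu_1-2$: there is no prior description of those constituents, so pinning down $\beta''$ and excluding every other candidate requires its own (elementary but fiddly) inductive bookkeeping; an appealing alternative is to push the bijective proof of \cref{yaaaas2} down one further layer for rectangular $\nu$, at the cost of a more elaborate case division. A secondary hazard is that the Kostka and plethystic-tableaux counts in (b), though routine, are easy to get wrong and must be carried out carefully.
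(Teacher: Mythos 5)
Your overall strategy coincides with the paper's: both proofs run the recursion of \cref{dvir} for $\omega:=\bar\nu-2\varepsilon_1+2\varepsilon_2$, and your treatment of the two higher layers agrees with the paper ($\bar\nu$ with multiplicity $1$; at first part $n+a-1$ only $\alpha=\bar\nu-\varepsilon_1-\varepsilon_b+2\varepsilon_2$ with multiplicity $1$, while $\delta=\bar\nu-\varepsilon_1+\varepsilon_2$ has multiplicity $|I(\delta)|-1=0$ by \cref{yaaaas2}). The problem is the layer $\beta_1=\lambda_1=n+a-2$, which you yourself flag as the crux but then resolve by assertion: you claim that exactly one constituent $\beta''$ of multiplicity $1$ lives there. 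That is false for general rectangles. In the paper's proof (see the Hasse diagram in \cref{hasse33}) the partitions with first part $n+a-2$ strictly dominating $\omega$ include $\beta_{(4)},\beta_{(3,1)},\beta_{(2,2)},\beta_{(2,1,1)},\gamma_{(4)},\gamma_{(3,1)},\gamma_{(2,2)},\gamma_{(2,1,1)},\zeta_{(3)},\zeta_{(2,1)}$, and for $b\geq 6$ \emph{five} of them have multiplicity $1$, namely $\beta_{(4)}$, $\gamma_{(3,1)}$, $\beta_{(2,2)}$, $\zeta_{(3)}$ and $\zeta_{(2,1)}$ (for $b=3$ only $\beta_{(4)}$ survives, because $\gamma_{(3,1)},\beta_{(2,2)},\zeta_{(2,1)}$ degenerate to $\omega$ itself and $\zeta_{(3)}$ has multiplicity $0$). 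Your $(2^3)$ test case is exactly the degenerate situation, which is why it misled you: already for $(2^4)$ or $(2^6)$ several distinct multiplicity-one constituents appear at that level, e.g.\ for $\nu=(2^6)$ one has $(12,6,2^3)$, $(12,5,3,2,1,1)$, $(12,4,4,2,2)$, $(12,5,2,2,2,1)$ and $(12,4,3,2,2,1)$ all occurring with coefficient $1$ above $\omega=(12,4,2^4)$.

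Consequently the "fiddly bookkeeping" you defer is not a finishing touch but the entire content of the proof: one must determine the multiplicities of roughly ten partitions at that layer (several of which require their own applications of \cref{dvir}, and some of which vanish only after cancellation), and then compute $b$-dependent Kostka numbers such as $|\SStd(\alpha,\omega)|=2(b-2)$, $|\SStd(\beta_{(4)},\omega)|=\binom{b-1}{2}$, $|\SStd(\gamma_{(3,1)},\omega)|=(b-2)(b-4)$, $|\SStd(\beta_{(2,2)},\omega)|=\binom{b-2}{2}$, $|\SStd(\zeta_{(3)},\omega)|=b-2$, $|\SStd(\zeta_{(2,1)},\omega)|=b-3$, together with the (quadratically growing in $b$) count $|\PStd((2)^\nu,\omega)|$; the value $2$ emerges only from this nontrivial cancellation, not from a single subtraction as in your $(2^3)$ computation. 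Your step (b) is fine in principle -- for $\mu=(2)$ the inner tableaux are totally ordered under $\prec$, so the plethystic tableau counts do reduce to sums of Kostka numbers -- but since none of the layer-$\lambda_1$ multiplicities nor the generic counts are actually established, and your structural claim about that layer is incorrect, the proposal as it stands does not prove the proposition; it would need to be replaced by (essentially) the case analysis the paper carries out, including the separate treatment of the small cases $b=3,4,5$ where the generic partitions coincide or disappear.
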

 \begin{proof}
  The partitions $\lambda$ satisfying
 $$
\bar\nu \succeq
\lambda
\rhd
\bar\nu-2\varepsilon_1+2\varepsilon_2 \quad \text{ and }\quad \PStd((2)^{(a^b)},\la)\neq \emptyset
  $$
are  obtained from $\bar\nu $ by \\
(1) removing $i\leq 2$ nodes   from the first row of $\bar\nu$,\\
(2) removing at most $i$  nodes   from the final ($b$th and $(b-1)$th) rows of  $\bar\nu$, \\
(3)  adding these nodes in rows  with indices strictly greater than 1 and strictly less than $b$.
  The partitions satisfying these criteria are
  $$
\bar\nu,
 \quad
 \alpha=\bar\nu-\varepsilon_1-\varepsilon_b+2\varepsilon_2,
 \quad
 \beta_{(4)}=\bar\nu-2\varepsilon_1-2\varepsilon_b+4\varepsilon_2,
 \quad
  \beta_{(3,1)}=\bar\nu-2\varepsilon_1-2\varepsilon_b+3\varepsilon_2+\varepsilon_3,
$$
$$
  \beta_{(2,2)}=\bar\nu-2\varepsilon_1-2\varepsilon_b+2\varepsilon_2+2\varepsilon_3,
  \quad
    \beta_{(2,1,1)}=\bar\nu-2\varepsilon_1-2\varepsilon_b+2\varepsilon_2+ \varepsilon_3+ \varepsilon_4,
    $$
        $$
        \gamma_{(4)}=\bar\nu-2\varepsilon_1- \varepsilon_b- \varepsilon_{b-1}+4\varepsilon_2,
        \quad
          \gamma_{(3,1)}=\bar\nu-2\varepsilon_1-\varepsilon_b-\varepsilon_{b-1}+3\varepsilon_2+\varepsilon_3,$$
          $$
 \gamma_{(2,2)}=\bar\nu-2\varepsilon_1-\varepsilon_b-\varepsilon_{b-1}+2\varepsilon_2+2\varepsilon_3,
  \quad
    \gamma_{(2,1,1)}=\bar\nu-2\varepsilon_1-\varepsilon_b-\varepsilon_{b-1}+2\varepsilon_2+ \varepsilon_3+ \varepsilon_4,
$$
 $$
    \zeta_{(3)}= \bar\nu-2\varepsilon_1-\varepsilon_b+3\varepsilon_2,
    \quad
    \zeta_{(2,1)}=  \bar\nu-2\varepsilon_1-\varepsilon_b+2\varepsilon_2+ \varepsilon_3,
  $$
$$
    \delta= \bar\nu-\varepsilon_1+\varepsilon_2,
    \quad
    \omega=  \bar\nu-2\varepsilon_1+2\varepsilon_2.
  $$
The   Hasse diagram of these partitions, under the dominance ordering, is depicted in \cref{hasse33}, below.

 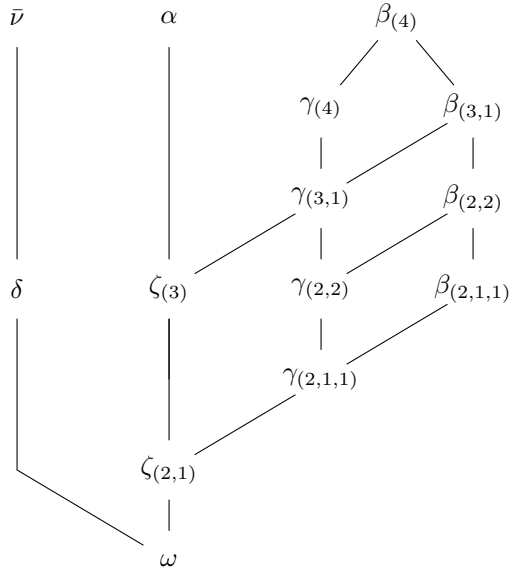
\begin{figure}[ht!]  \!\!\!\! $$ \scalefont{0.9}  \begin{tikzpicture} [scale=0.4]

\draw(-2.5,-3)--(-7.5,-6)--(-7.5,-9)--(-7.5,-15);
\draw(-2.5,-9)--(-7.5,-12);
\draw(-7.5,3)--(-7.5,-9);
\draw(-12.5,3)--(-12.5,-12)--(-7.5,-15);

\draw(0,3)--(2.5,0)--(2.5,-3)--(2.5,-6)--(-2.5,-9);
\draw(2.5,-3)--(-2.5,-6);
\draw(2.5,-0)--(-2.5,-3);
\draw(0,3)--(-2.5,0)--(-2.5,-3)--(-2.5,-6)--(-2.5,-9);

\fill[white](-12.5,3)   circle (28pt) ;
\fill[white](-7.5,3)   circle (28pt) ;
\fill[white](-7.5,-6)   circle (28pt) ;
\fill[white](-7.5,-12)   circle (28pt) ;
\fill[white](-7.5,-15)   circle (28pt) ;

\fill[white](2.5,0)   circle (28pt) ; 
\fill[white](-2.5,0)   circle (28pt) ; 
\fill[white](0,3)   circle (28pt) ; 

\fill[white](2.5,-3)   circle (28pt) ; 
\fill[white](-2.5,-3)   circle (28pt) ; 

\fill[white](2.5,-6)   circle (28pt) ; 
\fill[white](-2.5,-6)   circle (28pt) ; 

\fill[white](-2.5,-9)   circle (28pt) ; 

\path(0,3) node {$\beta_{(4)}$} ;

\path(2.5,0) node {$\beta_{(3,1)}$} ;
\path(-2.5,0) node {$\gamma_{(4)} $} ;

\path(2.5,-3) node {$\beta_{(2,2)}$} ;
\path(-2.5,-3) node {$\gamma_{(3,1)}$} ;

\path(2.5,-6) node {$\beta_{(2,1,1)}$} ;
\path(-2.5,-6) node {$\gamma_{(2,2)}$} ;

\path(-2.5,-9) node {$\gamma_{(2,1,1)}$} ;

\path(0-7.5,-12) node {$\zeta_{(2,1)}$} ;

\path(0-7.5,-6) node {$\zeta_{(3)}$} ;

\path(0-7.5,-15) node {$\omega$} ;

\path(0-7.5,3) node {$\alpha$} ;

\fill[white](-17.5,-6)   circle (28pt) ;
\fill[white](-12.5,-6)   circle (28pt) ;
\path(-12.5,-6) node {$\delta  $} ;

\path(-12.5,3) node {$\bar\nu $} ;

\end{tikzpicture}$$
\caption{Hasse diagram of the dominance ordering on the relevant partitions $\lambda$ such that
$ \lambda\trianglerighteq \omega:=\bar\nu - 2\varepsilon_1+2\varepsilon_2 $.      }
\label{hasse33}
\end{figure}

\smallskip\noindent{\bf The partitions $\bm\bar\nu$, $\bm\alpha$, $\bm\delta $. }
By \cref{yaaaas2} and \cref{pppppppppp}, we know that
$$
\langle s_\nu \circ s_{(2)} \mid s_{\bar\nu} \rangle =
\langle s_\nu \circ s_{(2)} \mid s_{\alpha} \rangle =1  $$
and
$$\langle s_\nu \circ s_{(2)} \mid s_{\delta }
\rangle =0.$$
\smallskip
\noindent{\bf The partitions $\bm\beta_{(4)}$ and $\bm\gamma_{(4)}$. }
There is a single plethystic tableau $\SSTT^{\beta_{(4)}}\in \PStd((2)^{(a^b)}, \beta_{(4)})$ as follows:
$$
\SSTT^{\beta_{(4)}}(b,a)=\SSTT^{\beta_{(4)}}(b,a-1) = \gyoung(2;2)
\qquad
\SSTT^{\beta_{(4)}}\node = \gyoung(1;x)
$$
for $\node$ otherwise.  This weight is maximal in the dominance order and so
$\langle s_\nu \circ s_{(2)} \mid s_{\beta_{(4)}}\rangle =1$.
Similarly,   there is a single plethystic tableau $\SSTT^{{\gamma}_{(4)}}\in \PStd((2)^{(a^b)}, \gamma_{(4)})$ as follows:
$$
\SSTT^{{\gamma}_{(4)}}(b,a)=\SSTT^{{\gamma}_{(4)}}(b,a-1) = \gyoung(2;2)
\qquad
\SSTT^{{\gamma}_{(4)}}(b-1,a) = \gyoung(1;b)
\qquad
\SSTT^{{\gamma}_{(4)}}\node = \gyoung(1;x)
$$
for $\node$ otherwise.    Since ${\beta_{(4)}}\rhd  {\gamma_{(4)}}$ and 
 $|\SStd({\beta_{(4)}}, {\gamma_{(4)}})|=1$,   
it follows that
 $\langle s_\nu \circ s_{(2)} \mid  s_{\gamma_{(4)}}\rangle =1-1=0$.

\smallskip
\noindent{\bf The partitions $\bm\beta_{(3,1)}$ and $\bm\gamma_{(3,1)}$. }
There is a unique plethystic tableau $\SSTT^\beta_{(3,1)}  \in \PStd
((2)^{(a^b)}, \beta_{(3,1)})$ as follows:
$$\begin{array}{rrrrrccc}
 &\SSTT^{{\beta}_{(3,1)}}(b,a-1) = \gyoung(2;2)
 \quad
& \SSTT^{{\beta}_{(3,1)}}(b,a ) = \gyoung(2;3)
 \quad
&\SSTT^{{\beta}_{(3,1)}}\node = \gyoung(1;x)
\end{array}$$
for $\node$ otherwise.  We  find that  
 $|\SStd(\beta_{{(4)}}, \beta_{(3,1)})|=1$ 
  and so
$\langle s_\nu \circ s_{(2)} \mid s_{\beta_{(3,1)}}\rangle =1-1=0$.
There are two plethystic tableaux $\SSTT^{\gamma_{(3,1)}}_1,\SSTT^{\gamma_{(3,1)}}_2 \in \PStd
((2)^{(a^b)}, \gamma_{(3,1)})$ as follows:
$$\begin{array}{rrrrrrrr}
  &\SSTT^{{\gamma_{(3,1)}}}_1(b,a-1) = \gyoung(2;2)
 \quad
 &\SSTT^{{\gamma_{(3,1)}}}_1(b,a) = \gyoung(2;3)
  \quad
 &\SSTT^{{\gamma_{(3,1)}}}_1(b-1,a) = \gyoung(1;b)
\\
 &\SSTT^{{\gamma_{(3,1)}}}_2(b-1,a) = \gyoung(2;2)
 \quad
&\SSTT^{{\gamma_{(3,1)}}}_2(b,a) = \gyoung(2;3)
 \quad
  &\SSTT^{{\gamma_{(3,1)}}}_i\node = \gyoung(1;x)
\end{array}$$
for $i=1,2$ and $\node$ otherwise.
 Since $|\SStd(\beta_{(4)},{\gamma_{(3,1)}})|=1$, it follows that
$\langle s_\nu \circ s_{(2)} \mid s_{\gamma_{(3,1)}}\rangle =2-1=1$.

 \smallskip
\noindent{\bf The partitions $\bm\beta_{(2,2)}$ and  $\bm\gamma_{(2,2)}$. }
  We define
$$
\newcommand{\ibar}{\bar{i}}
\SSTS^{\beta_{(2,2)}} (b,a-1)= \Yboxdim{14pt}\gyoung(2;2)\quad
\SSTS^{\beta_{(2,2)}} (b,a)= \gyoung(3;3)\qquad
 \SSTS^{\beta_{(2,2)}} \node= \gyoung(1;x)
$$
$$\newcommand{\ibar}{\bar{i}}\Yboxdim{14pt}
\SSTT^{\beta_{(2,2)}}  (b,a-1)= \gyoung(2;3)\quad
\SSTT^{\beta_{(2,2)}}  (b,a)= \gyoung(2;3)\quad
 \SSTT^{\beta_{(2,2)}}  \node= \gyoung(1;x)
$$
and similarly, we define
 $$
\newcommand{\ibar}{\bar{i}}
\SSTS^{\gamma_{(2,2)}} (b,a-1)= \Yboxdim{14pt}\gyoung(2;2)\quad
\SSTS^{\gamma_{(2,2)}} (b,a)= \gyoung(3;3)\qquad
\SSTS^{\gamma_{(2,2)}} (b-1,a)= \gyoung(1;b)\qquad
$$
$$\newcommand{\ibar}{\bar{i}}\Yboxdim{14pt}
\SSTT^{\gamma_{(2,2)}}  (b,a-1)= \gyoung(2;3)\quad
\SSTT^{\gamma_{(2,2)}} (b,a)= \gyoung(2;3)\quad
\SSTT^{\gamma_{(2,2)}} (b-1,a)= \gyoung(1;b)\qquad
$$
$$\newcommand{\ibar}{\bar{i}}\Yboxdim{14pt}
\SSTU^{\gamma_{(2,2)}}  (b-1,a)= \gyoung(2;2)\quad
\SSTU^{\gamma_{(2,2)}}  (b,a)= \gyoung(3;3)\quad
$$
and
$$
\SSTS^{\gamma_{(2,2)}} \node= \gyoung(1;x)
\quad
\SSTT^{\gamma_{(2,2)}} \node= \gyoung(1;x)
\quad\SSTU^{\gamma_{(2,2)}} \node= \gyoung(1;x)
$$
for $\node$ otherwise.  We calculate $|\SStd(\beta_{(4)},\beta_{(2,2)})|=1$, and hence
  $$\langle s_\nu \circ s_{(2)} \mid s_{\beta_{(2,2)}}\rangle =1.$$
Similarly,
$$
 |\SStd(\beta_{(4)},\gamma_{(2,2)})|=1, \quad |\SStd(\gamma_{(3,1)}, \gamma_{(2,2)})|=1   \text{ and }
|\SStd(\beta_{(2,2)},\gamma_{(2,2)} )|=1,
 $$
  and so
 $$\langle s_\nu \circ s_{(2)} \mid s_{\gamma_{(2,2)}}\rangle =0.$$

 \smallskip
\noindent{\bf The partitions $\bm\beta_{(2,1,1)}$ and  $\bm\gamma_{(2,1,1)}$. }   We claim that
$$
\langle s_\nu \circ s_{(2)} \mid s_{\beta_{(2,1,1)}}\rangle =0
=
\langle s_\nu \circ s_{(2)} \mid s_{\gamma_{(2,1,1)}}\rangle.
$$ The calculation is similar to that for $ \beta_{(2,2)}$ and  $ \gamma_{(2,2)}$ and so we leave this as an exercise for the reader.

\smallskip
\noindent{\bf The partition $\bm\zeta_{(3)}$. }   Given $2\leq i\leq b$ we let
$$
\SSTT^{\zeta_{(3)} }_i(b,a-1)=\gyoung(2;2)\quad
\SSTT^{\zeta_{(3)}}_i(b,a)=\gyoung(2;i)\quad
\SSTT^{\zeta_{(3)}}_i(j-1,a)=\gyoung(1;j)
\quad
\SSTT^{\zeta_{(3)}}_i\node=\gyoung(1;x)
$$
for $i<j<b$ and $\node$ otherwise.
Given $2 <  i < b$ we let
$$
\SSTS^{\zeta_{(3)}}_i(b-1,a)=\gyoung(2;2)\quad
\SSTS^{\zeta_{(3)}}_i(b,a)=\gyoung(2;i)\quad
\SSTS^{\zeta_{(3)}}_i(j-1,a)=\gyoung(1;j)
\quad
\SSTS^{\zeta_{(3)}}_i\node=\gyoung(1;x)
$$
for $i<j<b$ and $\node$ otherwise.
We compute
 $|\SStd(\alpha, \zeta_{(3)})|=1$,
 $|\SStd(\beta_{(4)}, \zeta_{(3)})|=b-2$,  and finally $|\SStd(\gamma_{(3,1)}, \zeta_{(3)})|=b-4$
provided $b \ne 3$. (When $b=3$ this last  multiplicity is zero.) We therefore obtain that, provided $b \ne 3$,
$$\langle s_\nu \circ s_{(2)} \mid s_{\zeta_{(3)}} \rangle =  (b-3)+(b-1)-(b-4)-(b-2)-1=1,$$
but this multiplicity is zero in the case $b=3$.

\smallskip
 \noindent{\bf The partition $\bm\zeta_{(2,1)}$. }     For $3\leq i\leq b$, we define
$$\begin{array}{rrrrrrrr}
&\SSTS_i^{\zeta_{(2,1)}} (b,a-1)=\gyoung(2;2)
\quad
&\SSTS_i^{\zeta_{(2,1)}} (b,a)=\gyoung(3;i)
\quad
&\SSTS_i^{\zeta_{(2,1)}} (j-1,a)=\gyoung(1;j)
\\
&\SSTT_i^{\zeta_{(2,1)}} (b,a-1)=\gyoung(2;3)
\quad
&\SSTT_i^{\zeta_{(2,1)}} (b,a)=\gyoung(2;i)
\quad
&\SSTT_i^{\zeta_{(2,1)}} (j-1,a)=\gyoung(1;j)
\end{array}$$
and
$
 \SSTS^{\zeta_{(2,1)}}_i \node=\gyoung(1;x),  \SSTT^{\zeta_{(2,1)}}_i \node=\gyoung(1;x)$
for $i<j \leq b$ and $\node$ otherwise.  Now, for $3\leq i \leq b-1$, we define
$$\begin{array}{rrrrrrrr}
&\SSTU_i^{\zeta_{(2,1)}} (b-1,a)=\gyoung(2;2)
\quad
&\SSTU_i^{\zeta_{(2,1)}} (b,a)=\gyoung(3;i)
\quad
&\SSTU_i^{\zeta_{(2,1)}} (j-1,a)=\gyoung(1;j)
\end{array}$$
and $\SSTU^{\zeta_{(2,1)}}_i \node=\gyoung(1;x) $ for $i<j \leq b$ and $\node$ otherwise.
For $4\leq i \leq b-1$, we define
$$\begin{array}{rrrrrrrr}
&\SSTV_i^{\zeta_{(2,1)}} (b-1,a)=\gyoung(2;3)
\quad
&\SSTV_i^{\zeta_{(2,1)}} (b,a)=\gyoung(2;i)
\quad
&\SSTV_i^{\zeta_{(2,1)}} (j-1,a)=\gyoung(1;j)
\end{array}$$
and $\SSTV^{\zeta_{(2,1)}}_i \node=\gyoung(1;x) $ for $i<j \leq b$ and $\node$ otherwise.  We have two final plethystic tableaux of weight $\zeta_{(2,1)}$ to consider, namely
$$\begin{array}{rrrrrrrr}
&
\SSTW_1^{\zeta_{(2,1)}}(i-1,a)=\gyoung(1;i)\quad
&\SSTW_1^{\zeta_{(2,1)}}(b,a-1)=\gyoung(2;2)\quad
&\SSTW_1^{\zeta_{(2,1)}}(b,a)=\gyoung(2;3)
\\
&
\SSTW_2^{\zeta_{(2,1)}}(j-1,a)=\gyoung(1;j)\quad
 &\SSTW_2^{\zeta_{(2,1)}}(b,a)=\gyoung(2;3)\quad
  &\SSTW_2^{\zeta_{(2,1)}}(b-1,a)=\gyoung(2;2)
\end{array}
$$
and $\SSTW_k^{\zeta_{(2,1)}}\node=\gyoung(1;x) $ for $ 2\leq i < b$, $ 2\leq j < b-1$, $k=1,2$ and $\node$ otherwise.
We have that
 $$|\SStd(\beta_{(4)},\zeta_{(2,1)})|=b-2
 \quad
 |\SStd(\gamma_{(3,1)},\zeta_{(2,1)})|= 2(b-4)
$$
$$
 |\SStd(\zeta_{(3)},\zeta_{(2,1)})|=1
 \quad
 |\SStd(  \beta_{(2,2)} ,\zeta_{(2,1)})|=b-3
 \quad
 |\SStd(  \alpha ,\zeta_{(2,1)})|=2
 $$
and putting this altogether we deduce that
$\langle s_\nu \circ s_{(2)} \mid s_			
{\zeta_{(2,1)}}
\rangle =1 $.

\smallskip
\noindent{\bf The partition $\bm\omega$. }
The plethystic tableaux of weight $\omega$ are as follows.  For $2\leq i \leq j\leq b$ we have
$$\begin{array}{rrrrrrrr}
&
\SSTS_{i,j}^\omega(b,a-1)=\gyoung(2;i)\quad
&\SSTS_{i,j}^\omega(b,a)=\gyoung(2;j)\quad
&\SSTS_{i,j}^\omega( k-1,a)=\gyoung(1;k)\quad
\\
&\SSTS_{i,j}^\omega( \ell-1,a-1)=\gyoung(1;\ell)
\quad
&\SSTS_{i,j}^\omega\node=\gyoung(1;x) \\
\end{array}$$
for all $i<k\leq b $ and $j<\ell \leq b $ and $\node$ otherwise.
 For $3\leq i \leq j\leq b$ we have
$$\begin{array}{rrrrrrrr}&
\SSTT_{i,j}^\omega(b,a-1)=\gyoung(2;2)\quad
&\SSTT_{i,j}^\omega(b,a)=\gyoung(i;j)\quad
&\SSTT_{i,j}^\omega( k-1,a)=\gyoung(1;k)\quad
\\
&\SSTT_{i,j}^\omega( \ell-1,a-1)=\gyoung(1;\ell)
\quad
&\SSTT_{i,j}^\omega\node=\gyoung(1;x)
\end{array}$$
for all $i<k\leq b $ and $j<\ell \leq b $ and $\node$ otherwise.
For $2\leq i<j \leq b $ we define
$$\begin{array}{rrrrrrrr}
&
\SSTU_{i,j}^\omega(b-1,a)=\gyoung(2;i)\quad
&\SSTU_{i,j}^\omega(b,a)=\gyoung(2;j)\quad
&\SSTU_{i,j}^\omega( k-1,a)=\gyoung(1;k)\quad
\\
&\SSTU_{i,j}^\omega( \ell-2,a-1)=\gyoung(1;\ell)
\quad
&\SSTU_{i,j}^\omega\node=\gyoung(1;x)
\end{array}$$
 for all $i+1\leq k\leq j-1 $ and $j+1 \leq \ell \leq b $ and $\node$ otherwise.
\color{black}
For $3 \leq i< j \leq b $ we define
$$\begin{array}{rrrrrrrr}
&
\SSTV_{i,j}^\omega(b-1,a)=\gyoung(2;2)\quad
&\SSTV_{i,j}^\omega(b,a)=\gyoung(i;j)\quad
&\SSTV_{i,j}^\omega( k-1,a)=\gyoung(1;k)\quad
\\
&\SSTV_{i,j}^\omega( \ell-2,a-1)=\gyoung(1;\ell)
\quad
&\SSTV_{i,j}^\omega\node=\gyoung(1;x)
\end{array}$$
 for all $i+1\leq k\leq j-1 $ and $j+1 \leq \ell \leq b $ and $\node$ otherwise.
\color{black}
Finally, we define
$$\begin{array}{rrrrrrrr}
&
\SSTW^\omega(b ,a)=\gyoung(2;2)\quad
&\SSTW^\omega(i-1,a)=\gyoung(1;i)\quad
 &\SSTW^\omega\node=\gyoung(1;x)
\end{array}$$
for $2\leq i\leq b$ and $\node$ otherwise.  We have that
 $$|\SStd(\bar\nu,\omega)|=1
 \quad
 |\SStd(\alpha,\omega)|=
2 (b-2)
 \quad
 |\SStd(\beta_{(4)},\omega)|=\textstyle{{b-1} \choose 2}
 $$
$$ |\SStd(\gamma_{(3,1)},\omega)|= (b-2)(b-4)
 \quad  |\SStd(\beta_{(2,2)},\omega)|=\textstyle{{b-2} \choose 2}$$
$$
|\SStd(\zeta_{(3)},\omega)|=b-2 \quad |\SStd(\zeta_{(2,1)},\omega)|= b-3.
 $$
Taking the usual summation as in \cref{dvir}, we obtain the required equality
$\langle s_\nu \circ s_{(2)} \mid s_\omega\rangle =2 $.

In the cases $b=3,4,5$, not all the partitions listed at the start of the proof are defined. Nonetheless the calculation proceeds in exactly the same way and the only difference is
that  $\langle s_{(a^3)} \circ s_{(2)} \mid s_{\zeta_{(3)}}\rangle =0$, but we still find that  $\langle s_{(a^3)} \circ s_{(2)} \mid s_{\omega}\rangle =2$.
\end{proof}

\begin{cor}
Let $\nu=(a^b)$ be a rectangle with $a,b\geq 3$.
Then $p(\nu,\mu)>1$ for any partition $\mu$ such that $|\mu|>1$.
\end{cor}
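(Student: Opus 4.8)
The plan is to derive this corollary directly from \cref{the latter} together with the semigroup bookkeeping of \cref{cor:2-important}, so that essentially no new work is required. First I would set $\mathcal N$ to be the set of all rectangles $(a^b)$ with $a,b\geq 3$ and observe that $\mathcal N$ is closed under conjugation: the conjugate of $(a^b)$ is the rectangle $(b^a)$, whose two side lengths are again both at least $3$.

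Next, for each $\nu=(a^b)\in\mathcal N$ one has $(a^b)\supseteq(2^3)$, so \cref{the latter} applies and yields
\[
p(\nu,(2))\;\geq\;\langle s_\nu\circ s_{(2)}\mid s_{\bar\nu-2\varepsilon_1+2\varepsilon_2}\rangle\;=\;2 .
\]
Hence the hypotheses of \cref{cor:2-important} are met for the family $\mathcal N$, and that corollary immediately gives $p(\nu,\mu)\geq 2>1$ for every $\nu\in\mathcal N$ and every partition $\mu$ with $|\mu|>1$, which is precisely the claim.

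The genuine content lies entirely in \cref{the latter}, and the step I expect to be the real obstacle is the proof of the equality $\langle s_{(a^b)}\circ s_{(2)}\mid s_{\bar\nu-2\varepsilon_1+2\varepsilon_2}\rangle=2$: this forces one to enumerate all plethystic semistandard tableaux of shape $(2)^{(a^b)}$ with weights ranging over the partitions appearing in the Hasse diagram of \cref{hasse33}, and then to unwind the Dvir-type recursion \eqref{dvir} term by term, keeping track of how the count at each partition depends on $b$. Once that computation is secured, the passage from $p(\nu,(2))\geq 2$ to $p(\nu,\mu)\geq 2$ for an arbitrary $\mu$ of size greater than $1$ is purely formal, being a consequence of the conjugation identity \eqref{max-conjugate} and the monotonicity statements of \cref{cor:semigroup}, both of which are already packaged into \cref{cor:2-important}.
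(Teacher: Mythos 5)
Your proposal is correct and is essentially identical to the paper's own proof: both note that the family of rectangles with both sides at least $3$ is closed under conjugation, invoke \cref{the latter} to get $p(\nu,(2))\geq 2$ (the paper just writes the weight $\bar\nu-2\varepsilon_1+2\varepsilon_2$ explicitly as $(ab+a-2,a+2,a^{b-2})$), and then conclude via \cref{cor:2-important}. Nothing further is needed.
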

\begin{proof}
Notice that  our extra restriction  on the width  being at least 3 ensures  that our set $\mathcal N$ of rectangles is conjugation-invariant.
  We have that
    $$
2\le \langle s_{(a^b)} \circ s_{(2)}\mid
 s_{ (ab+a-2,a+2,a^{b-2})}   \rangle
 $$
and so the result holds by \cref{cor:2-important}.
  \end{proof}

   \begin{prop}\label{2rec2}
For $a >3$ we have
$$
\langle s_{(a^2)} \circ s_{(2)} \mid s_{( 3a-2, a, 2)} \rangle =2
=
\langle s_{(2^a)} \circ s_{(2)} \mid s_{(2a, 4, 2^{a-2})} \rangle .
 $$
\end{prop}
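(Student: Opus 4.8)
The second equality is immediate from \cref{the latter}: the partition $(2^a)$ is a rectangle, and $(2^a)\supseteq(2^3)$ since $a>3$, so with $n=2a$ and $\bar\nu=(2^a)+(n)=(2a+2,2^{a-1})$ we have $\bar\nu-2\varepsilon_1+2\varepsilon_2=(2a,4,2^{a-2})$, whence $\langle s_{(2^a)}\circ s_{(2)}\mid s_{(2a,4,2^{a-2})}\rangle=2$. The companion rectangle $(a^2)$ has only two rows, so it is \emph{not} covered by \cref{the latter} — whose hypothesis $(a^b)\supseteq(2^3)$ forces three rows — and it is this case that requires an argument.

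For the first equality, put $\nu=(a^2)$, $n=2a$, $\bar\nu=\nu+(n)=(3a,a)$ and $\lambda:=(3a-2,a,2)=\bar\nu-2\varepsilon_1+2\varepsilon_3$. The plan is to run the recursion \eqref{dvir} from $\bar\nu$ down to $\lambda$, exactly as in the proof of \cref{the latter} but one layer deeper in the dominance order: for a two-row outer shape the direct analogue $\bar\nu-2\varepsilon_1+2\varepsilon_2=(3a-2,a+2)$ of the target of \cref{the latter} carries coefficient only $1$ (a direct count gives $|\PStd((2)^{\nu},(3a-2,a+2))|=2$, and then \eqref{dvir} returns $2-1=1$), which forces the descent to $\lambda$. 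First I would list the partitions $\beta\vdash4a$ with $\bar\nu\succeq\beta\rhd\lambda$ and $\PStd((2)^{\nu},\beta)\neq\emptyset$: for $a>3$ there are eight, namely $\bar\nu$, $(3a,a-1,1)$, $(3a,a-2,2)$ with $\beta_1=3a$; $(3a-1,a+1)$, $(3a-1,a,1)$, $(3a-1,a-1,2)$ with $\beta_1=3a-1$; and $(3a-2,a+2)$, $(3a-2,a+1,1)$ with $\beta_1=3a-2$.

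The coefficients $p(\nu,(2),\beta)$ come straight from the earlier sections: $p(\nu,(2),\bar\nu)=1$ by \cref{pppppppppp}; the two partitions with $\beta_1=3a$ other than $\bar\nu$ give $0$ by \eqref{maxfirstpart}; and, since $\nu_1=\nu_2$, \cref{yaaaas2} handles the three $\beta$ with $\beta_1=3a-1=n+\nu_1-1$, giving $0$, $1$, $1$ for $(3a-1,a+1)$, $(3a-1,a,1)$, $(3a-1,a-1,2)$ respectively once the sets $M(\nu)$ and $I(\beta)$ are written down. For the two $\beta$ with $\beta_1=3a-2$, and for $\lambda$ itself, I would count $\PStd((2)^{\nu},\cdot)$ directly: as $[\nu]=[(a^2)]$ has only two rows, a plethystic tableau of weight this close to $\bar\nu$ is rigid — in row $1$ every cell but one equals $\young(11)$, in row $2$ every cell but one or two is of the form $\young(1t)$ — so those counts, and the Kostka numbers $|\SStd(\beta,\lambda)|$ appearing in \eqref{dvir}, are elementary explicit functions of $a$. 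Substituting into \eqref{dvir} with $\alpha=\lambda$ and cancelling then gives $p(\nu,(2),\lambda)=2$; the finitely many small $a$ at which some of the eight $\beta$ coincide or fail to be partitions are checked by hand (and for $a\le3$ the equality genuinely fails, $(a^2)$ being then an exception of \cref{conj}).

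An alternative route avoids the plethystic tableaux altogether: Jacobi--Trudi gives $s_{(a^2)}=h_a\boxtimes h_a-h_{a+1}\boxtimes h_{a-1}$, and \eqref{size2} gives $h_k\circ s_{(2)}=\sum_{\mu\vdash k}s_{2\mu}$, so $\langle s_{(a^2)}\circ s_{(2)}\mid s_{\lambda}\rangle=\sum_{\mu,\mu'\vdash a}c^{\lambda}_{2\mu,2\mu'}-\sum_{\mu\vdash a+1,\,\mu'\vdash a-1}c^{\lambda}_{2\mu,2\mu'}$, a difference of Littlewood--Richardson numbers in which only the $\mu$ with $2\mu\subseteq\lambda$ (hence with at most three parts and $\mu_3\le1$) contribute. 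Either way the difficulty is bookkeeping rather than ideas: the real work lies in pinning down the complete finite list of intermediate partitions and every tableau count attached to them, then pushing the alternating sum of \eqref{dvir} through without a sign or off-by-one slip — the same careful but routine computation as in \cref{the latter}, only shorter because the outer shape is two-rowed.
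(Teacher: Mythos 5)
Your outline follows the paper's own route almost exactly: the second equality is indeed quoted verbatim from \cref{the latter} (and your observation that $(a^2)$ escapes that proposition because it has only two rows is the right reason the first equality needs separate work), and for the first equality the paper does precisely what you propose — it runs the recursion \cref{dvir} from $(3a,a)$ down to $(3a-2,a,2)$ over the same eight intermediate partitions you list, with the same values you state for the upper layers ($1$ for $\bar\nu$ by \cref{pppppppppp}, $0$ for the two partitions with first part $3a$, and $0,1,1$ on the layer $\beta_1=3a-1$; your shortcut of reading these off \cref{yaaaas2} rather than recounting them, as the paper does, is legitimate and slightly cleaner). Your alternative via $s_{(a^2)}=h_ah_a-h_{a+1}h_{a-1}$ and \cref{size2} is also sound and is in fact the same style of argument the paper uses elsewhere (e.g.\ \cref{usefulrestulwhichstopproblem}), though you do not carry it out either.

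The genuine gap is that the decisive computation is asserted rather than performed. The entire content of the paper's proof is the explicit enumeration that you defer to ``routine bookkeeping'': one must actually exhibit the plethystic tableaux of weights $(3a-2,a+2)$, $(3a-1,a,1)$, $(3a-2,a+1,1)$, $(3a-1,a-1,2)$ and $(3a-2,a,2)$ (the paper finds $2,3,5,4,10$ of them respectively), together with the relevant Kostka numbers $|\SStd(\beta,\la)|$, and then verify the alternating sum, which for the target weight is $10-3-2-1-1-1=2$. Your claim that these counts are ``elementary explicit functions of $a$'' because the outer shape is two-rowed is plausible but unproven — nothing in your write-up rules out a miscount, and the final answer $2$ is exactly the kind of delicate cancellation (ten tableaux against eight subtracted) that cannot be waved through; note also that the paper separately checks the small values of $a$ where some intermediate partitions degenerate, a point you acknowledge only in passing. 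As it stands the proposal is a correct plan coinciding with the paper's, but not yet a proof: the step that produces the number $2$ is missing.
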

 \begin{proof}
The latter equality follows from \cref{the latter} and is only recorded here for convenience.
We note that
$ \langle s_{(a^2)} \circ s_{(2)}  \mid s_{ ( 3a, a)}\rangle =1$ 
 by \cref{pppppppppp}.
\color{black}
  By \cref{dvir}, it is enough to     calculate the plethystic and semistandard tableaux for each of the partitions $\alpha $ such that $(3a,a)\succeq \alpha\trianglerighteq (3a-2,a,2)$
  in order to deduce the result.
 We record the Hasse diagram (under the dominance ordering) for this set of partitions  in \cref{lotstocheck2}.   We claim that
  $$
  \langle s_{(a^2)} \circ s_{(2)}  \mid s_\alpha \rangle
  =
    \begin{cases}
    0			&\text{for }\alpha=( 3a-1, a+1),{(3a,a-1,1), (3a,a-2,2)} \\
        2			&\text{for }\alpha =  (3a-2,a,2) \\
    1			&\text{for all other }(3a,a)\succeq \alpha\vartriangleright  (3a-2,a,2)\\
   \end{cases}
    $$

 \begin{figure}[ht!]  $$ \scalefont{0.9}  \begin{tikzpicture} [scale=0.45]

\draw(11,3)--(11,0)--(11,-3)--(4.5,-6)--(4.5,-3)--(11,-0);
\draw(11,3)--(4.5,0)--(4.5,-3)--(4.5,-6)--(-3,-9);
\draw(4.5,-3)--(-3,-6);
\draw(4.5,0)--(-3,-3)--(-3,-6)--(-3,-9);

\fill[white](4.5,0)   circle (32pt) ;
\fill[white](11,0.2)   circle (32pt) ;
\fill[white](11,-3.2)   circle (32pt) ;

\fill[white](11,3)   circle (32pt) ; 
\fill[white](3,3)   circle (32pt) ; 

\fill[white](4.5,-3)   circle (32pt) ; 
\fill[white](-3,-3)   circle (32pt) ; 
\fill[white](4.5,-3)   circle (32pt) ; 
\fill[white](-3,-3)   circle (32pt) ; 

\fill[white](4.5,-6)   circle (32pt) ; 
\fill[white](-3,-6)   circle (32pt) ; 

\fill[white](4.5,-6)   circle (32pt) ; 
\fill[white](-3,-6)   circle (32pt) ; 

\fill[white](-3,-9)   circle (32pt) ; 

\path(11,3) node {$( 3a, a)$} ;

\path(4.5,0) node {$( 3a-1, a+1)$} ;

\path(12 ,0) node {$( 3a, a-1, 1)$} ;

\path(12 ,-3) node {$( 3a, a-2, 2)$} ;

\path(4.5,-3) node {$( 3a-1, a,1)$} ;
\path(-3,-3) node {$( 3a-2, a+2)$} ;

\path(4.5,-6) node {$( 3a-1, a-1,2)$} ;
\path(-3,-6) node {$( 3a-2, a+1,1)$} ;

\path(-3,-9) node {$( 3a-2, a,2)$} ;

\end{tikzpicture}$$
\caption{Hasse diagram of the partial ordering on the partitions $\alpha$ such that
$(3a,a)\succeq \alpha\trianglerighteq (3a-2,a,2)$.      }
\label{lotstocheck2}
\end{figure}
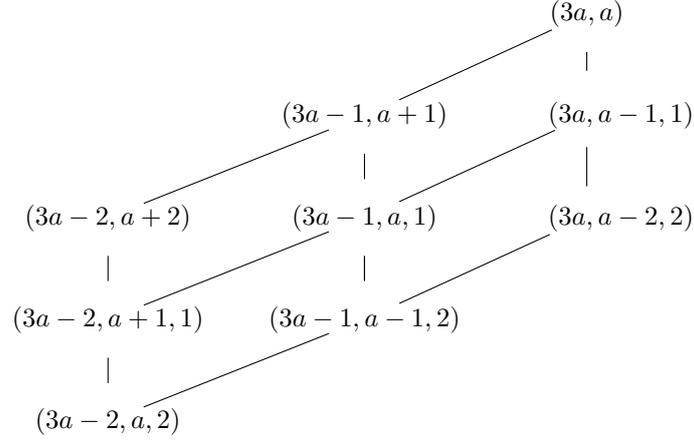

\noindent We have  that
$ \langle s_{(a^2)} \circ s_{(2)}  \mid s_{ ( 3a, a)}\rangle =1$ by \cref{PW}
  and 
 $$
 \langle s_{(a^2)} \circ s_{(2)}  \mid s_{ (3a, a-1,1)}\rangle=
  \langle s_{(a^2)} \circ s_{(2)}  \mid s_{( 3a, a-2,2)}\rangle
  =0
$$
by \cref{maxfirstpart}.
The partitions $(3a-1,a+1)=\bar\nu+\epsilon_1+\epsilon_2$, 
$(3a-1,a,1)=\bar\nu+\epsilon_1+\epsilon_3$ and $(3a-1,a-1,2)=\bar\nu+\epsilon_1-\epsilon_2+2\epsilon_3$ are dealt with by \cref{yaaaas2}, having multiplicities 0,1,1 respectively.
\color{black}

Now, there are two elements of ${\rm PStd}((2)^{(a^2)},   ( 3a-2, a+2))$ given by
\begin{align*}\SSTT_1(1,a)=\gyoung(1;2)
\quad
\SSTT_1(2,a)=\gyoung(2;2)&
\\
 \SSTT_2(2,a-1)=\gyoung(2;2)
\quad
\SSTT_2(2,a)=\gyoung(2;2)&
\end{align*}
and $\SSTT_i(r,c)=\gyoung(1;r)$ otherwise for $i=1,2$.  There is a single  element of ${\rm SStd}(\color{black}(3a,a),( 3a-2, a+2))$ and so
$$
 \langle s_{(a^2)} \circ s_{(2)}  \mid s_{ ( 3a-2, a+2)}\rangle = 2 - 1= 1.
$$
by \cref{dvir}.
The  five  elements of ${\rm PStd}((2)^{(a^2)},   ( 3a-2, a+1,1))$  are given by
$$\begin{array}{rrrrrrrrrrrrrrrr}
\SSTT_1(1,a)=\gyoung(1;2)
&
\SSTT_1(2,a)=\gyoung(2;3)&
\\
\SSTT_2(2,a-1)=\gyoung(2;2)
&
\SSTT_2(2,a)=\gyoung(2;3)&
 \\
\SSTT_3(2,a-2)=\gyoung(1;3)
&
\SSTT_3(2,a-1)=\gyoung(2;2)
&
\SSTT_3(2,a)=\gyoung(2;2)&
\\
 \color{black} \SSTT_4(1,a)=\gyoung(1;3)
 & \color{black}
\SSTT_4(2,a)=\gyoung(2;2)&
\\
\SSTT_5(1,a)=\gyoung(1;2)
&  \quad
\SSTT_5(2,a-1)=\gyoung(1;3)
&\quad
\SSTT_5(2,a)=\gyoung(2;2)&
\\  \end{array}$$
and $\SSTT_i(r,c)=\gyoung(1;r)$ otherwise for $i=1,2,3,4,5$.
 We have that
\begin{align*}
  |{\rm SStd}((3a,a),( 3a-2, a+1,1))| 
 \color{black}
 &= 2
\\
  |{\rm SStd}(( 3a-2, a+1,1),(3a-2,a+2))| &= 1
\\
   |{\rm SStd}((3a-1,a,1),( 3a-2, a+1,1))|
  \color{black}
   &= 1.
\end{align*}
Therefore
 $$
 \langle s_{(a^2)} \circ s_{(2)}  \mid s_{ ( 3a-2, a+1,1)}\rangle = 5-2-1-1=1
$$by \cref{dvir}.
Finally, we are now ready to show that the last constituent     of interest, $( 3a-2, a,2)$, appears with multiplicity   2.
The ten elements of ${\rm PStd}((2)^{(a^2)},   ( 3a-2, a,2))$  are given by
$$\begin{array}{rrrrrrrrrrrrrrrr}
\SSTT_1(1,a)=\gyoung(1;2)
&
\SSTT_1(2,a)=\gyoung(3;3)&
\\
\SSTT_2(2,a-1)=\gyoung(2;2)
&
 \SSTT_2(2,a)=\gyoung(3;3)
\color{black}
&
\\
\SSTT_3(1,a)=\gyoung(1;3)
&
\SSTT_3(2,a)=\gyoung(2;3)&
\\
\SSTT_4(2,a-1)=\gyoung(2;3)
&
\SSTT_4(2,a)=\gyoung(2;3)&
\\
\SSTT_5(1,a)=\gyoung(1;2)
&
\SSTT_5(2,a-1)=\gyoung(1;3)&
&
\SSTT_5(2,a)=\gyoung(2;3)&
\\
\SSTT_6(2,a-2)=\gyoung(1;3)
&
\SSTT_6(2,a-1)=\gyoung(2;2)&
&
\SSTT_6(2,a)=\gyoung(2;3)&
\\
\SSTT_7(1,a)=\gyoung(1;3)
&
\SSTT_7(2,a-1)=\gyoung(1;3)&
&
\SSTT_7(2,a)=\gyoung(2;2)&
\end{array}$$
along with the following
$$\begin{array}{rrrrrrrrrr}
\SSTT_8(1,a-1)=\gyoung(1;2)\qquad
&
\SSTT_8(1,a)=\gyoung(1;2)
\\
\SSTT_8(2,a-1)=\gyoung(1;3)
&
\SSTT_8(2,a)=\gyoung(1;3)
\\
\SSTT_9(1,a)=\gyoung(1;2)
&
\SSTT_9(2,a-2)=\gyoung(1;3)
&  \\
\SSTT_9(2,a-1)=\gyoung(1;3)
&
\SSTT_9(2,a)=\gyoung(2;2)&
\\
\SSTT_{10}(2,a-3)=\gyoung(1;3)
&
\SSTT_{10}(2,a-2)=\gyoung(1;3)
&  \\
\SSTT_{10}(2,a-1)=\gyoung(2;2)
&
\SSTT_{10}(2,a)=\gyoung(2;2)&
\end{array}$$
where $\SSTT_i(r,c)=\gyoung(1;r)$ for all $1\leq i \leq 10$ and $(r,c)$ other than the boxes detailed above.
We have that
$$
 |{\rm SStd}(	\alpha , ( 3a-2, a,2)	)| 
\color{black}
 =
\begin{cases}
3 &\text{if } \alpha=(3a,a)   \\
2&\text{if } \alpha=(3a-1,a,1)   \\
1&\text{if } \alpha=(3a-2,a+2), (3a-2,a+1,1), \text{ or }(3a-1,a-1,2).
\end{cases}
$$
 Therefore
  $
 \langle s_{(a^2)} \circ s_{(2)}  \mid s_{ ( 3a-2, a,2)}\rangle = 10-3- 2-1-1-1=2
 $ by \cref{dvir}, as required.
  \end{proof}

 \begin{prop}\label{usefulrestulwhichstopproblem}
Given $\nu=(2^a,1^b) $ with $a,b>1$, we have that
$$
\langle s_\nu \circ s_{(2)}\mid
s_{(	a+b+1   , a+2,2,		1^{2a+b-5   })} \rangle
 =
\begin{cases}
2 &b=2 \\
3 &b>2
\end{cases}
 $$
  \end{prop}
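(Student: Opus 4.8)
The plan is to bypass the Dvir recursion \eqref{dvir} and compute directly in the ring of symmetric functions. By the dual (N\"agelsbach--Kostka) Jacobi--Trudi identity applied to $(2^a,1^b)$, whose conjugate is the two--row partition $(a+b,a)$, we have the symmetric function identity
$$
s_{(2^a,1^b)} \;=\; s_{(1^{a+b})}\,s_{(1^a)} \;-\; s_{(1^{a+b+1})}\,s_{(1^{a-1})},
$$
with the convention $s_{(1^{a-1})}=1$ when $a=1$. Since $(-)\circ s_{(2)}$ is a ring homomorphism, this gives
$$
s_{(2^a,1^b)}\circ s_{(2)} \;=\; \big(s_{(1^{a+b})}\circ s_{(2)}\big)\big(s_{(1^a)}\circ s_{(2)}\big)\;-\;\big(s_{(1^{a+b+1})}\circ s_{(2)}\big)\big(s_{(1^{a-1})}\circ s_{(2)}\big).
$$
By \eqref{maxminall}, each factor $s_{(1^k)}\circ s_{(2)}=\sum_{\beta} s_{ss[\beta]}$, the sum running over partitions $\beta\vdash k$ with distinct parts, and is multiplicity--free. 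Expanding the two products by the Littlewood--Richardson rule we obtain, with $\la=(a+b+1,a+2,2,1^{2a+b-5})$,
$$
\langle s_{(2^a,1^b)}\circ s_{(2)}\mid s_\la\rangle \;=\; \sum_{\gamma,\delta}\langle s_{ss[\gamma]}\boxtimes s_{ss[\delta]}\mid s_\la\rangle \;-\; \sum_{\gamma',\delta'}\langle s_{ss[\gamma']}\boxtimes s_{ss[\delta']}\mid s_\la\rangle,
$$
the first sum over distinct--part partitions $\gamma\vdash a+b$, $\delta\vdash a$, the second over $\gamma'\vdash a+b+1$, $\delta'\vdash a-1$.

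The next step is to observe that only a handful of these Littlewood--Richardson coefficients are nonzero. A direct computation from the definition gives $ss[(k)]=(k+1,1^{k-1})$ and $ss[(k,l)]=(k+1,l+2,2^{l-1},1^{k-l-1})$, while $ss[\beta]_3\geq 3$ whenever $\beta$ has three or more parts. A nonzero term $\langle s_{ss[\gamma]}\boxtimes s_{ss[\delta]}\mid s_\la\rangle$ forces $ss[\gamma]\subseteq\la$ and $ss[\delta]\subseteq\la$; since $\la_3=2$ and $\la_4=1$, this restricts $\gamma$ and $\delta$ to distinct--part partitions with at most two parts whose second part is at most $2$. Hence $\gamma\in\{(a+b),(a+b-1,1),(a+b-2,2)\}$ and $\delta\in\{(a),(a-1,1),(a-2,2)\}$, the $(\,\cdot\,,2)$ options being present only for $a+b\geq 5$, respectively $a\geq 5$; similarly $\gamma'\in\{(a+b+1),(a+b,1),(a+b-1,2)\}$ and $\delta'\in\{(a-1),(a-2,1),(a-3,2)\}$. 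Thus the coefficient is a signed sum of at most $9+9$ explicit Littlewood--Richardson numbers.

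Each surviving number is then read off directly from the relevant skew shape: for each admissible $\gamma$ (resp.\ $\gamma'$) one checks that $\la/ss[\gamma]$ is the disjoint union of a short, nearly horizontal piece occupying rows $2$ and $3$ together with a vertical strip in the first column lying below it, and one enumerates the lattice fillings of that skew shape whose content has the form $ss[\delta]$ for a distinct--part $\delta\vdash a$. For instance one finds $\langle s_{ss[(a+b)]}\boxtimes s_{ss[(a)]}\mid s_\la\rangle=1$ and $\langle s_{ss[(a+b-1,1)]}\boxtimes s_{ss[(a)]}\mid s_\la\rangle=3$, the latter because the bottom vertical strip is free and the two cells of row $2$ that are outside the inner shape contribute a genuine choice. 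Assembling the contributions of the two sums and subtracting leaves exactly $2$ when $b=2$ and $3$ when $b>2$; the extra unit for $b>2$ is traceable to a single lattice filling that becomes available precisely once the tail $1^{2a+b-5}$ of $\la$ is long enough to accommodate the vertical strip associated to $ss[(a+b-2,2)]$ (equivalently, to a filling of $\la/ss[(a+b-1,1)]$ after reorganising the sum).

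The genuine obstacle will be the bookkeeping of these small Littlewood--Richardson coefficients, and above all the low--rank boundary cases: when $a\in\{2,3,4\}$ several of the candidate partitions $(a+b-2,2)$, $(a-2,1)$, $(a-2,2)$, $(a-3,2)$ either violate strictness or fail to embed in $\la$, so the lists of surviving pairs $(\gamma,\delta)$ and $(\gamma',\delta')$ shrink; one must verify that in every such degenerate range the cancellation between the two sums still produces precisely $2$ (for $b=2$) or $3$ (for $b>2$). Pinning down exactly which pair is responsible for the jump as $b$ passes from $2$ to $3$, uniformly in $a$, is the one delicate point.
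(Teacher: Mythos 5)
Your proposal follows essentially the same route as the paper: the identity $s_{(2^a,1^b)}=s_{(1^{a+b})}s_{(1^a)}-s_{(1^{a+b+1})}s_{(1^{a-1})}$ is exactly the paper's $s_{(2^a,1^b)}=e_{(a+b,a)}-e_{(a+b+1,a-1)}$, and after applying $(-)\circ s_{(2)}$ and the shift-symmetric expansion of $e_k\circ s_{(2)}$, the paper likewise restricts to $\rho,\pi,\rho',\pi'$ with at most two rows and second part at most $2$ and counts the resulting Littlewood--Richardson tableaux ($1+3+1+1=6$ generically for the first term, $3$ for the second, with the $\lambda\setminus ss[(a+b-2,2)]$ filling appearing only when $b\geq 3$, matching your diagnosis of the jump from $2$ to $3$). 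What remains in your write-up is precisely the explicit tableau bookkeeping, including the degenerate small-$a$, small-$b$ cases, which the paper carries out by listing the tableaux in its figures.
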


\begin{proof}
 Denote $(	a+b+1   , a+2,2,		1^{2a+b-5   })$ by $\lambda$. 
\color{black}
 We have that $ s_{(2^a,1^{b})}=e_{(a+b,a)}-e_{(a+b+1,a-1)}$
  by \cite[page 115]{MR3443860}.  Therefore, by  \cref{maxminall} we have that
\begin{align*}
 s_{(2^a,1^{b})}\circ s_{(2)}		&=
 e_{(a+b,a)} \circ s_{(2)} -e_{(a+b+1,a-1)}  \circ s_{(2)}
\\
 &=
 \left(  e_{(a+b)}\circ s_{(2)} \right)
  \boxtimes (e_{(a)}\circ s_{(2)})
-\left( e_{(a+b+1)}  \circ s_{(2)}\right)
  \boxtimes (e_{(a-1)}\circ s_{(2)})
\\
&=
\bigg(  \sum_{\rho\vdash a+b}  s_{ss[\rho]}
\bigg)\boxtimes
\bigg(  \sum_{\pi \vdash a }  s_{ss[\pi ]}
\bigg)
-
\bigg(  \sum_{\rho'\vdash a+b+1}  s_{ss[\rho']}
\bigg)\boxtimes
\bigg(  \sum_{\pi '\vdash a-1 }  s_{ss[\pi ']}
\bigg)\end{align*}
where here the sum is taken over all partitions $\rho, \pi , \rho', \pi ' $ with no repeated parts.  
We now use the Littlewood--Richardson Rule.
\color{black}

 \begin{figure}[ht!]$$
 \scalefont{0.7}
  \begin{minipage}{5.75cm} \begin{tikzpicture}  [xscale=0.5,yscale=0.4]

 \draw[thick](0,0)--++(90:1)--++(0:1)--++(-90:1)--++(180:1);
 \draw(0.5,0.5) node {1};
 \draw[thick](1,0)--++(90:1)--++(0:1)--++(-90:1)--++(180:1);
 \draw(1.5,0.5) node {1};
  \draw[thick](2,0)--++(90:1)--++(0:1)--++(-90:1)--++(180:1);
 \draw(2.5,0.5) node {1};
  \draw[thick](5,0)--++(90:1)--++(0:1)--++(-90:1)--++(180:1);
 \draw(5.5,0.5) node {1};

\draw[thick, densely dotted] (3,0)--++(90:1)--++(0:2)--++(-90:1)--++(180:2);
   \draw[thick](6,0)--++(90:1)--++(0:1)--++(-90:1)--++(180:1);
 \draw(6.5,0.5) node {1};

  \draw[thick] (0,-1)--++(90:1)--++(0:1)--++(-90:1)--++(180:1);
 \draw(0.5,-0.5) node {2};

   \draw[thick,densely dotted,fill=gray!30](7,1)--++(0:4)--++(90:1)--++(180:1)--++(180:11)--++(-90:7)--++(0:1)--
   ++(90:6);

   \draw[thick] (-1,-2-4)--++(90:1)--++(0:1)--++(-90:1)--++(180:1);
 \draw(-0.5,-2-3.5) node {3};

  \draw[thick] (-1,-2-5)--++(90:1)--++(0:1)--++(-90:1)--++(180:1);
 \draw(-0.5,-2-4.5) node {4};
    \draw[thick] (-1,-2-6)--++(90:1)--++(0:1)--++(-90:1)--++(180:1);
 \draw(-0.5,-2-5.5) node {5};

  \draw[thick,densely dotted] (-1,-2-6)--++(90:-2);
    \draw[thick,densely dotted] (0,-2-6)--++(90:-2);

   \draw[thick] (-1,-2-9)--++(90:1)--++(0:1)--++(-90:1)--++(180:1);
 \draw(-0.5,-2-8.5) node {$a$};

 \end{tikzpicture}
\end{minipage}
 \qquad
   \begin{minipage}{5.75cm} \begin{tikzpicture}  [xscale=0.5,yscale=0.4]

 \draw[thick](0,0)--++(90:1)--++(0:1)--++(-90:1)--++(180:1);
 \draw(0.5,0.5) node {1};
 \draw[thick](1,0)--++(90:1)--++(0:1)--++(-90:1)--++(180:1);
 \draw(1.5,0.5) node {1};
  \draw[thick](2,0)--++(90:1)--++(0:1)--++(-90:1)--++(180:1);
 \draw(2.5,0.5) node {1};
  \draw[thick](5,0)--++(90:1)--++(0:1)--++(-90:1)--++(180:1);
 \draw(5.5,0.5) node {1};

\draw[thick, densely dotted] (3,0)--++(90:1)--++(0:2)--++(-90:1)--++(180:2);
   \draw[thick](6,0)--++(90:1)--++(0:1)--++(-90:1)--++(180:1);
 \draw(6.5,0.5) node {1};

  \draw[thick] (0,-1)--++(90:1)--++(0:1)--++(-90:1)--++(180:1);
 \draw(0.5,-0.5) node {2};

   \draw[thick,densely dotted,fill=gray!30](7,1)--++(0:3)--++(90:1)--++(180:1)--++(180:10)--++(-90:7)--++(0:1)--
   ++(90:6);

    \fill[ fill=gray!30](-0.1,1)--++(0:2.1)--++(-90:1)--++(180:2.1)--++(90:1);

    \draw[thick](0,0)--++(0:2)--++(90:1);

  \draw[thick] (10,1)--++(90:1)--++(0:1)--++(-90:1)--++(180:1);
   \draw(10.5,1.5) node {1};

 \draw[thick,fill=white] (-1,-2-3)--++(90:1)--++(0:1)--++(-90:1)--++(180:1);
 \draw(-0.5,-2-2.5) node {1};

   \draw[thick] (-1,-2-4)--++(90:1)--++(0:1)--++(-90:1)--++(180:1);
 \draw(-0.5,-2-3.5) node {3};

  \draw[thick] (-1,-2-5)--++(90:1)--++(0:1)--++(-90:1)--++(180:1);
 \draw(-0.5,-2-4.5) node {4};
    \draw[thick] (-1,-2-6)--++(90:1)--++(0:1)--++(-90:1)--++(180:1);
 \draw(-0.5,-2-5.5) node {5};

  \draw[thick,densely dotted] (-1,-2-6)--++(90:-2);
    \draw[thick,densely dotted] (0,-2-6)--++(90:-2);

   \draw[thick] (-1,-2-9)--++(90:1)--++(0:1)--++(-90:1)--++(180:1);
 \draw(-0.5,-2-8.5) node {$a$};

 \end{tikzpicture}
\end{minipage} $$
\caption{
Let $a,b\geq 2$.  The tableau on the left is the unique 
Littlewood--Richardson
\color{black}
tableau of shape $
 \lambda\setminus ss[(a+b)]$ and weight $ss[(a)]$.
 The tableau on the right is  the first of three  of shape $
 \lambda\setminus ss[(a+b-1,1)]$ and weight $ss[(a)]$.
}
\label{gg1}
\end{figure}
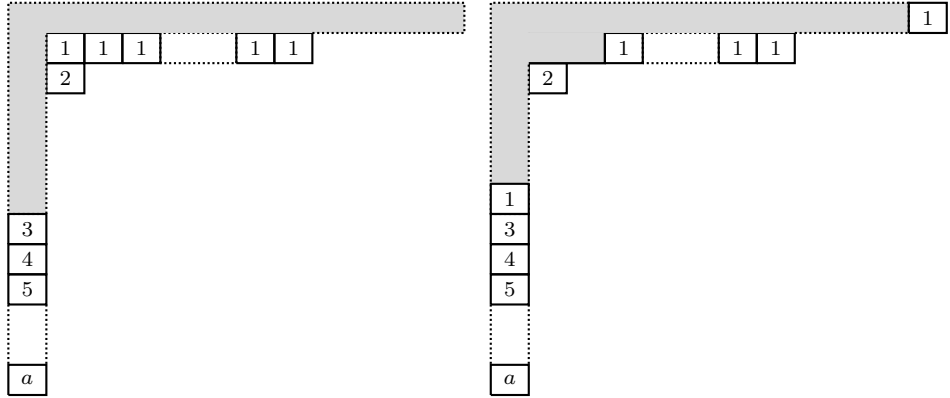

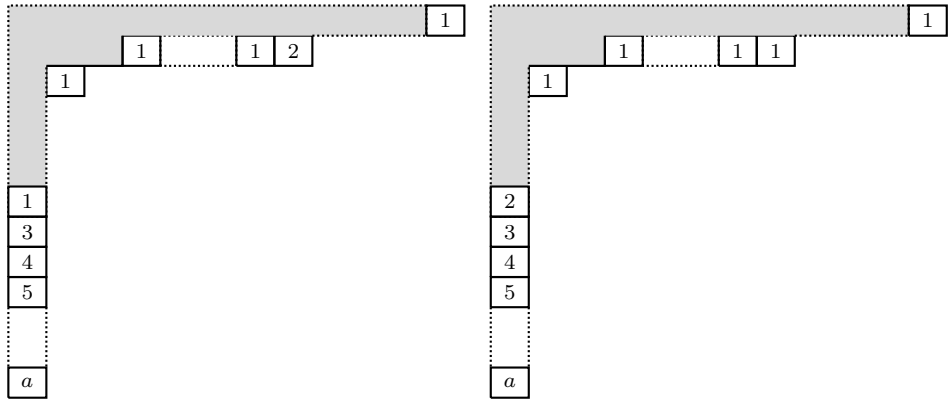
\begin{figure}[ht!]$$
 \scalefont{0.7}
  \begin{minipage}{5.75cm} \begin{tikzpicture}  [xscale=0.5,yscale=0.4]

 \draw[thick](0,0)--++(90:1)--++(0:1)--++(-90:1)--++(180:1);
 \draw(0.5,0.5) node {1};
 \draw[thick](1,0)--++(90:1)--++(0:1)--++(-90:1)--++(180:1);
 \draw(1.5,0.5) node {1};
  \draw[thick](2,0)--++(90:1)--++(0:1)--++(-90:1)--++(180:1);
 \draw(2.5,0.5) node {1};
  \draw[thick](5,0)--++(90:1)--++(0:1)--++(-90:1)--++(180:1);
 \draw(5.5,0.5) node {1};

\draw[thick, densely dotted] (3,0)--++(90:1)--++(0:2)--++(-90:1)--++(180:2);
   \draw[thick](6,0)--++(90:1)--++(0:1)--++(-90:1)--++(180:1);
 \draw(6.5,0.5) node {2};

  \draw[thick] (0,-1)--++(90:1)--++(0:1)--++(-90:1)--++(180:1);
 \draw(0.5,-0.5) node {1};

   \draw[thick,densely dotted,fill=gray!30](7,1)--++(0:3)--++(90:1)--++(180:1)--++(180:10)--++(-90:7)--++(0:1)--
   ++(90:6);

    \fill[ fill=gray!30](-0.1,1.1)--++(0:2.1)--++(-90:1.1)--++(180:2.1)--++(90:1);

    \draw[thick](0,0)--++(0:2)--++(90:1);

  \draw[thick] (10,1)--++(90:1)--++(0:1)--++(-90:1)--++(180:1);
   \draw(10.5,1.5) node {1};

 \draw[thick,fill=white] (-1,-2-3)--++(90:1)--++(0:1)--++(-90:1)--++(180:1);
 \draw(-0.5,-2-2.5) node {1};

   \draw[thick] (-1,-2-4)--++(90:1)--++(0:1)--++(-90:1)--++(180:1);
 \draw(-0.5,-2-3.5) node {3};

  \draw[thick] (-1,-2-5)--++(90:1)--++(0:1)--++(-90:1)--++(180:1);
 \draw(-0.5,-2-4.5) node {4};
    \draw[thick] (-1,-2-6)--++(90:1)--++(0:1)--++(-90:1)--++(180:1);
 \draw(-0.5,-2-5.5) node {5};

  \draw[thick,densely dotted] (-1,-2-6)--++(90:-2);
    \draw[thick,densely dotted] (0,-2-6)--++(90:-2);

   \draw[thick] (-1,-2-9)--++(90:1)--++(0:1)--++(-90:1)--++(180:1);
 \draw(-0.5,-2-8.5) node {$a$};

 \end{tikzpicture}
\end{minipage}
\qquad
 \begin{minipage}{5.75cm} \begin{tikzpicture}  [xscale=0.5,yscale=0.4]

 \draw[thick](0,0)--++(90:1)--++(0:1)--++(-90:1)--++(180:1);
 \draw(0.5,0.5) node {1};
 \draw[thick](1,0)--++(90:1)--++(0:1)--++(-90:1)--++(180:1);
 \draw(1.5,0.5) node {1};
  \draw[thick](2,0)--++(90:1)--++(0:1)--++(-90:1)--++(180:1);
 \draw(2.5,0.5) node {1};
  \draw[thick](5,0)--++(90:1)--++(0:1)--++(-90:1)--++(180:1);
 \draw(5.5,0.5) node {1};

\draw[thick, densely dotted] (3,0)--++(90:1)--++(0:2)--++(-90:1)--++(180:2);
   \draw[thick](6,0)--++(90:1)--++(0:1)--++(-90:1)--++(180:1);
 \draw(6.5,0.5) node {1};

  \draw[thick] (0,-1)--++(90:1)--++(0:1)--++(-90:1)--++(180:1);
 \draw(0.5,-0.5) node {1};

   \draw[thick,densely dotted,fill=gray!30](7,1)--++(0:3)--++(90:1)--++(180:1)--++(180:10)--++(-90:7)--++(0:1)--
   ++(90:6);

    \fill[ fill=gray!30](-0.1,1)--++(0:2.1)--++(-90:1)--++(180:2.1)--++(90:1);

    \draw[thick](0,0)--++(0:2)--++(90:1);

  \draw[thick] (10,1)--++(90:1)--++(0:1)--++(-90:1)--++(180:1);
   \draw(10.5,1.5) node {1};

 \draw[thick,fill=white] (-1,-2-3)--++(90:1)--++(0:1)--++(-90:1)--++(180:1);
 \draw(-0.5,-2-2.5) node {2};

   \draw[thick] (-1,-2-4)--++(90:1)--++(0:1)--++(-90:1)--++(180:1);
 \draw(-0.5,-2-3.5) node {3};

  \draw[thick] (-1,-2-5)--++(90:1)--++(0:1)--++(-90:1)--++(180:1);
 \draw(-0.5,-2-4.5) node {4};
    \draw[thick] (-1,-2-6)--++(90:1)--++(0:1)--++(-90:1)--++(180:1);
 \draw(-0.5,-2-5.5) node {5};

  \draw[thick,densely dotted] (-1,-2-6)--++(90:-2);
    \draw[thick,densely dotted] (0,-2-6)--++(90:-2);

   \draw[thick] (-1,-2-9)--++(90:1)--++(0:1)--++(-90:1)--++(180:1);
 \draw(-0.5,-2-8.5) node {$a$};

 \end{tikzpicture}
\end{minipage}
 $$
\caption{
Two of the three  
Littlewood--Richardson 
\color{black}
tableaux  of shape $\lambda\setminus ss[(a+b-1,1)]$ and weight equal to $ss[(a)]$ for $a,b\geq 2$.  	  (\cref{gg1} contains the final tableau.)	 }
\label{gg2}
\end{figure}

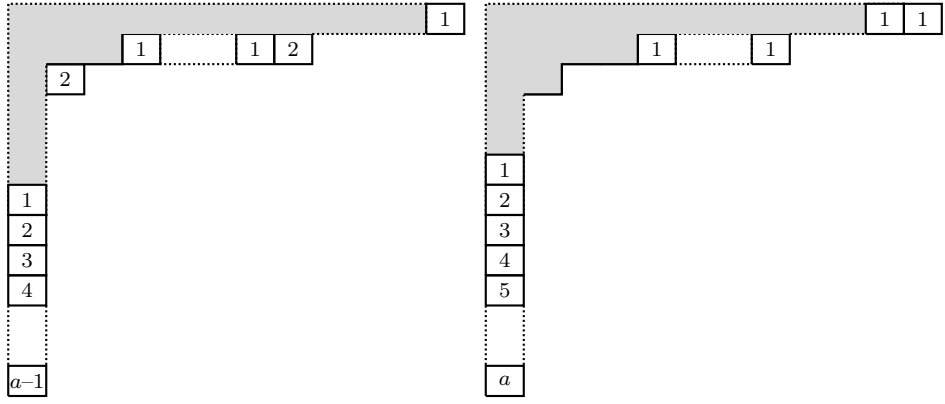
\begin{figure}[ht!]$$
 \scalefont{0.7}
  \begin{minipage}{5.75cm} \begin{tikzpicture}  [xscale=0.5,yscale=0.4]

 \draw[thick](0,0)--++(90:1)--++(0:1)--++(-90:1)--++(180:1);
 \draw(0.5,0.5) node {1};
 \draw[thick](1,0)--++(90:1)--++(0:1)--++(-90:1)--++(180:1);
 \draw(1.5,0.5) node {1};
  \draw[thick](2,0)--++(90:1)--++(0:1)--++(-90:1)--++(180:1);
 \draw(2.5,0.5) node {1};
  \draw[thick](5,0)--++(90:1)--++(0:1)--++(-90:1)--++(180:1);
 \draw(5.5,0.5) node {1};

\draw[thick, densely dotted] (3,0)--++(90:1)--++(0:2)--++(-90:1)--++(180:2);
   \draw[thick](6,0)--++(90:1)--++(0:1)--++(-90:1)--++(180:1);
 \draw(6.5,0.5) node {2};

  \draw[thick] (0,-1)--++(90:1)--++(0:1)--++(-90:1)--++(180:1);
 \draw(0.5,-0.5) node {2};

   \draw[thick,densely dotted,fill=gray!30](7,1)--++(0:3)--++(90:1)--++(180:1)--++(180:10)--++(-90:7)--++(0:1)--
   ++(90:6);

    \fill[ fill=gray!30](-0.1,1.1)--++(0:2.1)--++(-90:1.1)--++(180:2.1)--++(90:1);

    \draw[thick](0,0)--++(0:2)--++(90:1);

  \draw[thick] (10,1)--++(90:1)--++(0:1)--++(-90:1)--++(180:1);
   \draw(10.5,1.5) node {1};

 \draw[thick,fill=white] (-1,-2-3)--++(90:1)--++(0:1)--++(-90:1)--++(180:1);
 \draw(-0.5,-2-2.5) node {1};

   \draw[thick] (-1,-2-4)--++(90:1)--++(0:1)--++(-90:1)--++(180:1);
 \draw(-0.5,-2-3.5) node {2};

  \draw[thick] (-1,-2-5)--++(90:1)--++(0:1)--++(-90:1)--++(180:1);
 \draw(-0.5,-2-4.5) node {3};
    \draw[thick] (-1,-2-6)--++(90:1)--++(0:1)--++(-90:1)--++(180:1);
 \draw(-0.5,-2-5.5) node {4};

  \draw[thick,densely dotted] (-1,-2-6)--++(90:-2);
    \draw[thick,densely dotted] (0,-2-6)--++(90:-2);

   \draw[thick] (-1,-2-9)--++(90:1)--++(0:1)--++(-90:1)--++(180:1);
 \draw(-0.5,-2-8.5) node {$a\text{--}1$};

 \end{tikzpicture}
\end{minipage} \qquad
 \begin{minipage}{5.75cm} \begin{tikzpicture}  [xscale=0.5,yscale=0.4]

 \draw[thick](0,0)--++(90:1)--++(0:1)--++(-90:1)--++(180:1);
 \draw(0.5,0.5) node {1};
 \draw[thick](1,0)--++(90:1)--++(0:1)--++(-90:1)--++(180:1);
 \draw(1.5,0.5) node {1};
  \draw[thick](2,0)--++(90:1)--++(0:1)--++(-90:1)--++(180:1);
 \draw(2.5,0.5) node {1};
  \draw[thick](3,0)--++(90:1)--++(0:1)--++(-90:1)--++(180:1);
 \draw(3.5,0.5) node {1};

\draw[thick, densely dotted] (4,0)--++(90:1)--++(0:2)--++(-90:1)--++(180:2);
   \draw[thick](6,0)--++(90:1)--++(0:1)--++(-90:1)--++(180:1);
 \draw(6.5,0.5) node {1};

  \draw[thick] (0,-1)--++(90:1)--++(0:1)--++(-90:1)--++(180:1);
 \draw(0.5,-0.5) node {2};

   \draw[thick,densely dotted,fill=gray!30](7,1)--++(0:3)--++(90:1)--++(180:1)--++(180:10)--++(-90:7)--++(0:1)--
   ++(90:6);

    \fill[ fill=gray!30](-0.1,1.1)--++(0:3.1)--++(-90:1.1)--++(180:3.1)--++(90:1);
    \fill[ fill=gray!30](-0.1,-1)--++(00:1.1)--++(90:1)--++(180:1.1);

    \draw[thick](0,-1)--++(0:1)--++(90:1)--++(0:2)--++(90:1);

  \draw[thick] (10,1)--++(90:1)--++(0:1)--++(-90:1)--++(180:1);
   \draw(10.5,1.5) node {1};
  \draw[thick,fill=white] (9,1)--++(90:1)--++(0:1)--++(-90:1)--++(180:1);
   \draw(9.5,1.5) node {1};

 \draw[thick,fill=white] (-1,-2-2)--++(90:1)--++(0:1)--++(-90:1)--++(180:1);
 \draw(-0.5,-2-1.5) node {1};

 \draw[thick,fill=white] (-1,-2-3)--++(90:1)--++(0:1)--++(-90:1)--++(180:1);
 \draw(-0.5,-2-2.5) node {2};

   \draw[thick] (-1,-2-4)--++(90:1)--++(0:1)--++(-90:1)--++(180:1);
 \draw(-0.5,-2-3.5) node {3};

  \draw[thick] (-1,-2-5)--++(90:1)--++(0:1)--++(-90:1)--++(180:1);
 \draw(-0.5,-2-4.5) node {4};
    \draw[thick] (-1,-2-6)--++(90:1)--++(0:1)--++(-90:1)--++(180:1);
 \draw(-0.5,-2-5.5) node {5};

  \draw[thick,densely dotted] (-1,-2-6)--++(90:-2);
    \draw[thick,densely dotted] (0,-2-6)--++(90:-2);

   \draw[thick] (-1,-2-9)--++(90:1)--++(0:1)--++(-90:1)--++(180:1);
 \draw(-0.5,-2-8.5) node {$a $};

 \end{tikzpicture}
\end{minipage}
 $$
\caption{
The  tableau on the left is the unique 
  Littlewood--Richardson
  tableau   of shape $\lambda\setminus ss[(a+b-1,1)]$ and weight equal to $ss[(a-1,1)]$ for $a\neq 2$.    	
The tableau on the right is the unique 
  Littlewood--Richardson
 tableau   of shape $\lambda\setminus ss[(a+b-2,2)]$ and weight equal to $ss[(a )]$ for   $b\geq 3$.	
	 }
\label{gg3}
\end{figure}

To compute the multiplicity  $\langle s_\nu \circ s_{(2)}\mid
s_{(	a+b+1   , a+2,2,		1^{2a+b-5   })} \rangle $ it is enough to consider, 
in the sums above,
\color{black} 
 $\rho, \pi , \rho', \pi ' $  with at most 2 rows and with second part at most 2.
We have  that
$$
\langle e_{(a+b,a)}\circ s_{(2)}\mid s_{(a+b+1,a+2,2,1^{2a+b-5})}\rangle =
\begin{cases}
4	&a=2, b=2  \\
5 &b=2, a> 2 \text{ or } a=2, b>2 \\
6 &a,b>2
\end{cases}
$$
 The complete list of 
 Littlewood--Richardson 
  tableaux are listed in \cref{gg1,gg2,gg3} (we depict the generic case  and list the tableaux which disappear for small values of $a$ and $b$);   
in all other relevant cases the Littlewood--Richardson coefficient is zero.

Similarly we have  that
$$
\langle e_{(a+b+1,a-1)}\circ s_{(2)}\mid s_{(a+b+1,a+2,2,1^{2a+b-5})}\rangle =
\begin{cases}
 2	&a=2   \\
3 &a>2
\end{cases} .
$$
The complete list of  
Littlewood--Richardson 
 \color{black} tableaux are listed in \cref{gg4,gg5} (we depict the generic case, one can easily delete the tableaux which disappear for small values of $a$ and $b$).
The result follows.

 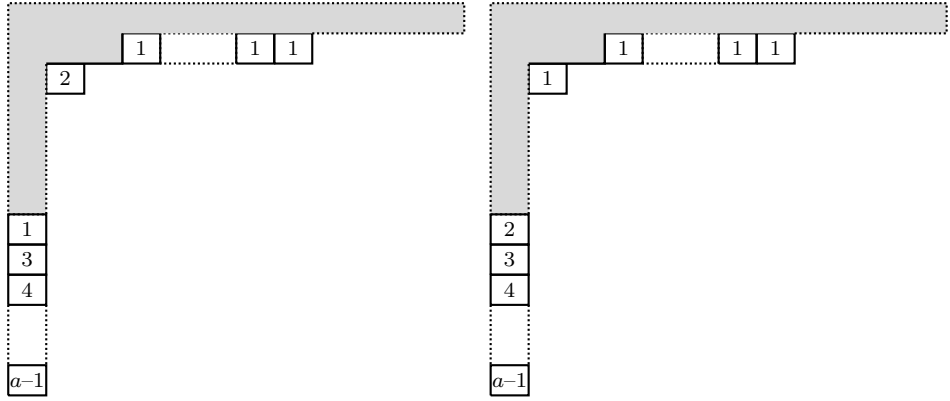
\begin{figure}[ht!]
 $$ \scalefont{0.7}\begin{minipage}{5.75cm} \begin{tikzpicture}  [xscale=0.5,yscale=0.4]

 \draw[thick](0,0)--++(90:1)--++(0:1)--++(-90:1)--++(180:1);
 \draw(0.5,0.5) node {1};
 \draw[thick](1,0)--++(90:1)--++(0:1)--++(-90:1)--++(180:1);
 \draw(1.5,0.5) node {1};
  \draw[thick](2,0)--++(90:1)--++(0:1)--++(-90:1)--++(180:1);
 \draw(2.5,0.5) node {1};
  \draw[thick](5,0)--++(90:1)--++(0:1)--++(-90:1)--++(180:1);
 \draw(5.5,0.5) node {1};

\draw[thick, densely dotted] (3,0)--++(90:1)--++(0:2)--++(-90:1)--++(180:2);
   \draw[thick](6,0)--++(90:1)--++(0:1)--++(-90:1)--++(180:1);
 \draw(6.5,0.5) node {1};

  \draw[thick] (0,-1)--++(90:1)--++(0:1)--++(-90:1)--++(180:1);
 \draw(0.5,-0.5) node {2};

   \draw[thick,densely dotted,fill=gray!30](7,1)--++(0:4)--++(90:1)--++(180:1)--++(180:11)--++(-90:7)--++(0:1)--
   ++(90:6);

    \fill[ fill=gray!30](-0.1,1)--++(0:2.1)--++(-90:1)--++(180:2.1)--++(90:1);

    \draw[thick](0,0)--++(0:2)--++(90:1);

   \draw[thick] (-1,-2-4)--++(90:1)--++(0:1)--++(-90:1)--++(180:1);
 \draw(-0.5,-2-3.5) node {1};

  \draw[thick] (-1,-2-5)--++(90:1)--++(0:1)--++(-90:1)--++(180:1);
 \draw(-0.5,-2-4.5) node {3};
    \draw[thick] (-1,-2-6)--++(90:1)--++(0:1)--++(-90:1)--++(180:1);
 \draw(-0.5,-2-5.5) node {4};

  \draw[thick,densely dotted] (-1,-2-6)--++(90:-2);
    \draw[thick,densely dotted] (0,-2-6)--++(90:-2);

   \draw[thick] (-1,-2-9)--++(90:1)--++(0:1)--++(-90:1)--++(180:1);
 \draw(-0.5,-2-8.5) node {$a\text{--}1$};

 \end{tikzpicture}
\end{minipage} \qquad
\begin{minipage}{5.75cm} \begin{tikzpicture}  [xscale=0.5,yscale=0.4]

 \draw[thick](0,0)--++(90:1)--++(0:1)--++(-90:1)--++(180:1);
 \draw(0.5,0.5) node {1};
 \draw[thick](1,0)--++(90:1)--++(0:1)--++(-90:1)--++(180:1);
 \draw(1.5,0.5) node {1};
  \draw[thick](2,0)--++(90:1)--++(0:1)--++(-90:1)--++(180:1);
 \draw(2.5,0.5) node {1};
  \draw[thick](5,0)--++(90:1)--++(0:1)--++(-90:1)--++(180:1);
 \draw(5.5,0.5) node {1};

\draw[thick, densely dotted] (3,0)--++(90:1)--++(0:2)--++(-90:1)--++(180:2);
   \draw[thick](6,0)--++(90:1)--++(0:1)--++(-90:1)--++(180:1);
 \draw(6.5,0.5) node {1};

  \draw[thick] (0,-1)--++(90:1)--++(0:1)--++(-90:1)--++(180:1);
 \draw(0.5,-0.5) node {1};

   \draw[thick,densely dotted,fill=gray!30](7,1)--++(0:4)--++(90:1)--++(180:1)--++(180:11)--++(-90:7)--++(0:1)--
   ++(90:6);

    \fill[ fill=gray!30](-0.1,1)--++(0:2.1)--++(-90:1)--++(180:2.1)--++(90:1);

    \draw[thick](0,0)--++(0:2)--++(90:1);

   \draw[thick] (-1,-2-4)--++(90:1)--++(0:1)--++(-90:1)--++(180:1);
 \draw(-0.5,-2-3.5) node {2};

  \draw[thick] (-1,-2-5)--++(90:1)--++(0:1)--++(-90:1)--++(180:1);
 \draw(-0.5,-2-4.5) node {3};
    \draw[thick] (-1,-2-6)--++(90:1)--++(0:1)--++(-90:1)--++(180:1);
 \draw(-0.5,-2-5.5) node {4};

  \draw[thick,densely dotted] (-1,-2-6)--++(90:-2);
    \draw[thick,densely dotted] (0,-2-6)--++(90:-2);

   \draw[thick] (-1,-2-9)--++(90:1)--++(0:1)--++(-90:1)--++(180:1);
 \draw(-0.5,-2-8.5) node {$a\text{--}1$};

 \end{tikzpicture}
\end{minipage} $$
\caption{The two  
Littlewood--Richardson 
 \color{black}
 tableaux of shape $\la\setminus ss[(a+b,1)]$ and weight $ss[(a-1)]$.  If $a=2$ only the tableau
on the right exists.  }
\label{gg4}
\end{figure}

 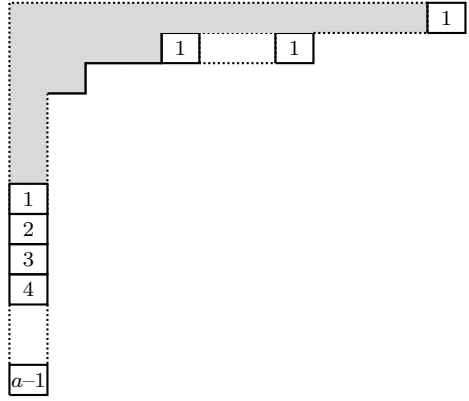
\begin{figure}[ht!]
 $$ \scalefont{0.7} \begin{minipage}{5.75cm} \begin{tikzpicture}  [xscale=0.5,yscale=0.4]

 \draw[thick](0,0)--++(90:1)--++(0:1)--++(-90:1)--++(180:1);
 \draw(0.5,0.5) node {1};
 \draw[thick](1,0)--++(90:1)--++(0:1)--++(-90:1)--++(180:1);
 \draw(1.5,0.5) node {1};
  \draw[thick](2,0)--++(90:1)--++(0:1)--++(-90:1)--++(180:1);
 \draw(2.5,0.5) node {1};
  \draw[thick](3,0)--++(90:1)--++(0:1)--++(-90:1)--++(180:1);
 \draw(3.5,0.5) node {1};

\draw[thick, densely dotted] (4,0)--++(90:1)--++(0:2)--++(-90:1)--++(180:2);
   \draw[thick](6,0)--++(90:1)--++(0:1)--++(-90:1)--++(180:1);
 \draw(6.5,0.5) node {1};

  \draw[thick] (0,-1)--++(90:1)--++(0:1)--++(-90:1)--++(180:1);
 \draw(0.5,-0.5) node {2};

   \draw[thick,densely dotted,fill=gray!30](7,1)--++(0:3)--++(90:1)--++(180:1)--++(180:10)--++(-90:7)--++(0:1)--
   ++(90:6);

    \fill[ fill=gray!30](-0.1,1.1)--++(0:3.1)--++(-90:1.1)--++(180:3.1)--++(90:1);
    \fill[ fill=gray!30](-0.1,-1)--++(00:1.1)--++(90:1)--++(180:1.1);

    \draw[thick](0,-1)--++(0:1)--++(90:1)--++(0:2)--++(90:1);

  \draw[thick] (10,1)--++(90:1)--++(0:1)--++(-90:1)--++(180:1);
   \draw(10.5,1.5) node {1};

 \draw[thick,fill=white] (-1,-2-3)--++(90:1)--++(0:1)--++(-90:1)--++(180:1);
 \draw(-0.5,-2-2.5) node {1};

   \draw[thick] (-1,-2-4)--++(90:1)--++(0:1)--++(-90:1)--++(180:1);
 \draw(-0.5,-2-3.5) node {2};

  \draw[thick] (-1,-2-5)--++(90:1)--++(0:1)--++(-90:1)--++(180:1);
 \draw(-0.5,-2-4.5) node {3};
    \draw[thick] (-1,-2-6)--++(90:1)--++(0:1)--++(-90:1)--++(180:1);
 \draw(-0.5,-2-5.5) node {4};

  \draw[thick,densely dotted] (-1,-2-6)--++(90:-2);
    \draw[thick,densely dotted] (0,-2-6)--++(90:-2);

   \draw[thick] (-1,-2-9)--++(90:1)--++(0:1)--++(-90:1)--++(180:1);
 \draw(-0.5,-2-8.5) node {$a\text{--}1$};

 \end{tikzpicture}
\end{minipage}
$$
\caption{The unique  
Littlewood--Richardson 
 \color{black}
 tableau of shape $\la\setminus ss[(a+b-1,2)]$ and weight $ss[(a-1)]$ for any $a\geq 2$.  }
\label{gg5}
\qedhere
\end{figure}
\end{proof}

 \begin{prop}\label{2liners}
If $\nu \vdash n $ is a
2-line partition
and the pair $(\nu,\mu)$ does not belong to the list of exceptions in \cref{conj}, then $p(\nu,\mu)>1$.
\end{prop}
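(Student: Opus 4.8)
The plan is to dispatch 2-line partitions $\nu$ by producing an explicit seed of multiplicity in $s_\nu\circ s_{(2)}$ and then invoking \cref{cor:2-important}. Because the class of 2-line partitions $\nu$ with $\min(\ell(\nu),w(\nu))=2$ is closed under conjugation, it suffices by \cref{cor:2-important} to show $p(\nu,(2))\geq 2$ for all such $\nu$ outside the exceptional list, and then the same conclusion $p(\nu,\mu)\geq 2$ follows for every $\mu\vdash m$ with $m>1$. So the whole proof reduces to a statement about $s_\nu\circ s_{(2)}$, where we may use the machinery of \cref{sec:proof}: \cref{yaaaas}, \cref{yaaaas2}, and \cref{dvir}.

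First I would split into cases according to the shape of the 2-line partition. A 2-line partition with $\ell(\nu)=2$ is either a rectangle $(a^2)$, a near-rectangle $(a+1,a)$ of length 2, $(a,b)$ with $a>b\geq 2$ and $a>b+1$ (a genuine 2-row partition that is neither), or has a removable-node count handled elsewhere; the case $\rem(\nu)\geq 3$ is already covered by \cref{3removable}, so here $\rem(\nu)=2$ and we are looking at $\nu=(a,b)$ with $a>b\geq 2$. When $\nu_1\neq\nu_2$, i.e. $a>b$, \cref{yaaaas} gives $\langle s_\nu\circ s_{(2)}\mid s_{\bar\nu-\varepsilon_1+\varepsilon_c}\rangle=|I(\bar\nu-\varepsilon_1+\varepsilon_c)|$ for $c>1$. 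The point is that for a 2-row partition that is not a near-rectangle there is a choice of $c$ (typically $c=2$) for which $|I(\cdot)|\geq 2$: the partition $\bar\nu-\varepsilon_1+\varepsilon_2$ can be reached both by removing a node from row $2$ and re-adding in row $2$ (the trivial pair $(2,2)$ is excluded unless $\nu_2\neq\nu_2$, so we need to be careful) and via a genuinely different $(a,b)$ pair. So I would directly compute $|I(\beta)|$ for the relevant $\beta$ and verify it is $\geq 2$ except precisely when $\nu$ is a near-rectangle $(a+1,a)$ or in the small exceptional cases. For the rectangle subcase $\nu=(a^2)$, the previous corollary (the one derived from \cref{the latter} and \cref{2rec2}) already gives $p((a^2),(2))\geq 2$ for $a>3$; the cases $a\leq 3$, i.e. $(2^2),(3^2)$, are on the exception list of \cref{conj}, as is $(2^2,1^2)$ after conjugation. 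For the near-rectangle subcase $\nu=(a+1,a)$, the single constituent $\bar\nu-\varepsilon_1+\varepsilon_2$ has $|I|=1$, so \cref{yaaaas} does not immediately help and I would instead go one step deeper in the dominance order, computing $\langle s_\nu\circ s_{(2)}\mid s_{\bar\nu-2\varepsilon_1+2\varepsilon_2}\rangle$ via \cref{dvir} exactly as in \cref{2rec2} (which handles the $\ell(\nu)=2$, $w=a$ case $\nu=(a^2)$ and, after conjugation, feeds into $(2^a)$). Near-rectangles $(a+1,a)$ reduce to a short finite computation of plethystic and semistandard tableaux for the $O(1)$ partitions between $\bar\nu$ and $\bar\nu-2\varepsilon_1+2\varepsilon_2$, which mirrors \cref{2rec2} almost verbatim; I expect this to yield multiplicity $2$ once $a$ is large enough, with the finitely many small cases ($(2,1),(3,1),(3,2)$, etc.) falling into the exception list and verified by computer.

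The remaining subcase is $w(\nu)=2$, i.e. $\nu=(2^a,1^b)$ with $a\geq 1$; by conjugation-invariance of the class this is the conjugate situation to 2-row partitions, but \cref{cor:2-important} lets us choose the more convenient orientation, so in practice I only need one of the two. Nonetheless, for completeness \cref{usefulrestulwhichstopproblem} provides exactly the needed seed: it states $\langle s_{(2^a,1^b)}\circ s_{(2)}\mid s_{(a+b+1,a+2,2,1^{2a+b-5})}\rangle$ equals $2$ (if $b=2$) or $3$ (if $b>2$), which is $\geq 2$ whenever $a,b>1$, hence $p((2^a,1^b),(2))\geq 2$ for $a,b>1$. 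This leaves $b\leq 1$: $\nu=(2^a)$ is a rectangle (handled above via \cref{2rec2}/its corollary, with $(2^2),(3^2)$ exceptional), and $\nu=(2^a,1)$ is a near-rectangle (conjugate to $(a+1,a^0\ldots)$-type, handled by the same $\bar\nu-2\varepsilon_1+2\varepsilon_2$ computation, with $(2,1),(2^2,1),(2^3,1)$ exceptional). Assembling: every 2-line $\nu$ not on the exception list satisfies $p(\nu,(2))\geq 2$, so by \cref{cor:2-important} $p(\nu,\mu)\geq 2$ for all $\mu$ with $|\mu|>1$.

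The main obstacle is organizing the bookkeeping of which small-rank pairs $(\nu,\mu)$ genuinely need to appear on the exception list of \cref{conj} versus which are absorbed by the infinite-family arguments. The combinatorial content — computing $|I(\beta)|$, and running \cref{dvir} for the near-rectangle cases — is routine once set up (the near-rectangle computation is structurally identical to \cref{2rec2}), but one must be meticulous that the boundary values of $a$ and $b$ where various plethystic or semistandard tableaux degenerate are exactly matched against the exceptions $\nu\in\{(2,1^a)\mid a\leq 6\}$, $(2^2),(3^2),(2^2,1)$, etc., and against the rank bound $|\nu|+|\mu|$ constraints in part $(iv)$. I would therefore structure the proof as: (1) reduce to $p(\nu,(2))\geq 2$ via \cref{cor:2-important}; (2) dispose of $\rem(\nu)\geq 3$ by citing \cref{3removable}; (3) handle proper 2-row $\nu$ with $\nu_1>\nu_2+1$ via \cref{yaaaas} and an explicit $|I(\beta)|\geq 2$; (4) handle near-rectangles via a \cref{2rec2}-style \cref{dvir} computation; (5) handle rectangles by citing the corollary to \cref{the latter} and \cref{2rec2}; (6) handle $w(\nu)=2$ with two nontrivial columns-of-ones via \cref{usefulrestulwhichstopproblem}; and (7) collect the finitely many residual small cases and confirm they coincide with the stated exceptions, checked by computer.
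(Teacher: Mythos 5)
Your overall architecture (reduce to showing $p(\nu,(2))\geq 2$, then feed in seeds) is the paper's, but two of your steps fail concretely. The decisive one is step (3). For a 2-row partition $\nu=(\nu_1,\nu_2)$ with $\nu_1>\nu_2$, the only row $x>1$ of $\bar\nu-\varepsilon_1$ containing a removable node is row $2$, so every $\beta\in M(\nu)$ has $|I(\beta)|=1$; by \cref{yaaaas} every constituent in the layer $\la_1=n+\nu_1-1$ therefore has multiplicity at most $1$, and there is no choice of $c$ with $|I(\bar\nu-\varepsilon_1+\varepsilon_c)|\geq 2$ — the "genuinely different pair" you hope for does not exist. (This matches the data: the multiplicity $3$ for $(4,2)$ sits at $(6,4,2)$, two layers down, invisible to \cref{yaaaas}.) The paper's mechanism for non-rectangular 2-row $\nu$ is different and is the key idea missing from your plan: it performs the deep computation only once, for the rectangle $(a^2)$ in \cref{2rec2}, and then propagates it to every $(b,a)$ with $b\geq a$ by \cref{Brion} (adding boxes to the first row of $\nu$), using three explicitly computed seeds $(5,1),(4,2),(4,3)$ when the second part is $1,2,3$. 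This also renders your unverified "one layer deeper" computation for near-rectangles $(a+1,a)$ unnecessary. Secondly, your case analysis omits two infinite 2-line families, $(a,1)$ and $(2,1^{b})$: the former is covered in the paper by growing the seed $(5,1)$, and the latter requires the dedicated computation $\langle s_{(2,1^{n-2})}\circ s_{(2)}\mid s_{ss[(n-4,3,1)]}\rangle=2$ for $n\geq 9$; neither is among your "finitely many residual small cases".

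Two further points. Your repeated appeal to conjugation to pass between the 2-row and 2-column orientations is not available for $\mu=(2)$: since $|\mu|$ is even, \cref{max-conjugate} relates $p(\nu,(2))$ to $p(\nu,(1^2))$, not to $p(\nu^T,(2))$; indeed $p((3,2),(2))=2$ while $p((2^2,1),(2))=1$. For the same reason the set of 2-line partitions outside the exception list is not conjugation-closed (compare $(3,2)$ with $(2^2,1)$, or $(6,1)$ with $(2,1^5)$), so \cref{cor:2-important} cannot be invoked as you state it; the reduction you want does hold for each individual $\nu$, but it has to be run through \cref{cor:semigroup}(1),(2) (strip $\mu$ down to $(2)$ or $(1^2)$ without touching $\nu$) rather than through the conjugation-closed corollary. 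This is also exactly why the paper proves the 2-column seeds directly — \cref{usefulrestulwhichstopproblem} for $(2^a,1^b)$ with $a,b>1$, and the $e$-function expansions for $(2^a,1)$ and $(2,1^{n-2})$ — instead of conjugating the 2-row case, whereas you propose to handle $(2^a,1)$ by conjugation plus an untested analogue of \cref{2rec2}. Finally, a bookkeeping slip with real content: $(2^3,1)$ is not on the exception list, and the paper's computation gives $p((2^3,1),(2))=2$, so your claimed exceptional status for it is false.
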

\begin{proof}
     If $\nu=(b,a)\vdash n> 8$ then, using \cref{Brion},
  we can grow multiplicities for the products
   $s_{(b,a)} \circ s_{(2)}$    from the seeds  $(5,1)$, $(4,2)$, $(4,3)$ for $  a=1, 2, 3$ or the seed $(a^2)$ if $a>3$.
  By direct calculation,  we have that
    $$
  p(\nu,(2))
  =
  \begin{cases}
  2= p({	(5,1), (2),	(6, 4, 2)}) &\text{for } \nu=(5,1)		 \\
  3= p({	(4,2), (2),	(6, 4, 2)}) &\text{for } \nu=(4,2)  	 \\
  3= p({	(4,3), (2),	(8, 4, 2)}) &\text{for } \nu=(4,3)
  \end{cases}
    $$
    and for the final seed
      $\langle s_{(a^2)} \circ s_{(2)}  \mid s_{(3a-2,a,2)} \rangle  =
        2		$
     by  \cref{2rec2}.
  Hence   $p(\nu,(2))>1$     for any  $\nu$ a 2-row partition of $n>8$.

 \begin{figure}[ht!]$$
 \scalefont{0.7}
  \begin{minipage}{5.75cm} \begin{tikzpicture}  [xscale=0.5,yscale=0.4]

 \draw[thick](0,0)--++(90:1)--++(0:1)--++(-90:1)--++(180:1);
 \draw(0.5,0.5) node {1};
 \draw[thick](1,0)--++(90:1)--++(0:1)--++(-90:1)--++(180:1);
 \draw(1.5,0.5) node {1};
  \draw[thick](2,0)--++(90:1)--++(0:1)--++(-90:1)--++(180:1);
 \draw(2.5,0.5) node {1};
  \draw[thick](5,0)--++(90:1)--++(0:1)--++(-90:1)--++(180:1);
 \draw(5.5,0.5) node {1};

\draw[thick, densely dotted] (3,0)--++(90:1)--++(0:2)--++(-90:1)--++(180:2);
   \draw[thick](6,0)--++(90:1)--++(0:1)--++(-90:1)--++(180:1);
 \draw(6.5,0.5) node {1};

  \draw[thick] (0,-1)--++(90:1)--++(0:1)--++(-90:1)--++(180:1);
 \draw(0.5,-0.5) node {2};
  \draw[thick] (1,-1)--++(90:1)--++(0:1)--++(-90:1)--++(180:1);
 \draw(1.5,-0.5) node {2};

   \draw[thick,densely dotted,fill=gray!30](7,1)--++(0:1)--++(90:1)--++(180:1)--++(180:8)--++(-90:7)--++(0:1)--
   ++(90:6);

   \draw[thick] (-1,-2-4)--++(90:1)--++(0:1)--++(-90:1)--++(180:1);
 \draw(-0.5,-2-3.5) node {2};

  \draw[thick] (-1,-2-5)--++(90:1)--++(0:1)--++(-90:1)--++(180:1);
 \draw(-0.5,-2-4.5) node {3};
    \draw[thick] (-1,-2-6)--++(90:1)--++(0:1)--++(-90:1)--++(180:1);
 \draw(-0.5,-2-5.5) node {4};

  \draw[thick,densely dotted] (-1,-2-6)--++(90:-2);
    \draw[thick,densely dotted] (0,-2-6)--++(90:-2);

   \draw[thick] (-1,-2-9)--++(90:1)--++(0:1)--++(-90:1)--++(180:1);
 \draw(-0.5,-2-8.5) node {$a\text{--}1$};

 \end{tikzpicture}
\end{minipage}
\qquad\qquad
\begin{minipage}{5.75cm} \begin{tikzpicture}  [xscale=0.5,yscale=0.4]

 \draw[thick](0,0)--++(90:1)--++(0:1)--++(-90:1)--++(180:1);
 \draw(0.5,0.5) node {1};
 \draw[thick](1,0)--++(90:1)--++(0:1)--++(-90:1)--++(180:1);
 \draw(1.5,0.5) node {1};
  \draw[thick](2,0)--++(90:1)--++(0:1)--++(-90:1)--++(180:1);
 \draw(2.5,0.5) node {1};
  \draw[thick](5,0)--++(90:1)--++(0:1)--++(-90:1)--++(180:1);
 \draw(5.5,0.5) node {1};

\draw[thick, densely dotted] (3,0)--++(90:1)--++(0:2)--++(-90:1)--++(180:2);
   \draw[thick](6,0)--++(90:1)--++(0:1)--++(-90:1)--++(180:1);
 \draw(6.5,0.5) node {1};

  \draw[thick] (0,-1)--++(90:1)--++(0:1)--++(-90:1)--++(180:1);
 \draw(0.5,-0.5) node {1};

   \draw[thick,densely dotted,fill=gray!30](7,1)--++(0:0)--++(90:1)--++(180:1)--++(180:7)--++(-90:7)--++(0:1)--
   ++(90:6);

    \fill[ fill=gray!30](-0.1,1)--++(0:2.1)--++(-90:1)--++(180:2.1)--++(90:1);

    \draw[thick](0,0)--++(0:2)--++(90:1);

     \draw[thick,fill=white] (7,1)--++(90:1)--++(0:1)--++(-90:1)--++(180:1);
  \draw(7.5,1.5) node {1};

\draw[thick] (1,-1)--++(90:1)--++(0:1)--++(-90:1)--++(180:1);
 \draw(1.5,-0.5) node {1};

   \draw[thick,fill=white] (-1,-2-3)--++(90:1)--++(0:1)--++(-90:1)--++(180:1);
 \draw(-0.5,-2-2.5) node {2};

   \draw[thick] (-1,-2-4)--++(90:1)--++(0:1)--++(-90:1)--++(180:1);
 \draw(-0.5,-2-3.5) node {3};

  \draw[thick] (-1,-2-5)--++(90:1)--++(0:1)--++(-90:1)--++(180:1);
 \draw(-0.5,-2-4.5) node {4};
    \draw[thick] (-1,-2-6)--++(90:1)--++(0:1)--++(-90:1)--++(180:1);
 \draw(-0.5,-2-5.5) node {5};

  \draw[thick,densely dotted] (-1,-2-6)--++(90:-2);
    \draw[thick,densely dotted] (0,-2-6)--++(90:-2);

   \draw[thick] (-1,-2-9)--++(90:1)--++(0:1)--++(-90:1)--++(180:1);
 \draw(-0.5,-2-8.5) node {$a $};

 \end{tikzpicture}
\end{minipage} $$
\caption{The tableau on the left is the unique Littlewood--Richardson tableau of shape
 $(a+2,a+1,3,1^{2a-4}) \setminus ss[(a+1)]$
 and weight \color{blue}
  $ss[(a-1,1)]$.
   \color{black}
The tableau on the right is the one of three Littlewood--Richardson tableaux of shape
 $(a+2,a+1,3,1^{2a-4}) \setminus ss[(a,1)]$
\color{black} 
 and weight $ss[(a)]$.
  }
  \label{641}
\end{figure}
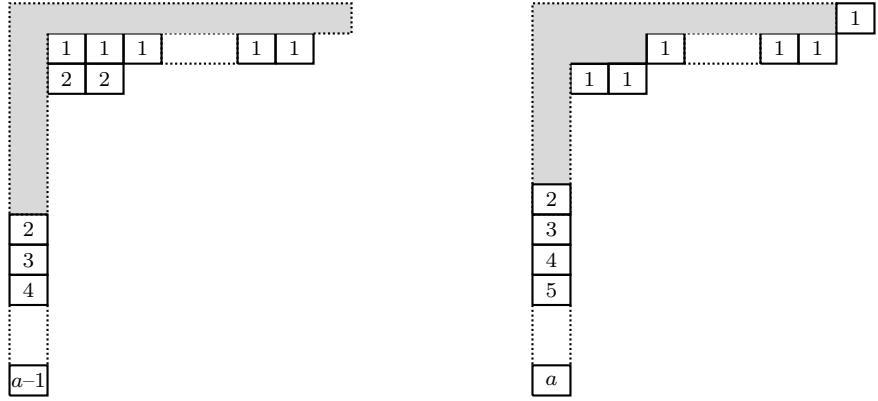

\begin{figure}[ht!]
$$ \scalefont{0.7}
\begin{minipage}{5.75cm} \begin{tikzpicture}  [xscale=0.5,yscale=0.4]

 \draw[thick](0,0)--++(90:1)--++(0:1)--++(-90:1)--++(180:1);
 \draw(0.5,0.5) node {1};
 \draw[thick](1,0)--++(90:1)--++(0:1)--++(-90:1)--++(180:1);
 \draw(1.5,0.5) node {1};
  \draw[thick](2,0)--++(90:1)--++(0:1)--++(-90:1)--++(180:1);
 \draw(2.5,0.5) node {1};
  \draw[thick](5,0)--++(90:1)--++(0:1)--++(-90:1)--++(180:1);
 \draw(5.5,0.5) node {1};

\draw[thick, densely dotted] (3,0)--++(90:1)--++(0:2)--++(-90:1)--++(180:2);
   \draw[thick](6,0)--++(90:1)--++(0:1)--++(-90:1)--++(180:1);
 \draw(6.5,0.5) node {1};

  \draw[thick] (0,-1)--++(90:1)--++(0:1)--++(-90:1)--++(180:1);
 \draw(0.5,-0.5) node {1};

    \draw[thick,densely dotted,fill=gray!30](7,1)--++(0:0)--++(90:1)--++(180:1)--++(180:7)--++(-90:7)--++(0:1)--
   ++(90:6);

    \fill[ fill=gray!30](-0.1,1)--++(0:2.1)--++(-90:1)--++(180:2.1)--++(90:1);

    \draw[thick](0,0)--++(0:2)--++(90:1);

      \draw[thick,fill=white] (7,1)--++(90:1)--++(0:1)--++(-90:1)--++(180:1);
   \draw(7.5,1.5) node {1};

\draw[thick] (1,-1)--++(90:1)--++(0:1)--++(-90:1)--++(180:1);
 \draw(1.5,-0.5) node {2};

   \draw[thick,fill=white] (-1,-2-3)--++(90:1)--++(0:1)--++(-90:1)--++(180:1);
 \draw(-0.5,-2-2.5) node {1};

   \draw[thick] (-1,-2-4)--++(90:1)--++(0:1)--++(-90:1)--++(180:1);
 \draw(-0.5,-2-3.5) node {3};

  \draw[thick] (-1,-2-5)--++(90:1)--++(0:1)--++(-90:1)--++(180:1);
 \draw(-0.5,-2-4.5) node {4};
    \draw[thick] (-1,-2-6)--++(90:1)--++(0:1)--++(-90:1)--++(180:1);
 \draw(-0.5,-2-5.5) node {5};

  \draw[thick,densely dotted] (-1,-2-6)--++(90:-2);
    \draw[thick,densely dotted] (0,-2-6)--++(90:-2);

   \draw[thick] (-1,-2-9)--++(90:1)--++(0:1)--++(-90:1)--++(180:1);
 \draw(-0.5,-2-8.5) node {$a $};

 \end{tikzpicture}
\end{minipage}
\qquad
\qquad
\begin{minipage}{5.75cm} \begin{tikzpicture}  [xscale=0.5,yscale=0.4]

 \draw[thick](0,0)--++(90:1)--++(0:1)--++(-90:1)--++(180:1);
 \draw(0.5,0.5) node {1};
 \draw[thick](1,0)--++(90:1)--++(0:1)--++(-90:1)--++(180:1);
 \draw(1.5,0.5) node {1};
  \draw[thick](2,0)--++(90:1)--++(0:1)--++(-90:1)--++(180:1);
 \draw(2.5,0.5) node {1};
  \draw[thick](5,0)--++(90:1)--++(0:1)--++(-90:1)--++(180:1);
 \draw(5.5,0.5) node {1};

\draw[thick, densely dotted] (3,0)--++(90:1)--++(0:2)--++(-90:1)--++(180:2);
   \draw[thick](6,0)--++(90:1)--++(0:1)--++(-90:1)--++(180:1);
 \draw(6.5,0.5) node {2};

  \draw[thick] (0,-1)--++(90:1)--++(0:1)--++(-90:1)--++(180:1);
 \draw(0.5,-0.5) node {1};

    \draw[thick,densely dotted,fill=gray!30](7,1)--++(0:0)--++(90:1)--++(180:1)--++(180:7)--++(-90:7)--++(0:1)--
   ++(90:6);

    \fill[ fill=gray!30](-0.1,1)--++(0:2.1)--++(-90:1)--++(180:2.1)--++(90:1);

    \draw[thick](0,0)--++(0:2)--++(90:1);

     \draw[thick,fill=white] (7,1)--++(90:1)--++(0:1)--++(-90:1)--++(180:1);
  \draw(7.5,1.5) node {1};

\draw[thick] (1,-1)--++(90:1)--++(0:1)--++(-90:1)--++(180:1);
 \draw(1.5,-0.5) node {1};

   \draw[thick,fill=white] (-1,-2-3)--++(90:1)--++(0:1)--++(-90:1)--++(180:1);
 \draw(-0.5,-2-2.5) node {1};

   \draw[thick] (-1,-2-4)--++(90:1)--++(0:1)--++(-90:1)--++(180:1);
 \draw(-0.5,-2-3.5) node {3};

  \draw[thick] (-1,-2-5)--++(90:1)--++(0:1)--++(-90:1)--++(180:1);
 \draw(-0.5,-2-4.5) node {4};
    \draw[thick] (-1,-2-6)--++(90:1)--++(0:1)--++(-90:1)--++(180:1);
 \draw(-0.5,-2-5.5) node {5};

  \draw[thick,densely dotted] (-1,-2-6)--++(90:-2);
    \draw[thick,densely dotted] (0,-2-6)--++(90:-2);

   \draw[thick] (-1,-2-9)--++(90:1)--++(0:1)--++(-90:1)--++(180:1);
 \draw(-0.5,-2-8.5) node {$a $};

 \end{tikzpicture}
\end{minipage}
$$
\caption{The two remaining Littlewood--Richardson tableaux of shape
 $(a+2,a+1,3,1^{2a-4}) \setminus ss[(a,1)]$
 and weight $ss[(a)]$. }
 \label{642}
\end{figure}

 \begin{figure}[ht!]
 $$ \scalefont{0.7}
 \begin{minipage}{5.75cm} \begin{tikzpicture}  [xscale=0.5,yscale=0.4]

 \draw[thick](0,0)--++(90:1)--++(0:1)--++(-90:1)--++(180:1);
 \draw(0.5,0.5) node {1};
 \draw[thick](1,0)--++(90:1)--++(0:1)--++(-90:1)--++(180:1);
 \draw(1.5,0.5) node {1};
  \draw[thick](2,0)--++(90:1)--++(0:1)--++(-90:1)--++(180:1);
 \draw(2.5,0.5) node {1};
  \draw[thick](5,0)--++(90:1)--++(0:1)--++(-90:1)--++(180:1);
 \draw(5.5,0.5) node {1};

\draw[thick, densely dotted] (3,0)--++(90:1)--++(0:2)--++(-90:1)--++(180:2);
   \draw[thick](6,0)--++(90:1)--++(0:1)--++(-90:1)--++(180:1);
 \draw(6.5,0.5) node {2};

  \draw[thick] (0,-1)--++(90:1)--++(0:1)--++(-90:1)--++(180:1);
 \draw(0.5,-0.5) node {1};

    \draw[thick,densely dotted,fill=gray!30](7,1)--++(0:0)--++(90:1)--++(180:1)--++(180:7)--++(-90:7)--++(0:1)--
   ++(90:6);

    \fill[ fill=gray!30](-0.1,1)--++(0:2.1)--++(-90:1)--++(180:2.1)--++(90:1);

    \draw[thick](0,0)--++(0:2)--++(90:1);

     \draw[thick,fill=white] (7,1)--++(90:1)--++(0:1)--++(-90:1)--++(180:1);
  \draw(7.5,1.5) node {1};

\draw[thick] (1,-1)--++(90:1)--++(0:1)--++(-90:1)--++(180:1);
 \draw(1.5,-0.5) node {2};

   \draw[thick,fill=white] (-1,-2-3)--++(90:1)--++(0:1)--++(-90:1)--++(180:1);
 \draw(-0.5,-2-2.5) node {1};

   \draw[thick] (-1,-2-4)--++(90:1)--++(0:1)--++(-90:1)--++(180:1);
 \draw(-0.5,-2-3.5) node {2};

  \draw[thick] (-1,-2-5)--++(90:1)--++(0:1)--++(-90:1)--++(180:1);
 \draw(-0.5,-2-4.5) node {3};
    \draw[thick] (-1,-2-6)--++(90:1)--++(0:1)--++(-90:1)--++(180:1);
 \draw(-0.5,-2-5.5) node {4};

  \draw[thick,densely dotted] (-1,-2-6)--++(90:-2);
    \draw[thick,densely dotted] (0,-2-6)--++(90:-2);

   \draw[thick] (-1,-2-9)--++(90:1)--++(0:1)--++(-90:1)--++(180:1);
 \draw(-0.5,-2-8.5) node {$a\text{--}1 $};

 \end{tikzpicture}
\end{minipage}
\qquad
\qquad
\begin{minipage}{5.75cm} \begin{tikzpicture}  [xscale=0.5,yscale=0.4]

 \draw[thick](0,0)--++(90:1)--++(0:1)--++(-90:1)--++(180:1);
 \draw(0.5,0.5) node {1};
 \draw[thick](1,0)--++(90:1)--++(0:1)--++(-90:1)--++(180:1);
 \draw(1.5,0.5) node {1};
  \draw[thick](2,0)--++(90:1)--++(0:1)--++(-90:1)--++(180:1);
 \draw(2.5,0.5) node {1};
  \draw[thick](5,0)--++(90:1)--++(0:1)--++(-90:1)--++(180:1);
 \draw(5.5,0.5) node {1};

\draw[thick, densely dotted] (3,0)--++(90:1)--++(0:2)--++(-90:1)--++(180:2);
   \draw[thick](6,0)--++(90:1)--++(0:1)--++(-90:1)--++(180:1);
 \draw(6.5,0.5) node {1};

  \draw[thick] (0,-1)--++(90:1)--++(0:1)--++(-90:1)--++(180:1);
 \draw(0.5,-0.5) node {2};

    \draw[thick,densely dotted,fill=gray!30](7,1)--++(0:0)--++(90:1)--++(180:1)--++(180:7)--++(-90:7)--++(0:1)--
   ++(90:6);

    \fill[ fill=gray!30](-0.1,1)--++(0:2.1)--++(-90:1)--++(180:2.1)--++(90:1);

    \draw[thick](0,0)--++(0:2)--++(90:1);

     \draw[thick,fill=white] (7,1)--++(90:1)--++(0:1)--++(-90:1)--++(180:1);
  \draw(7.5,1.5) node {1};

\draw[thick] (1,-1)--++(90:1)--++(0:1)--++(-90:1)--++(180:1);
 \draw(1.5,-0.5) node {2};

   \draw[thick,fill=white] (-1,-2-3)--++(90:1)--++(0:1)--++(-90:1)--++(180:1);
 \draw(-0.5,-2-2.5) node {1};

   \draw[thick] (-1,-2-4)--++(90:1)--++(0:1)--++(-90:1)--++(180:1);
 \draw(-0.5,-2-3.5) node {2};

  \draw[thick] (-1,-2-5)--++(90:1)--++(0:1)--++(-90:1)--++(180:1);
 \draw(-0.5,-2-4.5) node {3};
    \draw[thick] (-1,-2-6)--++(90:1)--++(0:1)--++(-90:1)--++(180:1);
 \draw(-0.5,-2-5.5) node {4};

  \draw[thick,densely dotted] (-1,-2-6)--++(90:-2);
    \draw[thick,densely dotted] (0,-2-6)--++(90:-2);

   \draw[thick] (-1,-2-9)--++(90:1)--++(0:1)--++(-90:1)--++(180:1);
 \draw(-0.5,-2-8.5) node {$a\text{--}1 $};

 \end{tikzpicture}
\end{minipage}
$$
\caption{The 
   Littlewood--Richardson 
   tableaux of shape $(a+2,a+1,3,1^{2a-4}) \setminus ss[(a ,1)]$
 and weight $ss[(a-1,1)]$.   }
\label{643}

\end{figure}

Now we consider the 2-column case  $\nu=(2^a,1^{b})$.
For $a,b >1$ the result follows  from \cref{usefulrestulwhichstopproblem}.
Let  $\nu=(2^a,1)$.  We claim that
\begin{align}
& {\color{white} = }
  \langle s_{(2^a,1 )}\circ s_{(2)}	\mid s_{(a+2,a+1,3,1^{2a-4})}\rangle
  \\
   \label{6and4}
   &=  \langle e_{(a+1,a)} \circ s_{(2)} \mid s_{(a+2,a+1,3,1^{2a-4})}\rangle-
    \langle e_{(a+2,a-1)}  \circ s_{(2)}\mid s_{(a+2,a+1,3,1^{2a-4})}\rangle
    \\
&=6-4=2.
\end{align}
The 6 Littlewood--Richardson tableaux arising from the first term in \cref{6and4} are depicted in \cref{641,642,643} and
the 4 Littlewood--Richardson tableaux  arising from the second term in \cref{6and4} are depicted in \cref{644,645}.

Let  $a=1$ and $n\ge 9$.
  We claim that $$\langle s_{(2,1^{n-2})}\circ s_{(2)}	\mid s_{ss[n-4,3,1]}\rangle =2.$$
To see this, we set
  $$
  \beta_1=(n-5,3,1)
  \quad
  \beta_2=(n-4,2,1)
  \quad
  \beta_3=(n-4,3)
      $$
and we note that
      $$
\langle     s_{ss[\beta_i]}\boxtimes  s_{(2)}
\mid s_{ss[n-4,3,1]}\rangle =1$$
for $i=1,2,3$,  
whereas $
\langle     s_{ss[\gamma]}\boxtimes  s_{(2)}
\mid s_{ss[n-4,3,1]}\rangle =0$ for all other partitions $\gamma \vdash n-1$ with distinct parts.
\color{black}
  Now, simply note that
\begin{align*}
s_{(2,1^{n-2})}\circ s_{(2)}		
  &=
 e_{(n-1,1)} \circ s_{(2)} -e_{(n)}  \circ s_{(2)}\\
\intertext{and therefore} 
  \langle    s_{(2,1^{n-2})}\circ s_{(2)} \mid s_{ss[n-4,3,1]}\rangle	\color{black}
	 &=
\sum_{1\leq i \leq 3}\langle     s_{ss[\beta_i]}\boxtimes  s_{(2)}
\mid s_{ss[n-4,3,1]}\rangle
-
\langle    s_{ss[n-4,3,1]}
\mid s_{ss[n-4,3,1]}\rangle \\
& =3-1=2\end{align*}

 \begin{figure}[ht!]
 $$ \scalefont{0.7}
 \begin{minipage}{5.75cm} \begin{tikzpicture}  [xscale=0.5,yscale=0.4]

 \draw[thick](0,0)--++(90:1)--++(0:1)--++(-90:1)--++(180:1);
 \draw(0.5,0.5) node {1};
 \draw[thick](1,0)--++(90:1)--++(0:1)--++(-90:1)--++(180:1);
 \draw(1.5,0.5) node {1};
  \draw[thick](2,0)--++(90:1)--++(0:1)--++(-90:1)--++(180:1);
 \draw(2.5,0.5) node {1};
  \draw[thick](5,0)--++(90:1)--++(0:1)--++(-90:1)--++(180:1);
 \draw(5.5,0.5) node {1};

\draw[thick, densely dotted] (3,0)--++(90:1)--++(0:2)--++(-90:1)--++(180:2);
   \draw[thick](6,0)--++(90:1)--++(0:1)--++(-90:1)--++(180:1);
 \draw(6.5,0.5) node {1};

  \draw[thick] (0,-1)--++(90:1)--++(0:1)--++(-90:1)--++(180:1);
 \draw(0.5,-0.5) node {1};

       \draw[thick,densely dotted,fill=gray!30](7,1)--++(0:1)--++(90:1)--++(180:1)--++(180:8)--++(-90:7)--++(0:1)--
   ++(90:6);

    \fill[ fill=gray!30](-0.1,1)--++(0:2.1)--++(-90:1)--++(180:2.1)--++(90:1);

    \draw[thick](0,0)--++(0:2)--++(90:1);


\draw[thick] (1,-1)--++(90:1)--++(0:1)--++(-90:1)--++(180:1);
 \draw(1.5,-0.5) node {2};

   \draw[thick,fill=white] (-1,-2-3)--++(90:1)--++(0:1)--++(-90:1)--++(180:1);
 \draw(-0.5,-2-2.5) node {1};

   \draw[thick] (-1,-2-4)--++(90:1)--++(0:1)--++(-90:1)--++(180:1);
 \draw(-0.5,-2-3.5) node {3};

  \draw[thick] (-1,-2-5)--++(90:1)--++(0:1)--++(-90:1)--++(180:1);
 \draw(-0.5,-2-4.5) node {4};
    \draw[thick] (-1,-2-6)--++(90:1)--++(0:1)--++(-90:1)--++(180:1);
 \draw(-0.5,-2-5.5) node {5};

  \draw[thick,densely dotted] (-1,-2-6)--++(90:-2);
    \draw[thick,densely dotted] (0,-2-6)--++(90:-2);

   \draw[thick] (-1,-2-9)--++(90:1)--++(0:1)--++(-90:1)--++(180:1);
 \draw(-0.5,-2-8.5) node {$a\text{--}1$};

 \end{tikzpicture}
\end{minipage}
\qquad
\qquad
\begin{minipage}{5.75cm} \begin{tikzpicture}  [xscale=0.5,yscale=0.4]

 \draw[thick](0,0)--++(90:1)--++(0:1)--++(-90:1)--++(180:1);
 \draw(0.5,0.5) node {1};
 \draw[thick](1,0)--++(90:1)--++(0:1)--++(-90:1)--++(180:1);
 \draw(1.5,0.5) node {1};
  \draw[thick](2,0)--++(90:1)--++(0:1)--++(-90:1)--++(180:1);
 \draw(2.5,0.5) node {1};
  \draw[thick](5,0)--++(90:1)--++(0:1)--++(-90:1)--++(180:1);
 \draw(5.5,0.5) node {1};

\draw[thick, densely dotted] (3,0)--++(90:1)--++(0:2)--++(-90:1)--++(180:2);
   \draw[thick](6,0)--++(90:1)--++(0:1)--++(-90:1)--++(180:1);
 \draw(6.5,0.5) node {1};

  \draw[thick] (0,-1)--++(90:1)--++(0:1)--++(-90:1)--++(180:1);
 \draw(0.5,-0.5) node {1};

       \draw[thick,densely dotted,fill=gray!30](7,1)--++(0:1)--++(90:1)--++(180:1)--++(180:8)--++(-90:7)--++(0:1)--
   ++(90:6);

    \fill[ fill=gray!30](-0.1,1)--++(0:2.1)--++(-90:1)--++(180:2.1)--++(90:1);

    \draw[thick](0,0)--++(0:2)--++(90:1);

\draw[thick] (1,-1)--++(90:1)--++(0:1)--++(-90:1)--++(180:1);
 \draw(1.5,-0.5) node {1};

   \draw[thick,fill=white] (-1,-2-3)--++(90:1)--++(0:1)--++(-90:1)--++(180:1);
 \draw(-0.5,-2-2.5) node {2};

   \draw[thick] (-1,-2-4)--++(90:1)--++(0:1)--++(-90:1)--++(180:1);
 \draw(-0.5,-2-3.5) node {3};

  \draw[thick] (-1,-2-5)--++(90:1)--++(0:1)--++(-90:1)--++(180:1);
 \draw(-0.5,-2-4.5) node {4};
    \draw[thick] (-1,-2-6)--++(90:1)--++(0:1)--++(-90:1)--++(180:1);
 \draw(-0.5,-2-5.5) node {5};

  \draw[thick,densely dotted] (-1,-2-6)--++(90:-2);
    \draw[thick,densely dotted] (0,-2-6)--++(90:-2);

   \draw[thick] (-1,-2-9)--++(90:1)--++(0:1)--++(-90:1)--++(180:1);
 \draw(-0.5,-2-8.5) node {$a\text{--}1$};

 \end{tikzpicture}
\end{minipage}
$$
\caption{The 
 Littlewood--Richardson \color{black}  tableaux of shape 
$(a+2,a+1,3,1^{2a-4}) \setminus  ss[(a+1 ,1)]$
   \color{black}
 and weight $ss[(a-1)]$.   }
\label{644}

\end{figure}

 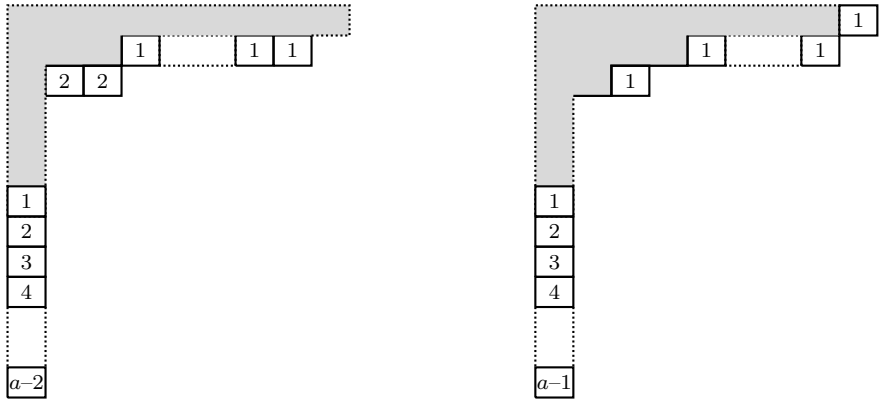
\begin{figure}[ht!]
 $$ \scalefont{0.7}
 \begin{minipage}{5.75cm} \begin{tikzpicture}  [xscale=0.5,yscale=0.4]

 \draw[thick](0,0)--++(90:1)--++(0:1)--++(-90:1)--++(180:1);
 \draw(0.5,0.5) node {1};
 \draw[thick](1,0)--++(90:1)--++(0:1)--++(-90:1)--++(180:1);
 \draw(1.5,0.5) node {1};
  \draw[thick](2,0)--++(90:1)--++(0:1)--++(-90:1)--++(180:1);
 \draw(2.5,0.5) node {1};
  \draw[thick](5,0)--++(90:1)--++(0:1)--++(-90:1)--++(180:1);
 \draw(5.5,0.5) node {1};

\draw[thick, densely dotted] (3,0)--++(90:1)--++(0:2)--++(-90:1)--++(180:2);
   \draw[thick](6,0)--++(90:1)--++(0:1)--++(-90:1)--++(180:1);
 \draw(6.5,0.5) node {1};

  \draw[thick] (0,-1)--++(90:1)--++(0:1)--++(-90:1)--++(180:1);
 \draw(0.5,-0.5) node {2};

       \draw[thick,densely dotted,fill=gray!30](7,1)--++(0:1)--++(90:1)--++(180:1)--++(180:8)--++(-90:7)--++(0:1)--
   ++(90:6);

    \fill[ fill=gray!30](-0.1,1)--++(0:2.1)--++(-90:1)--++(180:2.1)--++(90:1);

    \draw[thick](0,0)--++(0:2)--++(90:1);

\draw[thick] (1,-1)--++(90:1)--++(0:1)--++(-90:1)--++(180:1);
 \draw(1.5,-0.5) node {2};

   \draw[thick,fill=white] (-1,-2-3)--++(90:1)--++(0:1)--++(-90:1)--++(180:1);
 \draw(-0.5,-2-2.5) node {1};

   \draw[thick] (-1,-2-4)--++(90:1)--++(0:1)--++(-90:1)--++(180:1);
 \draw(-0.5,-2-3.5) node {2};

  \draw[thick] (-1,-2-5)--++(90:1)--++(0:1)--++(-90:1)--++(180:1);
 \draw(-0.5,-2-4.5) node {3};
    \draw[thick] (-1,-2-6)--++(90:1)--++(0:1)--++(-90:1)--++(180:1);
 \draw(-0.5,-2-5.5) node {4};

  \draw[thick,densely dotted] (-1,-2-6)--++(90:-2);
    \draw[thick,densely dotted] (0,-2-6)--++(90:-2);

   \draw[thick] (-1,-2-9)--++(90:1)--++(0:1)--++(-90:1)--++(180:1);
 \draw(-0.5,-2-8.5) node {$a\text{--}2$};

 \end{tikzpicture}
\end{minipage}
\qquad\qquad
 \begin{minipage}{5.75cm} \begin{tikzpicture}  [xscale=0.5,yscale=0.4]

 \draw[thick](0,0)--++(90:1)--++(0:1)--++(-90:1)--++(180:1);
 \draw(0.5,0.5) node {1};
 \draw[thick](1,0)--++(90:1)--++(0:1)--++(-90:1)--++(180:1);
 \draw(1.5,0.5) node {1};
  \draw[thick](2,0)--++(90:1)--++(0:1)--++(-90:1)--++(180:1);
 \draw(2.5,0.5) node {1};
  \draw[thick](3,0)--++(90:1)--++(0:1)--++(-90:1)--++(180:1);
 \draw(3.5,0.5) node {1};

\draw[thick, densely dotted] (4,0)--++(90:1)--++(0:2)--++(-90:1)--++(180:2);
   \draw[thick](6,0)--++(90:1)--++(0:1)--++(-90:1)--++(180:1);
 \draw(6.5,0.5) node {1};

  \draw[thick] (0,-1)--++(90:1)--++(0:1)--++(-90:1)--++(180:1);
 \draw(0.5,-0.5) node {2};

   \draw[thick,densely dotted,fill=gray!30](7,1)--++(0:0)--++(90:1)--++(180:1)--++(180:7)--++(-90:7)--++(0:1)--
   ++(90:6);

    \fill[ fill=gray!30](-0.1,1.1)--++(0:3.1)--++(-90:1.1)--++(180:3.1)--++(90:1);
    \fill[ fill=gray!30](-0.1,-1)--++(00:1.1)--++(90:1)--++(180:1.1);

    \draw[thick](0,-1)--++(0:1)--++(90:1)--++(0:2)--++(90:1);

 \draw[thick] (1,-1)--++(90:1)--++(0:1)--++(-90:1)--++(180:1);
 \draw(1.5,-0.5) node {1};

  \draw[thick] (7,1)--++(90:1)--++(0:1)--++(-90:1)--++(180:1);
   \draw(7.5,1.5) node {1};

 \draw[thick,fill=white] (-1,-2-3)--++(90:1)--++(0:1)--++(-90:1)--++(180:1);
 \draw(-0.5,-2-2.5) node {1};

   \draw[thick] (-1,-2-4)--++(90:1)--++(0:1)--++(-90:1)--++(180:1);
 \draw(-0.5,-2-3.5) node {2};

  \draw[thick] (-1,-2-5)--++(90:1)--++(0:1)--++(-90:1)--++(180:1);
 \draw(-0.5,-2-4.5) node {3};
    \draw[thick] (-1,-2-6)--++(90:1)--++(0:1)--++(-90:1)--++(180:1);
 \draw(-0.5,-2-5.5) node {4};

  \draw[thick,densely dotted] (-1,-2-6)--++(90:-2);
    \draw[thick,densely dotted] (0,-2-6)--++(90:-2);

   \draw[thick] (-1,-2-9)--++(90:1)--++(0:1)--++(-90:1)--++(180:1);
 \draw(-0.5,-2-8.5) node {$a\text{--}1$};

 \end{tikzpicture}
\end{minipage}
$$
\caption{The left  tableau is the unique Littlewood--Richardson tableau of shape
 $(a+2,a+1,3,1^{2a-4}) \setminus  ss[(a+1 ,1)]$\color{black}
 and weight $ss[(a-2,1)]$. The right tableau
 is the unique Littlewood--Richardson tableau of shape 
  $(a+2,a+1,3,1^{2a-4}) \setminus  ss[(a ,2)]$
 \color{black}
 and weight $ ss[(a-1)]$ }
\label{645}
 \qedhere

\end{figure}
       \end{proof}

We have now already considered  all partitions $\nu$ except hooks and fat hooks.
Firstly, we consider hooks.
As 2-line partitions have already been discussed, we need only consider hooks of length and width at least~3.

 \begin{prop}\label{hooky1}\label{hooky}
 If   $\nu=(n-a,1^a)$ for $2\leq a < n-2$, then $p(\nu,\mu)>1$
 for all $\mu\vdash m>1$ except for the cases listed in \cref{conj}.
  \end{prop}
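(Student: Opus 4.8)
The plan is to follow the strategy already used for rectangles, $2$-line and proper-fat-hook partitions: exhibit a ``seed'' constituent of $s_\nu \circ s_{(2)}$ of multiplicity at least $2$, and then invoke \cref{cor:2-important} together with the conjugation-closedness of the relevant family of hooks to upgrade this to $p(\nu,\mu)>1$ for all $\mu\vdash m>1$. So the real content is to prove, for $\nu=(n-a,1^a)$ with $2\le a< n-2$ (equivalently $\ell(\nu),w(\nu)\ge 3$), that there is a partition $\la\vdash 2n$ with $\langle s_\nu\circ s_{(2)}\mid s_\la\rangle\ge 2$, with a short finite list of exceptional small hooks handled by direct computer calculation (these are exactly the ones appearing in \cref{conj}$(iv)$, e.g.\ $(4,1),(3,1)$, etc.).

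First I would try the cheapest seed: apply \cref{yaaaas} (or \cref{yaaaas2} when $\nu_1=\nu_2$, which here only happens for the small hook $(2,1^a)$-type cases already in the exceptional list, so generically we are in the $\nu_1\neq\nu_2$ regime). A hook $\nu=(n-a,1^a)$ with $a\ge 2$ and $n-a\ge 3$ has exactly $2$ removable nodes, so \cref{3removable} does not apply, but \cref{yaaaas} tells us that $\langle s_\nu\circ s_{(2)}\mid s_{\bar\nu-\varepsilon_1+\varepsilon_c}\rangle=|I(\bar\nu-\varepsilon_1+\varepsilon_c)|$ for $c>1$. Here $\bar\nu=(2n-a,1^a)$, and I would compute $I(\bar\nu-\varepsilon_1+\varepsilon_c)$ explicitly: we need pairs $(a',b')$ with $2\le b'\le a'$, $\nu_{a'}\ne\nu_{b'}$ if $a'\ne b'$, such that removing a node in some row $x>1$ and adding nodes in rows $a',b'$ reproduces $\bar\nu-\varepsilon_1+\varepsilon_c$. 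Because $\nu$ has only two distinct part sizes ($n-a$ and $1$), the constraint $\nu_{a'}\ne\nu_{b'}$ is restrictive, so I expect $|I(\cdot)|$ to be $1$ for the obvious choice $c=2$, meaning this first seed gives multiplicity exactly $1$ and is \emph{not} enough — mirroring the near-rectangle difficulty encountered in \cref{the latter} and \cref{2rec2}. The main obstacle is therefore precisely this: for hooks the ``first layer'' $\la_1=n+\nu_1-1$ constituents are all multiplicity-free, so one must go deeper into the dominance order.

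Consequently I would construct the seed one layer further down, at $\la_1=n+\nu_1-2$, analogously to the partition $\omega=\bar\nu-2\varepsilon_1+2\varepsilon_2$ used for rectangles in \cref{the latter}. Concretely, I would take a candidate such as $\la=\bar\nu-2\varepsilon_1+2\varepsilon_2=(2n-a-2,3,1^{a-1})$ (adjusting the small-$a$ cases) and compute $\langle s_\nu\circ s_{(2)}\mid s_\la\rangle$ via the Dvir-type recursion \cref{dvir}: enumerate all partitions $\mu$ with $\bar\nu\succeq\mu\rhd\la$ and $\PStd((2)^\nu,\mu)\ne\emptyset$, draw the (small) Hasse diagram of this interval, and inductively peel off multiplicities using the already-established values for the $\la_1\in\{n+\nu_1,n+\nu_1-1\}$ partitions (\cref{pppppppppp}, \cref{yaaaas}). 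Since $\nu$ is a hook, the plethystic tableaux $\SStd_\NN(\nu)$ appearing as entries are very constrained — each entry of shape $(2)$ can only be $\scriptsize\young(11)$, $\scriptsize\young(1z)$, or $\scriptsize\young(z_1z_2)$ with $z_1,z_2\ge 2$, and the column/row conditions on a hook shape $[\nu]$ force a near-linear structure — so the enumeration of $\PStd((2)^\nu,\mu)$ for the finitely many relevant $\mu$ is mechanical, exactly as in the rectangle proof. I expect the outcome $\langle s_\nu\circ s_{(2)}\mid s_\la\rangle=2$, perhaps with one or two exceptional values of $a$ (small hooks) where the interval degenerates and the multiplicity drops to $1$; those $\nu$ get absorbed into the finite exceptional list of \cref{conj}$(iv)$ and checked by computer.

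Finally, once the seed $p(\nu,(2))\ge 2$ is established for all non-exceptional hooks $\nu=(n-a,1^a)$ with $2\le a< n-2$, I note that the set $\mathcal N$ of such hooks is closed under conjugation (since $(n-a,1^a)^T=(a+1,1^{n-a-1})$ is again a hook of length and width $\ge 3$ when the original is), so \cref{cor:2-important} immediately yields $p(\nu,\mu)\ge 2$ for every $\mu\vdash m>1$. Combined with \cref{conjugate}/\eqref{max-conjugate} this also covers the conjugate hooks automatically. The hard part, as indicated, is the bookkeeping in the depth-two Dvir recursion — counting $|\PStd((2)^\nu,\mu)|$ and $|\SStd(\mu,\la)|$ correctly across the Hasse interval — but this is routine given the rigidity of hook-shaped plethystic tableaux, and is the analogue of the (admittedly lengthy) computation carried out in \cref{the latter}.
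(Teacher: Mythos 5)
Your overall strategy is the paper's strategy: establish the seed $p(\nu,(2))\geq 2$ on a constituent two steps down in the first row, and then conclude from conjugation-closedness of the proper hooks via \cref{cor:2-important}. Indeed, the paper's seed constituent is exactly the partition you name, $\bar\nu-2\varepsilon_1+2\varepsilon_2$, which for $\nu=(3,1^a)$ is $(4+a,3,1^{a-1})$; and your observation that the first layer $\la_1=n+\nu_1-1$ only yields multiplicity $1$ for hooks (so \cref{yaaaas} alone is not enough) is correct and is precisely why the paper also has to dig deeper. The difference lies in how the seed multiplicity is obtained: the paper first uses \cref{Brion} together with the 2-line case (\cref{2liners}) to reduce to $\nu=(3,1^a)$ with $a=2,\dots,6$ and then simply records the five values $\langle s_{(3,1^a)}\circ s_{(2)}\mid s_{(4+a,3,1^{a-1})}\rangle=2$ by direct (computer) verification, whereas you propose to prove $\langle s_\nu\circ s_{(2)}\mid s_{\bar\nu-2\varepsilon_1+2\varepsilon_2}\rangle=2$ uniformly for every proper hook by a depth-two application of \cref{dvir}, in the style of the rectangle computation of \cref{the latter}. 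Your route is heavier, but it would buy uniformity: it furnishes the seed for every arm \emph{and} leg length simultaneously, which is convenient since \cref{Brion} only adds boxes to the first row of $\nu$, so the paper's five seeds by themselves only grow the arm.

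Two caveats. First, the Dvir-style enumeration is the entire technical content of your argument, and you only assert its outcome (``I expect the outcome $2$'') rather than carry it out; the paper discharges the corresponding burden by explicitly checking its finitely many seeds, so as written your proposal is a correct plan rather than a complete proof. Second, your fallback that any small hook where the computation degenerates ``gets absorbed into the exceptional list of \cref{conj}$(iv)$'' is not available: that list is fixed by the theorem and contains no hooks with $2\leq a<n-2$ (for instance $(4,1)$ and $(3,1)$ have leg $1$ and are outside the proposition's range), so a degenerate case would actually force you to find a different seed. Fortunately none occurs -- already for $(3,1^2)$ one has $p((3,1^2),(2))=2$ (see \cref{Data}), consistent with the paper's stated values for $a=2,\dots,6$.
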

\begin{proof}
  By   \cref{Brion,2liners} it suffices to consider partitions $\nu$ of the form $(3,1^a)$ for $a=2,3,4,5, 6$.
   In this case we obtain 5 small rank seeds of multiplicity as follows:
  $$\langle s_{(3,1^a)} \circ s_{(2)} \mid s_{(4+a,3,1^{a-1})}\rangle =2$$
  for $a=2,3,4,5, 6$ (by computer calculation).
We   hence  deduce $p(\nu,(2))>1$ whenever $\nu$ is a proper hook not listed in \cref{conj}.
 Since the set of hooks under consideration is closed under conjugation,
we deduce the result using \cref{cor:2-important}.
\end{proof}

 \begin{prop}\label{coolcor}\label{fathooks2}
Let  $\nu$ be a  proper fat hook.
 Then $p(\nu,\mu)>1$ for any  partition
$\mu  $ such that   $|\mu|> 1$.
 \end{prop}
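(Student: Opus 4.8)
The plan is to deduce the statement from \cref{cor:2-important}. The set $\mathcal N$ of proper fat hooks is closed under conjugation --- hooks conjugate to hooks, $2$-line partitions to $2$-line partitions, and $\rem$ is conjugation-invariant --- so it suffices to prove $p(\nu,(2))\geq 2$ for every proper fat hook $\nu$. Throughout, write $\nu=(p^q,r^s)$ with $p>r\geq 1$ and $q,s\geq 1$; the hypotheses force $p\geq 3$, $q+s\geq 3$ and $(q,r)\neq(1,1)$.

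First suppose $\nu_1>\nu_2$, that is $q=1$. Then $r\geq 2$ (otherwise $\nu$ is a hook) and $s\geq 2$ (otherwise $\nu$ is a $2$-line partition), so $\nu=(r^{s+1})+(p-r)$ is obtained from the rectangle $(r^{s+1})$ by enlarging the first row. Since $(r^{s+1})\supseteq(2^3)$, \cref{the latter} produces a constituent of $s_{(r^{s+1})}\circ s_{(2)}$ of multiplicity $2$, so $p((r^{s+1}),(2))\geq 2$, and hence $p(\nu,(2))\geq 2$ by \cref{Brion}.

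It remains to treat the case $\nu_1=\nu_2$, that is $q\geq 2$ (with $r\geq 1$ arbitrary, so that the hook-like shapes $(p^q,1^s)$ are included). Such a $\nu$ is never of the form $\rho+(k)$ with $k\geq 1$, so the preceding reduction is unavailable and one must descend further in the dominance order. Here I would prove, in close parallel with \cref{the latter}, that
\[
\langle s_\nu\circ s_{(2)}\mid s_{\bar\nu-2\varepsilon_1+2\varepsilon_2}\rangle = 2,\qquad \bar\nu=\nu+(n),
\]
by running the recursion of \cref{dvir} from $\bar\nu$ down to $\omega:=\bar\nu-2\varepsilon_1+2\varepsilon_2$: one lists the partitions $\lambda$ with $\bar\nu\succeq\lambda\trianglerighteq\omega$ and $\PStd((2)^\nu,\lambda)\neq\emptyset$ --- obtained from $\bar\nu$ by removing up to two boxes from row $1$, removing up to two boxes from the two removable corners of $\bar\nu$, which now lie in rows $q$ and $q+s$, and re-adding these boxes in intermediate rows --- arranges them in a Hasse diagram, and computes $\langle s_\nu\circ s_{(2)}\mid s_\lambda\rangle$ for each by downward induction, beginning from \cref{pppppppppp} and \cref{yaaaas2} and exhibiting the contributing plethystic and semistandard tableaux explicitly. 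As in \cref{the latter}, a handful of the smallest shapes (small $q$, small $s$) behave slightly differently and are checked by hand, and the finitely many $\nu$ with $n$ too small for $\omega$ to be relevant are dealt with by direct computation.

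The heart of the matter is this last computation. For a rectangle, $\bar\nu$ has a single removable corner below the first row, so the near-maximal constituents of $s_\nu\circ s_{(2)}$ form one family and already the Hasse diagram of \cref{the latter} is sizeable; for a proper fat hook with $q\geq 2$ there are two removable corners, at different heights, so both the list of intermediate partitions $\lambda$ and the tableaux carrying the $\gyoung(2;2)$ entries --- which may sit in row $q$ or in row $q+s$ of $\nu$, subject to the constraint that the two successive RSK-type additions never lie in a single column --- proliferate, and it is delicate to verify that all contributions cancel down to exactly $2$. I expect the bookkeeping, rather than any new idea, to be the obstacle.
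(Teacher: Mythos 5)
Your reduction via \cref{cor:2-important} and your treatment of the case $\nu_1>\nu_2$ are correct: writing $\nu=(r^{s+1})+(p-r)$ with $r\geq 2$, $s+1\geq 3$, applying \cref{the latter} to the rectangle and growing with \cref{Brion} is exactly the device the paper uses for its one exceptional subcase (the near rectangles $(a+k,a^b)$), and your version covers all proper fat hooks with $\nu_1>\nu_2$ at once.

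The gap is the case $\nu_1=\nu_2$. There you do not give a proof; you propose to establish $\langle s_\nu\circ s_{(2)}\mid s_{\bar\nu-2\varepsilon_1+2\varepsilon_2}\rangle=2$ by redoing the full Dvir-style recursion of \cref{the latter} for a general fat hook with two removable corners, and you explicitly leave the (admittedly delicate) bookkeeping undone. That computation is both unverified --- the claimed value $2$ at depth two in the dominance order is not established for these shapes --- and unnecessary: the near-maximal analysis already carried out in \cref{sec:proof} settles this case in one step. By \cref{yaaaas2} (the $\nu_1=\nu_2$ analogue of \cref{yaaaas}), for $\lambda=\bar\nu-\varepsilon_1+\varepsilon_c$ the multiplicity is $|I(\lambda)|$ for $c>2$ (and $|I(\lambda)|-1$ for $c=2$), and for a proper fat hook with $\nu_1=\nu_2$ one can choose $c>2$ so that both removable corners of $\nu$ (in rows $q$ and $q+s$, both $>1$) contribute to $I(\lambda)$, giving multiplicity at least $2$ already at the level $\lambda_1=n+\nu_1-1$; the only shapes for which no such first-layer constituent of multiplicity $\geq 2$ exists are the near rectangles $(a+k,a^b)$ with $k\geq1$, $a,b\geq2$, which have $\nu_1\neq\nu_2$ and are precisely the shapes your first case (and the paper's \cref{the latter} plus \cref{Brion}) handles. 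So to complete your argument you should replace the sketched recursion by a direct appeal to \cref{yaaaas2}, exhibiting a suitable $c$; as written, the $\nu_1=\nu_2$ half of the proposition remains unproved.
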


 \begin{proof}
Let $\mathcal N$ be the set of all proper fat hooks.
Let $\nu \vdash n$ be in $\mathcal N$.
 If $\nu_1=\nu_2$ then \cref{yaaaas2} shows that
$$2 \leq \langle s_\nu\circ s_{(2)} \mid s_{\overline\nu -\varepsilon_1+\varepsilon_c}\rangle
$$
for any $\varepsilon_c \in {\rm Add}(\overline{\nu}-\varepsilon_1)$ with $c>2$.  
Otherwise, $\nu$ is a near rectangle of the form $\nu=(a+k,a^b)$ with $k\geq 1$ and $a,b\geq 2$.
\color{black}
In this latter case, we apply \cref{the latter} to the rectangle $\rho =(a^{b+1})\vdash r$ and
obtain by \cref{Brion} for $\nu=\rho+(k)$:
$$
2 = p(\rho,(2),\bar\rho -2\varepsilon_1+2\varepsilon_2)
\leq p(\nu,(2),\bar\rho+(2k)-2\varepsilon_1+2\varepsilon_2).
$$
Thus, in any case $p(\nu,(2))>1$.
As $\mathcal N$ is closed under conjugation, the result now follows by
\cref{cor:2-important}.
 \end{proof}

  \begin{prop}\label{2222}
 Let $\nu\vdash 2$.  Then $p(\nu,\mu)>1$ for all
 $\mu$ not appearing  in the exceptional cases of \cref{conj}$(ii)$.
  \end{prop}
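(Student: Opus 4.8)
The plan is to run the same ``seed and grow'' strategy used throughout \cref{sec:endproof}, now with the \emph{inner} partition $\nu$ held fixed at $(2)$ or $(1^2)$. First a reduction: by \eqref{conjugate} and the fact that the family of partitions $\mu$ occurring in \cref{conj}$(ii)$ is closed under conjugation (rectangles, $(a+1,a^{b-1})$, $(a^b,1)$, $(a^{b-1},a-1)$ and hooks all conjugate within this family), it suffices to prove $p((2),\mu)>1$ for every off-list $\mu$ together with $p((1^2),\mu)>1$ for every off-list $\mu$ of even size; the remaining case $\nu=(1^2)$, $|\mu|$ odd then follows since $p((1^2),\mu)=p((2),\mu^T)$ with $\mu^T$ again off-list. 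The genuinely multiplicity-free products with $\nu\vdash 2$ and $\mu$ not of the shapes in $(ii)$ are the two small exceptions $s_{(1^2)}\circ s_{(4,2)}$ and $s_{(1^2)}\circ s_{(2^2,1^2)}$ recorded in \cref{conj}$(iv)$; these sit precisely on the ``boundary'' of the construction below and are set aside (checked by computer).

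Next one sorts the off-list $\mu$ into: those with $\rem(\mu)\geq 3$; proper fat hooks $(a^p,b^q)$ other than $(a+1,a^{b-1})$, $(a^b,1)$, $(a^b,a-1)$; and $2$-line partitions other than $(a+1,a)$ and the $2$-column near-rectangles. For each family I would exhibit a short explicit list of seeds of multiplicity --- the minimal candidates being $(3,2,1)$ (for both $\nu$), $(4,2)$ and $(2^2,1^2)$ (on the $(2)$-side), $(4,2,1)$, and a handful on the $(1^2)$-side such as $(5,2)$, $(5,3)$, $(6,2)$ forced by the boundary behaviour near the near-rectangles --- with $p(\nu,\mu_0)\geq 2$ verified either through the Carr\'e--Leclerc domino calculus and the plethystic tableaux of \cref{easyhalf} and \cref{sec:proof}, or by direct computation. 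One then ``grows'' each seed using \cref{cor:semigroup}: the moves $\mu\mapsto\mu+\alpha$, $\mu\mapsto(r)\cup\mu$ with $r\geq w(\mu)$, and $\mu\mapsto\mu\cup(1)$ (which toggles $\nu\leftrightarrow\nu^T$), supplemented by conjugation of the whole triple via \eqref{max-conjugate}. Conjugation is what lets one ``fatten the bottom rows'' of a fat hook, since $\bigl((a^p,b^{q})\bigr)^T=\bigl((a^p,b^{q-1})\bigr)^T+(1^b)$. Concretely, a $2$-row $\mu=(p,q)$ with $q\geq 2$, $p-q\geq 2$ is reached from $(4,2)$ by adding the partition $(p-4,q-2)$; a fat hook $(a^2,b^2)$ from $(2^2,1^2)$ by adding $(a-2,a-2,b-1,b-1)$, then taller fat hooks by prepending rows of length $a$ and by the conjugate-plus-$(1^b)$ trick; and every $\mu$ with $\rem(\mu)\geq 3$ from $(3,2,1)$, $(4,2,1)$ (and their images under prepending a long row and under conjugation) by adding suitable partitions.

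The main obstacle is bookkeeping rather than a single hard inequality: one must pin down exactly which small partitions lie on or off the list (the smallest off-list cases being $(3,2,1),(4,2),(2^2,1^2)$ of size $6$), track how the parity of $|\mu|$ --- hence which of $(2),(1^2)$ is the ``active'' inner partition --- changes under the conjugation and $\mu\cup(1)$ moves, and check that the finite seed set really does generate \emph{every} off-list partition in each parity class. The delicate spots are precisely the partitions lying one box away from a near-rectangle, where $(4,2)$ and $(2^2,1^2)$ fail to have multiplicity $>1$ on the $(1^2)$-side; the seed list on that side must be chosen so that the regions it grows into cover all off-list $\mu$ except exactly those two. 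Once the covering is confirmed, the semigroup inequalities of \cref{cor:semigroup} propagate the multiplicity from the seeds, completing the proof.
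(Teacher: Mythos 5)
Your overall strategy is the paper's strategy (seeds of multiplicity grown by the semigroup moves of \cref{cor:semigroup}, with conjugation handled via \cref{max-conjugate} because $\{(2),(1^2)\}$ is conjugation-closed), but the decisive step is exactly the one you leave open: you never verify that your seed set generates every off-list $\mu$, and with the seeds you actually name it does not. On the $(1^2)$-side the partitions $(4^2,2)$ and $(4,2^2)$ (conjugate to $(3^2,1^2)$) are genuinely minimal and must be seeds in their own right: every semigroup move producing $(4,4,2)$ from something smaller passes through an on-list partition --- writing $(4,4,2)=\beta+\alpha$ forces $\beta$ to be one of $(3,3,2)$, $(3,3,1)$, $(2^3)$, $(2^2)$, $(2,2,1)$, $(1^3)$, $\dots$, all of which are multiplicity-free with $(1^2)$; deleting the first row leaves $(4,2)$, where $p((1^2),(4,2))=1$ is one of the exceptions; $(4,4,2)$ has no part equal to $1$, so the move $\mu\mapsto\mu\cup(1)$ is unavailable; and conjugating just gives $(3^2,2^2)$, which has the same problem. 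The same analysis applies to $(4,2^2)$. So your proposed list $(3,2,1)$, $(4,2)$, $(2^2,1^2)$, $(4,2,1)$, $(5,2)$, $(5,3)$, $(6,2)$ cannot reach these partitions, and the phrase ``once the covering is confirmed'' is doing all the work.

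The paper closes this gap with different bookkeeping: it first checks \emph{all} $\mu$ with $|\mu|\le 10$ by computer, and then argues structurally. For a fat hook $\mu$ off the list one peels columns (conjugating when needed, which is legitimate precisely because the inner partitions form a conjugation-closed set, via \cref{cor:semigroup}(4)) until one more step would land on $(4,2)$, the unique minimal fat hook that is neither an almost rectangle nor a hook; since $(4,2)$ is exceptional one stops one step earlier, and the possible stopping points are exactly $(5,2)$, $(5,3)$, $(4^2,2)$, $(3^2,1^2)$ up to conjugation --- all of size at most $10$, hence inside the computed range. For $\rem(\mu)\ge 3$ it reduces by column removal and conjugation to the single seed $(3,2,1)$, where $\langle s_\nu\circ s_{(3,2,1)}\mid s_{(5,4,2,1)}\rangle=2$ for both $\nu\vdash 2$. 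To repair your write-up you would either have to import such an exhaustive small-size verification, or carry out a complete minimal-seed analysis of the kind sketched above; as it stands the argument does not close.
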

\begin{proof}
  We have checked that the result is true for all partitions $\mu$ of size
 at most 10 by computer calculation.
Now, we let $\nu \vdash 2$ and suppose that
 $\mu$ is  either
 \begin{itemize}[leftmargin=*]
 \item[$(i)$]   a
  fat hook not equal to  $(a^b)$, $(a+1,a^{b-1})$, $(a^b,1)$,   $(a^{b-1},a-1)$, or a hook;

 \item[$(ii)$]  a partition with at least 3 removable nodes;
 \end{itemize}
 we will show that $p(\nu,\mu)>1$.

   We first assume that $\mu$  satisfies $(i)$.
   We wish to use the semigroup property of \cref{PW2}  to remove
     columns of  $\mu$ and then conjugate (note that   the condition on $\nu$ is conjugation invariant) and again remove more columns until we obtain  a list of  the smallest possible fat hook partitions $\widehat{\mu}$ such that $s_\nu \circ s_{\widehat{\mu}}$ contains multiplicities.  Up to conjugation,  the partition   $(4,2)$ is the unique smallest fat hook  which is
      not
     equal to an almost rectangle  or a hook.  However $(4,2)$ is on our list of exceptional products for which $s_{\nu}\circ s_{(4,2)}$ {\em is} multiplicity-free --- and so if we reach $\widehat{\mu}=(4,2)$ (or its conjugate) we have removed a row or column too many from $\mu$.  Therefore our list of seeds is given by the four fat hook partitions obtained by   adding a row or column to $(4,2)$, namely
   $\widehat{\mu}=(5,2)$, $(5,3)$,   $(4^2,2)$, or $(3^2,1^2)$ up to conjugation.
 Now such $\widehat{\mu}$ has $|\widehat{\mu}|\leq 10$ and
 hence is covered by computer calculation.
 Thus we deduce that
 any product  $s_\nu\circ s_{\mu}$ can be seen to have multiplicities  by reducing it to one of the form $s_\nu\circ s_{\widehat{\mu}}$  using  \cref{cor:semigroup}.

Now suppose that $\mu$ satisfies $(ii)$.
Using \cref{PW2} we can remove successive {columns} from anywhere in $\mu$ until we obtain  a  3 column partition  $\widehat{\mu}$ with 3 removable nodes (it does not matter how we do this).
 We then conjugate (as the condition on $\nu$ is conjugation invariant) using \cref{conjugate}
  and again remove successive columns until we obtain the partition
  $\overline{\mu}=(3,2,1)$.
 Finally we note that
$$
 2=
 \langle s_{\nu} \circ s_{(3,2,1) }\mid  s_{
 (5,4,2,1)} \rangle
  $$
for $\nu\vdash 2$  and so the result follows.
 \end{proof}

  \begin{prop}\label{finallinearcase}
 Let
 $\nu$ be a linear partition of $n\ge 3$.
  Then $p(\nu,\mu)>1$ for all
 $\mu$ not appearing  in the exceptional cases of \cref{conj}.
 \end{prop}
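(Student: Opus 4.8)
The plan is to split into the two sub-cases $\nu=(n)$ and $\nu=(1^n)$ and, using the semigroup inequalities of \cref{cor:semigroup}, to reduce every non-exceptional product to a short list of small ``seed'' products verified by computer. The key first observation is that for $\nu=(n)$ with $n\geq 3$ we have $p((n),\mu)=p((2)+(n-2),\mu)\geq p((2),\mu)$ by \cref{cor:semigroup}(3); hence whenever $((2),\mu)$ is not multiplicity-free — that is, whenever $\mu$ is not one of the rectangle/near-rectangle/hook partitions of \cref{conj}$(ii)$ and $|\mu|>2$ — \cref{2222} already yields $p((n),\mu)>1$. So only a restricted family of $\mu$ remains: $\mu$ linear, or (up to the obvious moves) a rectangle, a near-rectangle of the listed forms, or a hook.

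For such residual non-linear $\mu$ I would use \cref{cor:semigroup}(1),(2): a rectangle, near-rectangle or hook is of the form $\alpha+\mu'$ or $(r)\cup\mu'$ for a strictly smaller partition $\mu'$, and iterating this (together with the conjugation identity \eqref{conjugate}, harmless since $\{(n),(1^n)\}$ is conjugation-closed) reduces $p((n),\mu)$ below $p((n),(2,1))$ or $p((n),(2^2))$; then \cref{cor:semigroup}(3) lowers the outer partition to give $p((n),\mu)\geq p((3),(2,1))$ or $p((3),(2^2))$. For $\mu$ linear the reduction is bookkeeping with $|\nu|+|\mu|$: writing $(k)=(k')+(k-k')$, $(1^k)=(1)\cup(1^{k-1})$ and conjugating as needed, every non-exceptional pair $((n),\mu)$ with $\mu$ linear reduces to one of $p((5),(3))$, $p((4),(4))$, $p((3),(6))$, $p((6),(1^3))$. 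Each of the six seeds $p((3),(2,1)),\,p((3),(2^2)),\,p((5),(3)),\,p((4),(4)),\,p((3),(6)),\,p((6),(1^3))$ has $|\nu|+|\mu|\leq 9$ and exceeds $1$ by direct calculation (these are exactly the minimal non-multiplicity-free products in their respective ``directions''), which settles $\nu=(n)$.

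For $\nu=(1^n)$ the additional tool is \cref{cor:semigroup}(4): if $\mu$ (or, after conjugating $\mu$, its transpose) has a part equal to $1$, say $\mu=\mu_0\cup(1)$, then $p((1^n),\mu)\geq p((n),\mu_0)$ and we fall back on the case $\nu=(n)$; and if $|\mu|$ is odd then $p((1^n),\mu)=p((n),\mu^{T})$ directly by \eqref{conjugate}. A short parity-and-stripping analysis shows the only $\mu$ escaping both reductions are rectangles $(c^b)$ with $c,b\geq 2$, and these reduce by a further row- and column-strip to $(c-1)^{b-1}$, which has odd size unless $\mu=(2^2)$. Thus, apart from finitely many small seeds checked by computer, the whole case $\nu=(1^n)$ collapses to the single infinite family $s_{(1^n)}\circ s_{(2^2)}$ for $n\geq 4$. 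This is the main obstacle: it is the one place the semigroup machinery is useless, since both $s_{(2)}\circ s_{(2^2)}$ and $s_{(1^2)}\circ s_{(2^2)}$ are multiplicity-free and $(2^2)$ cannot be ``grown onto'' from a smaller shape. I expect to deal with it by an explicit Dvir-type computation in the spirit of \cref{the latter} — exhibiting, for a suitable near-maximal weight $\lambda$, two semistandard plethystic tableaux of shape $(2^2)^{(1^n)}$ and weight $\lambda$ that survive the recursion \eqref{dvir} — giving $p((1^n),(2^2))\geq 2$ for all $n\geq 4$ and completing the proof.
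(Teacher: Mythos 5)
Your opening move is a genuinely nice shortcut the paper does not use: since $(n)=(2)+(n-2)$, \cref{cor:semigroup}(3) gives $p((n),\mu)\geq p((2),\mu)$, so \cref{2222} instantly kills every $\mu$ that is not a rectangle, a listed near-rectangle, a hook, or linear, whereas the paper argues by a case split on $\mu$ (linear, rectangle, non-rectangle). But as written your argument has two genuine gaps. The decisive one is the family you yourself flag: after your reductions everything in the $\nu=(1^n)$ case funnels into $p((1^n),(2^2))$ for $n\geq 4$, and for $n\geq 5$ you prove nothing — you only announce an intention to do a Dvir-type computation. Since none of the tools quoted in the paper (nor any step in your proposal) changes the column length of the outer partition, this infinite family is exactly where the whole proof lives, and a plan is not a proof. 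Moreover your funnel is slightly off at the bottom: for $\nu=(1^3)$ and $\mu=(a^2)$, $a\geq 3$, stripping columns all the way to $(2^2)$ lands on the \emph{exceptional} multiplicity-free pair $((1^3),(2^2))$ and yields nothing; you must stop at $(3^2)$ and use the computed value $p((1^3),(3^2))=2$, as the paper does.

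The second gap is in your treatment of the residual non-linear $\mu$ for $\nu=(n)$: you claim that \cref{cor:semigroup}(1),(2) together with conjugation strip any rectangle, near-rectangle or hook down to $(2,1)$ or $(2^2)$. This is true for rectangles and the listed near-rectangles, but it fails for proper hooks: applying (1) to $(a,1^b)$ gives the linear partition $(1^b)$, applying (2) gives $(a-1)$, and conjugation gives another hook, so iterating these moves can never reach $(2,1)$. To shorten a hook you need \cref{cor:semigroup}(4) (deleting parts equal to $1$ at the cost of conjugating $\nu$, harmless as $\{(n),(1^n)\}$ is conjugation-closed), which is precisely how the paper reduces every non-rectangle to $p((3),(2,1))=2$. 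Relatedly, your claim that in the $\nu=(1^n)$ case "the only $\mu$ escaping both reductions are rectangles" is not literally correct (e.g. $(3,3,2)$ has even size, no part $1$, and $\mu_1=\mu_2$); it only becomes correct after iterated stripping, and each application of $p((1^n),\mu_0\cup(1))\geq p((n),\mu_0)$ is useful only when $\mu_0$ is itself non-exceptional for row outer, so small cases such as $\mu=(3,1)$ with $n\leq 5$ need separate routing. These bookkeeping issues are patchable, but together with the unproven $(2^2)$ family they leave the proposal short of a complete proof; for comparison, the paper dispatches the same hard family with the inequality $p((1^n),(a^2))\geq p((1^4),(2^2))$ and otherwise reaches the same kind of small seeds you list.
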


\begin{proof}
Let $\mu$ be a partition of $m$.
We already know that for
$m \le 2$ we have $p(\nu,\mu)=1$,
so we assume now that $m\ge 3$.
We also note that for $m+n\leq 8$ the claim
is checked by computer (see \cref{Data}).
So from now on, we assume that $m+n\geq 9$.

We first suppose that $\mu$ is also a  linear partition.

 We now first consider the case when $\nu=(n)$.
 We can use \cref{cor:semigroup}
 to remove boxes
 from $\nu$ and $\mu$ until we obtain a seed 
  (see \cref{Data})
 \color{black}
 of the form
$$
s_{(3)}\circ s_{(6)}
 \quad
s_{(4)}\circ s_{(4)}
  \quad s_{(5)}\circ s_{(3)},
 $$
$$
s_{(3)}\circ s_{(1^6)}
 \quad
 s_{(4)}\circ s_{(1^4)}
 \quad s_{(6)}\circ s_{(1^3)}.
$$
We now proceed to the  case when $\nu=(1^n)$.
  If $m$ is odd, then by  \cref{max-conjugate}
 we have $p((1^n),\mu)=p((n),\mu^T)$ and so the result follows from the above.
 If $m$ is even, then we can remove a box from $\mu$ using
 \cref{cor:semigroup}	and then the result follows from the $m$ odd case
 if $m+n>9$ 	
	(note that $m-1\geq 3$ if $m$ is even and so this is fine);
if $m+n=9$ we only need to check by computer that we have the seed
$$s_{(1^5)}\circ s_{(4)} .
$$

Next suppose that $\mu$ is an arbitrary non-linear  rectangle $(a^b)$.
 If $a,b\geq 3$ then
we remove rows and column of $\mu$ using \cref{cor:semigroup} until
we obtain the partition $\widehat{\mu}=(3^3)$, with $p(\nu,(3^3))\le p(\nu,\mu)$.
  Since $9$ is odd,
 using \cref{max-conjugate} reduces to showing that $p((n),(3^3))>1$.

     Using \cref{cor:semigroup} again, we have
    $p((n),(3^3))\ge p((3),(3^3))>1$,
    and the result follows for $\mu=(a^b)$ for $a,b\geq 3$.      By \cref{max-conjugate} it only remains to consider 2-line rectangles $\mu=(a^2)$, $a\ge 2$.      Using \cref{cor:semigroup}
    once more
      we find
		$p((n),(a^2)) \ge p((3),(2^2))=2$ for $n\ge 3$ and $a\ge 2$,
    $p((1^n),(a^2)) \ge p((1^4),(2^2))=3$ for $n\ge 4$ and $a\ge 2$,
		and $p((1^3),(a^2)) \ge p((1^3),(3^2))=2$ for $a\ge 3$.
    Thus the result follows in this case.

Finally, suppose that  $\mu$ is not a rectangle.
We now use all parts of \cref{cor:semigroup} in turn, i.e.,
we remove all rows above the last non-linear hook of~$\mu$,
all  columns to the left of this hook, and then almost all
boxes in the arm and almost all boxes in the leg, and we find
$$p(\nu,\mu) \ge p(\nu,(2,1))=p(\nu^T,(2,1))\ge p((3),(2,1))=2.$$
Hence the result follows.
 \end{proof}

Since $\nu$ must be a linear partition, or a 2-line partition, or a hook, or a rectangle or a proper fat hook, or have (at least) 3 removable nodes  --- and we have proven \cref{conj} for  each of these different cases in turn  ---  the proof of  \cref{conj} is now complete.

\bibliographystyle{amsalpha}
\bibliography{master}

 \section{Data}\label{Data}

We now provide   all pairs of partitions $\nu\vdash n,\mu\vdash m$ with $n+m\leq 8$ 
for which the plethysm $s_\nu\circ s_\mu$ is not multiplicity-free, together with the corresponding value $p(\nu,\mu)>1$ (values 1 are suppressed in the tables below);
 for succinctness, we do not list the products which can be deduced by conjugation (as in \cref{conjugate}).  

Using monotonicity properties, in the main body of this paper
pairs in this region and slightly beyond serve as seeds for plethysms which are not multiplicity-free.
Hence, we also add further values for some pairs $\nu,\mu$ which are used as seeds for multiplicity
in the arguments.

\bigskip

\scalefont{0.9} $
\begin{array}{c||c|c||c|c||c|} 
\nu \backslash \mu
    & (4,2) & (3,2,1) & (5,2) & (4,2,1) &
		(4,2^2)
		\\
\hline
(2) & 2 & 2 & 2 & 3 & 2 \\
(1^2) & & 2 & 2 & 3 & 2 \\
\end{array}
$

\smallskip


$
\begin{array}{l||c||c|c|c||c|c|c|c||c|c||c|}
\nu \backslash \mu
    & (2,1) & (4) & (3,1) & (2^2) & (5) & (4,1) & (3,2) & (3,1^2) & (6) &(3^2) &(3^3)     \\
\hline
(3)   & 2 &   & 4 & 2 &   & 6 & 6 & 7 & 2	&&9\\
(2,1) & 3 & 2 & 7 & 2 & 2 & 10& 11& 12 & 2&&\\
(1^3) & 2 &   & 3 &   &   & 5 & 6 & 7 &	&2	&
\end{array}
$

\smallskip

$
\begin{array}{l||c|c||c|c|c||c|}
\nu \backslash \mu
    & (3) & (2,1) & (4) & (3,1) & (2^2) & (5)   \\
\hline
(4)   &   & 4  & 2 & 15 & 3  & 3  \\
(3,1) & 2 & 12 & 4 & 46 & 9  & 6  \\
(2^2) & 2 & 9  & 3 & 31 & 6 & 5  \\
(2,1^2)& 2& 12 & 4 & 46 & 9 & 6 \\
(1^4) &   & 4  &   & 15 & 3 & 2
\end{array}
$

\smallskip

$
\begin{array}{l||c||c|c||c|}
\nu \backslash \mu
    & (2) & (3) & (2,1) & (4)   \\
\hline
(5)   &   & 2  & 12 & 4      \\
(4,1) &  & 4 & 49  & 10     \\
(3,2) & 2 & 5  & 60 & 13 \\
(3,1^2)& 2& 6 & 72 & 17 \\
(2^2,1) &   & 4  & 60 & 14 \\
(2,1^3) &  & 4 & 49 & 12 \\
(1^5) &  & & 12 & 3 \\
\end{array}
$

\smallskip

$
\begin{array}{l||c||c|}
\nu \backslash \mu
    & (2)  & (3)  \\
\hline
(6) &    & 2    \\
(5,1) &  2   & 7     \\
(4,2) & 3  & 14  \\
(4,1^2)& 2  & 16 \\
(3^2) &     & 8  \\
(3,2,1) & 4  & 25 \\
(3,1^3) &  2 & 18 \\
(2^3) & 2 &  8 \\
(2^2,1^2) & 2 & 15 \\
(2,1^4) &    &  8  \\
(1^6) &     &  2 \\
\end{array}
$

\bigskip\bigskip

 \end{document}